\begin{document}
\setcounter{tocdepth}{1}

\newtheorem{theorem}{Theorem}    
\newtheorem{proposition}[theorem]{Proposition}
\newtheorem{conjecture}[theorem]{Conjecture}
\def\theconjecture{\unskip}
\newtheorem{corollary}[theorem]{Corollary}
\newtheorem{lemma}[theorem]{Lemma}
\newtheorem{sublemma}[theorem]{Sublemma}
\newtheorem{fact}[theorem]{Fact}
\newtheorem{observation}[theorem]{Observation}
\theoremstyle{definition}
\newtheorem{definition}{Definition}
\newtheorem{notation}[definition]{Notation}
\newtheorem{remark}[definition]{Remark}
\newtheorem{question}[definition]{Question}
\newtheorem{questions}[definition]{Questions}

\newtheorem{example}[definition]{Example}
\newtheorem{problem}[definition]{Problem}
\newtheorem{exercise}[definition]{Exercise}

\numberwithin{theorem}{section}
\numberwithin{definition}{section}
\numberwithin{equation}{section}

\def\reals{{\mathbb R}}
\def\torus{{\mathbb T}}
\def\heis{{\mathbb H}}
\def\integers{{\mathbb Z}}
\def\rationals{{\mathbb Q}}
\def\naturals{{\mathbb N}}
\def\complex{{\mathbb C}\/}
\def\distance{\operatorname{distance}\,}
\def\diststar{{\rm dist}^\star}
\def\sym{\operatorname{Symm}\,}
\def\support{\operatorname{support}\,}
\def\dist{\operatorname{dist}}
\def\Span{\operatorname{span}\,}
\def\degree{\operatorname{degree}\,}
\def\kernel{\operatorname{kernel}\,}
\def\dim{\operatorname{dim}\,}
\def\codim{\operatorname{codim}}
\def\trace{\operatorname{trace\,}}
\def\Span{\operatorname{span}\,}
\def\dimension{\operatorname{dimension}\,}
\def\codimension{\operatorname{codimension}\,}
\def\Gl{\operatorname{Gl}}
\def\nullspace{\scriptk}
\def\kernel{\operatorname{Ker}}
\def\ZZ{ {\mathbb Z} }
\def\p{\partial}
\def\rp{{ ^{-1} }}
\def\Re{\operatorname{Re} }
\def\Im{\operatorname{Im} }
\def\ov{\overline}
\def\eps{\varepsilon}
\def\lt{L^2}
\def\diver{\operatorname{div}}
\def\curl{\operatorname{curl}}
\def\etta{\eta}
\newcommand{\norm}[1]{ \|  #1 \|}
\def\expect{\mathbb E}
\def\bull{$\bullet$\ }
\def\det{\operatorname{det}}
\def\Det{\operatorname{Det}}
\def\multiR{\mathbf R}
\def\bestA{\mathbf A}
\def\bestB{\mathbf B}
\def\bestC{\mathbf C}
\def\Apq{\mathbf A_{p,q}}
\def\Apqr{\mathbf A_{p,q,r}}
\def\rank{\operatorname{rank}}
\def\rankk{\mathbf r}
\def\diameter{\operatorname{diameter}}
\def\bp{\mathbf p}
\def\bff{\mathbf f}
\def\bg{\mathbf g}
\def\essd{\operatorname{essential\ diameter}}

\def\mab{M}
\def\t2{\tfrac12}

\newcommand{\abr}[1]{ \langle  #1 \rangle}
\def\unitQ{{\mathbf Q}}
\def\mbfp{{\mathbf P}}

\def\aff{\operatorname{Aff}}
\def\T{{\mathcal T}}

\newcommand{\Norm}[1]{ \Big\|  #1 \Big\| }
\newcommand{\set}[1]{ \left\{ #1 \right\} }
\newcommand{\sset}[1]{ \{ #1 \} }

\def\one{{\mathbf 1}}
\newcommand{\modulo}[2]{[#1]_{#2}}

\def\rint{ \int_{\reals^+} }
\def\Abest{{\mathbb A}}

\def\barrier{\medskip\hrule\hrule\medskip}

\def\scriptf{{\mathcal F}}
\def\scripts{{\mathcal S}}
\def\scriptq{{\mathcal Q}}
\def\scriptg{{\mathcal G}}
\def\scriptm{{\mathcal M}}
\def\scriptb{{\mathcal B}}
\def\scriptc{{\mathcal C}}
\def\scriptt{{\mathcal T}}
\def\scripti{{\mathcal I}}
\def\scripte{{\mathcal E}}
\def\scriptv{{\mathcal V}}
\def\scriptw{{\mathcal W}}
\def\scriptu{{\mathcal U}}
\def\scripta{{\mathcal A}}
\def\scriptr{{\mathcal R}}
\def\scripto{{\mathcal O}}
\def\scripth{{\mathcal H}}
\def\scriptd{{\mathcal D}}
\def\scriptl{{\mathcal L}}
\def\scriptn{{\mathcal N}}
\def\scriptp{{\mathcal P}}
\def\scriptk{{\mathcal K}}
\def\scriptP{{\mathcal P}}
\def\scriptj{{\mathcal J}}
\def\scriptz{{\mathcal Z}}
\def\frakv{{\mathfrak V}}
\def\frakG{{\mathfrak G}}
\def\frakA{{\mathfrak A}}
\def\frakB{{\mathfrak B}}
\def\frakC{{\mathfrak C}}
\def\frakf{{\mathfrak F}}
\def\fcross{{\mathfrak F^{\times}}}

\author{Michael Christ}
\address{
        Michael Christ\\
        Department of Mathematics\\
        University of California \\
        Berkeley, CA 94720-3840, USA}
\email{mchrist@math.berkeley.edu}

\date{January 19, 2014.  Revised June 4, 2014.}

\title{A Sharpened Hausdorff-Young Inequality}


\maketitle
\tableofcontents

\section{Introduction}

The Fourier transform $\widehat{f}$, for suitably bounded functions
$f:\reals^d\to\complex$, is 
\begin{equation} \widehat{f}(\xi) = \int_{\reals^d} e^{-2\pi i x\cdot\xi}f(x)\,dx.\end{equation}
Integration is with respect to Lebesgue measure. 
With this normalization the Fourier transform is
unitary on $L^2(\reals^d)$, and is a contraction from $L^1$ to $L^\infty$.  

Let $p\in[1,2]$, and let $q=p/(p-1)$ be the exponent conjugate to $p$. 
The Hausdorff-Young-Beckner inequality for $\reals^d$, first established by Babenko 
\cite{babenko} for a discrete family of exponents and by Beckner \cite{beckner} for general exponents, 
states that 
\begin{equation} \norm{\widehat{f}}_{L^{q}} \le \bestA_p^d\norm{f}_{L^p}\end{equation}
for $f\in L^p(\reals^d)$, with optimal constant of the form $\bestA_p^d$ where
\begin{equation} \bestA_p ={p^{1/2p}\,q^{-1/2q}}.  \end{equation}

Equality is attained by all functions 
\begin{equation}\label{eq:Gaussian} \phi(x) = ce^{-Q(x)+x\cdot v} \end{equation}
where $v\in\complex^d$, $c\in\complex$,
and $Q$ is a positive definite real quadratic form.
We call such a function a Gaussian, and denote by $\frakG$ the set of all Gaussians.

Lieb \cite{liebgaussian} went farther, proving that all extremizers are Gaussians. 
For exponents $p$ in the restricted range $(1,\tfrac43]$, Beckner had observed that
this is a consequence of uniqueness in corresponding cases of Young's convolution inequality.
For even integers $q\ge 4$, an alternative approach to determination of the
optimal constant, to identification of extremizers, and to a proof of their uniqueness,
is to use Plancherel's Theorem to reduce matters to an extension of Young's
inequality to convolutions with $q/2$ factors, then to apply the heat equation deformation method. 
But no such argument is available for general exponents.

This paper establishes a stabler form of  uniqueness of extremizers, and a sharper inequality.
Define the distance from $f\in L^p(\reals^d)$ to $\frakG$ to be 
\begin{equation} \dist_p(f,\frakG) = \inf_{G\in \frakG} \norm{f-G}_p.\end{equation}

\begin{theorem} \label{thm:truemain}
Let $d\ge 1$. Let $p\in(1,2)$, and let $q=p/(p-1)$. There exists $c=c(p,d)>0$ such that for every nonzero 
function $f\in L^p(\reals^d)$,
\begin{equation} \norm{\widehat{f}}_{q}\le 
\Big[\bestA_p^d - c\Big(\frac{ \dist_p(f,\frakG) }{ \norm{f}_p }\Big)^2\, \Big] 
\norm{f}_p.\end{equation}
\end{theorem}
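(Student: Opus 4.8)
The plan is to combine a local analysis near the manifold $\frakG$ of Gaussians with a compactness argument keeping functions far from $\frakG$ strictly below the extremal constant. Both $\norm{\widehat f}_q/\norm f_p$ and $\dist_p(f,\frakG)/\norm f_p$ are invariant under the group generated by invertible linear changes of variable, translations, modulations, and nonzero scalar multiplication, and this group acts transitively on $\frakG$. Hence it suffices to prove: (a) a \emph{local} inequality, namely there are $\rho,c_1>0$ with $\norm{\widehat f}_q\le\bestA_p^d-c_1\tau^2$ whenever $\norm f_p=1$ and $\tau:=\dist_p(f,\frakG)\le\rho$; and (b) \emph{quantitative non-extremality at fixed distance}, namely there are $\rho,\delta_0>0$ with $\norm{\widehat f}_q\le(\bestA_p^d-\delta_0)\norm f_p$ whenever $\dist_p(f,\frakG)\ge\rho\norm f_p$. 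Since $\dist_p(f,\frakG)\le\norm f_p$ always (let the scalar parameter tend to $0$ in $\frakG$), (a) and (b) together yield the theorem with $c=\min(c_1,\delta_0)$.

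For (a), after applying a symmetry we may assume the nearest Gaussian to $f$ is a fixed normalized Gaussian $G_0(x)=c_pe^{-\pi|x|^2}$ with $\norm{G_0}_p=1$; for $\tau$ small this nearest Gaussian exists, is non-degenerate, and is unique near $G_0$. Write $f=G_0+h$, so $\norm h_p=\tau$, and the first-order optimality of $G_0$ gives $\Re\int|h|^{p-2}\bar h\,\phi=0$ for every $\phi$ in the tangent space $T_{G_0}\frakG$, so that $h$ is, to leading order, $L^2$-orthogonal to $T_{G_0}\frakG$. Since $G_0>0$ and $\Gamma:=\widehat{G_0}$ is a positive Gaussian with $\norm{\Gamma}_q=\bestA_p^d$, a second-order expansion of $\norm{G_0+h}_p$ and of $\norm{\Gamma+\widehat h}_q$ — valid where $|h|$ is small relative to $G_0$, resp.\ $|\widehat h|$ small relative to $\Gamma$ — produces $\norm{\widehat f}_q-\bestA_p^d\norm f_p=L(h)+\mathcal Q(h)+o(\tau^2)$ with $L$ linear and $\mathcal Q$ a quadratic form built from weighted integrals of $|h|^2$ and $|\widehat h|^2$ against the Gaussian weights $G_0^{p-2}$ and $\Gamma^{q-2}$, together with rank-one terms. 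Because $G_0$ is a global maximizer of $f\mapsto\norm{\widehat f}_q-\bestA_p^d\norm f_p$ and this functional is differentiable at $G_0\ne0$, we get $L\equiv0$; equivalently, the Euler--Lagrange equation for $G_0$ holds, and it matches the two Gaussian weights since $\tfrac1{q-1}=p-1$. Decomposing $h$ into real and imaginary parts and then into parity classes splits $\mathcal Q$ into finitely many quadratic forms of the shape $u\mapsto\alpha\int\Gamma^{q-2}|\widehat u|^2-\beta\int G_0^{p-2}|u|^2$ (with one extra rank-one term in the even real sector), $\alpha,\beta>0$; I must show each is $\le0$, with equality exactly on the corresponding piece of $T_{G_0}\frakG$ (which lies in the span of the Hermite functions of degree $\le2$) and with a gap on its complement. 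I would do this by expanding in Hermite functions, computing the Gaussian-weighted overlaps $\int e^{-\pi\lambda|\xi|^2}\psi_\alpha(\xi)\psi_\beta(\xi)\,d\xi$ via Mehler's formula, and reducing to an explicit comparison of one-variable functions of $p$ and the Hermite degree.

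Two features of the range $1<p<2$ need care. First, $L^p$ and $L^q$ norms are not twice differentiable at $0$, so the expansion above is honest only away from the set where $|h|\gtrsim G_0$; there one truncates, and on the complementary set $|G_0+h|^p$ (and $|\Gamma+\widehat h|^q$) is controlled by $|h|^p$ (resp.\ $|\widehat h|^q$) plus lower-order contributions — which needs $h$ controlled simultaneously in position and in frequency, an input of the kind supplied by the precompactness in (b). Second, on that set the $L^p$ geometry is ``flat'': a perturbation of high Hermite degree normalized in $L^p$ is $L^2$-small and increases $\norm f_p$ by order $\tau^p$ while barely changing $\norm{\widehat f}_q$, so its deficit is of order $\tau^p$, which for $p<2$ dominates $\tau^2$ — this works in our favour, and is combined with the genuine $L^2$-negativity on the low modes by the elementary inequality $\norm{h^{\mathrm{in}}}_p^2+\norm{h^{\mathrm{out}}}_p^p\gtrsim\tau^2$ valid for $\tau\le1$. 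For (b) I would use precompactness, modulo symmetries, of $L^p$-normalized extremizing sequences: if $\norm{\widehat{f_n}}_q\to\bestA_p^d$ then, after symmetries, $f_n\to G$ in $L^p$ for some $G\in\frakG$; this rests on a concentration-compactness analysis of the Hausdorff--Young functional together with Lieb's identification of the extremizers, and yields (b) by contradiction.

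The step I expect to be the main obstacle is the non-degeneracy of the second variation transverse to $\frakG$: that $\mathcal Q$ has \emph{no} neutral or positive directions other than the tangent directions of $\frakG$, and that its negative eigenvalues on the low modes stay bounded away from $0$. This is exactly what pins the exponent $2$, and it is genuinely delicate because $\mathcal Q$ is not diagonalized by Hermite functions alone — only by the balance between the spatial weight $G_0^{p-2}$ and the frequency weight $\Gamma^{q-2}$ brought in by the Fourier transform — so that establishing strict negativity amounts to proving a sharp weighted Plancherel-type inequality and classifying its equality cases. (If the precompactness in (b) is not available off the shelf, proving it is a second substantial ingredient; together with the local estimate it completes the proof.)
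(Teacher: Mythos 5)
Your overall architecture is exactly the paper's: Theorem~\ref{thm:truemain} is obtained there by combining a local second-variation estimate near $\frakG$ (Theorem~\ref{thm:refinement1}, proved via a truncated Taylor-type expansion $f=f_\sharp+f_\flat$ as in Proposition~\ref{prop:taylorsubstitute}, followed by an exact spectral analysis of a Gaussian-weighted quadratic form through Hermite functions and the Mehler kernel, Lemma~\ref{lemma:eigenvalues} and Proposition~\ref{prop:spectral}) with a global stability statement (Proposition~\ref{prop:submain}: near-extremizers are $L^p$-close to $\frakG$), which is the contrapositive of your step (b). Your step (a) is a faithful sketch of that local analysis, including the correct observation that the ``flat'' part contributes a favorable deficit of order $\tau^p\gg\tau^2$; one small inaccuracy is your claim that controlling the region $|h|\gtrsim G_0$ requires simultaneous position/frequency control imported from (b) — the paper's second-variation lemma handles this for an arbitrary bounded operator $L^p\to L^q$ with $p<2\le q$, using only the sign structure of $|1+z|^p$ and crude Hausdorff--Young bounds on the $L^q$ side, so no such input is needed.

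The genuine gap is step (b). You treat the precompactness (modulo symmetries) of extremizing sequences as something resting on ``a concentration-compactness analysis of the Hausdorff--Young functional together with Lieb's identification of the extremizers,'' possibly available off the shelf. It is not, and for $p\in(\tfrac43,2)$ no such result was previously known; proving it is the bulk of the paper. Standard concentration-compactness does not apply because the Fourier transform is nonlocal: if $f=f_1+f_2$ with $f_j$ supported on balls separated by distance $\eps^{-1}$, the interaction $\|\widehat{f_1}\widehat{f_2}\|_{q/2}$ does not decay as $\eps\to0$ (it is invariant under independent translations of $f_1,f_2$), so multi-bubble scenarios cannot be excluded by the usual splitting arguments, and no rearrangement/symmetrization tool is available at these exponents. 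The paper instead proves Proposition~\ref{prop:submain} by an entirely different route: passing from quasi-extremality of $f$ to quasi-extremality of $|f|^{p/r}$ for Young's convolution inequality, invoking continuum Balog--Szemer\'edi and Fre\u{\i}man theorems to extract mass on continuum multiprogressions of bounded rank, iterating and merging these into a single multiprogression, rescaling so the progression sits near $\integers^d$ (Lemma~\ref{lemma:gapd}), lifting to $\integers^d\times\reals^d$ and applying the known characterization of near-extremizers for discrete groups, then repeating on the Fourier side and using an uncertainty-type tightness lemma to renormalize. Without supplying an argument at this level (or an alternative proof of (b)), your proposal assumes the hardest ingredient of the theorem rather than proving it.
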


The exponent $2$ is optimal.  To our knowledge, for $p\in(4/3,2)$
it was not previously known that $\norm{\widehat{f}}_q/\norm{f}_p-\bestA_p^d$
is majorized on the complement of $\frakG$ 
by any strictly negative function of $\dist_p(f,\frakG)/\norm{f}_p$ alone. 
For $p$ in the range $(1,4/3]$ such a nonquantitative result is a consequence of the
corresponding result for Young's convolution inequality, proved in \cite{christyoungest}.

\subsection{Refinements}
For functions very close to $\frakG$,
a more precise inequality can be formulated in terms of a modified distance function.

\begin{definition} \label{defn:scriptp}
$\scriptp=\scriptp(\reals^d)$ is the real vector space consisting of all
complex-valued quadratic polyonomials $P:\reals^d\to\complex$ of the form 
\begin{equation} P(x) = -\sum_{i,j=1}^d a_{i,j}x_ix_j + \sum_k b_k x_k +c\end{equation}
where $b_k,c\in\complex$, $a_{i,j}\in \reals$,
$a_{j,i}=a_{i,j}$, and the quadratic form $x\mapsto \sum_{i,j}a_{i,j}x_ix_j$
is positive definite.
\end{definition}

The set of all nonzero extremizers $\frakG\setminus\{0\}$ of the $L^p$ Hausdorff-Young-Beckner inequality 
for $\reals^d$
is the real submanifold of $L^p$ consisting of all functions $\exp(P)$ where $P\in\scriptp$. 
The real tangent space to $\frakG$ at $g\in\frakG\setminus\{0\}$ 
is the vector space of all product functions $Pg$, where $P\in\scriptp$ is arbitrary. 
Define the normal space $\scriptn_g$ to $\frakG$ at $g\in\frakG$ to be 
\begin{equation} \scriptn_g = \big\{h\in L^p:
\Re\big(\textstyle\int h \,P \,\overline{g}\,|g|^{p-2}\big)\,
=\,0\ \text{ for all $P\in\scriptp$.} \big\}\end{equation}
There exists $\delta_0>0$ such that
any nonzero function that satisfies $\dist_p(f,\frakG)\le \delta_0\norm{f}_p$
can be expressed in a unique way as 
\begin{equation}
f=\pi(f)+f^\perp\ \text{ where } \  \pi(f)\in\frakG \ \text{ and } \ 
f^\perp\in\scriptn_{\pi(f)}.\end{equation}
Then $\norm{f^\perp}_p\ge \dist_p(f,\frakG)$, and these two quantities are uniformly
comparable, provided that $\norm{f}_p^{-1}\dist_p(f,\frakG)$ is sufficiently small.
Define \[{\rm dist}^\star(f,\frakG)=\norm{f^\perp}_p\] 
whenever $f\ne 0$ and $\dist_p(f,\frakG)\le \delta_0\norm{f}_p$.
Define \begin{equation} \bestB_{p,d} = \tfrac12(p-1)(2-p)\bestA_p^d.\end{equation} 

\begin{theorem} \label{thm:refinement1}
Let $p\in (1,2)$. There exists $\rho>0$ such that uniformly for all functions
$0\ne f\in L^p$ for which $\dist_p(f,\frakG)/\norm{f}_p$ is sufficiently small, 
\begin{equation}\label{eq:refinement1bound} 
\frac{\norm{\widehat{f}}_{q}}{\norm{f}_p} \le \bestA_p^d
-\bestB_{p,d} \norm{f}_p^{-2}{\rm dist}^\star(f,\frakG)^2 
+O\big(\norm{f}_p^{-1}\dist_p(f,\frakG)\big)^{2+\rho}.
\end{equation}
\end{theorem}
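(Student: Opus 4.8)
The plan is to linearize the functional $f\mapsto\norm{\widehat f}_q/\norm f_p$ about a fixed Gaussian, read off its Hessian, and then control the two mechanisms — remoteness or concentration of $f^\perp$ relative to $g$, and the limited regularity of $t\mapsto|t|^p$ — that keep the expansion from being exactly quadratic.

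First I would invoke the symmetry group of the inequality: the maps $f\mapsto f\circ A$ with $A\in\Gl(d,\reals)$, translations, modulations, and multiplication by scalars all fix the ratio $\norm{\widehat f}_q/\norm f_p$, carry $\frakG$ to itself, and act transitively on $\frakG\setminus\{0\}$. Since \eqref{eq:refinement1bound} involves only scale-invariant quantities and the action is exact, we may assume $\pi(f)=g$ for one fixed real, positive, even Gaussian $g$ (say $g(x)=e^{-\pi|x|^2}$, so $\widehat g=g$), with $\rho$ and the implied constant independent of the chosen element of $\frakG$. Writing $f=g+h$ with $h=f^\perp\in\scriptn_g$ as supplied in the excerpt, and using $\norm f_p=\norm g_p(1+O(\norm h_p))$ and $\norm h_p=\diststar(f,\frakG)$ comparable to $\dist_p(f,\frakG)$, the claim becomes
\[
\frac{\norm{\widehat{g+h}}_q}{\norm{g+h}_p}\ \le\ \bestA_p^d-\bestB_{p,d}\,\frac{\norm h_p^2}{\norm g_p^2}+O\big(\norm h_p^{2+\rho}\big)\qquad\text{for }h\in\scriptn_g,\ \norm h_p\ \text{small.}
\]

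Next I would split $h=h'+h''$ relative to $g$. The principal part $h'$ is the component of $h$ in the finite-dimensional space of polynomials of bounded degree times $g$ with coefficients of bounded size, so that $|h'|\lesssim|g|$ and $h'/g$ ranges over a compact family; $h''=h-h'$ collects everything else — $L^p$ mass placed where $|h|\gtrsim|g|$, highly oscillatory modes, and remote bumps. On $h'$, expand $|g+h'|^p$ and $|\widehat g+\widehat{h'}|^q$ pointwise to second order and integrate. Because $g$ is a global maximizer of $f\mapsto\norm{\widehat f}_q/\norm f_p$, the first variation of the logarithm of this ratio vanishes in every direction — the Euler--Lagrange equation satisfied by the extremizer $g$ — so one obtains
\[
q\log\frac{\norm{\widehat{g+h'}}_q}{\norm{g+h'}_p}=q\log\bestA_p^d+\scriptq(h')+R(h'),
\]
with $\scriptq$ the Hessian quadratic form and $R(h')=O(\norm{h'}_p^{2+\rho})$ uniformly on the compact family; the power gain $\rho=\rho(p)>0$ comes from the Hölder exponent $\min(1,p-1)$ of $t\mapsto|t|^{p}$, the analogous exponent for $t\mapsto|t|^q$, and the Gaussian decay that keeps $|h'|/|g|$ bounded. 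For the residual part one shows that $L^p$ mass of $f$ placed away from the extremal profile is converted inefficiently by the Fourier transform, so it enlarges the deficit $\bestA_p^d\norm f_p-\norm{\widehat f}_q$ by at least $c\,\norm{h''}_p^{\,p}$; since $p<2$ this dominates $\norm h_p^2$ except when $\norm{h''}_p$ is a large power of $\norm h_p$, in which case it lies inside the error term. Reconciling the two pieces is a short case analysis on the relative sizes of $\norm{h'}_p$ and $\norm{h''}_p$, the cross interaction being absorbed by Young's inequality with a weight that degenerates like $\norm{h'}_p^{\rho}$, which pins down the admissible range of $\rho$.

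The core is the Hessian estimate $\scriptq(h')\le-q\bestB_{p,d}\bestA_p^{-d}\norm g_p^{-2}\norm{h'}_p^2=-\tfrac12 q(p-1)(2-p)\norm g_p^{-2}\norm{h'}_p^2$ for $h'$ in the principal family inside $\scriptn_g$. I would diagonalize $\scriptq$ by tensor products of Hermite functions, which are eigenfunctions of the Fourier transform with $g$ the ground state. The tangent space to $\frakG$ at $g$ accounts for the real parts of all Hermite modes of degree $\le 2$ together with the imaginary parts of those of degree $\le 1$; hence the normal directions of lowest complexity are the imaginary quadratic-phase perturbations $i(B(x)-\ov B)g$, with $B$ a real quadratic form and $\ov B$ its $|g|^p$-weighted mean — the subtraction of $\ov B$ being precisely what places the vector in $\scriptn_g$. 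On each mode the relevant entry of $\scriptq$ collapses to a one-dimensional Gaussian moment, and the inequality reduces to an elementary moment comparison: for the imaginary quadratic-phase modes it is $\expect|Z^2-1|^p\le(2q)^{p/2}$ for a standard Gaussian $Z$, valid for all $p\in(1,2)$ (indeed $\expect|Z^2-1|^p\le 2^{p/2}\le(2q)^{p/2}$), with the remaining normal modes handled the same way and with strictly more room. This spectral computation — which is where the coefficient $\bestB_{p,d}=\tfrac12(p-1)(2-p)\bestA_p^d$ is produced — together with the uniform control of the residual part and the upgrade of $R(h')$ from $o(\norm h_p^2)$ to a genuine power gain, is where I expect the real work to lie. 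Combining the three estimates, exponentiating, and undoing the symmetry reduction yields \eqref{eq:refinement1bound}.
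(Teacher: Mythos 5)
Your overall architecture (symmetry reduction to $\pi(f)=G$, second variation about the Gaussian, spectral bound on the normal space, separate treatment of the part where Taylor expansion fails) is the right one, but the two quantitative inputs on which your argument rests are respectively false and misidentified. First, the residual estimate: you claim that the mass in $h''$ enlarges the deficit $\bestA_p^d\norm{f}_p-\norm{\widehat f}_q$ by at least $c\norm{h''}_p^{p}$. Your $h''$ contains, by construction, all Hermite-type modes of degree exceeding your fixed bound and all oscillatory modes, e.g.\ $h''=\eps P(x)G(x)$ with $P$ a fixed polynomial of high degree orthogonal to the tangent directions, or $h''=\eps e^{iNx}G(x)$. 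For such perturbations the deficit is genuinely quadratic, of order $\eps^2=\norm{h''}_p^2$, whereas $\norm{h''}_p^{p}=\eps^{p}\gg\eps^2$ since $p<2$; so the asserted lower bound on the deficit is wrong, and the case analysis built on it ("this dominates $\norm h_p^2$ unless $\norm{h''}_p$ is a large power of $\norm h_p$") collapses. The genuinely $p$-th-power behaviour occurs only for the part of $h$ that is pointwise large relative to $G$, which is why the paper decomposes pointwise ($f_\sharp$ where $|f|\le\eta|G|$, $f_\flat$ elsewhere) and extracts the favorably signed term $-c\eta^{2-p}\norm{f_\flat}_p^{p}$ from pointwise convexity inequalities for $|1+z|^p$ (Lemmas~\ref{lemma:fka}--\ref{lemma:complexquasitaylor}, Proposition~\ref{prop:taylorsubstitute}); in addition one must account for the fact that $f_\sharp$ no longer lies in $\scriptn_G$, which the paper controls by $C\norm{f_\flat}_p^2$ (Corollary~\ref{cor:ohbother}). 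Your proposal replaces all of this by the assertion itself plus an unspecified "Young's inequality with a weight degenerating like $\norm{h'}_p^{\rho}$"; that is exactly the missing content, not a routine absorption.

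Second, the Hessian computation, which is where $\bestB_{p,d}$ must be produced, identifies the wrong extremal directions. In $\scriptq$ the real and imaginary parts carry different weights ($q-1$ and $p-1$ versus $1$ and $1$), and the operator to diagonalize is the Mehler-type operator $\T(\varphi)=G^t\,(G^\sigma*(G^t\varphi))$, whose eigenfunctions are Hermite functions adapted to the weight $G^{p/2}$ (not eigenfunctions of the Fourier transform), with eigenvalues $(p-1)^{|\alpha|}(2-p)^{d/2}$ (Lemma~\ref{lemma:eigenvalues}). After weighting, the normal directions that saturate the constant are the \emph{real} degree-three modes (eigenvalue $(p-1)^3$), giving the coefficient $\tfrac12(p-1)(2-p)$ of Proposition~\ref{prop:spectral}; the imaginary quadratic-phase modes you single out come with the better coefficient $\tfrac12(2-p)$ and hence have strictly more room — the opposite of what you assert — and your moment inequality $\expect|Z^2-1|^p\le(2q)^{p/2}$ does not correspond to the eigenvalue computation that actually yields $\bestB_{p,d}$ (the paper's sharpness example is built from $H_3$, confirming that the cubic real modes are the tight ones). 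Until the residual bound is replaced by a pointwise-truncation argument of the above type and the spectral step is redone with the correct weights and modes, the proof does not go through.
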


The constant $\bestB_{p,d}$ in \eqref{eq:refinement1bound} is not optimal,
as will implicitly be shown below, but we are not able to calculate an optimal constant in explicit form. 
However, the bound \eqref{eq:refinement1bound}
is derived from a still more precise inequality, in which the same constant appears and is optimal. 
Assuming that $f\ne 0$, for any $\eta\in(0,\tfrac12]$ define 
\begin{equation} f^\perp_\eta(x)=
\begin{cases}
f^\perp(x)\qquad &\text{if $|f^\perp (x)|\le \eta|\pi(f)(x)|$,}
\\ 0 & \text{if $|f^\perp(x)| > \eta|\pi(f)(x)|$.} 
\end{cases}\end{equation}

\begin{theorem} \label{thm:refinement2}
For each $d\ge 1$ and $p\in(1,2)$ there exist $\eta_0,\gamma>0$ and $C,c\in\reals^+$
such that for all $0<\eta\le\eta_0$,
if $0\ne f\in L^p$ and $\dist_p(f,\frakG)/\norm{f}_p\le\eta^\gamma$ then
\begin{multline} \frac{\norm{\widehat{f}}_{q}}{\norm{f}_p} \le \bestA_p^d
-\big(\bestB_{p,d}-C\eta\big)\norm{f}_p^{-p} \int |f^\perp_\eta|^2 |\pi(f)|^{p-2}
\\
-c\eta^{2-p} \left(\frac{\dist_p(f,\frakG)}{\norm{f}_p}\right)^{p-2}\,\left(\frac{\norm{f^\perp-f^\perp_\eta}_p}{\norm{f}_p}\right)^2.
\end{multline}
\end{theorem}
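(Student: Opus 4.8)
The plan is to reduce Theorem~\ref{thm:refinement2} to a local analysis of the functional $f\mapsto \norm{\widehat f}_q/\norm f_p$ near the extremizer manifold $\frakG$, using the decomposition $f=\pi(f)+f^\perp$ with $f^\perp\in\scriptn_{\pi(f)}$. By the invariance of the ratio $\norm{\widehat f}_q/\norm f_p$ under the symmetry group (modulations, dilations, translations, scalar multiplication, and the Beckner-type symmetries that act transitively on $\frakG$), it suffices to treat the case $\pi(f)=G$ for a single normalized Gaussian $G$, say $G(x)=e^{-\pi|x|^2}$ with $\norm G_p=1$; write $f=G+h$ with $h=f^\perp\in\scriptn_G$ and $\norm h_p=\diststar(f,\frakG)$ small. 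The strategy is then to Taylor-expand both $\norm{f}_p^p=\int|G+h|^p$ and $\norm{\widehat f}_q^q=\int|\widehat G+\widehat h|^q$ to second order in $h$, but \emph{carefully}, splitting $h$ into its ``small compared to $G$'' part $h_\eta=f^\perp_\eta$ and its ``large'' part $h-h_\eta=f^\perp-f^\perp_\eta$, since the pointwise Taylor expansion of $t\mapsto|1+t|^p$ is only quadratically controlled where $|t|$ is bounded.

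\textbf{Step 1: the quadratic form on the small part.} On the set where $|h|\le\eta|G|$, write $|G+h|^p = |G|^p\,|1+h/G|^p$ pointwise (interpreting $h/G$ suitably where $G$ is complex-valued; here the normal-space condition guarantees the relevant inner products vanish). The expansion $|1+z|^p = 1 + p\Re z + \tfrac{p}{2}\big(|z|^2 + (p-2)(\Re z)^2\big) + O(|z|^3)$ gives, after integrating against $|G|^p$ and using $h\in\scriptn_G$ to kill the linear term,
\begin{equation}
\norm{G+h_\eta}_p^p = 1 + \tfrac{p}{2}\int\big(|h_\eta|^2 + (p-2)(\Re(h_\eta\bar G|G|^{-2}))^2\big)|G|^p + O\!\big(\eta\!\int|h_\eta|^2|G|^{p-2}\big).
\end{equation}
A parallel expansion applies to $\norm{\widehat f}_q^q$ with $\widehat G$, $\widehat h$ in place of $G$, $h$; since $\widehat G$ is again a Gaussian, the same pointwise structure is available. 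The key input — which is exactly the content of the sharp Hausdorff-Young-Beckner inequality at the quadratic level, i.e. the nonnegativity of the second variation — is that the resulting quadratic form in $h$, after normalizing by $\norm f_p^{-p}$ and collecting the two expansions, is bounded above by $-\bestB_{p,d}\norm f_p^{-p}\int|h_\eta|^2|G|^{p-2}$ up to the $C\eta$ error; this is where the constant $\tfrac12(p-1)(2-p)\bestA_p^d$ enters, and it is optimal precisely because the second-order Hessian of the log of the Hausdorff-Young ratio, restricted to the normal directions, has this as its leading eigenvalue behavior. I would either quote this Hessian computation from the body of the paper (it must appear there, since Theorem~\ref{thm:refinement1} already uses $\bestB_{p,d}$) or derive it directly from the Gaussian case of Beckner's inequality combined with the explicit form of $\bestA_p$.

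\textbf{Step 2: the large part.} Where $|h|>\eta|G|$, the Taylor expansion fails, but here one has a \emph{coercivity from below}: on this set $|G+h|^p \ge$ (something)$-$(concave correction), and more to the point one can show $\int_{|h|>\eta|G|}|G+h|^p \le \int_{|h|>\eta|G|}|h|^p + (\text{lower order})$ using the elementary inequality $|a+b|^p \le |b|^p + p|b|^{p-1}|a| + C_p|a|^2|b|^{p-2}$ valid when $|a|\le\eta^{-1}|b|$, or conversely that the contribution of this region to $\norm f_p^p$ exceeds that of $G$ alone by a definite amount comparable to $\eta^{2-p}\int_{|h|>\eta|G|}|h|^p \cdot(\dots)$. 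The precise bookkeeping yields the last term $-c\eta^{2-p}(\dist_p(f,\frakG)/\norm f_p)^{p-2}(\norm{f^\perp-f^\perp_\eta}_p/\norm f_p)^2$: the factor $(\dist_p/\norm f)^{p-2}$ is large (since $p-2<0$ and $\dist_p/\norm f$ is small), reflecting that even a small-$L^p$-norm large-amplitude perturbation costs a lot relative to its size when measured this way; one gets it by comparing $\int_{|h|>\eta|G|}|h|^p = \norm{f^\perp-f^\perp_\eta}_p^p$ against $\big(\norm{f^\perp-f^\perp_\eta}_p\big)^2$ via Hölder against the total mass, whose $L^p$-scale is set by $\dist_p(f,\frakG)$.

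\textbf{Step 3: combine and handle the Fourier side.} Adding the three pieces (Gaussian baseline $\bestA_p^d$, the $h_\eta$ quadratic form from Step 1, the large-part loss from Step 2) and controlling the cross terms between small and large parts (these are handled by Cauchy–Schwarz and absorbed into the $C\eta$ and $c$ constants by choosing $\eta_0$ small and $\gamma$ appropriately so that $\dist_p(f,\frakG)/\norm f_p\le\eta^\gamma$ makes the remainder genuinely lower order), one obtains the claimed bound on $\norm{\widehat f}_q^q/\norm f_p^q$; taking $q$-th roots and re-expanding (which is harmless since everything is within $O(\eta)$ of the extremal value) gives the stated form. \textbf{The main obstacle} is Step 1: controlling the second variation on the Fourier side. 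Unlike the physical side, where $h\in\scriptn_G$ makes the linear term vanish by construction, on the Fourier side one must show that the quadratic form $h\mapsto \text{(phys.\ quad.\ form)} - \text{(Fourier quad.\ form)}$ is not merely $\le 0$ but is bounded by $-\bestB_{p,d}\int|h_\eta|^2|G|^{p-2}$ with a \emph{spectral gap} — i.e. that the only near-null directions are the tangential ones already removed — and crucially that restricting $h$ to $h_\eta$ (truncating the large part) does not destroy this coercivity. This requires a careful decomposition of $h_\eta$ in a basis adapted to the Hermite-type operator governing the second variation, together with the observation that truncation commutes approximately with this decomposition up to $O(\eta)$ errors; establishing that this coercivity survives truncation, uniformly in the (non-compact) normal space, is the technical heart of the argument, and is presumably carried out in the body of the paper via the explicit diagonalization of the Hessian in Hermite functions for the one-dimensional case and a tensor-product reduction for $d>1$.
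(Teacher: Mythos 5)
Your proposal follows essentially the same route as the paper: reduce by the symmetry group to $\pi(f)=G$, truncate $f^\perp$ at height $\eta|G|$ to tame the failure of twice-differentiability of $t\mapsto|1+t|^p$, bound the resulting quadratic form by a spectral-gap estimate in a Hermite-type basis, and extract the coercive $\eta^{2-p}$ term from the large part; this is precisely the content of Proposition~\ref{prop:taylorsubstitute}, Lemma~\ref{lemma:eigenvalues}, Proposition~\ref{prop:spectral}, and Corollary~\ref{cor:ohbother}. Two clarifications to your sketch: the constant $\bestB_{p,d}$ cannot be derived from mere nonnegativity of the second variation (Beckner only gives $\scriptq\le 0$ on the normal space) but requires the exact eigenvalues $(p-1)^{|\alpha|}(2-p)^{d/2}$ of the operator $\T$, and on the Fourier side no truncation of $\widehat{h}$ is used at all --- since $q>2$ the expansion of $|\widehat{G}+\widehat{h}|^q$ is global in an auxiliary parameter $\tau$, with the cross term involving $\widehat{f_\flat}$ absorbed by an $\eta^{-1}$-weighted Cauchy--Schwarz estimate and the hypothesis $\dist_p(f,\frakG)/\norm{f}_p\le\eta^\gamma$.
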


In this formulation, the constant $\bestB_{p,d}$ is optimal.
The second and third terms on the right-hand side are both quite favorable.
Because the exponent $p-2$ is negative, the factor $|\pi(f)(x)|^{p-2}$
grows rapidly as $|x|\to\infty$,
and therefore stronger control is provided by the term $\norm{f}_p^{-p} \int |f^\perp_\eta(x)|^2|\pi(f)(x)|^{p-2}\,dx$
than by $\norm{f^\perp_\eta}_p^2\norm{f}_p^{-2}$,
especially for $x$ far from the point at which $|\pi(f)|$ achieves its maximum.
Since $p<2$, $\eta^{2-p} \big(\norm{f}_p^{-1}\dist_p(f,\frakG)\big)^{p-2}$ is much larger than 
$1$ provided that $\dist_p(f,\frakG)/\norm{f}_p\ll\eta$.
Thus the final term is more negative than any bounded
multiple of $(\norm{f^\perp-f_\eta^\perp}_p/\norm{f}_p)^2$.

\subsection{Compactness}
The main step in the analysis is the proof of a nonquantitative result.
\begin{proposition} \label{prop:submain}
Let $p\in(1,2)$ and $d\ge 1$. Let $q=\frac{p}{p-1}$.
For every $\eps>0$ there exists $\delta>0$
such that if $0\ne f\in L^p(\reals^d)$
satisfies
$\norm{\widehat{f}}_{q}\ge (1-\delta)\bestA_p^d\norm{f}_p$
then
$\dist_p(f,\frakG) \le \eps\norm{f}_p$.
\end{proposition}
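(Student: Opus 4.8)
The plan is to argue by contradiction using a concentration-compactness / profile-decomposition strategy adapted to the Hausdorff--Young inequality. Suppose the conclusion fails: then there exist $\eps_0>0$ and a sequence $f_n\in L^p(\reals^d)$ with $\norm{f_n}_p=1$ (after normalization), $\norm{\widehat{f_n}}_q\ge(1-\tfrac1n)\bestA_p^d$, yet $\dist_p(f_n,\frakG)\ge\eps_0$ for all $n$. The goal is to extract from $(f_n)$ a subsequence that converges, after applying symmetries of the inequality, to a Gaussian, contradicting the lower bound on the distance. The symmetry group to quotient by consists of the transformations under which both sides of the Hausdorff--Young inequality are equivariant: translations, modulations, $L^p$-normalized dilations, and multiplication by unimodular constants (and, if one wishes, invertible linear changes of variable combined with the dual action on the Fourier side). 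The first main step is therefore to establish a \emph{linear profile decomposition} for bounded sequences in $L^p$ relative to this group: after passing to a subsequence, $f_n=\sum_{j=1}^J g_{j,n}\phi_j + r_n^J$, where the $\phi_j$ are fixed profiles, $g_{j,n}$ are group elements with parameters that are asymptotically orthogonal across distinct $j$, and the remainder satisfies $\limsup_n\norm{\widehat{r_n^J}}_q\to0$ as $J\to\infty$; simultaneously one has an asymptotic Pythagorean expansion $\norm{f_n}_p^p=\sum_j\norm{\phi_j}_p^p+\norm{r_n^J}_p^p+o(1)$ and, crucially, a corresponding decoupling $\norm{\widehat{f_n}}_q^q=\sum_j\norm{\widehat{\phi_j}}_q^q+o_J(1)$ on the Fourier side.

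The second step is to feed this decomposition into the sharp inequality. Using $\norm{\widehat{f_n}}_q\to\bestA_p^d$ together with the decouplings, one obtains
\begin{equation}
\bestA_p^{dq}=\lim_n\norm{\widehat{f_n}}_q^q=\sum_j\norm{\widehat{\phi_j}}_q^q\le\bestA_p^{dq}\sum_j\norm{\phi_j}_p^q\le\bestA_p^{dq}\Big(\sum_j\norm{\phi_j}_p^p\Big)^{q/p}\le\bestA_p^{dq},
\end{equation}
where the middle inequality uses $q>p$ so that $\ell^p\hookrightarrow\ell^q$ with norm $1$, and the last uses $\sum_j\norm{\phi_j}_p^p\le 1$. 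Equality must hold throughout. Equality in the $\ell^p$-$\ell^q$ embedding forces all but one profile to vanish, say only $\phi_1\ne 0$; equality $\sum_j\norm{\phi_j}_p^p=1$ then forces $\norm{\phi_1}_p=1$ and $r_n^J\to 0$ in $L^p$; and equality $\norm{\widehat{\phi_1}}_q=\bestA_p^d\norm{\phi_1}_p$ forces, by Lieb's theorem \cite{liebgaussian}, that $\phi_1\in\frakG$. Hence $f_n=g_{1,n}\phi_1+o_{L^p}(1)$ with $\phi_1$ a Gaussian, and since $\frakG$ is invariant under the symmetry group, $\dist_p(f_n,\frakG)=\dist_p(g_{1,n}\phi_1+o(1),\frakG)\le\norm{o(1)}_p\to0$, contradicting $\dist_p(f_n,\frakG)\ge\eps_0$.

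The main obstacle is the proof of the linear profile decomposition itself, and in particular the Fourier-side decoupling $\norm{\widehat{f_n}}_q^q=\sum_j\norm{\widehat{\phi_j}}_q^q+o_J(1)$ and the remainder estimate $\limsup_n\norm{\widehat{r_n^J}}_q\to 0$. For $q$ an even integer this decoupling is essentially a counting argument after expanding the $q$-th power, but for general real $q\in(2,\infty)$ no such expansion is available, and one must instead proceed by a more delicate argument. One natural route is to first prove a \emph{weak} version: to show that $\limsup_n\norm{\widehat{f_n}}_q>0$ together with $\norm{f_n}_p\le 1$ forces nontrivial concentration, i.e. the existence of a group element $g_n$ such that $g_n^{-1}f_n$ has a nonzero weak-$L^p$ limit---this is where the quantitative gain in the Hausdorff--Young inequality on non-concentrated functions (a Fefferman--Stein / restriction-type estimate, or an interpolation between $L^1\to L^\infty$ and $L^2\to L^2$ exploiting that extremals concentrate) does the real work. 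One then iterates this concentration step, subtracting off profiles and controlling the remainder in a norm that dominates $\norm{\widehat{\cdot}}_q$, for instance a Lorentz or Besov norm adapted to the problem, until the remainder's Fourier $L^q$ norm is negligible. Verifying the asymptotic orthogonality bookkeeping (that distinct profiles have divergent relative parameters, and that this implies both the $L^p$ Pythagorean identity and the Fourier $L^q$ decoupling) is technically the longest part, but is by now standard once the concentration dichotomy is in hand; the genuinely new input is the concentration dichotomy for Hausdorff--Young at a general exponent, and I would expect to spend most of the effort there.
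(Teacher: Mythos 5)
There is a genuine gap, and it sits exactly where your proposal leans hardest: the linear profile decomposition, and specifically the Fourier-side decoupling $\norm{\widehat{f_n}}_q^q=\sum_j\norm{\widehat{\phi_j}}_q^q+o_J(1)$, is not merely unproven technical bookkeeping --- in the form you need it, it is false. Consider two profiles with the same scale and frequency whose spatial translation parameters diverge, $f_n=\phi_1(\cdot-x_{1,n})+\phi_2(\cdot-x_{2,n})$ with $|x_{1,n}-x_{2,n}|\to\infty$. On the Fourier side these differ by modulations $e^{-2\pi i x_{j,n}\cdot\xi}$, which converge weakly but not almost everywhere, so no Brezis--Lieb splitting is available; and averaging the relative phase shows that $\lim_n\norm{\widehat{f_n}}_q^q$ equals $\int_{\reals^d}\operatorname{avg}_\theta\,\bigl|\widehat{\phi_1}+e^{i\theta}\widehat{\phi_2}\bigr|^q\,d\xi\ \ge\ \int_{\reals^d}\bigl(|\widehat{\phi_1}|^2+|\widehat{\phi_2}|^2\bigr)^{q/2}\,d\xi\ \ge\ \norm{\widehat{\phi_1}}_q^q+\norm{\widehat{\phi_2}}_q^q$, with strict inequality when both profiles are nonzero, since $q>2$. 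Your chain of inequalities needs the opposite direction ($\le$) to force all but one profile to vanish, so the argument collapses precisely at the multi-bump configurations; this is the nonlocality obstruction the paper itself flags ($|\widehat{f_1}\widehat{f_2}|$ is unchanged under independent translations of $f_1,f_2$), and excluding such configurations is the heart of the proposition, not a standard orthogonality verification. The same issue infects the remainder estimate: smallness of $\norm{\widehat{r_n^J}}_q$ cannot be extracted by the usual inverse-inequality iteration without already knowing a concentration dichotomy for the Fourier transform at general exponents, which is again the main content to be proved.

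For comparison, the paper never attempts a profile decomposition. It passes from near-extremality of Hausdorff--Young to quasi-extremality of a convolution inequality, invokes continuum Balog--Szemer\'edi and Fre{\u\i}man theorems to place a fixed fraction of the $L^p$ mass on a continuum multiprogression of bounded rank, and then runs an iterative construction (with the ``cooperation'' lemma and Lemma~\ref{lemma:interactionimpliesstructure}) to force all such pieces into a single multiprogression; the possibility that this progression has large diameter is excluded by approximating it by $\integers^d$ (Lemma~\ref{lemma:gapd}), lifting to $\integers^d\times\reals^d$, and applying the sharp discrete Hausdorff--Young result, which yields spatial localization to a single bounded ellipsoid. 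Frequency localization then follows by duality applied to $|\widehat{f}|^{q-2}\widehat{f}$, the two are tied together by an uncertainty-principle tightness lemma, and compactness of $(\widehat{f_n})$ in $L^q$ is upgraded to compactness of $(f_n)$ in $L^p$ via weak-star convergence plus convergence of norms, after which Lieb's theorem identifies the limit as a Gaussian. If you wish to salvage your route, you would need a substitute for the decoupling that rules out translation-separated profiles, and that substitute is essentially equivalent to the spatial localization the paper achieves through additive combinatorics.
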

The case $p\in(1,\tfrac43)$ was established in \cite{christyoungest} as a corollary
of a corresponding result for Young's convolution inequality. 

An equivalent formulation of Proposition~\ref{prop:submain} is that if a sequence
of functions satisfies $\norm{f_n}_p=1$
and $\norm{\widehat{f_n}}_q\to\bestA_p^d$ as $n\to\infty$,
then the sequence $(f_n)$ is precompact in $L^p(\reals^d)$ after each $f_n$ is renormalized
by an appropriate element of the group of symmetries of the inequality. 
Much of our effort is directed towards a proof of this weaker result,
which yields no control on the dependence of $\delta$ on $\eps$.

Analysis of compactness for the Fourier transform goes differently from
corresponding analyses for less nonlocal operators.
Consider a function $f=f_1+f_2$ where $f_j$ are supported in balls $B(x_i,\eps)$
where $|x_1-x_2|\ge\eps^{-1}$ and $\norm{f_j}_p^p = \tfrac12$.
Then $\norm{\widehat{f_1}\widehat{f_2}}_{L^{q/2}}$
is not majorized by any quantity that tends to zero as $\eps\to 0$.
Indeed, $\big|\widehat{f_1}\widehat{f_2}\big|$ is unaffected by
independent translations of $f_1,f_2$.
This makes the exclusion of hypothetical extremizing sequences that concentrate
at more than one point delicate. Such concentration is merely one of the simplest of a broad
spectrum of possibilities.

In a series of works, we have treated
compactness theorems using additive combinatorial information.
These works have examined the Riesz-Sobolev inequality \cite{christRS3}, 
the Brunn-Minkowski inequality \cite{christbmtwo},\cite{christbmhigh}, and Young's convolution inequality
for $\reals^d$ \cite{christyoungest}.
They have relied on a characterization of
sets $A,B\subset\reals^1$ whose sumset $A+B$ is of almost minimal size, in the sense that $|A+B|$ 
is nearly equal to $|A|+|B|$. Some of these works \cite{christbmhigh}, \cite{christyoungest}
have also used symmetrization inequalities.
We have not succeeded in applying the description of sumsets of nearly minimal size,
or symmetrization inequalities, to the Hausdorff-Young inequality.
Instead, we rely here on a description of sets that satisfy the weaker assumption
$|A+B|\le K|A|+K|B|$, where $K$ is an arbitrary parameter; the regime of arbitrarily large $K$ is essential
to the analysis. The control on $A,B$ thus intially obtained is consequently far from what is required
for our purpose.

The reasoning in this paper does not provide an independent proof that all extremizers are Gaussians.
Instead, we show directly that 
that any near-extremizer is close to some exact extremizer, then invoke the known characterization
of extremizers to conclude Proposition~\ref{prop:submain} in the form stated.
On the other hand, techniques for demonstrating compactness are of intrinsic interest. 
We hope that this analysis may find further applications to nonlocal inequalities.

Once Proposition~\ref{prop:submain} is proved,
Theorem~\ref{thm:truemain} can be approached through analysis of the Taylor expansion
of $\norm{\widehat{f}}_q/\norm{f}_p$ about an element of $\frakG$. 
This strategy was used in recent work of Chen, Frank, and Werth \cite{chenfrankwerth} 
to treat a corresponding result for the Hardy-Littlewood-Sobolev inequalities.
The situation is awkward because the functional
$\norm{\widehat{f}}_q/\norm{f}_p$ fails to be twice differentiable; 
its denominator $\norm{f}_p$ suffers this defect in a strong form as a consequence of the relation $p<2$.
Indeed, the discrepancy $\bestA_p^d\norm{f}_p\,-\, \norm{\widehat{f}}_q$
is comparable to $(\norm{f}_p^{-1}\dist_p(f,\frakG))^p\norm{f}_p$
for certain $f$ close in $L^p$ norm to $\frakG$, just as Theorem~\ref{thm:refinement2} suggests. 
Such a variation is larger than quadratic in the distance for small perturbations.

\subsection{Young's convolution inequality}\label{subsect:convolution}

Theorem~\ref{thm:truemain} has a direct consequence of the same type for Young's convolution inequality,
for a limited range of exponents.
Young's convolution inequality states that
\begin{equation}
\big|\langle f_1*f_2,f_3\rangle\big| \le \bestC_p^d \prod_{j=1}^3\norm{f_j}_{L^{p_j}(\reals^d)}
\end{equation}
whenever $p_j\in[1,\infty]$ and $\sum_{j=1}^3 p_j^{-1}=2$,
where the optimal constant, determined by Beckner \cite{beckner}
and by Brascamp and Lieb \cite{brascamplieb} is
\begin{equation} \bestC_p = \prod_{j=1}^3 \bestA_{p_j}.\end{equation}
Here $\langle \varphi,\,\psi\rangle = \int_{\reals^d} \varphi\,\overline{\psi}$,
and $*$ denotes convolution.
Denote by
$\vec{\frakG}\subset L^{p_1}\times L^{p_2}\times L^{p_3}$
the collection of ordered triples
\[ \vec{g} = \left(
c_1 e^{i\xi_1\cdot x} \varphi^{p'_1}(x-a_1),\,c_2 e^{i\xi_2\cdot x} \varphi^{p'_2}(x-a_2),
\,c_3 e^{i\xi_3\cdot x}\varphi^{p'_3}(x-a_3) \right)\]
where $\varphi$ is a nonzero real Gaussian, 
$\xi_j,a_j\in\reals$, $\xi_3 = \xi_1+\xi_2$, $a_3 = a_1+a_2$, and $0\ne c_j\in\complex$.

Denote by $\vec{p}=(p_1,p_2,p_3)$ an ordered triple of exponents, and by
$\vec{f}=(f_1,f_2,f_3)$ any ordered triple of functions $f_j\in L^{p_j}$. 
If each $f_j$ has nonzero norm, then $\vec{f}$ extremizes Young's inequality 
if and only if $\vec{f}\in \vec{\frakG}$.
If $\norm{f_j}_{p_j}=1$ for each index $j$,
define the distance $\vec{\rm dist}(\vec{f},\vec{\frakG})$
from $\vec{f}$ to $\vec{\frakG}$ to be the infimum over all $\vec{g}=(g_1,g_2,g_3)\in\vec{\frakG}$
of $\max_{1\le j\le 3} \norm{f_j-g_j}_{L^{p_j}(\reals^d)}$.
For arbitrary $\vec{f}\in \prod_{j=1}^3 L^{p_j}$ such that each component $f_j$ has nonzero $L^{p_j}$ norm,
set $\tilde f_j=f_j/\norm{f_j}_{p_j}$, and define
\begin{equation} \vec{\rm dist}(\vec{f},\vec{\frakG})
= \prod_{i=1}^3\norm{f_i}_{p_i}\,\cdot\, \inf_{\vec{g}\in\vec{\frakG}} 
\max_{1\le j\le 3} \norm{\tilde f_j-g_j}_{p_j}.\end{equation}
Thus \[ \vec{\rm dist}((c_1f_1,c_2f_2,c_3f_3),\,\vec{\frakG}) = |c_1c_2c_3|
\,\cdot\,\vec{\rm dist}((f_1,f_2,f_3),\,\vec{\frakG}).\]

\begin{corollary} \label{cor:convolution}
Let $d\ge 1$.  Let $\vec{p}\in (1,2]^3$ satisfy $\sum_{j=1}^3 p_j^{-1}=2$.
There exists $c>0$ such that for all $\vec{f}\in \big(L^{p_1}\times L^{p_2}\times L^{p_3}\big)(\reals^d)$,
\begin{equation} \label{eq:bad4bound}
\frac{\big|\langle f_1*f_2,f_3\rangle\big|} 
{\prod_{j=1}^3\norm{f_j}_{L^{p_j}(\reals^d)} }
\le \bestC_p^d 
- c\,\frac{\vec{\rm dist}(\vec{f},\vec{\frakG})^4 } {\prod_{j=1}^3 \norm{f_j}_{p_j}^4}
\end{equation}
Moreover, for each index $j$,
\begin{equation}
{\rm dist}_{p_j}(f_j,\frakG) \le C\delta^{1/2} \norm{f_j}_{p_j}.
\end{equation}
\end{corollary}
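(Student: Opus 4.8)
The plan is to run Young's inequality through the standard chain of estimates
\[
|\langle f_1*f_2,f_3\rangle|
= \Big|\int_{\reals^d}\widehat{f_1}\,\widehat{f_2}\,\overline{\widehat{f_3}}\Big|
\le \int_{\reals^d}|\widehat{f_1}|\,|\widehat{f_2}|\,|\widehat{f_3}|
\le \prod_{j=1}^{3}\norm{\widehat{f_j}}_{q_j}
\le \prod_{j=1}^{3}\bestA_{p_j}^{d}\,\norm{f_j}_{p_j},
\]
where $q_j=p_j/(p_j-1)$, so that $\sum_j q_j^{-1}=\sum_j(1-p_j^{-1})=1$ by the hypothesis $\sum_j p_j^{-1}=2$. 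The equality is Parseval's identity together with $\widehat{f_1*f_2}=\widehat{f_1}\widehat{f_2}$; the second step is the triangle inequality, the third is Hölder's inequality, and the last is the Hausdorff--Young--Beckner inequality applied to each factor. This reproves Young's inequality with optimal constant $\bestC_p^d=\prod_j\bestA_{p_j}^d$. Assume each $f_j\ne 0$, normalize $\norm{f_j}_{p_j}=1$, and let $\delta=\bestC_p^d-|\langle f_1*f_2,f_3\rangle|\ge 0$ be the deficit. Since one may let the Gaussian in a member of $\vec{\frakG}$ degenerate to $0$, one has $\vec{\rm dist}(\vec f,\vec{\frakG})\big/\prod_j\norm{f_j}_{p_j}\le 1$, so \eqref{eq:bad4bound} is trivial once $\delta$ exceeds a fixed constant; thus assume $\delta$ is small.

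Next I would feed the smallness of $\delta$ back into the chain. Writing $\tau_j=\bestA_{p_j}^d-\norm{\widehat{f_j}}_{q_j}\ge 0$, comparison of the first and last members of the chain gives $\bestC_p^d-\delta\le\prod_j(\bestA_{p_j}^d-\tau_j)$, and since each factor $\bestA_{p_j}^d-\tau_j$ is at most $\bestA_{p_j}^d$ this forces $\tau_j\le C\delta$ for every $j$. Theorem~\ref{thm:truemain}, applied to $f_j$ with exponent $p_j$, then gives $\dist_{p_j}(f_j,\frakG)^2\le c^{-1}\tau_j\le C\delta$, which establishes the second (``Moreover'') assertion of the corollary. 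Choose $g_j\in\frakG$ with $\norm{f_j-g_j}_{p_j}\le 2\dist_{p_j}(f_j,\frakG)\le C\delta^{1/2}$; by Hausdorff--Young $\norm{\widehat{f_j}-\widehat{g_j}}_{q_j}\le\bestA_{p_j}^d\norm{f_j-g_j}_{p_j}\le C\delta^{1/2}$, so each $\widehat{f_j}$ is within $O(\delta^{1/2})$ in $L^{q_j}$ of the explicit Gaussian $\widehat{g_j}$.

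The core of the argument is to extract compatibility of the triple $(g_1,g_2,g_3)$ from near-equality in the two middle inequalities of the chain. Because the first and last members differ by $\delta$ and the three intermediate quantities are squeezed between them, the Hölder deficit $\prod_j\norm{\widehat{f_j}}_{q_j}-\int\prod_j|\widehat{f_j}|$ and the phase defect $\int\prod_j|\widehat{f_j}|-|\int\widehat{f_1}\widehat{f_2}\overline{\widehat{f_3}}|$ are each $O(\delta)$; and, since replacing $\widehat{f_j}$ by $\widehat{g_j}$ perturbs each of these quantities by $O(\delta^{1/2})$, the corresponding deficits for $(\widehat{g_1},\widehat{g_2},\widehat{g_3})$ are $O(\delta^{1/2})$. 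The equality case of Hölder's inequality says that vanishing of these deficits forces $|\widehat{g_j}|^{q_j}$ to be pairwise proportional and $\widehat{g_1}\widehat{g_2}\overline{\widehat{g_3}}$ to have constant phase, and a triple of Gaussians with these two properties is precisely an element of $\vec{\frakG}$ --- this is the Beckner--Brascamp--Lieb description of the extremizers of Young's inequality, read through the chain above. Quantitatively, the stability theory for Hölder's inequality (which costs one square root) forces the Gaussian parameters of $(g_1,g_2,g_3)$ to lie within $O(\delta^{1/4})$ of those of some $\vec g'=(g_1',g_2',g_3')\in\vec{\frakG}$; since on a bounded family of non-degenerate Gaussians the $L^{p_j}$ norm is Lipschitz in the parameters, $\norm{g_j-g_j'}_{p_j}\le C\delta^{1/4}$. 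Therefore $\max_j\norm{f_j-g_j'}_{p_j}\le\max_j\norm{f_j-g_j}_{p_j}+\max_j\norm{g_j-g_j'}_{p_j}\le C\delta^{1/2}+C\delta^{1/4}\le C\delta^{1/4}$, that is, $\vec{\rm dist}(\vec f,\vec{\frakG})/\prod_j\norm{f_j}_{p_j}\le C\delta^{1/4}$, and rearranging yields \eqref{eq:bad4bound}. The fourth power is the product of the two square roots lost --- one in Theorem~\ref{thm:truemain} and one in the stability of Hölder's inequality.

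I expect the third paragraph to contain the only real difficulty. It needs a robust quantitative form of the equality case of Hölder's inequality for three functions, strong enough to conclude proportionality of $|\widehat{g_j}|^{q_j}$ and constancy of the product phase up to an explicit power of the deficit; and it needs the passage from ``the Gaussians $\widehat{g_j}$ are nearly aligned'' to ``$(g_1,g_2,g_3)$ is near $\vec{\frakG}$'', which amounts to showing that $\vec{\frakG}$ is a submanifold of $\frakG^3$ cut out transversally by the alignment equations, so that an implicit-function argument controls the distance to it. One must also confine the Gaussians $g_j$ to a fixed compact family of non-degenerate Gaussians in order to make the Lipschitz comparison between parameter distance and $L^{p_j}$ distance legitimate; this can be arranged using the dilation, translation, modulation and linear symmetries of Young's inequality, under which $\vec{\rm dist}(\cdot,\vec{\frakG})$ and each $\dist_{p_j}(\cdot,\frakG)$ transform in a controlled way, exactly as in the normalizations used elsewhere in the paper.
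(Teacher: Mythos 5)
Your proposal follows the paper's own proof in all of its main steps: the Plancherel--H\"older--Hausdorff-Young chain, the deduction that each $f_j$ individually has Hausdorff--Young deficit $O(\delta)$, the application of Theorem~\ref{thm:truemain} to produce Gaussians $g_j$ with $\norm{f_j-g_j}_{p_j}\le C\delta^{1/2}$ (which is exactly how the paper obtains the ``Moreover'' bound), and the reduction to showing that the resulting triple of Gaussians, which nearly saturates the H\"older/phase step, lies within $O(\delta^{1/4})$ of $\vec{\frakG}$. The only divergence is in how that last step is to be discharged. The paper writes $G_j(x)=c_je^{i\xi_j\cdot x}e^{-Q_j(x-a_j)}$ and performs an elementary completion-of-squares analysis of the explicit Gaussian integral (details left to the reader); since everything has been reduced to finitely many parameters, the square-root loss is just nondegenerate second-order behaviour of an explicit function of $(c_j,\xi_j,a_j,Q_j)$. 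You instead appeal to a general ``quantitative stability theory for H\"older's inequality'' together with a transversality/implicit-function argument for $\vec{\frakG}$ inside $\frakG^3$, and you correctly identify this as the real work: there is no off-the-shelf three-function H\"older stability theorem delivering the exponent you need, and the confinement of the Gaussian parameters to a compact nondegenerate family (required for your Lipschitz comparison between parameter distance and $L^{p_j}$ distance) must itself be extracted from near-extremality. For Gaussians the two routes amount to the same finite-dimensional verification, so your plan is viable, but the paper's explicit computation is the cheaper way to carry it out; as written, your third paragraph states what must be proved rather than proving it, leaving your argument at essentially the same level of completeness as the paper's ``details are left to the reader.'' (Both treatments also share the minor caveat that Theorem~\ref{thm:truemain} is stated for $p\in(1,2)$, so the endpoint case $p_j=2$ permitted in the corollary requires a separate remark.)
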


The exponent $4$ in \eqref{eq:bad4bound} is not expected to be optimal; we have simply recorded 
a conclusion that follows readily from an invocation of Theorem~\ref{thm:truemain}.  

In earlier work \cite{christyoungest} we have established a compactness result of the same type as Proposition~\ref{prop:submain}
for Young's convolution inequality, for the full range of exponents.
It should be possible to extend Corollary~\ref{cor:convolution} to this full range,
and to improve the suboptimal exponent $4$ in \eqref{eq:bad4bound},
by analyzing the second variation as is done in this paper for the Hausdorff-Young inequality. 
Work in this direction is underway.

\section{Outline of the proof}

A simplified outline of the argument is as follows, for $d=1$.
\begin{enumerate}

\item
The inequalities of Hausdorff-Young for the Fourier transform, and Young for convolution, are
interrelated: If  $\norm{\widehat{f}}_q\ge \eta\norm{f}_p$ then for suitable exponents $\gamma,r,s$,
$\norm{|f|^\gamma*|f|^\gamma}_r\ge c\eta^2 \norm{f^\gamma}_s^2$.

\item
By continuum analogues of theorems of Balog-Szemer\'edi and Fre{\u\i}man, 
a function $f$ that satisfies
$\norm{|f|^\gamma*|f|^\gamma}_r\ge c\eta^2 \norm{f^\gamma}_s^2$
must place a significant portion of its $L^p$ mass on
a continuum multiprogression of controlled rank and Lebesgue measure.

\item
If $\norm{\widehat{f}}_q\ge (1-\delta)\bestA_p^d\norm{f}_p$ and $\delta$ is small
then this reasoning leads to a disjointly supported decomposition $f=g_1+f_1$  
and a (continuum) multiprogression $P_1$ such that $g_1$ is supported on $P_1$, 
$\norm{g_1 \one_{P_1}}_p\ge c \norm{f}_p$ and $\norm{g_1}_\infty|P_1|^{1/p} \le C\norm{f}_p$,
while $\norm{f_1}_p\le (1-c_\delta)\norm{f}_p$.

Given $\eps>0$, if $\delta>0$ is  sufficiently small then
this reasoning can be iterated to produce a decomposition $f = f_N + \sum_{j=1}^N g_j $
where $\norm{f_N}_p<\eps$, $N = O_\delta(1)$, and each $g_j$ is supported
on a multiprogression $P_j$ whose rank is $O_\delta(1)$ and satisfies suitable upper bounds.
This is a general construction once the first two steps are in hand.

\item
If $f$ is a near-extremizer then
the multiprogressions $\set{P_j: 1\le j\le N}$ in any such decomposition
are necessarily compatible in such a manner that their union is 
contained in a single multiprogression $P$, whose rank is still suitably bounded above
and whose measure is $O_\delta(\max_j|P_j|)$, so that
$g=\sum_j g_j$ satisfies $\norm{g}_\infty \le O_\eps(1)|P|^{1/p}$.

\item
Replace $f$ by $\lambda^{1/p}f(\lambda x)$, so that $|P|\asymp 1$.
Then there exists $\lambda\gtrsim 1$ such that the dilate $\lambda P$
is contained in a small neighborhood of $\integers$ in $\reals$.
After replacing $P$ by $\lambda P$, either $|P|\asymp 1$ and
$P$ is contained in an interval of length $\le O_\eps(1)$,
or $P$ has large gaps in a strong sense. The former case is favorable.
The crux of the entire proof is to show that the latter case cannot arise.

\item
If $f$ is supported on a continuum multiprogression contained in a small neighborhood of $\integers$ then
$f$ can be lifted to a function $F(n,x)$ on $\integers\times\reals$ in a natural way. 
The resulting function $F$ nearly extremizes the Hausdorff-Young inequality for $\integers\times\reals$. 

\item
For nearly all $x$ in a suitable sense, the function $n\mapsto F(n,x)$
nearly extremizes the Hausdorff-Young inequality for $\integers$.

\item
It is known that
any near-extremizer of the Hausdorff-Young inequality for a discrete group is very nearly supported on a single point.
Applying this characterization to $n\mapsto F(n,x)$ and then
translating the conclusion back to $f$, one finds that $f$ is very nearly supported
in a single interval of suitable Lebesgue measure,
up to an additive error which is small in $L^p$ norm.

\item
If $f$ nearly extremizes the Hausdorff-Young inequality, then by duality, so does $|\widehat{f}|^{q-2}\,\widehat{f}$.
Therefore the above reasoning applies to $\widehat{f}$, which consequently 
is well localized to some interval $J$ in the same sense as $f$, albeit with $p$ replaced by $q$.

\item
If $f$ is a near-extremizer then
the intervals $I,J$ 
must satisfy a reversed uncertainty bound $|I|\cdot|J|\le C_\eps<\infty$.
By exploiting the action of the group of symmetries of the inequality
generated by translations, dilations, and modulations,
one can therefore ensure that $|I| + |J| \le C_\eps$, and that $I,J$ are centered at $0$.
Given $\eps_0>0$, this construction applies for every $\eps\in[\eps_0,1]$, producing
decompositions and intervals that depend on $\eps$.

\item
It follows that for
any sequence of functions satisfying $\norm{f_n}_p=1$ and $\norm{\widehat{f_n}}_q \to \bestA_p$,
after each function $f_n$ is renormalized to $F_n$ via the action of an appropriate member of the group of symmetries 
of the inequality, $(\widehat{F_n})$ is precompact in $L^q$ norm.

\item
For an extremizing sequence $(g_n)$, precompactness of $(\widehat{g_n})$ in $L^q$ 
implies precompactness of $(g_n)$ in $L^p$, even though this is not so for arbitrary sequences.
This completes the proof of the compactness result, Proposition~\ref{prop:submain}.

\item
We establish a general lemma concerning second variations of ratios
$\norm{Tf}_q/\norm{f}_p$, where  $T:L^p\to L^q$ is any bounded linear operator 
and $1<p<2<q<\infty$.  Its thrust is that while
terms much larger than quadratic do appear, they have favorable signs
and thus reduce the size of the ratio beyond the desired quadratic bound.

\item Analysis of the second variation of the functional $\norm{\widehat{f}}_q/\norm{f}_p$
about a Gaussian leads to the a question about certain eigenvalues of
a specific compact self-adjoint linear operator on $L^2(\reals^d)$.
Exact calculation of its spectrum and eigenfunctions, 
in combination with the general lemma about second variations,
leads to Theorems~\ref{thm:refinement1} and \ref{thm:refinement2}. 
\end{enumerate}

\section{Notations}
By a Gaussian we mean a function of the form $G(x) = c\exp(-Q(x)+x\cdot v)$
where $v\in\complex^d$, $x\cdot v = \sum_{j=1}^d x_jv_j$, and
 $Q:\reals^d\to\infty$ is a real-valued positive definite homogeneous quadratic polynomial;
$Q(x)>0$ for all $x\ne 0$.

If $f,g$ are functions and $E$ is a set, 
we say that $f$ is supported on $E$ if $f(x)=0$ for Lebesgue almost every $x\notin E$,
and we write $f\prec E$.
The notation $f\prec g$ means that $f,g$ have a common domain, up to null sets, 
and that for almost every $x$ in that domain,
either $f(x)=g(x)$ or $f(x)=0$; $f$ equals the product of $g$ with the indicator function of a measurable set.
We say that two functions $f,g$ are disjointly supported if $f(x)g(x)=0$ for almost every point
$x$ in their common domain.

Consider an inequality $\norm{Tf}_q\le \norm{T}\norm{f}_p$, where $T:L^p\to L^q$ is a bounded linear operator
and $\norm{T}$ is its operator norm. 
By a near-extremizer we mean a function that satisfies $\norm{Tf}_q\ge (1-\delta)\norm{T}\cdot\norm{f}_p$
where $\delta>0$ is small. 
By a quasi-extremizer we mean a function that satisfies $\norm{Tf}_q\ge \delta \norm{f}_p$
where $\delta>0$ may be small. 

If $A,B$ are subsets of $\reals^d$ and $m,n$ are positive integers,
$mA-nB$ is defined to be
\[ mA-nB=\big\{\sum_{i=1}^m  a_i-\sum_{j=1}^n  b_j: a_i\in A \ \text{ and } b_j\in B\big\}.\]
A translate of $S$ is a set of the form $S+a = \set{s+a: s\in S}$.

\begin{definition}
A discrete multiprogression ${\mathbf P}$ in $\reals^d$ of rank $r$ is a function
\[{\mathbf P}: \prod_{i=1}^r\set{0,1,\dots,N_i-1} \to \reals^d\] of the form
\[ (n_1,\dots,n_r)\mapsto \big\{a + \sum_{i=1}^r n_i v_i: 0\le n_i<N_i\big\}, \]
for some $a\in\reals$ and some positive integers $N_1,\dots,N_r$. The size of ${\mathbf P}$ is 
$\sigma({\mathbf P})=\prod_{i=1}^r N_i$.
${\mathbf P}$ is said to be proper if this mapping is injective.

$\unitQ^d$ denotes the unit cube \[\unitQ^d=\big\{x=(x_1,\dots,x_d)\in\reals^d: \text{ $0\le x_j\le 1$ 
for every $1\le j\le d$}\big\}.\]

A continuum multiprogression $P$ in $\reals^d$ of rank $r$ is a 
function 
\[P: \prod_{i=1}^r \set{0,1,\dots,N_i-1}\times\unitQ^d \to \reals^d\] of the form
\[(n_1,\dots,n_d;y)\mapsto a+\sum_i n_i v_i + sy\] where $a,v_i\in\reals^d$ and $s\in\reals^+$. 
The size of $P$ is defined to be \[\sigma(P)=s^d \prod_i N_i.\]
$P$ is said to be proper if this mapping is injective.
If $P$ is proper then the Lebesgue measure of its range equals its size.
\end{definition}

A more invariant definition would replace $\unitQ^d$ by an arbitrary convex set 
of positive and finite Lebesgue measure.
The above definition is equivalent for our purposes, and is more convenient.

We will loosely identify a multiprogression with its range, 
and will thus refer to multiprogressions as if they were sets rather than functions. 

$\norm{y}_{\reals/\integers}$  will denote the distance from $y\in\reals$ to $\integers$.
For $d>1$, if $x=(x_1,\dots,x_d)$,
$\norm{x}_{\reals^d/\integers^d} = \max_{1\le j\le d} \norm{x_j}_{\reals/\integers}$.
There is a triangle inequality $\norm{x+y}_{\reals^d/\integers^d} 
\le \norm{x}_{\reals^d/\integers^d} +  \norm{y}_{\reals^d/\integers^d}$.

$O_\delta(1)$ denotes a quantity whose absolute value is less than or equal to a finite constant
that depends only on $\delta$ and on the dimension $d$, or sometimes on both $d$ and
the exponent $p$ in the Hausdorff-Young inequality. Similarly $o_\delta(1)$ denotes a quantity
that tends to zero as $\delta\to 0$, at a rate that may depend on $d$ or on both $d$ and $p$. 
All bounds that depend on $p$ are uniform for $p$ in any compact subset of $(1,2)$.

$\Gl(d)$ denotes be the group of all invertible linear transformations $T:\reals^d\to\reals^d$.  $\aff(d)$ 
denotes the group of all affine automorphisms of $\reals^d$, that is, all maps $x\mapsto \scriptt(x)=T(x)+v$
where $v\in\reals^d$ and $T\in \Gl(d)$.
$\det(T)$ denotes the determinant of $T$, and $|J(\scriptt)|$ denotes the Jacobian determinant of $\scriptt$,
which is equal to $|\det(T)|$. 

$|S|$ denotes the Lebesgue measure of a subset $S$ of a Euclidean space of arbitrary dimension. 
Most frequently $S$ will be a subset of $\reals^d$,
but other spaces including $\reals^{d^2}$ and $\reals^{d(d-1)}$ will also arise.
$\#(S)$ denotes the cardinality of a finite set $S$, and equals $\infty$ if $S$ is an infinite set.

The letters $p,q$ will be systematically used to denote the two exponents in the Hausdorff-Young inequality. 
The exponent conjugate to an exponent $s$ will sometimes be denoted by $s'$.

A disjointly supported decomposition of a Lebesgue measurable function $f$ is a representation $f = \sum_{j} g_j$
where the number of summands is finite, each summand $g_j$ is Lebesgue measurable,
and for almost every $x$, at most one summand $g_j(x)$ is nonzero.

\section{Generalities}
In this section we discuss some general facts about linear inequalities, functions, and operators
which are not specific to the Fourier transform.

\begin{lemma}[Distribution function control] \label{lemma:nicelyboundednearextremizers}
Let $p,q\in(1,\infty)$.
Let $0\ne T$ be a bounded linear operator from $L^p$ to $L^q$, with operator norm $\norm{T}$.
Suppose that $T$ is bounded from $L^{\tilde p}$ to $L^{\tilde q}$ for all
$(\tilde p,\tilde q)$ sufficiently close to $(p,q)$ that satisfy
$\tilde p^{-1} + \tilde q^{-1} = p^{-1}+q^{-1}$.
Then there exists a positive continuous function $\Theta:(0,1]\to (0,\infty)$
satisfying $\lim_{t\to \infty} \Theta(t)=0$ with the following property.
For any $\delta\in(0,\tfrac12]$ and any $0\ne f\in L^p$
satisfying $\norm{Tf}_q \ge (1-\delta)\norm{T}\cdot\norm{f}_p$,
there exists $\tau\in\reals^+$ such that for all $t\ge 1$,
\begin{align}
\int_{|f|\ge t\tau} |f|^p &\le \big(\Theta(t)+o_\delta(1)\big)\norm{f}_p^p
\\ \int_{|f|\le t^{-1}\tau} |f|^p & \le \big(\Theta(t)+o_\delta(1)\big)\norm{f}_p^p.
\end{align}
\end{lemma}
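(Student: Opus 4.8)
The plan is to use the hypothesis that $T$ is bounded between nearby pairs of exponents, which is exactly the structure that interacts well with a decomposition of $f$ according to the amplitude $|f(x)|$. Fix auxiliary exponents $p_0<p<p_1$ close to $p$, with partners $q_0>q>q_1$ determined by $\tilde p^{-1}+\tilde q^{-1}=p^{-1}+q^{-1}$, so that $T:L^{p_i}\to L^{q_i}$ is bounded, say with norm $M_i$; let $\theta\in(0,1)$ satisfy $\tfrac1p=\tfrac{\theta}{p_0}+\tfrac{1-\theta}{p_1}$, which forces $\tfrac1q=\tfrac{\theta}{q_0}+\tfrac{1-\theta}{q_1}$ as well. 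After multiplying $f$ by a scalar we may assume $\norm{f}_p=1$, and we take $\tau$ to be a quantile of the distribution function of $|f|$ (for instance the value with $\int_{|f|>\tau}|f|^p=\int_{|f|\le\tau}|f|^p=\tfrac12$, adjusted for atoms). Writing $a(\lambda)=\int_{|f|>\lambda}|f|^p$ and $b(\lambda)=\int_{|f|<\lambda}|f|^p$, the two assertions become: $a(t\tau)$ decays as $t\to\infty$, and $b(\tau/t)$ decays as $t\to\infty$, each at a rate independent of $f$.

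For the first, fix $\lambda\ge\tau$ and split $f=f\one_{|f|\le\lambda}+f\one_{|f|>\lambda}$. Because $|f|>\lambda$ on the support of the second summand and $p_0<p$, one has $\norm{f\one_{|f|>\lambda}}_{p_0}\le\lambda^{1-p/p_0}a(\lambda)^{1/p_0}$, and $1-p/p_0<0$, so $\norm{T(f\one_{|f|>\lambda})}_{q_0}$ is small once $\lambda/\tau$ is large. On the other hand $\norm{T(f\one_{|f|\le\lambda})}_q\le\norm{T}(1-a(\lambda))^{1/p}\le\norm{T}\bigl(1-\tfrac1p a(\lambda)\bigr)$, so the near-extremality hypothesis and the triangle inequality give $\norm{T(f\one_{|f|>\lambda})}_q\ge\norm{T}\bigl(\tfrac1p a(\lambda)-\delta\bigr)$. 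Thus if we also knew $\norm{T(f\one_{|f|>\lambda})}_{q_1}$ to be finite and suitably bounded, interpolation between $L^{q_0}$ and $L^{q_1}$ would show $\norm{T(f\one_{|f|>\lambda})}_q$ to be small, and comparison with the lower bound would force $a(\lambda)\le\Theta(\lambda/\tau)+o_\delta(1)$. The low-amplitude statement is symmetric, with $p_1>p$, $q_1<q$ and $f\one_{|f|<\lambda}$ replacing $p_0<p$, $q_0>q$ and $f\one_{|f|>\lambda}$. Since boundedness on the interpolation segment forces $M_0,M_1\to\norm{T}$ as $p_0,p_1\to p$, the scale-by-scale bounds can be assembled into a positive $\Theta$ with $\Theta(t)\to0$.

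The main obstacle is exactly the gap just flagged: control of $\norm{T(f\one_{|f|>\lambda})}_{q_0}$ with $q_0>q$ does not by itself control $\norm{T(f\one_{|f|>\lambda})}_q$, since the image could be spread thinly at low amplitude over a set of large measure, and without extra integrability of $f$ the norm $\norm{T(f\one_{|f|>\lambda})}_{q_1}$ may be infinite. The remedy is a preliminary truncation: since $f\in L^p$, for $\Lambda$ large and $\mu$ small (depending on $f$) one has $a(\Lambda)\le\delta^p$ and $b(\mu)\le\delta^p$, so $\tilde f:=f\one_{\mu\le|f|\le\Lambda}$ is again a near-extremizer, is bounded, and hence lies in $L^{p_1}$, making the interpolation legitimate for $\tilde f$. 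The delicate step is to carry this out so that the resulting $\Theta$ does not depend on $\mu$ or $\Lambda$ --- equivalently, to show directly that the amplitude of a near-extremizer cannot be spread over more than boundedly many dyadic scales, up to tails that are $o_\delta(1)$ in $L^p$ --- which requires iterating the above comparison over scales and playing the two auxiliary exponent pairs against each other, rather than applying interpolation on a single amplitude range (where the decay factor $\lambda^{1-p/p_0}$ and the growth factor $\Lambda^{1-p/p_1}$ cancel and the estimate becomes vacuous). Once $\tilde f$ is handled with $\Theta$ independent of the truncation, restoring the discarded pieces, whose $L^p$ norms are $o_\delta(1)$, yields the stated inequalities for $f$.
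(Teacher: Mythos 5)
You have located the real difficulty, but your proposal does not close it, and the step you defer is precisely the content of the lemma. The single-range interpolation argument is not scale invariant: after normalizing $\norm{f}_p=1$, the bound $\norm{f\one_{|f|>\lambda}}_{p_0}\le\lambda^{1-p/p_0}a(\lambda)^{1/p_0}$ is small only when $\lambda$ is large in absolute terms, not when $\lambda/\tau$ is large (consider $f=|E|^{-1/p}\one_E$: the median amplitude $\tau=|E|^{-1/p}$ can be arbitrarily large or small, so ``$\lambda\ge\tau$'' carries no absolute information), and, as you yourself concede, on a fixed amplitude range the decay factor coming from $(p_0,q_0)$ cancels against the growth factor coming from $(p_1,q_1)$, so the estimate is vacuous. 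The sentence ``iterate over scales and play the two auxiliary exponent pairs against each other'' names the missing argument rather than supplying it: nothing in the proposal produces an upper bound for $\norm{T(f\one_{|f|>\lambda})}_q$ capable of beating the lower bound $\norm{T}\big(\tfrac1p a(\lambda)-\delta\big)$ when $a(\lambda)$ is a fixed small constant independent of $\delta$. (A secondary flaw: you assert that the auxiliary operator norms $M_0,M_1$ tend to $\norm{T}$ as the exponents approach $(p,q)$; interpolation only gives $M_0^\theta M_1^{1-\theta}\ge\norm{T}$, and the individual norms need not converge, so even the final assembly of $\Theta$ is unjustified as written.)

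The route the paper takes is different in kind: the two-exponent hypothesis is first converted into the bilinear, ratio-form estimate \eqref{eq:nicelybounded}, namely $\big|\langle T\one_E,\one_{E'}\rangle\big|\le C\min\big(|E|/|E'|,|E'|/|E|\big)^\gamma|E|^{1/p}|E'|^{1/q'}$, in which the gain is expressed through the \emph{ratio} of the measures of the two sets. This is the scale-invariant substitute for your one-sided truncation bounds. One then takes the canonical dyadic amplitude decomposition $f=\sum_j f_j$, $f_j=f\one_{2^j\le|f|<2^{j+1}}$, observes via \eqref{eq:nicelybounded} that pieces supported at widely separated scales interact negligibly under $T$, and argues (in the spirit of Lemmas~\ref{lemma:noslacking} and \ref{lemma:crossterm}, and of the corresponding lemma in \cite{christyoungest}, to which the paper refers for details) that a near-extremizer cannot split into two far-separated groups of scales each carrying nonnegligible $L^p$ mass; this yields $\norm{\sum_{|j-k^*|\ge N}f_j}_p\le(\Theta(N)+o_\delta(1))\norm{f}_p$, which is exactly the assertion of the lemma. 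Deriving \eqref{eq:nicelybounded} from the hypothesis and running this bilinear, cross-scale interaction argument is what your proposal is missing; once you have it, the truncation issues you worried about (finiteness of $L^{p_1}$ norms, dependence on $\mu,\Lambda$) disappear, since the estimate is applied pairwise to bounded pieces with a gain depending only on the scale separation.
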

$\Theta$ depends only on $p,q$ and the constants $C,\gamma$ in the definition of a nicely bounded operator.

Let $p\in(1,\infty)$ and consider any $0\ne f\in L^p$.
For $j\in\integers$ define $E_j=\set{x: 2^j\le |f(x)|< 2^{j+1}}$.
Define $f_j = f\one_{E_j}$.
The decomposition $f = \sum_{j\in\integers} f_j$ is uniquely determined by $f$
and will be called the canonical discrete representation of $f$.

If $T:L^p\to L^q$ is a bounded linear operator satisfying the hypotheses
of Lemma~\ref{lemma:nicelyboundednearextremizers} then
there exist $\gamma>0$ and $C<\infty$ such that for any Lebesgue measurable sets $E,E'$
with finite, positive measures,
\begin{equation} \label{eq:nicelybounded}
\big|\langle T\one_E,\one_{E'}\rangle\big|
\le C\min\Big(\frac{|E|}{|E'|},\frac{|E'|}{|E|}\Big)^\gamma |E|^{1/p}|E'|^{1/q'}.
\end{equation}
The two conclusions of Lemma~\ref{lemma:nicelyboundednearextremizers} can be restated as follows:
There exists $k^*\in\integers$ such that the canonical discrete representation of $f$ satisfies
\begin{equation} \norm{\sum_{|j-k^*|\ge N} f_j}_p \le (\Theta(N)+o_\delta(1))\norm{f}_p.  \end{equation}

Inequality \eqref{eq:nicelybounded} leads easily to a proof of Lemma~\ref{lemma:nicelyboundednearextremizers}.
See \cite{christyoungest} for a proof of an analogue of this lemma
for Young's convolution inequality. A simplified version of that reasoning applies here. Therefore we do not
include the details of the proof of Lemma~\ref{lemma:nicelyboundednearextremizers}.

The next lemma applies to arbitrary measure spaces.
\begin{lemma}[No slacking] \label{lemma:noslacking}
For any $p,q\in[1,\infty)$ there exist $c,C_0<\infty$ with the following property.
Let $T:L^p\to L^q$ be a bounded linear operator, with operator norm $\norm{T}$.
Let $\delta\in[0,1]$.
Suppose that $0\ne f\in L^p$ satisfies \begin{equation} \norm{Tf}_q\ge (1-\delta) \norm{T}\cdot\norm{f}_p.\end{equation}
Suppose that $f=g+h$ where $g,h$ have disjoint supports and \begin{equation} \norm{h}_p\ge C_0\delta^{1/p}\norm{f}_p.\end{equation}
Then \begin{equation} \norm{Th}_q \ge c\delta^{(p-1)/p}\norm{f}_p.\end{equation} 
\end{lemma}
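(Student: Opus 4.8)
The plan is to prove this directly from the triangle inequality in $L^q$, the operator-norm bound, and the strict concavity of $t\mapsto t^{1/p}$; nothing specific to the Fourier transform, and no uniform convexity, enters.

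Set $A=\norm{f}_p$ and $B=\norm{h}_p$. Since $f=g+h$ with $g,h$ disjointly supported, $h$ equals $f$ times an indicator function, so $B\le A$ and $\norm{g}_p^p=A^p-B^p$. By linearity and the triangle inequality in $L^q$, $\norm{Th}_q\ge\norm{Tf}_q-\norm{Tg}_q$; the near-extremizer hypothesis bounds the first term below by $(1-\delta)\norm{T}A$, and the operator-norm inequality bounds the second above by $\norm{T}\norm{g}_p=\norm{T}(A^p-B^p)^{1/p}$. Hence
\[ \norm{Th}_q \ \ge\ \norm{T}\Big[(1-\delta)A-(A^p-B^p)^{1/p}\Big]. \]
Because $p\ge1$, the function $s\mapsto s^{1/p}$ is concave on $[0,\infty)$, so its graph lies below its tangent line at $s=A^p$; evaluating that tangent line at $s=A^p-B^p\ge 0$ gives $(A^p-B^p)^{1/p}\le A-\tfrac1p B^pA^{1-p}$. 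Substituting,
\[ \norm{Th}_q \ \ge\ \norm{T}\Big[\tfrac1p B^pA^{1-p}-\delta A\Big] \ =\ \norm{T}\,A\Big[\tfrac1p (B/A)^p-\delta\Big]. \]

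To conclude, feed in the hypothesis $B\ge C_0\delta^{1/p}A$, i.e.\ $(B/A)^p\ge C_0^p\delta$. Fixing $C_0$ large enough that $C_0^p\ge 2p$, the bracket is at least $\tfrac1{2p}(B/A)^p$, so $\norm{Th}_q\ge\tfrac1{2p}\norm{T}\,B^pA^{1-p}$. Writing $B^pA^{1-p}=B\,(B/A)^{p-1}$ and using $(B/A)^{p-1}\ge C_0^{\,p-1}\delta^{(p-1)/p}$ --- legitimate since $p-1\ge 0$ and $B/A\ge C_0\delta^{1/p}$ --- yields $\norm{Th}_q\ge c\,\norm{T}\,\delta^{(p-1)/p}\norm{h}_p$ with $c=c(p)>0$. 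Since $\norm{h}_p$ is the quantity the argument naturally controls and $\norm{T}$ is a fixed constant in every application, this is the asserted inequality (and it holds in the displayed form whenever $\norm{h}_p$ is comparable to $\norm{f}_p$, which is the case in the uses of the lemma).

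There is no genuine obstacle here: the argument is essentially forced. The two places demanding a little attention are the one-line concavity estimate for $(A^p-B^p)^{1/p}$ and the choice of $C_0$ large enough to absorb the correction term $-\delta A$ --- which is precisely why the power $\delta^{1/p}$ appears in the hypothesis. The exponent $q$ plays no role beyond furnishing a norm that obeys the triangle inequality; all quantitative content comes from the exponent $p$ of the domain and, crucially, from disjointness of supports, which makes $\norm{g}_p^p+\norm{h}_p^p=\norm{f}_p^p$ an exact identity rather than merely an inequality.
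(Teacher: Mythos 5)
Your proof is correct and is in essence the paper's argument: both compare $\norm{Tf}_q$ with $\norm{Tg}_q\le\norm{T}\big(\norm{f}_p^p-\norm{h}_p^p\big)^{1/p}$ plus terms involving $\norm{Th}_q$, and both extract the gain from the strict concavity bound $(1-y^p)^{1/p}\le 1-c_p\,y^p$. Your use of the plain triangle inequality in $L^q$, where the paper instead expands $\norm{T(g+h)}_q^q$ to first order in $\norm{Th}_q$, is only a cosmetic simplification: after normalizing $\norm{f}_p=1$ and setting $x=\norm{Th}_q/(\norm{T}\norm{h}_p)$, $y=\norm{h}_p$, your estimate reads $x\ge\tfrac1p y^{p-1}-\delta y^{-1}$, which is exactly the inequality the paper derives before choosing $C_0$. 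The caveat you raise at the end is shared by the paper rather than being a defect of your argument: the paper's proof likewise concludes $x\ge c\,\delta^{(p-1)/p}$, i.e.\ $\norm{Th}_q\ge c\,\delta^{(p-1)/p}\,\norm{T}\,\norm{h}_p$, so the appearance of $\norm{f}_p$ (and the missing factor $\norm{T}$) in the displayed conclusion is an inaccuracy of the statement as written; in every application of the lemma $\norm{h}_p$ is bounded below by a fixed multiple of $\norm{f}_p$, so nothing is lost.
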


\begin{proof}
There exists $C<\infty$ such that for arbitrary functions $G,H\in L^q$,
\begin{equation}
\norm{G+H}_q^q \le \norm{G}_q^q + C \norm{G}_q^{q-1}\norm{H}_q + C\norm{H}_q^q.
\end{equation}
Consequently
\begin{align*}
\norm{T(g+h)}_q^q &\le \norm{Tg}_q^q + C \norm{Tg}_q^{q-1}\norm{Th}_q + C\norm{Th}_q^q
\\ & \le \norm{T}^q \norm{g}_p^q + C\norm{T}^{q-1}\norm{g}_p^{q-1}\norm{Th}_q + C\norm{Th}_q^q.
\end{align*}
On the other hand,
$\norm{g}_p^p + \norm{h}_p^p = \norm{f}_p^p$.

We may assume without loss of generality that $\norm{f}_p=1$. 
Set $y = \norm{h}_p\in(0,1]$ and $x = \frac{\norm{Th}_q}{\norm{T}\cdot\norm{h}_p}\in[0,1]$. 
Then $\norm{g}_p = (1-y^p)^{1/p}\le 1-c_py^p$  for a certain constant $c_p>0$.  Thus
\begin{align*} (1-\delta)^q &\le  (1-y^p)^{q/p} + C(1-y^p)^{(q-1)/p} xy + Cx^qy^q
\\ & \le 1-c_py^p + Cxy \end{align*}
since $(1-y^p)\le 1$ and $x^qy^q\le xy$.  Therefore
\[ x \ge cy^{p-1}-C\delta y^{-1} \ge c\delta^{(p-1)/p} \] 
provided that the constant $C_0$ in the hypothesis $\norm{h}_p\ge C_0 \delta^{1/p}\norm{f}_p$ is sufficiently large.
\end{proof}

\begin{lemma} [Cooperation] \label{lemma:crossterm}
Let $p\in[1,2)$ and $q\in[2,\infty)$.
There exist $c,C\in\reals^+$ with the following property.
Let $0\ne T:L^p\to L^q$ be a bounded linear operator with norm $\norm{T}$.
Let $0\ne f\in L^p$ satisfy $\norm{Tf}_q\ge (1-\delta)\norm{T}\norm{f}_p$.
Suppose that $f=f^\sharp+f^\flat$ where $f^\sharp,f^\flat$ 
satisfy
\begin{align*}
\norm{f^\sharp}_p^p + \norm{f^\flat}_p^p &\le \norm{f}_p^p
\\\min(\norm{f^\sharp},\norm{f^\flat}) &\ge\eta\norm{f}_p.
\end{align*}
Then
\begin{equation} \norm{Tf^\sharp\cdot Tf^\flat}_{L^{q/2}} \ge (c\eta^p-C\delta)\norm{f}_p^2.  \end{equation}
\end{lemma}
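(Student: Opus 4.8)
The plan is to realize the product $Tf^\sharp\cdot Tf^\flat$ as the cross term in an expansion of $\|Tf\|_q^q$, and then to show that the two ``diagonal'' terms $\|Tf^\sharp\|_q^q$ and $\|Tf^\flat\|_q^q$ cannot by themselves account for all of $\|Tf\|_q^q\approx\|T\|^q\|f\|_p^q$, so that the cross term must be substantial. First I would make the harmless normalizations $\|f\|_p=1$ and, replacing $T$ by $T/\|T\|$ (legitimate since $T\neq 0$), $\|T\|=1$; the hypothesis then reads $\|Tf\|_q\ge 1-\delta$. Writing $G=Tf^\sharp$ and $H=Tf^\flat$, so $G+H=Tf$, and using that $q\ge 2$, I would take $L^{q/2}$ norms in the pointwise bound $|G+H|^2\le|G|^2+|H|^2+2|G|\,|H|$; the triangle inequality in $L^{q/2}$ is available precisely because $q/2\ge 1$, which is the only place $q\ge 2$ enters. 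This gives
\[\|Tf\|_q^2=\big\||G+H|^2\big\|_{q/2}\le\|G\|_q^2+\|H\|_q^2+2\|GH\|_{q/2}.\]
Setting $a=\|f^\sharp\|_p$, $b=\|f^\flat\|_p$ and using $\|G\|_q\le a$, $\|H\|_q\le b$ together with $\|Tf\|_q\ge 1-\delta$ yields
\[\|Tf^\sharp\cdot Tf^\flat\|_{q/2}\ \ge\ \tfrac12\big[(1-\delta)^2-(a^2+b^2)\big]\ \ge\ \tfrac12\big[(1-2\delta)-(a^2+b^2)\big].\]

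Everything then reduces to the elementary claim that, for $p\in[1,2)$, if $a,b\ge\eta$ and $a^p+b^p\le1$ then $a^2+b^2\le 1-c_0\eta^p$ for a constant $c_0=c_0(p)>0$; this is where the hypothesis $p<2$ is used, and it is the heart of the matter. The mechanism is strict $p$-convexity: since $2/p>1$ one has $t^{2/p}\le t$ on $[0,1]$, with a quantitative deficit away from $t=1$. Concretely, at least one of $a^p,b^p$ — say $b^p$ — is at most $\tfrac12$, and since also $b^p\ge\eta^p$,
\[b^2=(b^p)^{2/p}=b^p\cdot(b^p)^{(2-p)/p}\le b^p-b^p\big(1-(b^p)^{(2-p)/p}\big)\le b^p-\eta^p\big(1-2^{-(2-p)/p}\big),\]
while $a^2=(a^p)^{2/p}\le a^p$; adding and invoking $a^p+b^p\le1$ proves the claim with $c_0=1-2^{-(2-p)/p}$. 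Note also that $a^p+b^p\le1$ with $a,b\ge\eta$ already forces $\eta^p\le\tfrac12$, so no separate treatment of large $\eta$ is needed. Substituting this bound back and undoing the two normalizations gives $\|Tf^\sharp\cdot Tf^\flat\|_{q/2}\ge(\tfrac12 c_0\eta^p-\delta)\|T\|^2\|f\|_p^2$, which is the assertion with $c=\tfrac12 c_0$ and $C=1$.

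I do not expect a genuine obstacle here; the whole content lies in seeing why the two exponent hypotheses are exactly the right ones. The condition $q\ge2$ is what makes $2\|GH\|_{q/2}$ appear with a favorable sign in the expansion of $\|G+H\|_q^2$ — this is the ``cooperation'' of the name, in contrast to the cross term behaving as an oscillatory error — and $p<2$ is what upgrades the soft estimate $a^2+b^2\le a^p+b^p\le1$ to the quantitative gap $c_0\eta^p$. The only care required is routine: checking that the $L^{q/2}$ triangle inequality and the pointwise square expansion are applied correctly, that $\|Tf^\sharp\|_q\le\|T\|\,\|f^\sharp\|_p$ (and likewise for $f^\flat$), and that the normalizations are tracked faithfully through to the final inequality.
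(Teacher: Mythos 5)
Your proof is correct and follows essentially the same route as the paper: both isolate the cross term from the expansion of $|Tf^\sharp+Tf^\flat|^2$ (you via the triangle inequality in $L^{q/2}$, the paper via the weighted expansion of $|Tf|^q$ with H\"older, using $q\ge 2$ in the same way), and both then exploit the strict gap $\norm{f^\sharp}_p^2+\norm{f^\flat}_p^2\le(1-c\eta^p)\norm{f}_p^2$ forced by $p<2$, which you prove explicitly where the paper invokes a Taylor expansion. Your final bound carries the natural factor $\norm{T}^2$, consistent with the paper's own penultimate inequality, so nothing is lost.
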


The first of the two inequalities for the norms of $f^\sharp,f^\flat$ holds with equality if these two functions
are disjointly supported.  More generally, it holds if $|f^\sharp+f^\flat| = |f^\sharp|+|f^\flat|$
almost everywhere.

\begin{proof}
\begin{align*}
\norm{Tf}_q^q &\le \int (|Tf^\sharp|^2+|Tf^\flat|^2)|Tf|^{q-2} + 2\int |Tf^\sharp\cdot Tf^\flat||Tf|^{q-2}
\\& \le \norm{T}^q\big(\norm{f^\sharp}_p^2 + \norm{f^\flat}_p^2 \big)\norm{f}_p^{q-2}
+ 2\norm{Tf^\sharp\cdot Tf^\flat}_{q/2} \norm{T}^{q-2}\norm{f}_p^{q-2}.
\end{align*}
Thus
\[ \norm{Tf^\sharp\cdot Tf^\flat}_{q/2} 
\ge \norm{T}^2\big( (1-\delta)^q \norm{f}_p^2- \norm{f^\sharp}_p^2-\norm{f^\flat}_p^2\big).  \]

Moreover,
\[ \big(\norm{f^\sharp}_p^2 + \norm{f^\flat}_p^2\big)^{p/2}
\le \norm{f^\sharp}_p^p + \norm{f^\flat}_p^p\]
with strict inequality whenever neither summand vanishes.
By virtue of this strict inequality, we may assume without loss of generality that $\eta$ is small.

Since $p$ is strictly less than $2$ and $\min(\norm{f^\sharp}_p,\norm{f^\flat}_p)\ge\eta\norm{f}_p$,
Taylor expansion gives
\[ \norm{f^\sharp}_p^p + \norm{f^\flat}_p^p
\ge (1+c\eta^p) \big(\norm{f^\sharp}_p^2 + \norm{f^\flat}_p^2 \big)^{p/2}. \]
Equivalently, 
\[\norm{f^\sharp}_p^2 + \norm{f^\flat}_p^2 \le (1-c\eta^p) \big(\norm{f^\sharp}_p^p + \norm{f^\flat}_p^p\big)^{2/p}.\]
Therefore since $\norm{f^\sharp}_p^p + \norm{f^\flat}_p^p\le \norm{f}_p^p$,
\[ \norm{Tf^\sharp\cdot Tf^\flat}_{q/2} \ge \norm{T}^2 \big((1-\delta)^q -1+c\eta^p \big)\norm{f}_p^2
\ge (c\eta^p - C\delta)\norm{f}_p^2.\]
\end{proof}

\section{Properties of multiprogressions}

Our analysis relies on various properties of (continuum) multiprogressions,
and properties of more general sets or functions that are closely associated with continuum multiprogressions. 
In this section we state various auxiliary results concerning these properties.
Several of these are merely translations to the continuum setting of results known for discrete Abelian groups.
Lemma~\ref{lemma:interactionimpliesstructure} concerns the interaction of the Fourier transform with functions
supported on continuum multiprogressions. 
Lemma~\ref{lemma:gapd} is concerned with a specific notion of approximation of a
multiprogressions in $\reals^d$ of fixed but arbitrary rank, by an affine image of a lattice
of rank $d$. It has a more intricate proof.
Proofs of all these results are postponed to \S\ref{section:proofs}.

Recall the notion of the size of a continuum multiprogression, introduced above.
\begin{lemma}[Equivalence with proper multiprogressions] \label{lemma:properversusgeneral}
Let $d\ge 1$.
For any $r<\infty\in\naturals$
there exists $C_r<\infty$ such that for any continuum multiprogression
$Q$ in $\reals^d$ of rank $r$ there exists a continuum multiprogression $P\supset Q$
of rank $r$ satisfying $|P|\le C_r\sigma(Q)$
and $\sigma(P)\le C_r |P|$.
\end{lemma}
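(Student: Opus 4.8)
The plan is to reduce an arbitrary rank-$r$ continuum multiprogression to a proper one by an inductive procedure that detects and eliminates near-degeneracies among the generating vectors, at the cost of a rank-$r$ constant in the size comparison. Write $Q$ as the image of $\prod_{i=1}^r\{0,\dots,N_i-1\}\times\unitQ^d$ under $(n;y)\mapsto a+\sum_i n_i v_i + sy$. The range of $Q$ is contained in the (Minkowski) sum of the finite set $\Lambda=\{\sum_i n_i v_i: 0\le n_i<N_i\}$ with the cube $a+s\unitQ^d$; and conversely $Q$ contains a translate of that sum up to the obvious fattening. So the whole problem is really about the finite set $\Lambda$, i.e.\ about discrete multiprogressions, together with a fixed-scale cube. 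The size $\sigma(Q)=s^d\prod_i N_i$ is exactly $s^d\cdot\#(\text{generalized progression of lengths }N_i)$ when that progression is proper; when it is improper, $\sigma(Q)$ overcounts.

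First I would record the easy direction. Since $Q\subset \Lambda + (a+s\unitQ^d)$, and $\Lambda$ is a sum of $r$ arithmetic progressions each of length $N_i$, one always has $|\Lambda + s\unitQ^d|\le C_r\, s^d\prod_i N_i = C_r\sigma(Q)$ — this is just the triangle/volume estimate $|A+B|\le $ (number of summands choices), made quantitative by noting that $\Lambda+s\unitQ^d$ is covered by at most $\prod_i N_i$ translates of $s\unitQ^d$. This already gives a $P$, namely $Q$ itself (with $P\supset Q$ trivially), satisfying $|P|\le C_r\sigma(Q)$; what is missing is the reverse inequality $\sigma(P)\le C_r|P|$, i.e.\ we must enlarge $Q$ to a $P$ whose size does not wildly exceed its measure.

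For that, I would argue by induction on the rank $r$. If the defining map of $Q$ is already injective (proper) then $|Q|=\sigma(Q)$ and we take $P=Q$. Otherwise some nontrivial integer combination $\sum_i m_i v_i$ with $|m_i|<N_i$ equals a short vector — more precisely, properness fails, so two distinct lattice points of $\Lambda$ coincide, giving a relation $\sum_i m_i v_i=0$ with not all $m_i$ zero and $|m_i|<N_i$. Choosing such a relation with, say, $|m_{r}|$ maximal among the coordinates appearing, one can solve (after reindexing) for $v_r$ as a rational combination of $v_1,\dots,v_{r-1}$ with bounded denominators, and thereby cover $\Lambda$ by $O_r(1)$ translates of a rank-$(r-1)$ progression $\Lambda'$ generated by $v_1,\dots,v_{r-1}$ with suitably enlarged lengths $N_i'=O_r(N_i)$. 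Correspondingly $Q$ is covered by $O_r(1)$ translates of a rank-$(r-1)$ continuum multiprogression $Q'$ with $\sigma(Q')\le O_r(1)\prod_{i=1}^{r-1}N_i'\cdot s^d$ and $Q'\supset$ each of those pieces after a harmless fattening of $s$ by an $O_r(1)$ factor; taking the union of these $O_r(1)$ translates and absorbing it into a single rank-$(r-1)$ multiprogression (enlarging one length by an $O_r(1)$ factor) yields a rank-$(r-1)$ continuum multiprogression $\tilde Q\supset Q$ with $\sigma(\tilde Q)\le O_r(1)\sigma(Q)$ and $|\tilde Q|\le O_r(1)\sigma(Q)$. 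Apply the inductive hypothesis to $\tilde Q$ to get the desired $P\supset\tilde Q\supset Q$ of rank $r-1\le r$ (pad with a trivial generator to make the rank exactly $r$ if the statement is read strictly) with $|P|\le C_{r-1}\sigma(\tilde Q)\le C_r\sigma(Q)$ and $\sigma(P)\le C_{r-1}|P|$.

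The main obstacle is the bookkeeping in the inductive step: extracting from an improperness relation a genuine rank reduction with all the length-enlargement factors controlled purely in terms of $r$ (and $d$), uniformly over the vectors $v_i$, the scale $s$, and the lengths $N_i$. One must be careful that the solved-for generator introduces only $O_r(1)$-bounded denominators — this is where the choice of a relation with a maximal coefficient, rather than an arbitrary relation, matters — and that passing from the Minkowski-sum picture back to an honest continuum multiprogression (absorbing finitely many translates and the cube into standard form) does not silently inflate the size by more than an $O_r(1)$ factor. Everything else (the volume covering bound, the base case) is routine.
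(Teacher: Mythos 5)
Your opening reductions (passing to the discrete set $\Lambda$ plus a cube, and the easy covering bound $|Q|\le C_r\sigma(Q)$) are fine and are in the same spirit as the paper's first step. The genuine gap is in your inductive step. From a single improperness relation $\sum_i m_i v_i=0$ with $|m_i|<N_i$, not all zero, you claim that $\Lambda$ can be covered by $O_r(1)$ translates of a rank-$(r-1)$ progression generated by $v_1,\dots,v_{r-1}$ with lengths $N_i'=O_r(N_i)$. This is false. Take $d=1$, $r=2$, $v_1=1$, $v_2=K$, $N_1=N_2=N$, with $1\ll K\ll N$; the only relation (up to multiples) is $K v_1-v_2=0$. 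Here $\Lambda=\{n_1+n_2K:\,0\le n_i<N\}$ contains every integer in an interval of length about $KN$, so covering it by translates of a rank-one progression of length $O_r(N)$ (with either generator) requires on the order of $K$ translates, which is not bounded in terms of $r$. Equivalently, solving for the generator carrying the maximal coefficient introduces a denominator equal to that coefficient, which may be as large as $\max_i N_i$; choosing the relation with maximal coefficient does not make the denominators $O_r(1)$, contrary to your remark. Note that in this example a good containing proper progression does exist (generator $1$, length about $KN$), consistent with the lemma, but its length is not $O_r(N_i)$: a correct rank-reduction must allow the new lengths to grow, keeping only the total size bounded by $O_r(\sigma(Q))$, and establishing that uniformly is precisely the hard content of the discrete covering theorem, not a routine bookkeeping matter.

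The paper sidesteps exactly this difficulty: it discretizes $Q$ at a fine scale $\delta$, so that $Q$ is contained in a cube plus a discrete multiprogression $\tilde Q'+R$ in $\delta\integers^d$ of rank $r+d$, and then quotes Theorem~3.40 of Tao--Vu, which produces a proper discrete multiprogression $P'\supset \tilde Q'+R$ of the same rank with $\#(P')\le C_r\,\sigma(\tilde Q'+R)$; fattening $P'$ by the $\delta$-cube gives the semiproper continuum multiprogression $P$. If you want a self-contained argument you would in effect have to reprove that discrete theorem (its known proofs go through geometry of numbers/lattice arguments rather than the one-relation elimination you propose), which is substantially more than your sketch provides.
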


The last conclusion is a weak form of the condition that $P$ should be proper.
We will say that $P$ is {\em semiproper} to indicate that $\sigma(P)\le C|P|$,
where $C$ is a constant which depends only on the rank of $P$ and the dimension $d$.

Consider the quotient space $\torus^d = \reals^d/\integers^d$.
For $x\in\reals^d$ define $\norm{x}_{\reals^d/\integers^d}$
to be the Euclidean distance from $x$ to the nearest element of $\integers^d$.

\begin{lemma}[Approximation by $\integers^d$] \label{lemma:gapd}
For each $d\ge 1$ and $\rankk\ge 0$ there exists $c>0$ with the following property. Let 
$P$ be a proper continuum multiprogression in $\reals^d$ of rank $\rankk$, whose Lebesgue measure satisfies $|P|=1$.
Let $\delta\in(0,\tfrac12]$. 
There exists $\scriptt\in \aff(d)$ whose Jacobian determinant satisfies 
\begin{equation} |J(\scriptt)| \ge c \delta^{d\rankk+d^2}\end{equation}
such that
\begin{equation} \norm{\scriptt(x)}_{\reals^d/\integers^d}<\delta \ \text{ for all $x\in P$.}\end{equation}
\end{lemma}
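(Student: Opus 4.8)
The plan is to reduce the general-rank problem to the case of a lattice of rank exactly $d$, and then invoke a pigeonhole argument of Dirichlet type. First I would write $P$ as the image of $\prod_{i=1}^\rankk\{0,1,\dots,N_i-1\}\times\unitQ^d$ under $(n;y)\mapsto a+\sum_i n_iv_i+sy$, where $a,v_i\in\reals^d$ and $s\in\reals^+$. Because $P$ is proper, $|P|=s^d\prod_iN_i=1$. After composing with the affine map $x\mapsto s^{-1}(x-a)$ (which has Jacobian determinant $s^{-d}=\prod_iN_i\ge 1$, hence is harmless for the lower bound we seek), we may assume $a=0$, $s=1$, so $P\subset\{\sum_i n_iv_i:0\le n_i<N_i\}+\unitQ^d$ and $\prod_iN_i=1$ forces... no: we keep $s=1$ and $\prod_iN_i=s^{-d}$, the point being only that $P$ is contained in the sum of the discrete multiprogression generated by the $v_i$ and a fixed unit cube.

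The heart of the matter is to find $T\in\Gl(d)$ such that $T$ maps the relevant generating vectors $v_1,\dots,v_\rankk$ (and the unit cube) into a small neighborhood of $\integers^d$, with $|\det T|$ not too small. I would proceed by a greedy/inductive selection: among $v_1,\dots,v_\rankk$, together with the standard basis vectors $e_1,\dots,e_d$ (needed to handle the $\unitQ^d$ part), choose a maximal subset whose $\integers$-span is discrete in an appropriate quantitative sense, i.e. run a Minkowski-reduction or LLL-type argument on the lattice $\Lambda$ they generate. One obtains a sublattice $\Lambda'\subset\reals^d$ of rank $d$, with a basis of controlled length, such that every $v_i$ lies within a controlled multiple of a fundamental domain of $\Lambda'$. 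Then let $T$ be the linear map sending a reduced basis of $\Lambda'$ to $\delta'\integers^d$ for a suitable $\delta'\asymp\delta$; the Jacobian determinant of $T$ is comparable to $\delta'^d/\operatorname{covol}(\Lambda')$, and the covolume is bounded below in terms of $d$ and $\rankk$ by the reduction step. Composing with a Dirichlet-type simultaneous-approximation argument (to push the finitely many residues $Tv_i \bmod \integers^d$ within $\delta$ by a further rescaling by an integer $\lesssim\delta^{-(\text{something})\rankk}$, at the cost of multiplying $|J|$ by that integer to a negative power) yields the stated bound $|J(\scriptt)|\ge c\delta^{d\rankk+d^2}$, the $\delta^{d^2}$ accounting for the $\Gl(d)$ normalization and the $\delta^{d\rankk}$ for the simultaneous approximation of the $\rankk$ generators in $d$ dimensions.

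The main obstacle, and where the bookkeeping is genuinely delicate rather than routine, is controlling the covolume of the auxiliary rank-$d$ lattice $\Lambda'$ from below — equivalently, ensuring the greedy selection does not produce a lattice that is exponentially skewed. One must use the properness hypothesis on $P$ in an essential way here: properness forces $\prod_i N_i$ to equal $|P|/s^d=s^{-d}$, which prevents the $v_i$ from being too short relative to the $N_i$, and this is exactly what keeps $\operatorname{covol}(\Lambda')$ from collapsing. I would isolate this as the technical core: a quantitative statement that a proper continuum multiprogression of rank $\rankk$ and measure $1$ in $\reals^d$ is contained in an affine image of $[0,M)^d$ with $M\le C_{d,\rankk}$ after a $\Gl(d)$ change of variables of Jacobian $\ge c_{d,\rankk}>0$; the passage from there to the $\delta$-neighborhood of $\integers^d$ is then the standard Dirichlet pigeonhole with explicit constants. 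The polynomial-in-$\delta$ (rather than merely positive) lower bound on $|J(\scriptt)|$ comes out automatically from tracking the number of boxes in each pigeonhole step.
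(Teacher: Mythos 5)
Your proposal breaks at the step you yourself identify as the technical core. The claim that, after a $\Gl(d)$ change of variables with Jacobian bounded below by $c_{d,\rankk}$, a proper continuum multiprogression of measure $1$ is contained in a box $[0,M)^d$ with $M\le C_{d,\rankk}$ is false, and the lemma exists precisely to handle the cases where it fails. Take $d=1$, rank $1$: the union of $N$ intervals of length $N^{-1}$ whose centers form an arithmetic progression with increment $N^{-1/2}$ (the example discussed immediately after the lemma in the paper) is proper, has measure $1$, and has diameter $\approx N^{1/2}$; any affine map with derivative bounded below by a constant sends it to a set of diameter $\gtrsim N^{1/2}$, which cannot lie in a bounded box uniformly in $N$. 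Properness prevents overlaps but does not prevent the progression from being spread out, so it yields no such confinement and no lower bound on the covolume of any auxiliary lattice. Relatedly, the $\integers$-span of $v_1,\dots,v_\rankk$ together with $e_1,\dots,e_d$ is in general not discrete, so there is no lattice $\Lambda'$ on which to run Minkowski/LLL reduction.

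The Dirichlet step is also aimed at the wrong target. To get $\norm{\scriptt(x)}_{\reals^d/\integers^d}<\delta$ for every $x=a+\sum_j n_jv_j+sy$ with $0\le n_j<N_j$, you need $\norm{T(v_j)}_{\reals^d/\integers^d}\lesssim\delta N_j^{-1}$ for each $j$ (so the bound survives multiplication by $n_j\le N_j$), together with an operator-norm bound $|T|\lesssim\delta s^{-1}$ to control the cube factor; pushing the residues $Tv_i$ within $\delta$ of $\integers^d$ is not sufficient, and a further rescaling by a large Dirichlet integer is incompatible with that operator-norm constraint. Moreover, nothing in your outline produces the crucial lower bound on $|\det T|$: the set of matrices achieving the approximation could a priori concentrate near the variety $\{\det=0\}$. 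This is exactly what the paper's proof is organized around. It randomizes targets $s_j(\omega)\in[0,1]^d$, shows that for some fixed $\omega_0$ the set of $T$ with $|T|\le\Lambda$ and $\norm{T(v_j)-s_j(\omega_0)}_{\reals^d/\integers^d}\le\tfrac12\delta N_j^{-1}$ has measure $\ge c\Lambda^{d^2}\delta^{d\rankk}N^{-d}$, and then invokes a separate lemma on large determinants (Lemma~\ref{lemma:determinant}) to extract an integer combination $T=\sum_i\alpha_iT_i$ with $\sum_i\alpha_i=0$ (so the random targets cancel modulo $\integers^d$) and $|\det T|$ bounded below in terms of that measure; choosing $\Lambda\asymp\delta\eta^{-1}$ and using $\eta^dN=1$ gives $|J(\scriptt)|\ge c\delta^{d\rankk+d^2}$. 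Your argument has no substitute for this determinant lower bound, which is the genuinely hard point of the lemma.
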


Consider a continuum multiprogression $P\subset\reals^1$ satisfying $|P|=1$, whose rank is bounded above,
and which has large diameter $D\gg 1$.
Applying the lemma with $\delta = D^{-1/2}$ yields $\lambda,b$ such that $Q=\lambda P +b$ lies close to $\integers$, 
yet still has large diameter and therefore cannot lie close to a single element of $\integers$. 
This situation will lead to a contradiction at a critical stage of the proof, leading
to the conclusion that certain continuum multiprogressions have diameters majorized by constant multiples
of their Lebesgue measures. Thus these multiprogressions will be equivalent to intervals.

There is no upper bound on $|J(\scriptt)|$ in the conclusion.
Indeed, consider these examples: Let $\delta>0$. Let $N$ be any integer satisfying $N\ge \delta^{-2}$.
Let $P$ be a union of $N$ intervals of lengths $N^{-1}$, whose centers form a rank one arithmetic progression
with $N$ elements, containg $0$, with increment equal to $N^{-1/2}$. 
Then $|P|=1$, and $P$ has diameter $\approx N^{1/2}$.
Then a good choice for $\scriptt$ is $\scriptt(x) = N^{1/2}x$; 
the dilated set $N^{1/2}P$ lies within distance $N^{-1/2}\le\delta$ of $\integers$.
Any affine mapping with derivative larger than $N^{-1/2}$ but smaller than $N^{1/2}$ fails to map
$P$ to a set lying close to $\integers$.
The derivative of $\scriptt$ is not bounded above uniformly in $N$. 
On the other hand, for any particular $N$, the absolute value of the derivative of $\scriptt$ 
plainly cannot exceed $N$, since $\scriptt(P)$ would otherwise contain intervals of lengths $>1$.

\begin{lemma} \label{lemma:largesubsets}
There exists $C<\infty$ with the following property.  For any $\alpha\in(0,1]$ 
there exists a natural number $N \le C\log(1+\alpha^{-1})$
with the following property.
Let $G$ be any finite Abelian group, and let $E\subset G$ satisfy $\#(E)\ge\alpha\#(G)$.
Then $G$ is contained in the union of $N$ translates of $NE-NE$. 
\end{lemma}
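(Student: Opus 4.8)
The plan is to combine the elementary greedy packing argument (Ruzsa's covering lemma) with an iteration of Kneser's theorem: packing alone produces a covering but with a crude value of the parameter $N$, and it is the iteration of Kneser's theorem that keeps the number of required doubling steps logarithmic in $\alpha^{-1}$.

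First I would carry out the packing step, which also locates the relevant subgroup. Since $\#(E)\ge\alpha\#(G)$, a maximal family $\{t+E:t\in T\}$ of pairwise disjoint translates of $E$ has $\#(T)\le\#(G)/\#(E)\le\alpha^{-1}$, and maximality produces, for every $g\in G$, some $t\in T$ with $g-t\in E-E$; hence $G=\bigcup_{t\in T}(t+E-E)$. Since $E-E\subseteq NE-NE$ for every $N\ge1$ (pad a representation $e-e'$ with cancelling pairs), this already gives a covering of $G$ by $\le\alpha^{-1}$ translates of $NE-NE$. Moreover, letting $H\le G$ be the subgroup generated by $E-E$, the fact that $E$ lies in a single coset of $H$ forces $\#(H)\ge\#(E)\ge\alpha\#(G)$, i.e. $[G:H]\le\alpha^{-1}$.

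The real content is that a short iterated difference set already fills $H$. After translating $E$ so that $0\in E$ and replacing $G$ by $H$ — neither step changes $E-E$ or the density hypothesis — one must show: if $\langle E-E\rangle=G$ and $\#(E)\ge\alpha\#(G)$, then the iterated difference set $S_k:=kE-kE$ (equivalently, the $k$-fold sumset of $E-E$) equals $G$ after $O(\log\alpha^{-1})$ doublings $S_{2k}=S_k+S_k$. I would control the doublings by Kneser's theorem, which gives $\#(S_{2k})\ge 2\#(S_k)-\#(K_k)$, where $K_k\le G$ is the stabilizer of $S_{2k}$. At each doubling there are two cases: either $\#(K_k)\le\tfrac12\#(S_k)$, so $\#(S_k)$ grows by a factor $\ge\tfrac32$ (until it reaches $\#(G)$); or $\#(K_k)>\tfrac12\#(S_k)\ge\tfrac12\alpha\#(G)$, so $[G:K_k]<2\alpha^{-1}$, and then $S_{2k}$ and every later iterate is a union of $K_k$-cosets, so one passes to the quotient $G/K_k$, which has order $<2\alpha^{-1}$, and continues there. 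The density of $S_k$ in the ambient group is nondecreasing under both moves, starts at $\ge\alpha$, and is at most $1$, so at most $O(\log\alpha^{-1})$ doublings of the first kind can ever occur; and once a doubling of the second kind has happened, all further doublings take place in a group of order $<2\alpha^{-1}$, which leaves room for only $O(\log\alpha^{-1})$ more of them. Hence $S_k=G$ after $O(\log\alpha^{-1})$ doublings, and combining this with the index bound $[G:H]\le\alpha^{-1}$ from the packing step yields the covering of $G$ by translates of $NE-NE$.

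The step I expect to be the main obstacle is the second case above — the ``subgroup obstruction'', in which a large stabilizer $K_k$ halts the growth of the iterated difference set. Handling it requires the passage to $G/K_k$ and the induction (on $\#(G)$) that this sets up, together with the bookkeeping needed to confirm that the number of doubling steps, and the number of translates, stay within the advertised bounds through all the reductions. Everything else — the packing estimate, the elementary properties of $H$, and the individual applications of Kneser's theorem in the absence of a large stabilizer — is routine.
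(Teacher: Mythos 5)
Your proposal is essentially correct, and its engine is the same as the paper's: the Kneser dichotomy between growth of the iterated difference set by a factor $\tfrac32$ and the appearance of a stabilizer of size proportional to $\#(G)$, iterated boundedly many times. The packaging differs in two ways. First, your Ruzsa-covering (maximal packing) step does not appear in the paper, and it is worth noticing that it alone already delivers everything that is used downstream: it covers $G$ by at most $\lceil\alpha^{-1}\rceil$ translates of $E-E\subseteq NE-NE$, which is exactly the form invoked in the proof of Lemma~\ref{lemma:compatible} (all that matters there is that the number of translates and the sumset parameter are bounded in terms of $\alpha$). Second, the step you single out as the main obstacle --- quotienting by a large stabilizer and recursing, with the attendant bookkeeping of kernels --- is avoidable, and the paper avoids it: as soon as the stabilizer $H=\sym(E_n)$ of the current iterated difference set $E_n$ satisfies $\#(H)\ge\tfrac12\alpha\#(G)$, one simply stops, because $0\in E_n$ forces $H\subseteq E_n$, so $G$ is a union of at most $2\alpha^{-1}$ cosets of $H$, i.e.\ of at most $2\alpha^{-1}$ translates of $E_n$ itself. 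No passage to $G/H$ and no induction on the group are needed.

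One caution about the quantitative claim. ``After $O(\log\alpha^{-1})$ doublings'' does not give $N\le C\log(1+\alpha^{-1})$: after $D$ doublings the set in hand is $2^{D}E-2^{D}E$, so the sumset parameter is $2^{O(\log\alpha^{-1})}$, polynomial in $\alpha^{-1}$; and the number of translates coming from your index bound $[G:H]\le\alpha^{-1}$ (or from the packing step) is of order $\alpha^{-1}$, which cannot be improved --- if $E$ is a subgroup of index about $\alpha^{-1}$, then $NE-NE=E$ for every $N$ and roughly $\alpha^{-1}$ translates are unavoidable. So your assembly does not reach the literal bound stated in Lemma~\ref{lemma:largesubsets}; but neither does the paper's own proof, which halts after $C\log\alpha^{-1}$ iterations of $E_{n+1}=E_n-E_n$ (hence with $E_n=2^{n-1}E-2^{n-1}E$) and covers $G$ by $2\alpha^{-1}$ cosets of the stabilizer. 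What both arguments actually prove, and what the application requires, is a covering by $O(\alpha^{-1})$ translates of $NE-NE$ with $N$ bounded by a power of $\alpha^{-1}$ --- and your packing step yields that outright, with difference-set parameter $1$.
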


\begin{lemma}[Small sumset implies compatible structures] \label{lemma:compatible}
Let $d\ge 1$, let $\rho\in(1,2)$, and define $s\in(1,\infty)$ by $s^{-1}= 2\rho^{-1}-1$.  
Let $R<\infty$ and $\tau>0$.
Let $P,Q\subset\reals^d$ be continuum multiprogressions of rank $\le R$ with positive, finite Lebesgue measures.
If $\norm{\one_P*\one_Q}_s \ge \tau |P|^{1/\rho}|Q|^{1/\rho}$ then 
\[ \max(|P|,|Q|) \le C\tau^{-C} \min(|P|,|Q|)\]
and there exists a continuum multiprogression $\scriptp\subset\reals^d$ of rank $\le C_{\tau,\rho}R+C_{\tau,\rho}$ satisfying
\begin{gather} P+Q \subset\scriptp \\ |\scriptp|\le C \tau^{-C} \max(|P|,|Q|)  \end{gather}
where $C\in\reals^+$ depends only on $R,\rho,d$.
\end{lemma}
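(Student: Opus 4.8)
The plan is to reduce Lemma~\ref{lemma:compatible} to its well-known discrete analogue --- the statement that if two subsets $A,B$ of a finite abelian group (or of $\integers^d$) have a large additive energy, or equivalently if $\one_A*\one_B$ has large $L^s$ norm, then $A$ and $B$ are each efficiently covered by a single generalized arithmetic progression of bounded rank, and $A+B$ lies in a common such progression of comparable size. First I would unwind the hypothesis $\norm{\one_P*\one_Q}_s\ge\tau|P|^{1/\rho}|Q|^{1/\rho}$. Since $\one_P*\one_Q$ is supported on $P+Q$, is bounded above by $\min(|P|,|Q|)$ pointwise, and has $L^1$ norm exactly $|P|\,|Q|$, the assumed lower bound on its $L^s$ norm forces, by a routine interpolation/level-set argument, the existence of a number $\lambda\gtrsim\tau^{C}\min(|P|,|Q|)$ and a set $S\subset P+Q$ with $|S|\gtrsim\tau^{C}\max(|P|,|Q|)$ on which $\one_P*\one_Q\ge\lambda$; in particular $|P+Q|\le\tau^{-C}\min(|P|,|Q|)$, which by Ruzsa-type covering (in the continuum) already yields $\max(|P|,|Q|)\le C\tau^{-C}\min(|P|,|Q|)$, the first asserted conclusion, and reduces everything to the regime $|P|\asymp|Q|\asymp|P+Q|$ up to powers of $\tau$.

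Next I would discretize. Using Lemma~\ref{lemma:properversusgeneral} I may assume $P,Q$ are semiproper of rank $\le R$. A continuum multiprogression of bounded rank and measure $\asymp 1$ can be approximated, after rescaling, by a union of unit cubes centered at points of a discrete multiprogression of the same rank, with the number of cubes comparable to the measure; passing to centers turns $P,Q$ into finite subsets $A,B$ of $\reals^d$ (indeed of a lattice, after a further affine normalization using an argument in the spirit of Lemma~\ref{lemma:gapd}) with $|A|\asymp|P|$, $|B|\asymp|Q|$, $\#(A+B)\lesssim\tau^{-C}\#A$, and this passage changes all relevant quantities only by factors $C_{R,d}$. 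Then the Fre\u\i man--Ruzsa theorem (over $\integers^d$, or over a finite group via Lemma~\ref{lemma:largesubsets} to pass to the group-theoretic setting) applied to $A\cup B$, which has doubling $O(\tau^{-C})$, produces a generalized arithmetic progression $\mathbf P_0$ of rank $\le C_{\tau,\rho}R+C_{\tau,\rho}$ and size $\le C\tau^{-C}\max(\#A,\#B)$ with $A\cup B\subset\mathbf P_0$, hence $A+B\subset\mathbf P_0+\mathbf P_0$, which is again a discrete multiprogression of rank $\le 2(C_{\tau,\rho}R+C_{\tau,\rho})$ and comparable size.

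Finally I would lift back to the continuum: thicken $\mathbf P_0+\mathbf P_0$ by the generators of $P$ and $Q$ and by a unit cube, i.e. replace each generator direction used in the discretization and fatten by $\unitQ^d$, to obtain a continuum multiprogression $\scriptp$ with $P+Q\subset\scriptp$, with $\rank(\scriptp)\le C_{\tau,\rho}R+C_{\tau,\rho}$ and $|\scriptp|\le C\tau^{-C}\max(|P|,|Q|)$; invoking Lemma~\ref{lemma:properversusgeneral} once more makes $\scriptp$ semiproper if desired. I expect the main obstacle to be the discretization/undiscretization bookkeeping: making precise that a bounded-rank continuum multiprogression is, up to affine equivalence and bounded loss, a thickened discrete multiprogression, so that the $L^s$-norm hypothesis genuinely transfers to a sumset-size or additive-energy hypothesis for finite sets, and conversely that the Fre\u\i man progression covering $A+B$ fattens back to a continuum multiprogression of the same rank containing $P+Q$ --- all while tracking that every constant depends only on $R,\rho,d$ and that the only $\tau$-dependence in the rank comes through the Fre\u\i man step. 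The additive-combinatorial input itself (Fre\u\i man--Ruzsa, Pl\"unnecke--Ruzsa, Ruzsa covering) is standard; the continuum translation is where the care is required.
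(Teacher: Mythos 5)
The pivotal unjustified step is the assertion ``in particular $|P+Q|\le\tau^{-C}\min(|P|,|Q|)$.'' The interpolation/level-set argument does give a $\lambda\gtrsim\tau^{C}\min(|P|,|Q|)$ and a set $S\subset P+Q$ of measure $\gtrsim\tau^{C}\max(|P|,|Q|)$ on which $\one_P*\one_Q\ge\lambda$, but that says nothing about the measure of the rest of $P+Q$, where the convolution is small but positive. For general sets the implication is simply false: take $P=Q$ to be a unit interval together with $N$ widely and generically spaced intervals of total measure $1$; the $L^s$ hypothesis holds with $\tau$ an absolute constant, yet $|P+Q|\gtrsim N$ can be arbitrarily large compared with $\min(|P|,|Q|)$. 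So the step can only be rescued by exploiting that $P,Q$ are bounded-rank (semi)proper multiprogressions --- for instance each has bounded doubling, so one could run Balog--Szemer\'edi, Ruzsa covering and Pl\"unnecke--Ruzsa to control all of $P+Q$ --- but that is precisely the substantive content of the lemma, and it is missing from your outline. Everything downstream (the doubling bound for $A\cup B$, hence the Fre\u\i man step) rests on this unproved claim.

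Even granting the sumset bound, the Fre\u\i man step does not deliver the stated conclusion. With doubling constant $K\asymp\tau^{-C}$, Fre\u\i man--Ruzsa produces a covering progression whose size exceeds the set by a factor exponential in a power of $K$, i.e.\ of order $\exp(\tau^{-C})$, whereas the lemma asserts $|\scriptp|\le C\tau^{-C}\max(|P|,|Q|)$ with $C=C(R,\rho,d)$, polynomial in $\tau^{-1}$; likewise, merging the $O(\tau^{-C})$ Ruzsa-covering translates into a single progression by adjoining the translation vectors as generators multiplies the size by $2^{O(\tau^{-C})}$. This is exactly why the paper avoids Fre\u\i man here: it retains only the consequence $\norm{\one_P*\one_Q}_\infty\ge\tau^\gamma|Q|$ (many representations of one point), and proves the lemma by induction on the rank of $Q$, treating the rank-one case inside a cyclic group $\integers/\kappa\integers$ via Kneser's theorem (Lemma~\ref{lemma:largesubsets}), whose covering number is only logarithmic in $\tau^{-1}$; that is what keeps the rank increment additive in $R$ and the measure loss polynomial in $\tau^{-1}$. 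If only a qualitative version were needed, your route could likely be repaired by filling the first gap and accepting constants $C_\tau$ in place of $C\tau^{-C}$, but as written it neither justifies the key sumset bound nor yields the bounds actually claimed.
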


\begin{lemma}[Compatibility of nonnegligibly interacting multiprogressions] 
\label{lemma:interactionimpliesstructure}
Let $d\ge 1$. Let $\Lambda$ be a compact subset of $(1,2)$.  Let $\lambda>0$  and $R<\infty$. 
There exists $C<\infty$, depending only on $\lambda,R,d,\Lambda$, with the following property.
Let $p\in\Lambda$.
Let $P,Q\subset\reals^d$ be nonempty proper continuum multiprogressions of ranks $\le R$.
Let $\varphi\prec P$ and $\psi\prec Q$ be functions that satisfy
$\norm{\varphi}_\infty|P|^{1/p}\le 1$ and $\norm{\psi}_\infty|Q|^{1/p}\le 1$.  If
\begin{equation} \norm{\widehat{\varphi}\,\widehat{\psi}}_{p'/2} \ge \lambda \end{equation}
then \begin{align} \max(|P|,|Q|)& \le C\min(|P|,|Q|) \\ |P+Q| &\le C\min(|P|,|Q|).\end{align}
\end{lemma}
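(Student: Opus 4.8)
The plan is to reduce Lemma~\ref{lemma:interactionimpliesstructure} to Lemma~\ref{lemma:compatible} by passing from control of $\widehat{\varphi}\,\widehat{\psi}$ in a dual-exponent norm to control of a convolution of indicator functions of (slightly enlarged) multiprogressions. First I would fix $p\in\Lambda$ and set $\rho=p'$, so $\rho\in(1,2)$, and introduce $s$ by $s^{-1}=2\rho^{-1}-1$ as in Lemma~\ref{lemma:compatible}; note $p'/2$ is the conjugate exponent to some exponent naturally paired with $s$ via Hausdorff-Young, and all constants will be uniform for $p$ in the compact set $\Lambda$. The hypothesis $\norm{\widehat{\varphi}\,\widehat{\psi}}_{p'/2}\ge\lambda$ together with $\widehat{\varphi}\,\widehat{\psi}=\widehat{\varphi*\psi}$ and the Hausdorff-Young inequality on the Fourier-side exponent gives a lower bound $\norm{\varphi*\psi}_{s}\gtrsim\lambda$, or more precisely a lower bound for $\norm{\varphi*\psi}$ in whichever $L^s$-type norm is dual-compatible; here I use that $\norm{\widehat{g}}_{p'/2}\le \bestA_{\,\cdot}^d\norm{g}_{(p'/2)'}$ and that $(p'/2)'$ is exactly the exponent making Young's convolution inequality with $\varphi,\psi$ land in $L^s$. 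The normalization hypotheses $\norm{\varphi}_\infty|P|^{1/p}\le 1$ and $\norm{\psi}_\infty|Q|^{1/p}\le 1$ then let me dominate $|\varphi|\le |P|^{-1/p}\one_P$ and $|\psi|\le |Q|^{-1/p}\one_Q$ pointwise, so that
\[
\lambda \lesssim \norm{\varphi*\psi}_s \le |P|^{-1/p}|Q|^{-1/p}\,\norm{\one_P*\one_Q}_s,
\]
which rearranges to $\norm{\one_P*\one_Q}_s \ge \tau\,|P|^{1/\rho}|Q|^{1/\rho}$ with $\tau\asymp\lambda$, after checking that $1/p = 1/\rho$ fails — so one must be careful here: $1/p$ and $1/\rho=1/p'$ are not equal, and I would instead track the exponents so that the combination of $\norm{\varphi*\psi}_s$-lower bound and the $L^\infty$-bounds produces precisely the $|P|^{1/\rho}|Q|^{1/\rho}$ weight demanded by Lemma~\ref{lemma:compatible}, possibly after also using $\norm{\varphi}_1\le|P|^{1-1/p}\norm{\varphi}_\infty^{\,?}$-type interpolation between the $L^\infty$-bound and the trivial support bound.

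Once the hypothesis of Lemma~\ref{lemma:compatible} is verified with $\tau=c\lambda$ and $R$ the common rank bound, that lemma directly yields $\max(|P|,|Q|)\le C\tau^{-C}\min(|P|,|Q|)$, which is the first conclusion (with $C$ depending only on $\lambda,R,d,\Lambda$), and a continuum multiprogression $\scriptp\supset P+Q$ of controlled rank with $|\scriptp|\le C\tau^{-C}\max(|P|,|Q|)$. Since $P+Q\subset\scriptp$ we get $|P+Q|\le|\scriptp|\le C\tau^{-C}\max(|P|,|Q|) = C'\min(|P|,|Q|)$ by the first conclusion, which is exactly the second assertion. Finally I would absorb all $\tau$-dependence into the single constant $C=C(\lambda,R,d,\Lambda)$, using that $\lambda$ is fixed, and invoke Lemma~\ref{lemma:properversusgeneral} at the outset if needed to replace $P,Q$ by semiproper enclosing multiprogressions of the same rank (up to constants depending on $R$), so that the hypotheses of Lemma~\ref{lemma:compatible} — which are stated for general, not necessarily proper, multiprogressions — are met without extra work; note Lemma~\ref{lemma:interactionimpliesstructure} already assumes $P,Q$ proper, so this step is only needed if the enlargement in the pointwise domination $|\varphi|\le|P|^{-1/p}\one_P$ spoils properness, which it does not.

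The main obstacle I expect is the exponent bookkeeping in the first paragraph: making the passage from $\norm{\widehat{\varphi}\,\widehat{\psi}}_{p'/2}\ge\lambda$ to $\norm{\one_P*\one_Q}_s\ge\tau|P|^{1/\rho}|Q|^{1/\rho}$ with the \emph{exact} powers of $|P|,|Q|$ required by Lemma~\ref{lemma:compatible}. The Hausdorff-Young step is clean, but converting the $L^\infty$-normalizations into the right homogeneity requires choosing the interpolation between $L^\infty\cap\{\prec P\}$ and the Young exponent correctly; the relation $s^{-1}=2\rho^{-1}-1$ with $\rho=p'$ is precisely what makes $\one_P*\one_Q\in L^s$ with the homogeneity $|P|^{1/\rho}|Q|^{1/\rho}$ under Young's inequality ($\norm{\one_P*\one_Q}_s\le\norm{\one_P}_\rho\norm{\one_Q}_\rho = |P|^{1/\rho}|Q|^{1/\rho}$ since $2/\rho = 1+1/s$), so the matching is forced and the only real work is verifying that the $L^\infty$-bound on $\varphi,\psi$ supplies the complementary power so that the product $\norm{\varphi*\psi}_s$ carries total weight $|P|^{-1/p+1/\rho}|Q|^{-1/p+1/\rho}$ times a pure convolution term — and then reading off that $-1/p+1/\rho = -1/p + 1/p' = 1-2/p$, which is negative, so in fact one wants the \emph{reverse} domination and the correct route is to bound $\varphi*\psi$ below by combining the Fourier lower bound with an $L^\infty$ upper bound on $\varphi*\psi$ (itself $\le |P|^{-1/p}|Q|^{-1/p}\min(|P|,|Q|)$), converting an $L^s$ lower bound into a lower bound on the measure of the super-level set of $\one_P*\one_Q$, which is the genuinely correct form of Lemma~\ref{lemma:compatible}'s hypothesis. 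Sorting this reduction so that the powers align with Lemma~\ref{lemma:compatible} as literally stated is where care is needed; everything after that is immediate.
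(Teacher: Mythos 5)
Your overall strategy---convert the hypothesis into a lower bound for $\norm{\one_P*\one_Q}$ and then invoke Lemma~\ref{lemma:compatible}---is the same as the paper's, and it does close the case $p\in(1,\tfrac43]$ once two slips are repaired. First, you should take $\rho=p$, not $\rho=p'$: for $p\in(1,2)$ one has $p'\in(2,\infty)$, outside the admissible range $(1,2)$ in Lemma~\ref{lemma:compatible}. With $\rho=p$ and $s^{-1}=2p^{-1}-1$ one has $s'=p'/2$, the Hausdorff--Young inequality gives $\lambda\le\norm{\widehat{\varphi*\psi}}_{p'/2}\le\norm{\varphi*\psi}_s$, and the pointwise dominations $|\varphi|\le|P|^{-1/p}\one_P$, $|\psi|\le|Q|^{-1/p}\one_Q$ yield $\norm{\one_P*\one_Q}_s\ge\lambda\,|P|^{1/p}|Q|^{1/p}$, which is exactly the hypothesis of Lemma~\ref{lemma:compatible} with $\tau=\lambda$ and $\rho=p$; the homogeneity matches and no level-set reformulation is needed. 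So the ``exponent bookkeeping'' you flag is not the real obstacle.

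The genuine gap is the range $p\in(\tfrac43,2)$, which $\Lambda$ may well contain. There $p'/2<2$, hence $s>2$, and the inequality $\norm{\widehat{g}}_{s'}\le C\norm{g}_s$ that your reduction rests on is simply unavailable: the Fourier transform is not bounded from $L^s$ to $L^{s'}$ for any $s>2$. Your remark that ``the Hausdorff--Young step is clean'' is false precisely in the regime where new input is required, and your closing paragraph (super-level sets of $\one_P*\one_Q$) does not supply a substitute, since Lemma~\ref{lemma:compatible} as stated takes an $L^s$-norm lower bound as hypothesis. The paper handles $2<p'<4$ differently: defining $\theta$ by $1/p'=\tfrac12\theta+\tfrac14(1-\theta)$, it interpolates $\norm{\widehat{\varphi}\,\widehat{\psi}}_{p'/2}\le\norm{\widehat{\varphi}\,\widehat{\psi}}_1^\theta\norm{\widehat{\varphi}\,\widehat{\psi}}_2^{1-\theta}$, bounds the $L^1$ factor by $\norm{\varphi}_2\norm{\psi}_2$ via Cauchy--Schwarz and Plancherel, identifies the $L^2$ factor with $\norm{\varphi*\psi}_2$ by Plancherel, and then uses the $L^\infty$/support normalizations to deduce $\norm{\one_P*\one_Q}_2\ge\lambda^{1/(1-\theta)}|P|^{3/4}|Q|^{3/4}$; this is the hypothesis of Lemma~\ref{lemma:compatible} with $\rho=\tfrac43$ (so $s=2$). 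You would need to add such an argument, or some other device, to cover $p>\tfrac43$.
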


\begin{proposition}[Continuum Fre{\u\i}man Theorem] \label{prop:FreimanR}
Let $K<\infty$.
Let $A,B\subset\reals^d$ be Borel measurable sets with finite, positive Lebesgue measures.
Suppose that $|A+B|\le K|A|+K|B|$  and $|A|\le K|B|\le K^2|A|$.
There exists a proper continuum multiprogression $P\subset\reals^d$ of rank $\le C_K$ such that 
\[ A\subset P\ \text{ and }\  |P| \le C_K  |A|.\]
\end{proposition}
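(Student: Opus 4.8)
The strategy is to reduce the continuum Fre\u\i man theorem to its discrete counterpart by a discretization argument, following the general philosophy by which the earlier lemmas in \S5 are (as stated) to be deduced from known additive-combinatorial facts over $\integers^d$ or over finite Abelian groups. First I would normalize: rescale $\reals^d$ so that $|A|\asymp 1$, hence also $|B|\asymp_K 1$, and observe that the hypothesis $|A+B|\le K|A|+K|B|$ together with $|A|\le K|B|\le K^2|A|$ gives $|A+B|\le C_K\min(|A|,|B|)$ and $|A+B|\le C_K|A|$. One also checks $|2A-2A|\le C_K|A|$ by iterating the Pl\"unnecke--Ruzsa-type inequalities for Lebesgue measure (these hold with the same proofs as in the discrete case, since they only use submodularity of measure under sumsets).

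Next I would discretize. Fix a small $\eps>0$ to be chosen depending on $K,d$, tile $\reals^d$ by a lattice $\eps\integers^d$, and let $\tilde A$ (resp.\ $\tilde B$) be the set of lattice cubes meeting $A$ (resp.\ $B$), viewed as subsets of the group $\integers^d$. Then $\#\tilde A\asymp \eps^{-d}|A|$, similarly for $\tilde B$, and $\tilde A+\tilde B$ is contained in the set of cubes meeting $A+B+[-\eps,\eps]^d$, whose measure is still $\le C_K|A|$ provided $\eps$ is small relative to the "inner scale" of $A$ — which can be arranged after first replacing $A,B$ by $A+[-\eps,\eps]^d$, $B+[-\eps,\eps]^d$ (this enlarges measures only by a $K$-controlled factor while leaving the sumset hypothesis intact up to constants, since $|A+B+[-2\eps,2\eps]^d|\le C_K|A|$ by the $2A-2A$ bound). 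Thus $\tilde A,\tilde B\subset\integers^d$ satisfy $|\tilde A+\tilde B|\le C_K(|\tilde A|+|\tilde B|)$ with $|\tilde A|\asymp_K|\tilde B|$. Apply the discrete Fre\u\i man theorem (Green--Ruzsa over $\integers^d$): there is a discrete (generalized) arithmetic progression $\mathbf P_0\subset\integers^d$ of rank $\le C_K$ with $\tilde A\subset\mathbf P_0$ and $\#\mathbf P_0\le C_K\#\tilde A$. Rescaling $\mathbf P_0$ by $\eps$ and then thickening by a single cube gives a continuum multiprogression $P_1$ of rank $\le C_K+d$ containing $A$ with $|P_1|\le C_K|A|$; appending $d$ generators of length $\eps$ absorbs the cube.

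Finally I would upgrade $P_1$ to a \emph{proper} continuum multiprogression using Lemma~\ref{lemma:properversusgeneral}: that lemma produces $P\supset P_1\supset A$ of the same rank with $\sigma(P)\le C_r|P|$ and $|P|\le C_r\sigma(P_1)\le C_r\cdot C_K|A|$ (the semiproper bound gives $|P|\asymp\sigma(P)$). One more application of the "shorten/make proper" normalization — or a direct appeal to the discrete proper-progression form of Fre\u\i man's theorem before discretizing — yields a genuinely proper $P$ of rank $\le C_K$ with $A\subset P$ and $|P|\le C_K|A|$, which is the assertion. The main obstacle is the discretization bookkeeping: one must choose the lattice scale $\eps$ small enough that passing to the cube cover does not inflate the sumset beyond $C_K|A|$, and this requires controlling $A$ and $B$ "from the inside" via the iterated sumset bound $|2A-2A|\le C_K|A|$ rather than just the raw hypothesis; keeping the rank bound $\le C_K$ (not merely $\le C_{K,\eps}$) through the thickening step, and verifying that all Pl\"unnecke-type inequalities transfer verbatim from counting measure to Lebesgue measure, are the places where care is needed. (An alternative entirely continuous route — running the Green--Ruzsa Fourier-analytic argument directly over $\reals^d$ — is possible but is essentially this same discretization in disguise.)
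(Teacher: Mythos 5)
Your overall strategy (discretize at a small scale, apply the discrete Fre\u{\i}man theorem, rescale and thicken, then invoke Lemma~\ref{lemma:properversusgeneral} for properness) is the same one the paper follows, but the step you yourself flag as "bookkeeping" is a genuine gap, and the justification you offer for it is not correct. The claim $\#\tilde A\asymp \eps^{-d}|A|$ for the set of \emph{all} cubes meeting $A$ is false at a fixed scale $\eps$, and the proposed repair fails for the same reason: take $d=1$, $A=B=\bigcup_{i=1}^N[i,i+\delta]$ with $\delta\ll\eps\ll 1$. Then $|A|=N\delta$, $|A+B|\le 4|A|$, so the hypotheses hold with $K=2$, yet the number of $\eps$-cubes meeting $A$ is about $N\gg \eps^{-1}|A|$, and $|A+[-\eps,\eps]|\approx 2N\eps\gg C_K|A|$. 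In particular the assertion that replacing $A,B$ by their $\eps$-thickenings "enlarges measures only by a $K$-controlled factor ... since $|A+B+[-2\eps,2\eps]^d|\le C_K|A|$ by the $2A-2A$ bound" is a non sequitur: $|2A-2A|\le C_K|A|$ gives no control whatever on the measure of an $\eps$-neighborhood, because $[-\eps,\eps]^d$ bears no relation to the additive structure of $A$; in the example above the left side is $\approx 8N\eps$ while the right side is $\approx C_KN\delta$. If you run your chain in this example with such an $\eps$, the final progression has measure $\gtrsim N\eps$, so the conclusion $|P|\le C_K|A|$ is lost, not merely an intermediate estimate. What is true is that $\eps$ must be chosen small depending on the fine structure of $A$ (which is permissible, since the constants in the conclusion depend only on $K$), but the only general mechanism that produces such a scale is measure-theoretic (Lebesgue density), not the Pl\"unnecke-type iterated-sumset bounds you cite; equivalently, what you would need is something like $|\overline{A}|\le C_K|A|$ and $|\overline{A+B}|\le C_K|A|$, which do follow from the hypotheses but only via a separate density-point/Steinhaus-type argument that your proposal does not supply.

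This is exactly where the paper's proof diverges from yours. It discretizes at a scale $s$ chosen (via the Lebesgue density theorem, with $s$ depending on $A,B$) so that one keeps only the cubes in which $A$, respectively $B$, has density at least $1-\sigma_d$; for those sets $A(s),B(s)$ the counts $\#A(s)\approx s^{-d}|A|$ and $|A+B|\ge c\,s^d\,\#(A(s)+B(s))$ are immediate, so the discrete Fre\u{\i}man theorem applies with constants depending only on $K$. The price is an exceptional set $A^\dagger$ of small measure which your all-cubes discretization was designed to avoid, and recapturing it requires the second idea missing from your write-up: a Ruzsa-covering argument. The paper takes a maximal family of points $a_j\in A$ with the translates $a_j+B^*$ pairwise disjoint ($B^*=B\setminus B^\dagger$); since these translates sit inside $A+B$, there are only $O_K(1)$ of them, and maximality gives $A\subset\bigcup_j\big(a_j+B^*-B^*\big)$, hence $A$ is covered by $O_K(1)$ translates of $Q-Q$ for the multiprogression $Q$ attached to $B$, after which one encloses this union in a single multiprogression of rank $O_K(1)$ and applies Lemma~\ref{lemma:properversusgeneral}. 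So either you must add an argument controlling the measures of $\eps$-neighborhoods of $A$, $B$, and $A+B$ (and a proof, not a citation, of Pl\"unnecke--Ruzsa for Lebesgue measure, which in this context is usually obtained by the very discretization at issue), or you should switch to the density-cube selection, in which case you also need the covering step to upgrade "most of $A$" to "all of $A$".
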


\begin{definition}
Let $A,B\subset\reals^d$ be Lebesgue measurable sets with finite Lebesgue measures. The additive energy $E(A,B)$
of the pair $(A,B)$ is defined to be
\begin{equation} E(A,B) = \norm{\one_A*\one_B}_{L^2(\reals^d)}^2.  \end{equation}
\end{definition}
By Young's convolution inequality,
\begin{equation} E(A,B) \le |A|^{3/2}|B|^{3/2}.  \end{equation}

\begin{proposition}[Continuum Balog-Szemer\'edi Theorem] \label{prop:BSGR}
Let $d\ge 1$ and $1\le K<\infty$.
Let $A,B\subset\reals^d$ be Lebesgue measurable sets with finite Lebesgue measures. 
Suppose that $\max(|A|,|B|)\le K\min(|A|,|B|)$
and that $E(A,B) \ge K^{-1}|A|^3$.
Then there exist Lebesgue measurable subsets $A'\subset A$, $B'\subset B$ satisfying 
\begin{align} \min(|A'|,|B'|) &\ge CK^{-C}|A|
\\ |A'+B'| &\le CK^C |A|.  \end{align}
\end{proposition}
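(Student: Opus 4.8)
The plan is to deduce this continuum statement from its well-known discrete counterpart (the Balog--Szemer\'edi--Gowers theorem for finite sets in an abelian group) by a standard discretization argument, exploiting the fact that the additive energy and the sumset size behave well under passage from a set $A\subset\reals^d$ to a finite $\varepsilon$-net. First I would fix a small scale $\varepsilon>0$ and replace $A,B$ by the finite collections $\scripta,\scriptb$ of $\varepsilon$-lattice cubes that they meet (equivalently, intersect $A,B$ with the lattice $\varepsilon\integers^d$ after a suitable harmless fattening, so that $\#\scripta\asymp \varepsilon^{-d}|A|$ and similarly for $\scriptb$). The hypotheses $\max(|A|,|B|)\le K\min(|A|,|B|)$ and $E(A,B)\ge K^{-1}|A|^3$ translate, up to constants depending only on $d$, into $\max(\#\scripta,\#\scriptb)\lesssim K\min(\#\scripta,\#\scriptb)$ and $E(\scripta,\scriptb)\ge c(d)K^{-1}(\#\scripta)^3$ in $\integers^d$; the energy comparison is the one slightly delicate point and is handled by noting that $\one_A*\one_B$ is, up to scale factors $\varepsilon^{d}$ and Lipschitz-type errors controlled by the geometry of cubes, a smoothed version of the discrete convolution $\one_\scripta * \one_\scriptb$.

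Next I would invoke the discrete Balog--Szemer\'edi--Gowers theorem: from $E(\scripta,\scriptb)\ge c K^{-1}(\#\scripta)^3$ and the comparable-size hypothesis, one obtains subsets $\scripta'\subset\scripta$, $\scriptb'\subset\scriptb$ with $\min(\#\scripta',\#\scriptb')\ge C K^{-C}\#\scripta$ and $\#(\scripta'+\scriptb')\le CK^C\#\scripta$, where $C$ is absolute. I would then push these back to the continuum by setting $A'=\bigcup_{Q\in\scripta'}Q\cap A$ and $B'=\bigcup_{Q\in\scriptb'}Q\cap B$ (intersecting with the original sets so that $A'\subset A$, $B'\subset B$ as required). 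The lower bounds $\min(|A'|,|B'|)\ge CK^{-C}|A|$ follow directly from $\min(\#\scripta',\#\scriptb')\ge CK^{-C}\#\scripta$ together with $\#\scripta\asymp\varepsilon^{-d}|A|$. For the sumset bound: $A'+B'$ is contained in the union of the cubes $Q+Q'$ with $Q\in\scripta'$, $Q'\in\scriptb'$, and each such $Q+Q'$ has volume $\le (2\varepsilon)^d$ and is contained in an $O(\varepsilon)$-neighborhood of a point of $\varepsilon\integers^d$ lying in $\scripta'+\scriptb'$; hence $|A'+B'|\lesssim_d \varepsilon^d\,\#(\scripta'+\scriptb')\lesssim_d \varepsilon^d\cdot CK^C\varepsilon^{-d}|A| = C'K^C|A|$.

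The main obstacle, and the part deserving care rather than real difficulty, is the energy transfer in the \emph{reverse} direction at the discretization step: one must check that the hypothesis $E(A,B)\ge K^{-1}|A|^3$ on the continuum genuinely forces a lower bound on the \emph{discrete} energy $E(\scripta,\scriptb)$, with only a loss of an absolute constant (depending on $d$). This is where a fixed small $\varepsilon$ is not quite enough — smoothing the discrete convolution can only \emph{decrease} the $L^2$ norm by a controlled factor, so in fact the inequality $E(\scripta,\scriptb)\gtrsim_d \varepsilon^{-3d}E(A,B)$ holds by writing $\one_A*\one_B$ as an average of translates of $\varepsilon^{2d}(\one_\scripta*\one_\scriptb)$-type functions and applying the triangle inequality in $L^2$; since $E(A,B)\ge K^{-1}|A|^3\asymp_d K^{-1}\varepsilon^{3d}(\#\scripta)^3$, this yields $E(\scripta,\scriptb)\ge c(d)K^{-1}(\#\scripta)^3$, as needed. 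Everything else is bookkeeping with constants that depend only on $d$ and on the absolute constants in the discrete theorem.
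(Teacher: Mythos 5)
Your overall route — discretize at a small scale, apply the discrete Balog--Szemer\'edi--Gowers theorem from Tao--Vu, and pull the subsets back to the continuum — is exactly the paper's strategy. The gap is in the discretization itself. You take $\scripta$ to be the family of $\varepsilon$-cubes that $A$ \emph{meets} and assert $\#\scripta\asymp\varepsilon^{-d}|A|$; only the lower bound $\#\scripta\ge\varepsilon^{-d}|A|$ is automatic, and the upper bound is false for general measurable sets satisfying the hypotheses. For instance, $A=B=\bigcup_{j=1}^N[j,\,j+\tfrac1N]\subset\reals$ has $|A|=1$ and $E(A,A)\asymp|A|^3$ (so the hypotheses hold with $K$ absolute), yet it meets $N$ unit intervals with $N$ arbitrary; mixing such a set with an interval also breaks the transfer of the ratio hypothesis $\max(\#\scripta,\#\scriptb)\lesssim K\min(\#\scripta,\#\scriptb)$. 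This failure propagates through three steps: (i) your energy-transfer inequality $E(\scripta,\scriptb)\gtrsim_d\varepsilon^{-3d}E(A,B)$ is correct, but it does not yield the needed normalized bound $E(\scripta,\scriptb)\ge c K^{-1}(\#\scripta)^3$ when $\#\scripta\gg\varepsilon^{-d}|A|$, so the hypothesis of the discrete theorem is not verified; (ii) the deduction $\min(|A'|,|B'|)\ge CK^{-C}|A|$ from $\min(\#\scripta',\#\scriptb')\ge CK^{-C}\#\scripta$ fails, because a cube that $A$ merely meets may carry essentially none of the measure of $A$, so $|A'|=\sum_{Q\in\scripta'}|Q\cap A|$ need not be comparable to $\varepsilon^d\#\scripta'$; (iii) the comparability of $\#\scripta$ and $\#\scriptb$ can fail as noted. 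Only your sumset pullback $|A'+B'|\lesssim_d\varepsilon^d\,\#(\scripta'+\scriptb')$ is unconditionally sound.

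The fix is the one the paper uses: define $\tilde A=A(s)$ to be the set of lattice points $n$ for which the cube $sn+s\unitQ^d$ is nearly full, $|A\cap(sn+s\unitQ^d)|\ge(1-\sigma_d)s^d$, and discard the exceptional part $A^\dagger$, whose measure tends to $0$ as $s\to0$ by the Lebesgue density theorem (so $s$ must be chosen depending on $A,B$, which is harmless since the conclusion does not involve $s$). With this choice one genuinely has $\#\tilde A\asymp s^{-d}|A|$, and each selected cube carries $\gtrsim s^d$ of $A$-measure, which restores both the hypotheses of the discrete theorem and the lower bounds for $|A'|,|B'|$ after pullback. One more point you would then need to address, which your write-up does not: discarding $A^\dagger,B^\dagger$ must not destroy the energy hypothesis. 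This follows from Young's inequality, e.g. $\norm{\one_{A^\dagger}*\one_B}_2\le|A^\dagger|\,|B|^{1/2}$, so choosing $s$ small enough that $|A^\dagger|,|B^\dagger|\le cK^{-1/2}\min(|A|,|B|)$ keeps $E(A\setminus A^\dagger,B\setminus B^\dagger)\ge\tfrac12K^{-1}|A|^3$. (The paper feeds the energy into Theorem 2.19 of Tao--Vu via a large set of pairs with small restricted sumset rather than via the energy form of BSG you quote; that difference is cosmetic, and your use of the energy formulation is fine once the discretization is repaired.)
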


\section{Quasi-extremizers} 

In this section we exploit two connections to obtain structural information concerning
quasi-extremizers of the Hausdorff-Young inequality. The first is a connection between
the Hausdorff-Young inequality for the Fourier transform, and Young's inequality for convolutions.
If $p\in [\tfrac43,2)$, and if $f$ is a near-extremizer for the Hausdorff-Young inequality
with exponent $p$, then $f$ is a near-extremizer for Young's convolution inequality;
more accurately, the ordered pair $(f,f)$ is a near-extremizer 
in the sense that if $\norm{\widehat{f}}_{p'}\ge (1-\delta)\bestA_p^d\norm{f}_p$
then  $\norm{f*f}_s\ge (1-\delta)^2 A\norm{f}_p^2$ where $s^{-1} = 2p^{-1}-1$
and $A$ is the optimal constant in this inequality.
For $p\in(1,\tfrac43)$ this implication fails, but there
is a rather weak substitute: a suitable power of $f$  remains a {\em quasi}--extremizer for the convolution inequality.
Moreover, this holds if $f$ is merely a quasi-extremizer for the Hausdorff-Young inequality.

The second is a connection between Young's convolution inequality, and 
fundamental principles of additive combinatorics. These imply
that quasi-extremizers of Young's convolution inequality have components of nonnegligible size
with certain arithmetic structure. 

Young's inequality with optimal constant states that for any nonnegative functions $f_i\in L^{p_i}(\reals^d)$,
\[ \norm{f_1*f_2}_s \le \bestA_{p_1}^d \bestA_{p_2}^d \bestA_{s'}^d \norm{f_1}_{p_1}\norm{f_2}_{p_2}.  \]

\begin{definition}
$(f_1,f_2)$ is said to be a $\delta$--quasi-extremizer for Young's inequality (with exponents $(p_1,p_2)$) if
$f_j$ has a positive $L^{p_j}$ norm and
\[ \norm{f_1*f_2}_s \ge (1-\delta)\bestA_{p_1}^d \bestA_{p_2}^d \bestA_{s'}^d \norm{f_1}_{p_1}\norm{f_2}_{p_2}.  \]
Likewise, $0\ne f\in L^p$ is a $\delta$--quasi-extremizer for the Hausdorff-Young inequality if
\[ \norm{\widehat{f}}_q \ge (1-\delta)\bestA_p^d\norm{f}_p,\] where $q=p'$.
\end{definition}
By a quasi--extremizer we mean a $\delta$--quasi-extremizer for some small unspecified value of $\delta$.

\begin{lemma}[Quasi-extremizers for Young's inequality] \label{lemma:Youngquasidecomp}
Let $p_1,p_2\in(1,\infty)$ and suppose that the exponent $s$ defined by $s^{-1} = p_1^{-1}+p_2^{-1}-1$
also belongs to $(1,\infty)$. Then
for each $\delta>0$ there exist $c_\delta,C_\delta\in(0,\infty)$ such that
for any $\delta$--quasiextremizer $(f_1,f_2)$ for Young's convolution inequality there exist a 
proper continuum multiprogession $P$ and a function $\varphi$ satisfying
\begin{align*}
& \varphi\prec f_1,
\\&\varphi \prec P,
\\&\norm{\varphi}_{\infty} |P|^{1/p_1} \le C_\delta \norm{f}_{p_1},
\\&\norm{f-\varphi}_{p_1}\le (1-c_\delta) \norm{f}_{p_1}.
\end{align*}
Moreover, there exists $\alpha\in\reals^+$ such that 
$\alpha\le |\varphi(x)|\le 2\alpha$  whenever $\varphi(x)\ne 0$.
\end{lemma}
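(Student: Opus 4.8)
\emph{The plan} is to proceed in three stages. After replacing $f_i$ by $f_i/\norm{f_i}_{p_i}$---which preserves the hypothesis and scales both sides of each asserted inequality by the same factor---one may assume $\norm{f_1}_{p_1}=\norm{f_2}_{p_2}=1$, so that $\norm{f_1*f_2}_s\ge c_\delta:=(1-\delta)\bestA_{p_1}^d\bestA_{p_2}^d\bestA_{s'}^d>0$. In stage (i) a dyadic pigeonholing reduces matters to the case in which $f_1,f_2$ are essentially indicator functions of sets of comparable measure; in stage (ii) the resulting quasi-extremality is converted into a lower bound for the additive energy of those sets; in stage (iii) Propositions~\ref{prop:BSGR} and \ref{prop:FreimanR} produce the multiprogression $P$.

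\emph{Stage (i).} Let $f_i=\sum_j f_{i,j}$ be the canonical discrete representation, so $f_{i,j}$ is supported on $E_{i,j}=\{2^j\le|f_i|<2^{j+1}\}$; put $A_{i,j}=2^j|E_{i,j}|^{1/p_i}$, so that $\norm{f_{i,j}}_{p_i}\asymp A_{i,j}$ and $\sum_j A_{i,j}^{p_i}\asymp 1$. Young's convolution inequality holds with a constant bounded uniformly for exponent triples near $(p_1,p_2,s)$ obeying the scaling relation; applying it to indicator functions with a small optimal choice of exponent perturbation yields an off-diagonal bound
\[ \norm{\one_A*\one_B}_s\le |A|^{1/p_1}|B|^{1/p_2}\,\min\!\big(|A|/|B|,\,|B|/|A|\big)^{\eps_0} \]
with some $\eps_0=\eps_0(p_1,p_2)>0$, hence $\norm{f_{1,j}*f_{2,k}}_s\lesssim A_{1,j}A_{2,k}\min(|E_{1,j}|/|E_{2,k}|,|E_{2,k}|/|E_{1,j}|)^{\eps_0}$. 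The crucial point---carried out as in the corresponding step of \cite{christyoungest}---is that this off-diagonal decay, together with the $\ell^{p_i}$ summability of $(A_{i,j})_j$, makes $\sum_{j,k}\norm{f_{1,j}*f_{2,k}}_s$ comparable to the largest $O_\delta(1)$ of its terms. Combined with $c_\delta\le\norm{f_1*f_2}_s\le\sum_{j,k}\norm{f_{1,j}*f_{2,k}}_s$ and a further pigeonholing over boundedly many pairs, this produces indices $j_1,j_2$ with $\norm{f_{1,j_1}*f_{2,j_2}}_s\ge c'_\delta$. Write $E_i=E_{i,j_i}$ and $\alpha_i=2^{j_i}$. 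Young's inequality for this single pair, with $A_{i,j}\lesssim 1$, forces $A_{i,j_i}\gtrsim c'_\delta$, hence $\alpha_i^{p_i}|E_i|=A_{i,j_i}^{p_i}\asymp_\delta 1$; and since $|f_{i,j_i}|\le 2\alpha_i\one_{E_i}$, the off-diagonal bound applied to $\one_{E_1}*\one_{E_2}$, whose $L^s$ norm is $\gtrsim (\alpha_1\alpha_2)^{-1}c'_\delta\asymp_\delta |E_1|^{1/p_1}|E_2|^{1/p_2}$, forces $|E_1|\asymp_\delta|E_2|$.

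\emph{Stages (ii) and (iii).} Set $h=\one_{E_1}*\one_{E_2}$ and $M=\min(|E_1|,|E_2|)$, so that $\norm{h}_1=|E_1||E_2|\asymp_\delta M^2$, $\norm{h}_\infty\le M$, and $\norm{h}_s\gtrsim_\delta M^{1+1/s}$. Interpolating the $L^s$ lower bound against these two trivial bounds (placing $L^2$ between $L^\infty$ and $L^s$ when $s\ge2$, between $L^1$ and $L^s$ when $s<2$) gives $E(E_1,E_2)=\norm{h}_2^2\ge K^{-1}M^3$ with $K=O_\delta(1)$; since also $|E_1|\asymp_\delta|E_2|$, Proposition~\ref{prop:BSGR} supplies $E_i'\subset E_i$ with $\min(|E_1'|,|E_2'|)\ge c_\delta|E_1|$ and $|E_1'+E_2'|\le C_\delta|E_1|$. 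As $|E_1'|$, $|E_2'|$, $|E_1'+E_2'|$ are then mutually comparable up to $O_\delta(1)$, Proposition~\ref{prop:FreimanR} applied to $E_1'$ yields a proper continuum multiprogression $P\supset E_1'$ of rank $O_\delta(1)$ with $|P|\le C_\delta|E_1'|\le C_\delta|E_1|$. Finally set $\varphi=f_1\one_{E_1'}$ and $\alpha=2^{j_1}$. Then $\varphi\prec f_1$ and $\varphi\prec P$ by construction, and $\alpha\le|\varphi(x)|<2\alpha$ wherever $\varphi(x)\ne0$ since $E_1'\subset E_{1,j_1}$. From $\alpha^{p_1}|E_1|\le\int_{E_1}|f_1|^{p_1}\le 1$ one gets $\norm{\varphi}_\infty|P|^{1/p_1}\le 2\alpha(C_\delta|E_1|)^{1/p_1}\le C'_\delta$, and from $\norm{\varphi}_{p_1}^{p_1}\ge\alpha^{p_1}|E_1'|\ge c_\delta\alpha^{p_1}|E_1|\asymp_\delta 1$ together with $\norm{\varphi}_{p_1}^{p_1}+\norm{f_1-\varphi}_{p_1}^{p_1}=1$ one gets $\norm{f_1-\varphi}_{p_1}\le(1-c_\delta)$; undoing the normalization reinstates the factors $\norm{f_1}_{p_1}$.

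\emph{Main obstacle.} The only genuinely nontrivial step is the summation over dyadic scales in stage (i): showing that $\sum_{j,k}\norm{f_{1,j}*f_{2,k}}_s$ is dominated by boundedly many terms, with no a priori bound on the number of active scales. This is precisely the mechanism developed for Young's convolution inequality in \cite{christyoungest}, and a simplified version of it applies here; granting it, the remainder is the energy interpolation of stage (ii) and direct appeals to Propositions~\ref{prop:BSGR} and \ref{prop:FreimanR}.
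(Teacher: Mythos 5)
Your proposal is correct in substance and follows the paper's strategy: reduce by a level-set (dyadic) decomposition to a single pair of level sets $E_1,E_2$ of comparable measure carrying a definite share of the mass and interacting strongly, convert this into an additive-energy lower bound, apply Proposition~\ref{prop:BSGR} and then Proposition~\ref{prop:FreimanR}, and take $\varphi=f_1\one_{E_1'}$, verifying the four conclusions exactly as the paper does. The differences are modest and worth recording. First, the paper organizes the pigeonholing trilinearly: it dualizes, introducing $f_3$ with $\langle f_1*f_2,f_3\rangle=\norm{f_1*f_2}_s\norm{f_3}_{p_3}$, and invokes Lemma~\ref{lemma:lorentztheoryconsequences} to extract three level sets with comparable measures; the energy bound then comes from Cauchy--Schwarz against $\one_{E_{3,k_3^*}}$. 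You stay bilinear, extracting one pair $(j_1,j_2)$ with $\norm{f_{1,j_1}*f_{2,j_2}}_s\ge c'_\delta$ and getting the energy bound by interpolating the $L^s$ lower bound for $\one_{E_1}*\one_{E_2}$ against the trivial $L^\infty$ bound (when $s\ge2$) or $L^1$ bound (when $s<2$); the exponent arithmetic there checks out and this is a legitimate, slightly more self-contained route to $E(E_1,E_2)\gtrsim_\delta\min(|E_1|,|E_2|)^3$. Second, the one genuinely nontrivial step you defer---that a definite $\norm{f_1*f_2}_s$ forces a single dyadic pair to contribute a definite amount, despite unboundedly many active scales---is precisely the content of the paper's Lemma~\ref{lemma:lorentztheoryconsequences}, whose proof the paper likewise omits with a reference to \cite{christyoungest} and \cite{christonextremals}; so you are at parity with the paper here, not below it. Do note, though, that your phrasing of that step is stronger than what is true or needed: the double sum $\sum_{j,k}\norm{f_{1,j}*f_{2,k}}_s$ need not be comparable to its largest $O_\delta(1)$ terms in general (it can exceed the maximal term by an unbounded factor while both tend to zero); the correct and sufficient statement is that if the total is $\ge c_\delta$ then some single pair contributes $\ge c'_\delta$, which is what the cited argument delivers.
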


The final conclusion can be equivalently formulated as $\alpha\le |\varphi(x)|\le C_0\alpha$ 
where $C_0$ is a constant independent of $(f_1,f_2)$. Indeed, then
$\varphi=\sum_{n=0}^{1+\log_2 C_0} \varphi_n$ where $2^n\alpha\le |\varphi_n(x)|< 2^{n+1}\alpha$
whenever $\varphi_n(x)\ne 0$. At least one of the summands $\varphi_n$ must satisfy the conclusions
of the lemma with the stated factor of $2$, with adjusted values of $c_\delta,C_\delta$.

To prove Lemma~\ref{lemma:Youngquasidecomp}, 
choose $f_3$ such that $\langle f_1*f_2,f_3\rangle = \norm{f_1*f_2}_s\norm{f_3}_{p_3}$
where $\sum_{i=1}^3 p_i^{-1}=2$.
Assume without loss of generality that $\norm{f_i}_{p_i}=1$ for each $i\in\set{1,2,3}$.
For each $i\in\set{1,2,3}$ and each $k\in\integers$ let $E_{i,k}=\set{x: 2^k\le |f_i(x)|<2^{k+1}}$. Express 
\begin{equation} \label{eq:levelsetdecomp}
f_i = \sum_{k=-\infty}^\infty 2^k F_{i,k}\end{equation}
where $f_i\one_{E_{i,k}} = 2^k F_{i,k}$.
Thus $1\le |F_{i,k}(x)|<2$ for $x\in E_{i,k}$ and $F_{i,k}(x)=0$ for $x\notin E_{i,k}$.

\begin{lemma} \label{lemma:lorentztheoryconsequences}
There exist $k_i^*$ such that 
\begin{gather*}
2^{k_i^* p_i}|E_{i,k_i^*}| \ge c_\delta 
\\ \langle \one_{E_{1,k_1^*}}*\one_{E_{2,k_2^*}},\one_{E_{3,k_3^*}}\rangle \ge c_\delta \prod_{i=1}^3 |E_{i,k_i^*}|^{1/p_i}
\\ |k_i^*-k_j^*|\le C_\delta\ \text{ for all $i,j\in\set{1,2,3}$}.
\end{gather*}
\end{lemma}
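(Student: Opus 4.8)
The plan is to recast the near-extremality inequality $\langle f_1*f_2,f_3\rangle\ge(1-\delta)\bestA$, where $\bestA=\bestA_{p_1}^d\bestA_{p_2}^d\bestA_{p_3}^d$ with $p_3=s'$, as a sum over triples of dyadic amplitude levels, and to extract by pigeonholing one triple of level sets that already carries a fixed fraction of the action. Since $|f_1*f_2|\le|f_1|*|f_2|$, $\||f_i|\|_{p_i}=\|f_i\|_{p_i}$, and $E_{i,k}$ depends only on $|f_i|$, we may assume $f_i\ge0$, and then take $f_3\ge0$. Inserting the canonical representations $f_i=\sum_k2^kF_{i,k}$ and using $\one_{E_{i,k}}\le F_{i,k}<2\one_{E_{i,k}}$ pointwise gives
\[\textstyle\sum_{\vec k\in\integers^3}T(\vec k)\ \ge\ 2^{-3}(1-\delta)\bestA\ =:\ c_0,\qquad T(\vec k):=2^{k_1+k_2+k_3}\langle\one_{E_{1,k_1}}*\one_{E_{2,k_2}},\one_{E_{3,k_3}}\rangle.\]
Writing $\mu_i(k)=2^{kp_i}|E_{i,k}|$, one has $\sum_k\mu_i(k)\le\|f_i\|_{p_i}^{p_i}=1$, so for any $\eta>0$ at most $\eta^{-1}$ values of $k$ satisfy $\mu_i(k)\ge\eta$. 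The lemma reduces to producing one triple $\vec k^\ast$ with $T(\vec k^\ast)\ge c_\delta$ and $\mu_i(k_i^\ast)\ge c_\delta$ for $i=1,2,3$: the first of these is the first conclusion; the second follows from $\langle\one_{E_{1,k_1^\ast}}*\one_{E_{2,k_2^\ast}},\one_{E_{3,k_3^\ast}}\rangle=2^{-(k_1^\ast+k_2^\ast+k_3^\ast)}T(\vec k^\ast)\ge c_\delta\,2^{-\sum_ik_i^\ast}\ge c_\delta\prod_i|E_{i,k_i^\ast}|^{1/p_i}$, using $\mu_i\le1$; the third is obtained below.

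I would extract $\vec k^\ast$ by showing that, for a suitable $\eta=\eta(\delta)>0$, the part of $\sum_{\vec k}T(\vec k)$ coming from triples with $\min_i\mu_i(k_i)<\eta$ is at most $c_0/2$. Granting this, the ``central'' triples (all $\mu_i(k_i)\ge\eta$) carry at least $c_0/2$ and number at most $\eta^{-3}$, so some central $\vec k^\ast$ has $T(\vec k^\ast)\ge(c_0/2)\eta^3$, and one takes $c_\delta=\min(\eta,(c_0/2)\eta^3)$. The peripheral bound is the crux, and two ingredients enter. First, summing $k_2,k_3$ out against $f_2,f_3$ (which dominate $\sum_k2^k\one_{E_{i,k}}$) and applying Young's inequality, $\sum_{k_2,k_3}T(k_1,k_2,k_3)\le 2^{k_1}\langle\one_{E_{1,k_1}}*f_2,f_3\rangle\le C\mu_1(k_1)^{1/p_1}$, and cyclically. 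Second, because $1/p_1<1$ the naive sum $\sum_{k_1:\mu_1(k_1)<\eta}\mu_1(k_1)^{1/p_1}$ need not be small, so one must also use (a) Young's inequality on indicators at exponent triples $\vec q$ in a full neighborhood $\scriptn$ of $\vec p$ inside $\set{q_1^{-1}+q_2^{-1}+q_3^{-1}=2}\cap[1,\infty]^3$ — such a neighborhood exists because $\vec p$ lies in the interior of the boundedness region of convolution — which upgrades the restricted-type estimate to one gaining a fixed power of $\min(|A|,|B|,|C|)/\max(|A|,|B|,|C|)$, and (b) the fact that a near-extremizer cannot disperse a non-negligible share of its $L^p$ mass over arbitrarily many amplitude levels, a quantitative-compactness statement of the type encoded by the estimate \eqref{eq:nicelybounded} and Lemma~\ref{lemma:nicelyboundednearextremizers}. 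This is the main obstacle; its analogue for Young's convolution inequality is carried out in \cite{christyoungest}, and essentially the same argument applies here.

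Finally, for the third conclusion I would apply Young's inequality on indicators at each $\vec q\in\scriptn$ to the single triple $\vec k^\ast$. With $\epsilon_i:=q_i^{-1}-p_i^{-1}$, so $\sum_i\epsilon_i=0$, and $|E_{i,k}|=2^{-kp_i}\mu_i(k)$ with $\mu_i\le1$,
\[c_\delta\ \le\ T(\vec k^\ast)\ \le\ C(\vec q)\,2^{-\sum_ip_ik_i^\ast\epsilon_i}\prod_i\mu_i(k_i^\ast)^{1/q_i}\ \le\ C(\vec q)\,2^{-\sum_ip_ik_i^\ast\epsilon_i}.\]
As $\vec\epsilon$ ranges over a ball about $0$ in $\set{\sum_i\epsilon_i=0}$ (both signs being admissible), this forces $\big|\sum_ip_ik_i^\ast\epsilon_i\big|\le C_\delta$ for all such $\vec\epsilon$, hence $|p_ik_i^\ast-p_jk_j^\ast|\le C_\delta$ for all $i,j$; equivalently, by the improved restricted-type bound of (a) applied to $\vec k^\ast$, the three measures $|E_{i,k_i^\ast}|$ are mutually comparable up to factors depending only on $\delta$. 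Together with $\eta\le\mu_i(k_i^\ast)\le1$ this pins the indices $k_i^\ast$ down relative to one another, which is the third conclusion. The single genuinely hard step is the peripheral estimate of the preceding paragraph; everything else is bookkeeping with Young's inequality and pigeonholing.
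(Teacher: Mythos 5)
The paper never gives a proof of this lemma: it declares it ``parallel to Proposition~11.1 of \cite{christyoungest} and to a corresponding result in \cite{christonextremals}'' and omits the argument, so the comparison can only be with that cited reasoning. Your bookkeeping is in the same spirit and is largely correct as far as it goes: the reduction to nonnegative $f_i$, the dyadic trilinear sum $\sum_{\vec k}T(\vec k)\ge c_0$, the observation that the first two conclusions follow from a single triple with $T(\vec k^*)\ge c_\delta$ and $\mu_i(k_i^*)\ge\eta$ (using $\mu_i(k)\le 1$ for every level), the count of at most $\eta^{-3}$ central triples, and the perturbed-exponent Young argument at the extracted triple are all fine. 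But the proposal is not a proof, because the one step carrying all the analytic content --- the peripheral estimate that triples with $\min_i\mu_i(k_i)<\eta$ contribute at most $c_0/2$ --- is precisely the Lorentz-space refinement that the omitted proof supplies, and you do not prove it; you name it the main obstacle and defer to \cite{christyoungest}. Moreover your ingredient (b) is misdirected: this lemma is consumed by Lemma~\ref{lemma:Youngquasidecomp} inside Proposition~\ref{prop:HYquasidecomp}, where the input from Lemma~\ref{lemma:quasiextremizersviaconvolution} is only $\norm{g*g}_t\ge c\eta^\gamma\norm{g}_r^2$, i.e.\ quasi-extremality with a possibly tiny constant. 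A quasi-extremizing pair can perfectly well spread a non-negligible share of its $L^p$ mass over arbitrarily many dyadic levels (the conclusion only claims one good triple, not concentration), and Lemma~\ref{lemma:nicelyboundednearextremizers}, whose conclusion carries $o_\delta(1)$ errors meaningful only for near-extremizers, gives nothing in this regime. The peripheral bound must therefore be unconditional, extracted solely from restricted-type bounds with a gain in the measure ratios (your ingredient (a)) together with a genuine summation over imbalanced level triples using only $\sum_k\mu_i(k)\le 1$; none of that work is carried out, and it is the heart of the lemma.

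There is also a slip at the third conclusion. Your perturbed-exponent argument correctly yields $|p_ik_i^*-p_jk_j^*|\le C_\delta$, equivalently (given $\eta\le\mu_i(k_i^*)\le 1$) that the measures $|E_{i,k_i^*}|$ are comparable up to $\delta$-dependent factors --- and that is exactly what the proof of Lemma~\ref{lemma:Youngquasidecomp} uses downstream. It does not yield the printed statement $|k_i^*-k_j^*|\le C_\delta$ when the $p_i$ are distinct: replacing $f_i$ by $\lambda^{d/p_i}f_i(\lambda\,\cdot)$ preserves the hypotheses and the first two conclusions but shifts $k_i^*$ by roughly $(d/p_i)\log_2\lambda$, so the differences $k_i^*-k_j^*$ are not bounded by any $C_\delta$ (normalized indicators of a large ball make this concrete). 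So the literal third conclusion needs the comparability-of-measures reading (which is evidently what is intended and used), and your closing claim that comparability of the $p_ik_i^*$ ``pins the indices down relative to one another'' is not correct as written; you should instead state explicitly that what you obtain is $|p_ik_i^*-p_jk_j^*|\le C_\delta$, i.e.\ comparability of $|E_{i,k_i^*}|$.
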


Lemma~\ref{lemma:lorentztheoryconsequences} is parallel to Proposition~11.1 of \cite{christyoungest}
and to a corresponding result in \cite{christonextremals}, and
is proved by the same reasoning. Therefore the proof is omitted.
Granting this lemma,
the proof of Lemma~\ref{lemma:Youngquasidecomp} can be completed, as follows.

\begin{proof}
We may assume without loss of generality that the functions $f_1,f_2$ are nonnegative, 
since their absolute values satisfy the hypotheses.
The third conclusion of Lemma~\ref{lemma:lorentztheoryconsequences} 
states that the three sets $E_{i,k_i^*}$ have comparable measures, with bounds depending only on $\delta$.
Since
\[ \langle \one_{E_{1,k_1^*}}*\one_{E_{2,k_2^*}},\one_{E_{3,k_3^*}}\rangle 
\le \norm{\one_{E_{1,k_1^*}}*\one_{E_{2,k_2^*}}}_2|E_{3,k_3^*}|^{1/2}\]
we have
\[ \norm{\one_{E_{1,k_1^*}}*\one_{E_{2,k_2^*}}}_2
\ge c_\delta |E_{3,k_3^*}|^{-1/2}
\prod_{i=1}^3 |E_{i,k_i^*}|^{1/p_i}
\ge c'_\delta|E_{1,k_1^*}|^{3/2}\]
using the comparability of the measures of these three sets.
Therefore by the Balog-Szemer\'edi theorem, there exist subsets $E_i^\dagger\subset E_{i,k_i^*}$ for $i=1,2$
such that $|E_i^\dagger|\ge c_\delta |E_{i,k_i^*}|$
and $|E_1^\dagger+E_2^\dagger| \le C_\delta |E_1^\dagger|+C_\delta |E_2^\dagger|$.
Apply Proposition~\ref{prop:FreimanR} to obtain a proper continuum multiprogression $P$
such that $E_1^\dagger\subset P$, $|P| \le C_\delta |E_1^\dagger|$, and $P$ has rank $O_\delta(1)$.

Define the function $\varphi = f_1 \one_{E_1^\dagger}$,
which satisfies $\varphi\prec P$, and $0\le\varphi_1 \le 2^{1+k_1^*}\one_{E_{i,k_1^*}}\le f$.  
Moreover \begin{align*}
\varphi 
&= |P|^{-1/p_1} f_1 |P|^{1/p_1}  \one_{E_1^\dagger}
\\&\le 2 |P|^{-1/p_1} 2^{k_1^*} |P|^{1/p_1}  \one_{E_1^\dagger}
\\&\le 2 |P|^{-1/p_1} 2^{k_1^*} |E_1^\dagger|^{1/p_1}  
\\&\le 2|P|^{-1/p_1} 2^{k_1^*} |E_{1,k_1^*}|^{1/p_1}  
\\&\le 2|P|^{-1/p_1} \norm{f_1}_{p_1}.
\end{align*}

Now \[\norm{\varphi}_{p_1}^{p_1} \ge 2^{p_1k_1^*} |E_1^\dagger|
\ge c_\delta 2^{p_1k_1^*} |E_{1,k_1^*}| \ge c_\delta>0\norm{f_1}_{p_1}^{p_1}.\]
Therefore since $\varphi$ and $f-\varphi$ have disjoint supports,
\begin{align*} \norm{f-\varphi}_{p_1}^{p_1} = \norm{f_1}_{p_1}^{p_1} - \norm{\varphi}_{p_1}^{p_1}  
\le \norm{f_1}_{p_1}^{p_1} - c_\delta \norm{f_1}_{p_1}^{p_1}.  
\end{align*}
\end{proof}

\begin{lemma} \label{lemma:quasiextremizersviaconvolution}
Let $d\ge 1$ and $p\in(1,2)$. Let $r\in(1,p)$. There exist $c,\gamma\in\reals^+$ with the following property.
Let $\eta>0$.
Suppose that $0\ne f\in L^p(\reals^d)$ satisfies $\norm{\widehat{f}}_{p'} \ge \eta \norm{f}_p$.
Then $g=|f|^{p/r}$ satisfies
\begin{equation}
\norm{g*g}_{t}\ge c\eta^\gamma \norm{g}_{r}^2
\end{equation} 
where $t^{-1} = 2 r^{-1}-1$.
\end{lemma}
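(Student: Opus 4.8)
The plan is to reduce, with only a polynomial loss in $\eta$, to the case where $f$ is essentially the indicator of one set $E$; to turn the Hausdorff--Young quasi-extremality of $\one_E$ into a lower bound for its additive energy $E(E,E)$; and then to extract the desired lower bound for the convolution norm by an elementary argument. By scaling we may assume $\norm f_p=1$, since replacing $f$ by $\lambda f$ multiplies both $\norm{g\ast g}_t$ and $\norm g_r^2$ by $|\lambda|^{2p/r}$ and leaves $\norm{\widehat f}_{p'}/\norm f_p$ unchanged. First I would note that the Fourier transform is nicely bounded in the sense of \eqref{eq:nicelybounded}: it maps $L^{\tilde p}\to L^{\tilde p'}$ for every $\tilde p\in(1,2]$ by Hausdorff--Young, and $\tilde p^{-1}+(\tilde p')^{-1}=1=p^{-1}+(p')^{-1}$, so the hypotheses of Lemma~\ref{lemma:nicelyboundednearextremizers} are met. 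The Lorentz-space reasoning underlying Lemma~\ref{lemma:lorentztheoryconsequences} (see \cite{christonextremals}, \cite{christyoungest}), applied to the quasi-extremizer $f$, then yields a single level set $E=\{x:\alpha\le|f(x)|<2\alpha\}$, with $\alpha=2^{k^\ast}$ for some $k^\ast\in\integers$, such that $\tilde f:=f\one_E$ satisfies $\norm{\widehat{\tilde f}}_{p'}\ge c\,\eta^{C}$; consequently $\alpha|E|^{1/p}\le\norm{\tilde f}_p\le 1$ and $\norm{\tilde f}_p\ge \bestA_p^{-d}\norm{\widehat{\tilde f}}_{p'}\ge c\eta^{C}$ by Hausdorff--Young, and $0<|E|\le\alpha^{-p}<\infty$. (Here and below $c,C\in(0,\infty)$ depend only on $p,r,d$, and may change from line to line.)

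Next I would bound $E(E,E)=\norm{\one_E\ast\one_E}_2^2$ from below using $\widehat{\tilde f}$, so that the unknown phase of $f$ plays no role. Writing $\tilde f\star\tilde f(x)=\int\tilde f(y)\overline{\tilde f(y-x)}\,dy$ and $\one_E\star\one_E(x)=|E\cap(E+x)|$, one has $\widehat{\tilde f\star\tilde f}=|\widehat{\tilde f}|^2$ (valid since $\tilde f\in L^1\cap L^2$), hence $\norm{\widehat{\tilde f}}_4^4=\norm{\tilde f\star\tilde f}_2^2$ by Plancherel, while $|\tilde f\star\tilde f|\le 4\alpha^2\,\one_E\star\one_E$ pointwise and $\norm{\one_E\star\one_E}_2=\norm{\one_E\ast\one_E}_2$; thus $E(E,E)\ge(16\alpha^4)^{-1}\norm{\widehat{\tilde f}}_4^4$. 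I would then estimate $\norm{\widehat{\tilde f}}_4$ from below by interpolating $\norm{\widehat{\tilde f}}_{p'}$ between $\norm{\widehat{\tilde f}}_2$ and $\norm{\widehat{\tilde f}}_4$ when $p\ge\tfrac43$ (so $p'\le4$), and between $\norm{\widehat{\tilde f}}_4$ and $\norm{\widehat{\tilde f}}_\infty$ when $p\le\tfrac43$ (so $p'\ge4$), using $\norm{\widehat{\tilde f}}_2=\norm{\tilde f}_2\in[\alpha|E|^{1/2},2\alpha|E|^{1/2}]$, $\norm{\widehat{\tilde f}}_\infty\le\norm{\tilde f}_1\le2\alpha|E|$, and $\norm{\widehat{\tilde f}}_{p'}\ge c\eta^{C}\ge c\eta^{C}\alpha|E|^{1/p}$. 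In either range the resulting exponents of $\alpha$ and of $|E|$ (which coincide with those of a Euclidean cube) give $\norm{\widehat{\tilde f}}_4\ge c\eta^{C}\alpha|E|^{3/4}$, and hence $E(E,E)\ge c\eta^{C}|E|^{3}$.

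Finally, from $\norm{\one_E\ast\one_E}_2^2=E(E,E)\ge c\eta^{C}|E|^{3}$, together with $\norm{\one_E\ast\one_E}_1=|E|^2$ and $\norm{\one_E\ast\one_E}_\infty\le|E|$, an elementary Chebyshev-type argument (with threshold $\asymp\eta^{C}|E|$) produces a measurable set $F$ with $|F|\ge c\eta^{C}|E|$ on which $\one_E\ast\one_E\ge c\eta^{C}|E|$; therefore $\norm{\one_E\ast\one_E}_t\ge(c\eta^{C}|E|)\,|F|^{1/t}\ge c\eta^{C}|E|^{1+1/t}=c\eta^{C}|E|^{2/r}$, using the relation $1+t^{-1}=2r^{-1}$. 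Since $g=|f|^{p/r}\ge\alpha^{p/r}\one_E$ pointwise, it follows that $\norm{g\ast g}_t\ge\alpha^{2p/r}\norm{\one_E\ast\one_E}_t\ge c\eta^{C}(\alpha^p|E|)^{2/r}\ge c\eta^{C}\norm{\tilde f}_p^{2p/r}\ge c\eta^{\gamma}$ for a suitable $\gamma=\gamma(p,r)$, where the last two inequalities use $\alpha^p|E|\ge2^{-p}\norm{\tilde f}_p^p$ and $\norm{\tilde f}_p\ge c\eta^{C}$; as $\norm g_r^{2}=\norm f_p^{2p/r}=1$, this is the asserted inequality, and the general case follows by homogeneity. (Alternatively, the passage from a lower bound on $E(E,E)$ to one on $\norm{\one_E\ast\one_E}_t$ could be made via Proposition~\ref{prop:BSGR}.)

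The step I expect to be the main obstacle is the single-scale reduction of the first paragraph: establishing that a (possibly only weakly) quasi-extremizing $f$ for the Hausdorff--Young inequality remains quasi-extremal after restriction to one level set, \emph{with the constants degrading only polynomially in $\eta$}. This is exactly what the nicely-bounded property \eqref{eq:nicelybounded}, fed into the Lorentz-space machinery of \cite{christonextremals}, is designed to provide, and that polynomial dependence is what makes the power $\eta^{\gamma}$ available in the conclusion; the remaining ingredients (Plancherel, the pointwise bound $|\tilde f\star\tilde f|\le4\alpha^2\,\one_E\star\one_E$, interpolation of Lebesgue norms, and the Chebyshev argument) are soft.
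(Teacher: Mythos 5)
Your paragraphs 2--4 are fine: the interpolation bookkeeping does give $\norm{\widehat{\tilde f}}_4\ge c\eta^{C}\alpha|E|^{3/4}$ in both ranges of $p'$, the passage to $E(E,E)\ge c\eta^C|E|^3$ via $|\tilde f\star\tilde f|\le 4\alpha^2\,\one_E\star\one_E$ and Plancherel is correct, and the Chebyshev step and the transfer to $g=|f|^{p/r}$ close the argument with only polynomial losses. The genuine gap is exactly where you place it, in paragraph 1, and it is more serious than a routine citation. First, Lemma~\ref{lemma:nicelyboundednearextremizers} is a statement about near-extremizers ($\norm{Tf}_q\ge(1-\delta)\norm{T}\norm{f}_p$) and controls the distribution function of $f$; it does not by itself select a single level set whose Fourier transform is large when the hypothesis is only $\norm{\widehat f}_{p'}\ge\eta\norm f_p$ with $\eta$ small. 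Second, the inequality \eqref{eq:nicelybounded} that you invoke as the engine of the Lorentz-space machinery is false for the Fourier transform as literally stated: take $E=[0,L]$ and $E'=[-\tfrac1{10L},\tfrac1{10L}]$, so that $|\langle\widehat{\one_E},\one_{E'}\rangle|\gtrsim 1$ while $|E|^{1/p}|E'|^{1/q'}\asymp 1$ and $\min(|E|/|E'|,|E'|/|E|)^\gamma\to0$ as $L\to\infty$. What the family of bounds $L^{\tilde p}\to L^{\tilde p'}$ actually yields for the Fourier transform is a gain in the uncertainty product, $\min\big(|E|\,|E'|,(|E|\,|E'|)^{-1}\big)^\gamma$, not in the ratio. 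A quasi-extremal single-scale selection for the bilinear Hausdorff--Young form can plausibly be proved with this product-form gain (the diagonal becomes $|E_j|\,|E'_k|\approx1$ and the row/column sums can be controlled), but that lemma is not in this paper -- Lemma~\ref{lemma:lorentztheoryconsequences} concerns the trilinear convolution form and its proof is only cited -- so as written your first step is an unproved, and incorrectly justified, ingredient rather than an application of an available result.

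For comparison, the paper avoids level sets and additive energy entirely: it applies the three lines lemma to the analytic family $F_z=f|f|^{z-1}$ on the strip $\tfrac12 p\le\Re z\le s$, using Plancherel on the line $\Re z=\tfrac12 p$, to find $\zeta$ with $\Re\zeta=s$ and $\norm{\widehat{F_\zeta}}_{(p/s)'}\ge\eta^\gamma$; then, since $(p/s)'\ge4$, it writes $\norm{\widehat{F_\zeta}}_{(p/s)'}^2=\norm{\widehat{F_\zeta*F_\zeta}}_{\sigma'}\le\norm{F_\zeta*F_\zeta}_\sigma\le\norm{|f|^s*|f|^s}_\sigma$, which is the assertion with $s=p/r$. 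That route gets the polynomial loss for free from the interpolation exponent and needs no combinatorial input, whereas your route, if the single-scale lemma were supplied in the product-gain form, would give additional structural information (a level set with large additive energy) at the cost of proving that lemma.
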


\begin{proof}
We may suppose without loss of generality that $\norm{f}_p=1$.
Consider the family of functions $F_z(x) = f(x)|f(x)|^{z-1}$ for $\Re(z)>1$.
For any $z$, $F_z\in L^{\rho}$ where $\rho=p/\Re(z)$ with $\norm{F_z}_\rho = \norm{f}_p^{\Re(z)}=1$.
In particular, when $\Re(z) = p/2$, $F_z\in L^2$ with norm equal to $1$.

Consider any $s\in(1,p)$, and consider all $z\in\complex$ in the closed strip defined by $\tfrac12 p \le\Re(z)\le s$.
$F_z\in L^2$ whenever $\Re(z)=\tfrac12 p$, while $F_z\in L^{p/s}$ whenever $\Re(z)=s$.
We have $ps^{-1}<p<2$.
Expressing $p^{-1} = (1-\theta) 2^{-1}+\theta sp^{-1}$,
\[ \norm{\widehat{f}}_{p'} = \norm{\widehat{F_1}}_{(p/1)'} 
\le \sup_{\Re(z)=s} \norm{\widehat{F_z}}_{(p/s)'}^\theta
\sup_{\Re(w)=p/2} \norm{\widehat{F_w}}_{2}^{1-\theta}
= \sup_{\Re(z)=s} \norm{\widehat{F_z}}_{(p/s)'}^\theta \]
by the Three Lines Lemma proof of the Riesz-Th\"orin interpolation theorem and Plancherel's theorem.
Therefore there exists $\zeta$ satisfying $\Re(\zeta)=s$ such that 
\[\norm{\widehat{F_\zeta}}_{(p/s)'} \ge \eta^\gamma.\]

Choose $s\in[\tfrac 34 p,p)$. Then $(p/s)'\ge 4$, so $\tfrac12(p/s)'\ge 2$.
With $\zeta$ chosen as above, write $\tfrac12(p/s)' = \sigma'$ where $\sigma\in(1,2]$.
By the Hausdorff-Young inequality,
\[ \norm{\widehat{F_\zeta}}_{(p/s)'}^2 = \norm{ \widehat{F_\zeta*F_\zeta}}_{\sigma'} \le \norm{F_\zeta*F_\zeta}_\sigma.  \]
Thus $\norm{F_\zeta*F_\zeta}_\sigma \ge c\eta^\gamma$.
Since $|F_\zeta|=|f|^{\Re(\zeta)}$, $|F_\zeta*F_\zeta|\le |f|^s*|f|^s$
and thus
$\norm{|f|^s*|f|^s}_\sigma \ge c\eta^\gamma$.
Here $\sigma^{-1} = 1-2(1-sp^{-1})= 2 s p^{-1}-1$.
\end{proof}

\begin{proposition}[Quasi-extremizers for the Hausdorff-Young inequality] \label{prop:HYquasidecomp}
Let $d\ge 1$, let $\Lambda\subset(1,2)$ be a compact set, and let $\eta>0$.
There exist $C_\eta,c_\eta\in\reals^+$ with the following property for all $p\in\Lambda$.
Suppose that $0\ne f\in L^p(\reals^d)$ satisfies $\norm{\widehat{f}}_{p'} \ge \eta \norm{f}_p$.
Then there exist a proper continuum multiprogression $P$ and a decomposition $f=g+h$ such that
\begin{gather*}
\text{$g,h$ have disjoint supports,} 
\\
\text{$g$ is supported on $P$,}
\\
\norm{g}_\infty |P|^{1/p}\le C_\eta \norm{f}_p,
\\
\text{the rank of $P$ is $O_\eta(1)$,}
\\
\norm{g}_p\ge c_\eta \norm{f}_p.
\end{gather*}
Moreover, there exist $\alpha\in\reals^+$, depending only on $d,\Lambda$, 
and $k\in\integers$ depending also on $f$, such that 
\begin{equation} \label{eq:limitedrange} 
\alpha^k\le |g(x)|\le \alpha^{k+1}\ \text{ whenever $g(x)\ne 0$.} \end{equation}
\end{proposition}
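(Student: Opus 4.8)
The plan is to derive Proposition~\ref{prop:HYquasidecomp} by feeding a Hausdorff--Young quasi-extremizer into the two structural results already in hand. First I would use Lemma~\ref{lemma:quasiextremizersviaconvolution} to turn $f$ into a quasi-extremizer for Young's convolution inequality, then invoke Lemma~\ref{lemma:Youngquasidecomp} to produce the multiprogression and the bump function, and finally transport everything back through the substitution $g_0=|f|^{p/r}$. Apart from verifying one quantitative relation between the two ``quasi-extremizer'' parameters, the work is bookkeeping with exponents.

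Concretely: normalize $\norm{f}_p=1$ and fix once and for all an exponent $r\in(1,\inf\Lambda)$, so that $r<p$ for every $p\in\Lambda$; put $t^{-1}=2r^{-1}-1\in(0,1)$. Lemma~\ref{lemma:quasiextremizersviaconvolution} supplies $c,\gamma>0$, depending only on $d$ and $\Lambda$, with $\norm{g_0*g_0}_t\ge c\eta^\gamma\norm{g_0}_r^2$ where $g_0:=|f|^{p/r}$ and $\norm{g_0}_r=\norm{f}_p^{p/r}=1$. I would then compare $c\eta^\gamma$ with the sharp constant $\bestA_r^{2d}\bestA_{p_3}^d$ in Young's inequality with exponents $(r,r,p_3)$, $p_3^{-1}=2(1-r^{-1})$, to conclude that $(g_0,g_0)$ is a $\delta$--quasi-extremizer for Young's convolution inequality with $\delta=1-c'\eta^\gamma$, where $c'>0$ depends only on $d,\Lambda$. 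Applying Lemma~\ref{lemma:Youngquasidecomp} to $(g_0,g_0)$ then yields a proper continuum multiprogression $P$ of rank $O_\eta(1)$ and a nonnegative $\varphi$ with $\varphi\prec g_0$, $\varphi\prec P$, $\norm{\varphi}_\infty|P|^{1/r}\le C_\eta$, $\norm{g_0-\varphi}_r\le(1-c_\eta)\norm{g_0}_r$, and $\alpha_0\le\varphi(x)\le 2\alpha_0$ on $\{\varphi\ne 0\}$ for some $\alpha_0>0$.

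Next I would transport this back to $f$. Writing $\varphi=g_0\one_E$ for a measurable set $E$ and noting that $g_0=|f|^{p/r}$ vanishes exactly where $f$ does, set $g:=f\one_E$ and $h:=f-g$. Then $g,h$ are disjointly supported, $g\prec P$ (since $g$ vanishes wherever $\varphi$ does), and $|g|=\varphi^{r/p}$ pointwise. Hence $\norm{g}_\infty=\norm{\varphi}_\infty^{r/p}\le (C_\eta|P|^{-1/r})^{r/p}=C_\eta^{r/p}|P|^{-1/p}$, so that $\norm{g}_\infty|P|^{1/p}\le C_\eta^{r/p}\norm{f}_p$; and since $\varphi$ and $g_0-\varphi$ are disjointly supported, $\norm{g}_p^p=\int\varphi^r=\norm{g_0}_r^r-\norm{g_0-\varphi}_r^r\ge 1-(1-c_\eta)^r>0$, giving $\norm{g}_p\ge c_\eta'\norm{f}_p$. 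This establishes all the bulleted conclusions. For \eqref{eq:limitedrange} I would look inside the proof of Lemma~\ref{lemma:Youngquasidecomp}: there $E$ is chosen inside a single dyadic level set $\{2^k\le g_0<2^{k+1}\}$ of $g_0$ (Lemma~\ref{lemma:lorentztheoryconsequences}), which for $g_0=|f|^{p/r}$ is $\{(2^{r/p})^k\le|f|<(2^{r/p})^{k+1}\}$; thus $g=f\one_E$ satisfies $\alpha^k\le|g(x)|<\alpha^{k+1}$ on $\{g\ne 0\}$ with $\alpha=2^{r/p}$, which depends only on $r$ and $p$ and stays uniformly bounded for $p\in\Lambda$, and with $k\in\integers$ depending on $f$. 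If one insists that $\alpha$ be independent of $p$, one replaces the base $2$ in the level-set decomposition of $g_0$ by $\alpha_0^{p/r}$ for a fixed $\alpha_0>1$; all constants in that argument stay uniform since $r$ is fixed and $p$ ranges over a compact set.

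The step I expect to carry the weight is not in this proposition at all but in Lemma~\ref{lemma:quasiextremizersviaconvolution}, namely the passage from a Hausdorff--Young quasi-extremizer to a convolution quasi-extremizer with a \emph{quantitatively controlled} constant; that is exactly where complex interpolation in the strip, Plancherel, and the Hausdorff--Young inequality enter, and without that input the reduction collapses. Within the present argument the only points that genuinely need care are confirming the relation $\delta=1-c'\eta^\gamma$ between the two quasi-extremizer parameters, so that $O_\delta(1)$ for the rank really is $O_\eta(1)$, tracking the loss of the power $r/p$ through the norm inequalities, and arranging the base of the level-set decomposition so that the value range of $g$ is an honest band $[\alpha^k,\alpha^{k+1})$ rather than merely a band of bounded ratio.
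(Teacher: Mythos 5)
Your proposal is correct and is essentially the paper's own proof, which consists precisely of combining Lemma~\ref{lemma:quasiextremizersviaconvolution} with Lemma~\ref{lemma:Youngquasidecomp}; your back-substitution $g=f\one_E$, the exponent bookkeeping with $r<\inf\Lambda$ fixed, and the reading of $\delta=1-c'\eta^\gamma$ are exactly the details the paper leaves implicit. Your adjustment of the level-set base to obtain the band $[\alpha^k,\alpha^{k+1}]$ with $\alpha$ uniform over $p\in\Lambda$ is a legitimate way to secure \eqref{eq:limitedrange}, in the same spirit as the remark following Lemma~\ref{lemma:Youngquasidecomp}.
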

Equivalently, \begin{equation}\norm{h}_p\le (1-c'_\eta)\norm{f}_p,\end{equation}
where $c'_\eta>0$ depends only on $d,\Lambda,\eta$.

\begin{proof}
Combine Lemma~\ref{lemma:quasiextremizersviaconvolution} with Lemma~\ref{lemma:Youngquasidecomp}.
\end{proof}

\section{Multiprogression structure of near-extremizers}

\subsection{Simplified formulation}

\begin{lemma} \label{lemma:NE1}
Let $d\ge 1$, and let $\Lambda\subset (1,2)$ be a compact set. 
For any $\eps>0$ there exist $\delta>0$, $N_\eps<\infty$, and $C_\eps<\infty$ 
with the following property for all $p\in\Lambda$.
Let $0\ne f\in L^p(\reals^d)$ satisfy $\norm{\widehat{f}}_{p'} \ge (1-\delta)\bestA_p^d\norm{f}_p$.
Then there exist a disjointly supported decomposition $f=g+h$ 
and continuum multiprogressions $\set{P_i:  1\le i\le N_\eps }$ such that 
\begin{gather*}
 \norm{h}_p<\eps\norm{f}_p
\\ \norm{g}_\infty \sum_i|P_i|^{1/p}\le C_\eps\norm{f}_p
\\ g\prec \cup_{i=1}^{N_\eps} P_i
\\ \rank(P_i)\le C_\eps.
\end{gather*}
\end{lemma}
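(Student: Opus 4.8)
The plan is to iterate Proposition~\ref{prop:HYquasidecomp} finitely many times, peeling off one quasi-extremizing piece at a stage.

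\medskip

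\emph{Setup.} We may assume $\norm{f}_p=1$. The hypothesis $\norm{\widehat f}_{p'}\ge(1-\delta)\bestA_p^d$ with $\delta$ small implies in particular that $f$ is an $\eta$-quasi-extremizer for the Hausdorff-Young inequality with, say, $\eta=\tfrac12\bestA_p^d$ (uniformly for $p\in\Lambda$, since $\bestA_p$ is bounded below on the compact set $\Lambda$). Fix once and for all this threshold $\eta$, and let $c_\eta,C_\eta$ and the rank bound $O_\eta(1)$ be the constants furnished by Proposition~\ref{prop:HYquasidecomp} for this $\eta$ and this $\Lambda$.

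\medskip

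\emph{The iteration.} Apply Proposition~\ref{prop:HYquasidecomp} to $f$: since $\norm{\widehat f}_{p'}\ge\eta\norm{f}_p$, we obtain a decomposition $f=g_1+h_1$ with $g_1,h_1$ disjointly supported, a proper continuum multiprogression $P_1$ with $g_1\prec P_1$, $\rank(P_1)\le C_\eta$, $\norm{g_1}_\infty|P_1|^{1/p}\le C_\eta$, and $\norm{h_1}_p\le(1-c'_\eta)$. Now examine $h_1$. If $\norm{\widehat{h_1}}_{p'}<\eta\norm{h_1}_p$, we stop. Otherwise $h_1$ is again an $\eta$-quasi-extremizer, and we apply Proposition~\ref{prop:HYquasidecomp} to $h_1$ to write $h_1=g_2+h_2$ with the analogous properties; in particular $\norm{h_2}_p\le(1-c'_\eta)\norm{h_1}_p\le(1-c'_\eta)^2$. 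Continuing, after $k$ steps we have $f=\sum_{i=1}^k g_i + h_k$ with $\norm{h_k}_p\le(1-c'_\eta)^k$, each $g_i\prec P_i$, $\rank(P_i)\le C_\eta$, and $\norm{g_i}_\infty|P_i|^{1/p}\le C_\eta\norm{h_{i-1}}_p\le C_\eta(1-c'_\eta)^{i-1}$ (with $h_0:=f$). The process terminates after at most $N_\eps:=\lceil \log\eps/\log(1-c'_\eta)\rceil$ steps, in the sense that by that stage $\norm{h_{N_\eps}}_p<\eps$; set $g=\sum_{i=1}^N g_i$ and $h=h_N$ (or $h$ equal to the residual at an earlier stopping point), which is then a disjointly supported decomposition of $f$ since at each stage we split off a piece with support disjoint from the remainder.

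\medskip

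\emph{Control of $\norm{g}_\infty$.} The one subtlety is that Lemma~\ref{lemma:NE1} asks for a single sup-norm bound $\norm{g}_\infty\sum_i|P_i|^{1/p}\le C_\eps$, whereas the iteration produces pieces $g_i$ with possibly different sup-norms, and the $g_i$ need not be disjointly supported from one another (only $g_i$ is disjoint from $h_i\supseteq$ later pieces — actually by construction $g_{i}$ and $g_{j}$ for $i\ne j$ \emph{are} disjointly supported, since $g_j\prec h_{j-1}\prec\dots\prec h_i$ which is disjoint from $g_i$). Because the $g_i$ are pairwise disjointly supported, $\norm{g}_\infty=\max_i\norm{g_i}_\infty$, and $\norm{g_i}_\infty|P_i|^{1/p}\le C_\eta$ gives $\norm{g_i}_\infty\le C_\eta|P_i|^{-1/p}$; this does not immediately yield $\norm{g}_\infty\sum_i|P_i|^{1/p}\le$ const. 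To fix this I would, at the end, replace the bound by a counting argument: we have $N\le N_\eps$ pieces, so $\sum_{i=1}^N |P_i|^{1/p}\le N_\eps\max_i|P_i|^{1/p}$, and it suffices to bound $\norm{g}_\infty\max_i|P_i|^{1/p}$. Enlarge each $P_i$ to their common refinement is not possible, but we can simply use that $\norm{g}_\infty=\max_i\norm{g_i}_\infty\le\max_i C_\eta|P_i|^{-1/p}$ together with discarding pieces: any $g_i$ with $\norm{g_i}_p$ smaller than $\eps/N_\eps$ can be absorbed into $h$ (increasing $\norm{h}_p$ by at most $\eps$, so use $\eps/2$ throughout), and for the surviving pieces $\norm{g_i}_p\ge\eps/N_\eps$ forces, via $\norm{g_i}_p^p\le\norm{g_i}_\infty^p|P_i|$, the lower bound $|P_i|^{1/p}\ge(\eps/N_\eps)/\norm{g_i}_\infty\ge c_\eps\norm{g_i}_\infty^{-1}$, hence $\norm{g_i}_\infty|P_i|^{1/p}$ is comparable from above and $\norm{g}_\infty\max_i|P_i|^{1/p}\le C_\eps$ follows by pairing the maximizing sup-norm with its own progression. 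Packaging this bookkeeping cleanly is the main (purely technical) obstacle; the substantive content is entirely contained in Proposition~\ref{prop:HYquasidecomp} and the geometric decay $\norm{h_k}_p\le(1-c'_\eta)^k$.
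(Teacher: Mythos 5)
The substantive gap is in your stopping rule. You halt the iteration either when $\norm{h_k}_p<\eps$ or when the remainder fails to be a quasi-extremizer at your fixed level $\eta=\tfrac12\bestA_p^d$; in the latter case you have no control whatsoever on $\norm{h_k}_p$ (it could be $\approx 1-c'_\eta\gg\eps$ after just one step), so the conclusion $\norm{h}_p<\eps\norm{f}_p$ is simply not established at "an earlier stopping point". And this case cannot be waved away: once $g_1$ is removed, there is no reason for the remainder to retain a Fourier quotient bounded below by a constant independent of $\eps$ — most of the Fourier mass may leave with $g_1$. What keeps the iteration alive in the paper is Lemma~\ref{lemma:noslacking} ("no slacking"), applied to the disjointly supported decomposition $f=\big(\sum_{i\le k}g_i\big)+h_k$: because $f$ itself is a \emph{near}-extremizer and $\delta$ is chosen small as a function of $\eps$, any remainder with $\norm{h_k}_p\ge\eps\norm{f}_p$ automatically satisfies $\norm{\widehat{h_k}}_{p'}\ge c\,\delta^{(p-1)/p}\norm{f}_p$, i.e.\ it is a quasi-extremizer at some level $\sigma(\eps)>0$ depending on $\eps$ — which is all Proposition~\ref{prop:HYquasidecomp} needs, since its constants are allowed to depend on $\eps$. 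Without this re-use of the near-extremality of $f$ at every stage, the iteration can stall with a large residual, and your proof proves only a weaker statement.

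The sup-norm bookkeeping, which you flag as "purely technical", is also a genuine gap, and the repair you sketch does not close it. The two-sided per-piece bound $\eps/(2N_\eps)\le\norm{g_i}_\infty|P_i|^{1/p}\le C$ does not make the amplitudes comparable across pieces: take $\norm{g_1}_\infty=M$, $|P_1|^{1/p}=M^{-1}$ and $\norm{g_2}_\infty=1$, $|P_2|^{1/p}=1$, both pieces of unit $L^p$ norm; then $\norm{g}_\infty\sum_i|P_i|^{1/p}\ge M$ is unbounded, and nothing in your argument excludes this. Excluding it again requires the near-extremizer hypothesis: the paper combines the amplitude localization \eqref{eq:limitedrange} in Proposition~\ref{prop:HYquasidecomp} (each $g_i$ takes values in a single band $[\alpha_i,C\alpha_i]$ on its support) with the distribution-function control of Lemma~\ref{lemma:nicelyboundednearextremizers}, which says a near-extremizer carries only $o_\delta(1)$ of its $L^p$ mass at heights far from a single scale $\lambda(f)$; since each retained piece carries mass bounded below by a function of $\eps$, all the $\alpha_i$ are comparable to $\lambda$, and then $\max_j\norm{g_j}_\infty\cdot\sum_i|P_i|^{1/p}\le C_\eps$ follows because the number of pieces is $O_\eps(1)$. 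In short, both defects stem from the same omission: the hypothesis is near-extremality, not mere quasi-extremality, and it must be invoked again at each stage (via Lemmas~\ref{lemma:noslacking} and~\ref{lemma:nicelyboundednearextremizers}), with $\delta$ chosen small as a function of $\eps$.
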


\begin{proof}[Proof of Lemma~\ref{lemma:NE1}]
Lemma~\ref{lemma:noslacking} asserts that
there exist $\delta>0$ and $\sigma>0$, depending on $\eps,\Lambda,d$ but not on $N$, such that
for any function $h$ satisfying $\norm{\widehat{h}}_{p'} \ge (1-\delta)\bestA_p^d\norm{h}_p$
and any disjointly supported Lebesgue measurable decomposition $h = \varphi+\psi$ with $\norm{\varphi}_p\ge\eps$, 
the summand $\varphi$ must satisfy $\norm{\widehat{\varphi}}_{p'}\ge \sigma\norm{h}_p$.
We choose the parameter $\delta$ in the statement of this lemma to be less than or equal to this value.

Let $f\in L^p(\reals^d)$ and suppose that $\norm{\widehat{f}}_{p'} \ge (1-\delta)\bestA_p^d\norm{f}_p$. 
Assume without loss of generality that $\norm{f}_p=1$. 
Apply Proposition~\ref{prop:HYquasidecomp} iteratively, as follows. 

Let $\eps\in(0,\tfrac12]$ be given.  Let $\delta>0$ be a small quantity to be chosen below as a function of $\eps$. 
For the Step 1, apply Proposition~\ref{prop:HYquasidecomp} to $F=F_0=f$, 
with $\tau=\eps$ and with $\sigma = (1-\delta)\bestA_p^d$.
Since $\eps<1=\norm{F}_p$, the construction produces a disjointly supported decomposition $F=G_1+H_1$ 
and a proper continuum multiprogression $P_1$ 
such that  $\norm{H_1}_p\le (1-\rho)$, $G_1\prec P_1$,
the rank of $P_1$ is $O(1)$, and $\norm{G_1}_\infty |P_1|^{1/p}\le C\norm{F}_1 = C$.
Here $\rho>0$ depends only on $d,p$ provided that $\delta$ is sufficiently small.
Set $F_1=H_1$. Then $f = G_1+F_1$ and  $\norm{F_1}_p\le (1-\rho)<1$.

We execute an iterative procedure indexed by $N\in\naturals$.
The outcome of Step $N$ is a decomposition
$f=G_1+\dots+G_{N}+F_N$ of $f$ such that the summands have pairwise disjoint supports, 
along with proper continuum multiprogressions $P_1,\dots,P_{N}$, 
such that $G_j\prec P_j$, $\rank(P_j)=O_\eps(1)$,
$\norm{G_j}_\infty |P_j|^{1/p}=O_\eps(1)$, and $\norm{G_j}_p \ge c_\eps>0$.

Step $N$ proceeds as follows.
At the completion of Step $N-1$ we have a disjointly supported decomposition
$f=F_{N-1}+ \sum_{j=1}^{N-1}G_j$  of $f$ with associated proper continuum muttiprogressions $P_j$,
with the indicated properties.
If $\norm{F_{N-1}}_p<\eps$ then the iterative procedure halts, and we define 
\begin{equation*} g = \sum_{i=1}^{N-1}G_i \ \text{ and } h = F_{N-1}.  \end{equation*}
Then $f=g+h$ and $g=0$ on the complement of $\cup_i P_i$.
It will be shown below that $g,h,\set{P_i: 1\le i\le N-1}$ have the required properties.

If $\norm{F_{N-1}}_p\ge\eps$ then by the choice of $\delta$,
Proposition~\ref{prop:HYquasidecomp} may be applied to $F_{N-1}$ to produce a decomposition $F_{N-1}=G_N+H_N$ 
with the properties indicated in the statement of Proposition~\ref{prop:HYquasidecomp}. Define $F_{N}=H_N$.
In particular, to $G_N$ is associated a multi-progression $P_N$ satisfying $\norm{G_N}_\infty |P_N|^{1/p} \le C_\eps<\infty$.
Then  \[\norm{G_N}_p \ge c\norm{F_{N-1}}_p\ge c_\eps\] where $c_\eps>0$ is independent of $N$.
Since \[Nc_\eps^p \le \sum_{i=1}^N \norm{G_i}_p^p\le \norm{f}_p^p=1\] because the summands $G_i$ are disjointly supported,
this iterative process is guaranteed to terminate after at most $c_\eps^{-p}$ steps.

Suppose that the procedure has terminated.
We claim that there exists $C_\eps\in\reals^+$ such that for any $f$ satisfying the hypotheses,
there exists $\lambda\in\reals^+$ such that $\norm{G_i}_\infty\in [\lambda,C_\eps\lambda]$ for all indices $i$.
Combined with the bounds $\norm{G_i}_\infty|P_i|^{1/p}\le C_\eps$, 
this implies that \[\max_{i,j}\norm{G_j}_\infty |P_i|^{1/p}\le C_\eps<\infty\]
with a larger value of $C_\eps$ which still depends only on $d,\Lambda,\eps$.

To prove the claim, recall that the functions $G_j$ are constructed to have disjoint supports and so that for
each $x$, either $G_j(x)=f(x)$ or $G_j(x)=0$.
Moreover, according to the final conclusion of Proposition~\ref{prop:HYquasidecomp},
for each $j$ there exists $\alpha_j\in\reals^+$ such that $|G_j(x)|\in[\alpha_j,C\alpha_j]$ whenever $G_i(x)\ne 0$.
By Lemma~\ref{lemma:nicelyboundednearextremizers}, there exists $\lambda=\lambda(f)\in\reals^+$  such that for all $\eta\in\reals^+$,
\begin{equation}\label{eq:middlingvalues} \int_{|f(x)|\ge \eta^{-1}\lambda}|f|^p +  \int_{|f(x)|\le \eta \lambda} |f|^p
\le\Theta(\eta,\delta)\norm{f}_p^p\end{equation}
where $\Theta(\eta,\delta)\to 0$ as $\max(\eta,\delta)\to 0$; $\Theta$ depends on $p,d$ but is independent of $f$.
$\norm{G_j}_p$ is bounded below by the product of $\norm{f}_p$ with a function of $\eps$ alone. 
Together with \eqref{eq:middlingvalues} and the fact that $|G_j(x)|\in [\alpha_j,C\alpha_j]$ whenever $G_j(x)\ne 0$, 
this forces \[C_\eps^{-1}\lambda\le\alpha_j\le C_\eps\lambda\] for all $j$,
provided that $C_\eps$ is sufficiently large and $\delta$ is chosen to be sufficiently small.
This completes the proofs of the claim, and hence of Lemma~\ref{lemma:NE1}.
\end{proof}

We have implicitly proved the following general result. 
\begin{lemma} \label{lemma:NE1general}
Let $p,q\in[1,\infty)$.
Let $T:L^p\to L^q$ be a bounded linear operator.
Let there be given, for each $\tau>0$, a class of functions $\scriptg_\tau\subset L^p$  
and a number $\sigma(\tau)>0$ with the following property:
If $\tau>0$ and if $f\in L^p$ satisfies $\norm{Tf}_q\ge \tau \norm{f}_p$,
then there exists a disjointly supported Lebesgue measurable decomposition $f=g+h$ with $g\in \scriptg_\tau$
and $\norm{h}_p \le (1-\sigma(\tau))\norm{f}_p$.
Then for any $\eps>0$ there exist $\delta,\eta>0$ and $N<\infty$
such that for any function $f\in L^p$ satisfying $\norm{Tf}_q > (1-\delta)\norm{f}_p$
there exists a disjointly supported decomposition $f=h+\sum_{i=1}^N  g_i$ with 
\begin{gather*} \norm{h}_p<\eps\norm{f}_p \\ g_i\in \scriptg_\eta.  \end{gather*}
The quantities $\delta,\eta,N$ may be taken to depend only on $\eps$
and on the function $\tau\mapsto\sigma(\tau)$.
\end{lemma}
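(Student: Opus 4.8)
The plan is to run the obvious peeling iteration: repeatedly apply the hypothesis on the classes $\scriptg_\tau$ to split off one summand $g_i\in\scriptg_\eta$ at a time, leaving a smaller remainder, and stop once the remainder has $L^p$ norm below $\eps\norm{f}_p$. The one thing that must be arranged is that the quasi-extremality parameter $\eta$ used to invoke the hypothesis does \emph{not} degrade as the iteration proceeds; otherwise the admissible $\tau$ would have to be driven to $0$ and the hypothesis would become useless after finitely many steps. This uniformity is exactly what Lemma~\ref{lemma:noslacking} (No slacking) supplies.

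First I would fix the constants. Normalize $\norm{f}_p=1$; as in the statement we take the operator norm of $T$ to equal $1$ (rescaling $T$ and reindexing the families $\scriptg_\tau$, $\sigma$ accordingly if necessary). Let $c,C_0$ be the constants from Lemma~\ref{lemma:noslacking} for the given $p,q$, choose $\delta\in(0,\tfrac12]$ with $C_0\delta^{1/p}\le\eps$, and set $\eta:=c\,\delta^{(p-1)/p}$; both $\delta$ and $\eta$ then depend only on $\eps$ (and on $p,q$). The key remark is: if $f=\sum_{i=1}^{k}g_i+F_k$ is any disjointly supported decomposition with $\norm{F_k}_p\ge\eps$, then Lemma~\ref{lemma:noslacking}, applied with $h=F_k$ and $g=\sum_{i=1}^k g_i$ (its hypotheses hold since $\norm{Tf}_q\ge(1-\delta)\norm{f}_p$ and $\norm{F_k}_p\ge\eps\ge C_0\delta^{1/p}\norm{f}_p$), gives $\norm{TF_k}_q\ge c\,\delta^{(p-1)/p}\norm{f}_p=\eta\ge\eta\norm{F_k}_p$. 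Thus the remainder $F_k$ is an $\eta$-quasi-extremizer, and the hypothesis on $\scriptg_\eta$ applies to it.

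Now I would iterate. Put $F_0=f$. At stage $k\ge 0$: if $\norm{F_k}_p<\eps$, halt. Otherwise $\norm{TF_k}_q\ge\eta\norm{F_k}_p$ by the previous step, so the hypothesis yields a disjointly supported decomposition $F_k=g_{k+1}+F_{k+1}$ with $g_{k+1}\in\scriptg_\eta$ and $\norm{F_{k+1}}_p\le(1-\sigma(\eta))\norm{F_k}_p$. Since $g_{k+1}$ and $F_{k+1}$ are supported inside the support of $F_k$, which was disjoint from $g_1,\dots,g_k$, the enlarged decomposition $f=\sum_{i=1}^{k+1}g_i+F_{k+1}$ is again disjointly supported. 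Iterating the norm inequality gives $\norm{F_k}_p\le(1-\sigma(\eta))^k$, so the procedure halts at some step $k^*\le N$ with $N:=1+\lceil\log(1/\eps)/\log(1/(1-\sigma(\eta)))\rceil$, a quantity depending only on $\eps$ and on the function $\sigma$ (we may assume $\sigma(\eta)<1$, the case $\sigma(\eta)=1$ giving $F_1=0$ already). Equivalently, each non-terminal summand has $\norm{g_{k+1}}_p^p=\norm{F_k}_p^p-\norm{F_{k+1}}_p^p\ge(1-(1-\sigma(\eta))^p)\eps^p$, and disjointness forces $\sum_i\norm{g_i}_p^p\le\norm{f}_p^p=1$, again bounding the number of steps. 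At termination set $h=F_{k^*}$; then $f=h+\sum_{i=1}^{k^*}g_i$ is disjointly supported, $\norm{h}_p<\eps$, and each $g_i\in\scriptg_\eta$. Reading the sum as having $N$ terms (with $g_i=0$ for $k^*<i\le N$, or simply as having at most $N$ terms) and undoing the normalization of $\norm{f}_p$ gives the claimed statement.

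The only substantive point, and hence the main obstacle, is the uniform lower bound on $\norm{TF_k}_q/\norm{F_k}_p$ furnished by Lemma~\ref{lemma:noslacking}: it keeps the remainder quasi-extremal with a \emph{fixed} parameter $\eta$ as long as $\norm{F_k}_p$ has not yet dropped below $\eps$, regardless of how much mass has already been removed; without this the natural iteration simply does not close. Everything else is bookkeeping. Lemma~\ref{lemma:NE1} is the special case in which $T$ is the Fourier transform and $\scriptg_\tau$ is the class of functions supported on a proper continuum multiprogression of rank $O_\tau(1)$ satisfying the stated $L^\infty$-times-measure bound; feeding Proposition~\ref{prop:HYquasidecomp} into the present scheme reproduces it, the displayed proof of Lemma~\ref{lemma:NE1} being precisely this instance together with the additional step of amalgamating the $L^\infty$-size bounds of the pieces into a single scale via the distribution-function control of Lemma~\ref{lemma:nicelyboundednearextremizers}.
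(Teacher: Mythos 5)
Your proof is correct and coincides with the paper's own argument: the paper obtains this lemma implicitly from the proof of Lemma~\ref{lemma:NE1}, which is exactly your peeling iteration, with Lemma~\ref{lemma:noslacking} supplying the uniform quasi-extremality of the remainder at each stage and disjointness (equivalently, the geometric decay of $\norm{F_k}_p$) bounding the number of steps by a quantity depending only on $\eps$ and $\sigma$.
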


\subsection{A more structured decomposition} \label{subsection:morestructure}

Lemma~\ref{lemma:NE1} is not adequate for our purpose, because it produces multiple multiprogressions $P_j$.
In order to eventually invoke
Lemma~\ref{lemma:gapd}, we would like to replace $\set{P_j}$ by a single multiprogression, of appropriately bounded
measure and rank.  We now show that an elaboration of the construction underlying
the proof of Lemma~\ref{lemma:NE1} produces mutually compatible multiprogressions, 
whose union is contained in a single multiprogression, with appropriate bounds.

\begin{lemma}[Multiprogression structure of near-extremizers] \label{lemma:NE2}
Let $d\ge 1$, and let $\Lambda\subset(1,2)$ be compact. 
For any $\eps>0$ there exist $\delta>0$ and $C_\eps<\infty$ with the following property for any $p\in\Lambda$.
Let $0\ne f\in L^p(\reals^d)$ satisfy $\norm{\widehat{f}}_{p'} \ge (1-\delta)\bestA_p^d\norm{f}_p$.
There exist a disjointly supported decomposition $f=g+h$ and a continuum multiprogression $P$ satisfying
\begin{gather*}
 \norm{h}_p<\eps\norm{f}_p
\\ \norm{g}_\infty |P|^{1/p}\le C_\eps\norm{f}_p
\\ g\prec P
\\ \rank(P)\le C_\eps.
\end{gather*}
\end{lemma}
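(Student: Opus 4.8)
The plan is to elaborate the iteration behind Lemma~\ref{lemma:NE1}: peel pieces off $f$ one at a time, but arrange at each stage that the newly extracted piece is compatible with the multiprogression accumulated so far, so the two can be absorbed into a single multiprogression. Normalize $\|f\|_p=1$ and fix the target $\eps$. Inductively maintain a disjointly supported decomposition $f=\tilde G_n+F_n$ and a proper continuum multiprogression $\scriptp_n$ of rank $\le C_\eps$ with $\tilde G_n\prec\scriptp_n$, $\|\tilde G_n\|_\infty|\scriptp_n|^{1/p}\le C_\eps$, and $\|\tilde G_n\|_p\ge c_0$, where $c_0>0$ is the constant (depending only on $d,p$) supplied by Proposition~\ref{prop:HYquasidecomp} applied to a function whose Hausdorff-Young ratio is at least $\tfrac12\bestA_p^d$; the step $n=1$ is a single application of Proposition~\ref{prop:HYquasidecomp} to $f$.

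For the inductive step, suppose $\|F_n\|_p\ge\eps$. Applying Lemma~\ref{lemma:crossterm} to the disjointly supported $f=\tilde G_n+F_n$, whose two parts have norm $\ge c_1:=\min(c_0,\eps)$, gives $\|\widehat{\tilde G_n}\,\widehat{F_n}\|_{p'/2}\ge cc_1^p-C\delta=:\kappa$, which is a fixed positive constant once $\delta$ is chosen small. By Lemma~\ref{lemma:noslacking}, $F_n$ is a quasi-extremizer, so iterating Proposition~\ref{prop:HYquasidecomp} exactly as in the proof of Lemma~\ref{lemma:NE1} yields a disjointly supported decomposition $F_n=\sum_{k=1}^m G^{(k)}+H'$ with $\|H'\|_p<\tau$, each $G^{(k)}\prec Q^{(k)}$ for a proper continuum multiprogression of rank $\le C_\eps$ with $\|G^{(k)}\|_\infty|Q^{(k)}|^{1/p}\le C_\eps$, each $\|G^{(k)}\|_p\ge c_\eps>0$, and $m\le C_\eps$. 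Choosing $\tau$ so that $\bestA_p^{2d}\tau<\tfrac12\kappa$ and using $\|\widehat{\tilde G_n}\,\widehat{H'}\|_{p'/2}\le\bestA_p^{2d}\|H'\|_p$, the triangle inequality in $L^{p'/2}$ applied to $\widehat{F_n}=\sum_k\widehat{G^{(k)}}+\widehat{H'}$ forces some index $k_*$ with $\|\widehat{\tilde G_n}\,\widehat{G^{(k_*)}}\|_{p'/2}\ge\kappa/(2m)=:\lambda_\eps>0$.

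After dividing $\tilde G_n$ and $G^{(k_*)}$ by the bounded constants $\|\tilde G_n\|_\infty|\scriptp_n|^{1/p}$ and $\|G^{(k_*)}\|_\infty|Q^{(k_*)}|^{1/p}$, Lemma~\ref{lemma:interactionimpliesstructure} applies and yields comparability of $|\scriptp_n|$ and $|Q^{(k_*)}|$ together with $|\scriptp_n+Q^{(k_*)}|\le C_\eps\min(|\scriptp_n|,|Q^{(k_*)}|)$; since a proper continuum multiprogression $R$ of rank $r$ obeys $|R+R|\le 2^{d+r}|R|$, the set $\scriptp_n\cup Q^{(k_*)}$ has doubling constant $O_\eps(1)$, and Proposition~\ref{prop:FreimanR} produces a proper continuum multiprogression $\scriptp_{n+1}\supseteq\scriptp_n\cup Q^{(k_*)}$ of rank $O_\eps(1)$ and measure $O_\eps(\max(|\scriptp_n|,|Q^{(k_*)}|))$. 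Put $\tilde G_{n+1}=\tilde G_n+G^{(k_*)}$ and $F_{n+1}=F_n-G^{(k_*)}$; disjointness of supports, the comparability of $|\scriptp_n|$, $|Q^{(k_*)}|$, $|\scriptp_{n+1}|$, and the two $\|\cdot\|_\infty|\cdot|^{1/p}$ bounds preserve the invariants with an enlarged $C_\eps$, while $\|\tilde G_{n+1}\|_p\ge\|\tilde G_n\|_p\ge c_0$. Since each step removes from $F_n$ a piece of $L^p$ norm $\ge c_\eps$, the iteration halts after $O_\eps(1)$ steps, necessarily with $\|F_n\|_p<\eps$; taking $g=\tilde G_n$, $h=F_n$, $P=\scriptp_n$ gives the conclusion, and because only $O_\eps(1)$ merges occur, each inflating the rank, the measure-comparability constant, and the normalization constant by a bounded factor, $C_\eps$ remains a (non-explicit) function of $\eps,d,p$.

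The crux, and the reason the naive approach of invoking Lemma~\ref{lemma:NE1} and then merging the resulting multiprogressions $P_i$ runs into trouble, is the control of the number $m$ of sub-pieces of $F_n$ that must be examined before one is found interacting with $\tilde G_n$: the triangle inequality costs a factor $m$, so the bound $\lambda_\eps=\kappa/(2m)$ is of use only because the cooperation constant $\kappa$ does not depend on $m$. This in turn works precisely because $\tilde G_n$ carries a definite fraction $\ge c_0$ of the $L^p$ mass, fixed at the first step, and $F_n$ carries mass $\ge\eps$ as long as the process continues, so one always compares two parts of definite size, rather than comparing arbitrary sub-collections of the $N_\eps$ pieces produced by Lemma~\ref{lemma:NE1}, for which no $m$-independent cooperation bound is available.
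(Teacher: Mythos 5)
Your argument is correct and is essentially the paper's own proof of Lemma~\ref{lemma:NE2}: the same iterated use of Proposition~\ref{prop:HYquasidecomp}, with Lemma~\ref{lemma:crossterm} forcing a definite interaction between the accumulated structured piece and the remainder, Lemma~\ref{lemma:interactionimpliesstructure} (together with a Fre{\u\i}man-type merging step) making the two multiprogressions compatible inside a single one of controlled rank and measure, and the disjoint-support mass count bounding the number of merges by $O_\eps(1)$. The only difference is organizational: you first carry out a full Lemma~\ref{lemma:NE1}-style decomposition of $F_n$ and then pigeonhole against the cooperation lower bound, whereas the paper tests each peeled piece on the fly against a pre-chosen threshold $\rho$ and rules out the no-interaction outcome by contradiction with the same cooperation lemma.
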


The proof follows that of Lemma~\ref{lemma:NE1}, but requires a refinement.

\begin{proof}
The construction is an iterative one in the form of two nested loops. 
We index the outer loop by $N\ge 1$ and the inner loop by $k\ge 1$.
Step $N=1$ of the construction is unchanged; there is no inner loop for this initial step.

Set $G_0=0$.
Let $\rho>0$ be a small quantity, depending only on $\eps,p,d$, to be chosen  below.

If the outer loop does not terminate after the completion of Step $N-2$, then
Step $N-1$ will produce
a disjointly supported decomposition $f=G_{N-1}+F_{N-1}$ and a single continuum multiprogression $P_{N-1}$, such that
$P_{N-1}$ has rank $\le C_{\eps}$, $G_{N-1}\prec P_{N-1}$, and
$\norm{G_{N-1}}_\infty |P_{N-1}|^{1/p}\le C_\eps\norm{f}_p$.
Moreover, $\norm{G_{N-1}}_p$ is bounded below by a positive multiple of $\norm{f}_p$,
uniformly in $\eps,\delta$ provided that these two quantities are sufficiently small.
Finally, $\norm{G_{N-1}}_p^p \ge \norm{G_{N-2}}_p^p+c_\eps\norm{f}_p^p$
where $c_\eps>0$ depends only on $\eps,p,d$. 

Substep $(N,k)$ will produce a disjointly supported decomposition \[f = G_{N-1} + \psi_{N,k} + \sum_{j=1}^k \varphi_{N,j}\]
in which $\norm{\varphi_{N,j}}^p\ge c_\eps\norm{f}_p$, $\varphi_{N,j}\prec Q_{N,j}$, 
$Q_{N,j}$ is a continuum multiprogression whose rank is $O_\eps(1)$, 
and $\norm{\varphi_{N,j}}_\infty|Q_{N,j}|^{1/p} \le C_\eps$.

Consider any $N\ge 2$.
A decomposition $f = G_{N-1}+F_{N-1}$ is given as the outcome of Step $N-1$.
If $\norm{F_{N-1}}_p<\eps\norm{f}_p$ then the construction halts; its outcome will be analyzed  below.

If $\norm{F_{N-1}}_p\ge \eps\norm{f}_p$ then begin substep $(N,1)$,
by setting $\psi_{N,0}=F_{N-1}$. 
Consider the ordered pair $(G_{N-1},\psi_{N,0})$.
By Lemma~\ref{lemma:noslacking}, $\norm{\widehat{\psi_{N,0}}}_q=\norm{\widehat{F_N}}_q \ge c\eps^{p-1}\norm{f}p$,
provided that $\delta$ is chosen to be sufficiently small. Since $\psi_{N,0}$ is a summand
in a disjointly supported decomposition of $f$, $\norm{f}_p\ge\norm{\psi_{N,0}}_p$.
Thus $\norm{\widehat{\psi_{N,0}}}_q \ge c\eps^{p-1}\norm{\psi_{N,0}}p$.

Proposition~\ref{prop:HYquasidecomp} can consequently 
be applied to $\psi_{N,0}$ to obtain a disjointly supported decomposition 
\[\psi_{N,0}=\varphi_{N,1}+\psi_{N,1}\] and a proper continuum multiprogression $Q_{N,1}$ satisfying
\begin{align*}
&\varphi_{N,1}\prec Q_{N,1}, 
\\& \rank(Q_{N,1})\le C_\eps,
\\& \norm{\varphi_{N,1}}_\infty |Q_{N,1}|^{1/p} \le C_\eps\norm{f}_p,
\\& \norm{\varphi_{N,1}}_p\ge c_\eps\norm{f}_p.
\end{align*}

If $\norm{\widehat{G_{N-1}}\,\widehat{\varphi_{N,1}}}_{p'/2} \ge \rho\norm{f}_p^2$,
or if $\norm{\psi_{N,1}}_p<\eps\norm{f}_p$,
then substep $(N,1)$ halts.
If $\norm{\widehat{G_{N-1}}\,\widehat{\varphi_{N,1}}}_{p'/2} < \rho\norm{f}_p^2$ 
and $\norm{\psi_{N,1}}_p \ge \eps\norm{f}_p$
then rather than augmenting $G_{N-1}$ by $\varphi_{N,1}$ to form $G_N$, we set $\varphi_{N,1}$ aside, 
replace the ordered pair $(G_{N-1},F_N)$ by $(G_{N-1},\psi_{N,1})$, and execute substep $(N,2)$, 
with $\psi_{N,1}$ playing the part of $\psi_{N,0}$ in the above discussion, with halting criterion
$\norm{\widehat{G_{N-1}}\,\widehat{\varphi_{N,2}}}_{p'/2} \ge \rho\norm{f}_p^2$
or $\norm{\psi_{N,1}}_p<\eps\norm{f}_p$.

The general step
$(N,k+1)$ proceeds as follows. It has as its input an ordered pair 
$(G_{N-1},\psi_{N,k})$  with $\norm{\psi_{N,k}}_p\ge\eps\norm{f}_p$
and a disjointly supported decomposition \[f = G_{N-1} + \psi_{N,k} + \sum_{j=1}^k \varphi_{N,j},\]
along with a proper continuum multiprogression $P_{N-1}$.
Apply Proposition~\ref{prop:HYquasidecomp} to $\psi_{N,k}$ 
to obtain a disjointly supported decomposition \[\psi_{N,k} = \varphi_{N,k+1} + \psi_{N,k+1}\]
where $\varphi_{N,k+1}$ is associated to a proper continuum multiprogression $Q_{N,k+1}$ as above.
In particular, $\norm{\varphi_{N,k+1}}_p\ge c_\eps\norm{\psi_{N,k}}_p\ge c'_\eps \norm{f}_p$.
If \[\norm{\widehat{G_{N-1}}\,\widehat{\varphi_{N,k+1}}}_{p'/2} \ge \rho\norm{f}_p^2 
\ \text{ or }  \norm{\psi_{N,k+1}}_p < \eps\norm{f}_p\] 
then substep $(N,k+1)$ halts. Its outcome will be discussed below.
If not, execute substep $(N,k+2)$.
The procedure halts when the first index $k=M$ is reached for which either
$\norm{\widehat{G_{N-1}}\,\widehat{\varphi_{N,M}}}_{p'/2} \ge  \rho\norm{f}_p^2$ or $\norm{\psi_{N,M}}_p < \eps\norm{f}_p$.

For any $N$, the inner loop indexed by $k$ must halt after at most $C_\eps<\infty$ iterations.
Indeed, because the functions $\varphi_{N,i}$ are disjointly supported for distinct indices $i$, 
after substep $(N,M)$
\[ \norm{f}_p^p \ge \sum_{i=1}^M \norm{\varphi_{N,i}}_p^p \ge Mc_\eps\norm{f}_p^p.  \]
Therefore $M\le c_\eps^{-1}$.

Suppose that the subconstruction is iterated $M\le C_\eps$ times, 
and halts because $\norm{\psi_{N,M}}_p < \eps\norm{f}_p$, but  
$\norm{\widehat{G_{N-1}}\,\widehat{\varphi_{N,N}}}_{p'/2} <  \rho\norm{f}_p^2$. 
Then $\norm{\widehat{G_{N-1}}\,\widehat{\varphi_{N,j}}}_{p'/2} <  \rho\norm{f}_p^2$ 
for each index $j\in\set{1,2,\dots,M}$. 
Consequently $\varphi = \sum_{j=1}^N \varphi_{N,j}$ satisfies
\begin{align*}
\norm{\widehat{G_{N-1}}\,\widehat{\varphi}}_{p'/2} 
&\le \sum_{j=1}^M \norm{\widehat{G_{N-1}}\,\widehat{\varphi_{N,j}}}_{p'/2} 
+ \norm{\widehat{G_{N-1}}\,\widehat{\psi_{N,M}}}_{p'/2} 
\\ &\le M\rho\norm{f}_p^2 + \norm{G_{N-1}}_p\norm{\psi_{N,M}}_p
\\ &\le C_\eps \rho\norm{f}_p^2 + \norm{f}_p\cdot \eps \norm{f}_p.
\end{align*}
The parameter $\rho$ is at our disposal.  Choose $\rho$ to satisfy $C_\eps\rho<\eps$,  thus concluding that
\begin{equation}\label{eq:crosstoosmall} \norm{\widehat{G_{N-1}}\widehat{\varphi}}_{p'/2} \le 2\eps\norm{f}_p^2.\end{equation}
On the other hand,
\[ \norm{\varphi}_{p} = \norm{f-\psi_{N,M}}_p \ge (1-\eps)\norm{f}_p\ge\tfrac12\norm{f}_p.\]
Provided that $\eps$ is sufficiently small, as we may assume without loss of generality, 
this together with \eqref{eq:crosstoosmall} contradicts Lemma~\ref{lemma:crossterm}.

Therefore after $M\le C_\eps$ executions of the inner loop,
a function $\varphi_{N,M}$ is produced which satisfies
\begin{equation}\label{eq:crosstermnotsmall} 
\norm{\widehat{G_{N-1}}\,\widehat{\varphi_{N,M}}}_{p'/2} \ge  \rho\norm{f}_p^2\ 
\text{ and }\ \norm{\varphi_{N,M}}_p\ge c_\eps\norm{f}_p\end{equation}
with an associated proper continuum multiprogression $Q_N$ 
such that $\norm{\varphi_{N,M}}_\infty |Q_N|^{1/p}\le C_\eps$
and $Q_N$ has rank $O_\eps(1)$. 
Set $G_N = G_{N-1}+\varphi_{N,M}$,
and set \[F_{N+1} = f-G_N = \psi_{N,M} + \sum_{j=1}^{M-1}\varphi_{N,j}.\] 
By their construction, $G_{N-1}$ and $\varphi_{N,M}$ have disjoint supports. Therefore
\[ \norm{G_N}_p^p = \norm{G_{N-1}}_p^p + \norm{\varphi_{N,M}}_p^p \ge \norm{G_{N-1}}_p^p + c_\eps\norm{f}_p^p\]
where $c_\eps>0$.

By Lemma~\ref{lemma:interactionimpliesstructure}, \eqref{eq:crosstermnotsmall} 
implies that $P_{N-1}$ and $Q_N$ are compatible, in the sense that there exists a proper continuum multiprogression $P_N$
of rank $O_\eps(1)$ that contains both $P_{N-1}$ and $Q_N$ and satisfies $|P_N|\le C_\eps\max(|P_{N-1}|,|Q_N|)$.
Then 
\[\norm{G_N}_\infty |P_N|^{1/p} \le \norm{G_{N-1}}_\infty|P_{N-1}|^{1/p} + \norm{\varphi_{N,M}}_\infty|Q_N|^{1/p}\le C_\eps.\]

If $\norm{F_{N+1}}_p<\eps\norm{f}_p$ then the process halts, and the proof of Lemma~\ref{lemma:NE2} is complete.
If $\norm{F_{N+1}}_p\ge\eps\norm{f}_p$ then begin Step $N+1$. 
Because $\norm{G_N}_p^p \ge \norm{G_{N-1}}_p^p + c_\eps\norm{f}_p^p$, this outer loop must halt after at most $O_\eps(1)$ iterations. 
\end{proof}

\section{Discrete and hybrid groups}

The analysis of Beckner \cite{beckner} relates the Hausdorff-Young inequality for $\reals$ to an inequality
on an infinite product of groups with two elements. Lieb \cite{liebgaussian} exploits the product structure
of the inequality in a different way, using $\reals\times\reals$ to analyze the inequality for $\reals$.
We will likewise use products, lifting functions from $\reals^d$ to $\integers^d\times\reals^d$,
and exploiting analysis with respect to the $\integers^d$ coordinate.

In this discussion, $\widehat{\cdot}$ continues to denote the Fourier transform for $\reals^d$. 
Four other Fourier transforms,
denoted by $\frakf,\fcross,\scriptf,\tilde\scriptf$ and to be defined presently, will be in play. 
The first of these is
$\frakf$, the Fourier transform for the group $\integers^d$, for any $d\ge 1$. It is defined as follows:
\begin{equation}\label{eq:hybridFT} \frakf(f)(\theta) = \sum_{n\in\integers^d} e^{-2\pi i \theta \cdot n}f(n)\end{equation}
for $\theta\in\torus^d$ where $\torus = \reals/\integers$.
Equip $\torus^d$ with the measure defined by Lebesgue measure under the identification of a coset
in $\reals^d/\integers^d$ with its unique representative in $[0,1)^d$.
We will often identify $\torus^d$ instead with $[-\tfrac12,\tfrac12]^d$
in the natural way, disregarding sets of Lebesgue measure zero 
for which the natural maps fail to be well-defined or injective.

The Hausdorff-Young inequality for $\integers^d$ states that
\begin{equation} \norm{\frakf(f)}_{L^{p'}(\torus^d)}\le \norm{f}_{\ell^p(\integers^d)} \end{equation}
for all $1\le p\le 2$, where $p'$ is the exponent conjugate to $p$. The optimal constant in the inequality
is equal to $1$.

The Fourier transform of $f\in L^1(\integers^\kappa\times\reals^d)$  is
\begin{equation}
\fcross(f)(\theta,\xi) = \sum_{n\in\integers^\kappa} \int_{\reals^d} e^{-2\pi i n\cdot\theta} e^{-2\pi i x\cdot\xi}f(n,x)\,dx
\ \text{for $(\theta,\xi)\in\torus^\kappa\times\reals^d$.} 
\end{equation}
The Hausdorff-Young inequality with optimal constant for $\integers^\kappa\times\reals^d$ states that for $p\in[1,2]$ 
and $f\in L^1\cap L^p$,
\begin{equation}\label{eq:sharpHYhybrid} \norm{\fcross(f)}_{L^{p'}(\torus^\kappa\times\reals^d)}
\le \bestA_p^d\norm{f}_{L^p(\integers^\kappa\times\reals^d)} \end{equation}
where $\bestA_p$ is the optimal constant in the $L^p\to L^{p'}$ Hausdorff-Young inequality for $\reals^1$.

To verify that the optimal constant in \eqref{eq:sharpHYhybrid} cannot be smaller than 
the corresponding optimal constant for $\reals^d$, it suffices to consider functions 
\begin{equation*} f(n,x)= \begin{cases} F(x) \qquad &\text{if $n=0$} \\ 0 &\text{if $n\ne 0$.}  \end{cases}\end{equation*}

\begin{proof}[Proof of \eqref{eq:sharpHYhybrid}]
Let $q=p'$.
The product Fourier transform $\fcross$ can be expressed as a composition
$\fcross = \tilde\scriptf\circ\scriptf$ of commuting operators, with 
\begin{align}
\scriptf f(\theta,x) &= \sum_{n\in\integers^\kappa} f(n,x) e^{-2\pi i n\cdot\theta}
\\ \tilde\scriptf g(\theta,\xi) &= \int_{\reals^d} g(\theta,x) e^{-2\pi i x\cdot\xi}\,dx .\end{align}
The former operator maps $L^p(\integers^\kappa\times\reals^d)$ to $L^p_x L^q_\theta(\torus^\kappa_\theta \times \reals^d_x)$
with operator norm $1$, while 
$\tilde\scriptf$ maps $L^q_\theta L^p_x(\integers^\kappa_\theta \times \reals^d_x)$
to $L^q(\torus^\kappa\times\reals^d)$ with operator norm $\bestA_p^d$.
For any function $g(\theta,\xi)$,
\[ \norm{g}_{L^q_\theta L^p_x(\torus^\kappa_\theta \times\reals^d_x)}
\le \norm{g}_{L^p_x L^q_\theta(\reals^d_x\times \torus^\kappa_\theta)}\]
by Minkowski's integral inequality, since $q\ge p$.
Therefore
\begin{multline*}
\norm{\fcross{f}}_{L^q} 
= \norm{\fcross{f}}_{L^q_\theta L^q_\xi}
= \norm{\tilde\scriptf{\scriptf f}}_{L^q_\theta L^q_\xi }
\le  \bestA_p^d \norm{\scriptf f}_{L^q_\theta L^p_x }
\\ \le  \bestA_p^d \norm{\scriptf f}_{L^p_x L^q_\theta} \le \bestA_p^d \norm{f}_{L^p_x L^p_n}  =  \bestA_p^d\norm{f}_p.
\end{multline*}
\end{proof}
This calculation also demonstrates a result which will be important below.
\begin{lemma} \label{lemma:mixednormlowerbound}
If $\norm{\fcross(f)}_q\ge (1-\delta)\bestA_p^d\norm{f}_p$
then \[\norm{\scriptf{f}}_{L^p_xL^q_\theta} \ge (1-\delta) \norm{f}_{p}.\]
\end{lemma}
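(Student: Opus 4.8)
The plan is to read off the conclusion directly from the chain of inequalities already displayed in the proof of \eqref{eq:sharpHYhybrid}. That argument factors $\fcross=\tilde\scriptf\circ\scriptf$ and estimates, in order,
\[ \norm{\fcross f}_{L^q} \;\le\; \bestA_p^d\,\norm{\scriptf f}_{L^q_\theta L^p_x} \;\le\; \bestA_p^d\,\norm{\scriptf f}_{L^p_x L^q_\theta} \;\le\; \bestA_p^d\,\norm{f}_p, \]
where the first step uses that $\tilde\scriptf$ maps $L^q_\theta L^p_x$ to $L^q(\torus^\kappa\times\reals^d)$ with operator norm $\bestA_p^d$, the second is Minkowski's integral inequality (valid because $q\ge p$), and the third uses that $\scriptf$ maps $L^p$ to $L^p_x L^q_\theta$ with operator norm $1$.

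First I would isolate from this the single inequality $\norm{\fcross f}_q\le \bestA_p^d\,\norm{\scriptf f}_{L^p_x L^q_\theta}$, obtained by composing its first two steps. Combining this with the hypothesis $\norm{\fcross f}_q\ge (1-\delta)\bestA_p^d\norm{f}_p$ gives $(1-\delta)\bestA_p^d\norm{f}_p\le \bestA_p^d\,\norm{\scriptf f}_{L^p_x L^q_\theta}$, and dividing by $\bestA_p^d>0$ yields the asserted bound $\norm{\scriptf f}_{L^p_x L^q_\theta}\ge (1-\delta)\norm{f}_p$.

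There is essentially no obstacle here: the lemma is just a bookkeeping consequence of the proof already given, recording which intermediate quantity must be nearly saturated when $f$ nearly extremizes the hybrid Hausdorff-Young inequality. The only minor point worth mentioning is finiteness of the norms involved; it suffices to prove the statement first for $f\in L^1\cap L^p$, where \eqref{eq:sharpHYhybrid} was established, and then extend to general $f\in L^p$ by density, since both sides are continuous functionals of $f$ with respect to the $L^p$ norm.
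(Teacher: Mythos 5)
Your proof is correct and is exactly the argument the paper intends: the lemma is stated as an immediate consequence of the displayed chain $\norm{\fcross f}_{q}\le \bestA_p^d\norm{\scriptf f}_{L^q_\theta L^p_x}\le \bestA_p^d\norm{\scriptf f}_{L^p_x L^q_\theta}\le\bestA_p^d\norm{f}_p$, and you read off the conclusion from it in the same way. The closing remark about first treating $f\in L^1\cap L^p$ and extending by density is a harmless extra precaution, not a difference in approach.
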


Near extremizers of the Hausdorff-Young inequality, for arbitrary discrete Abelian groups,
were characterized implicitly by Fournier \cite{fournier},
and explicitly by Eisner and Tao \cite{eisnertao}.
The following more precise statement was shown 
in \cite{christmarcos}, where it was also observed that an equivalent formulation in terms of 
Young's convolution inequality extends to all discrete groups, not necessarily Abelian. 

\begin{theorem} [Near-extremizers for $\integers^d$] \label{thm:discretecase}
Let $\Lambda$ be a compact subset of $(1,2)$.
For any $\eps>0$ there exists $\delta>0$ such that for any dimension $d\ge 1$, any exponent $p\in\Lambda$,
and any $f\in L^p(\integers^d)$ satisfying $\norm{\frakf(f)}_{p'}\ge (1-\delta)\norm{f}_p$,
there exists $z\in \integers^d$ such that $\norm{f}_{L^p(\integers^d\setminus\set{z})}<\eps\norm{f}_p$.
More precisely, there exist a continuous nondecreasing function $\Lambda:(0,1]\to(0,1]$
satisfying \[\Lambda(t) \le 1-c(1-t)^\gamma \text{ as } t\to 1^-\]
for some constant $c>0$ and exponent $\gamma\in(0,\infty)$, 
such that for any $d\ge 1$ and for any function $f\in \ell^p(\integers^d)$,
\begin{equation}
\norm{\frakf(f)}_{p'} \le \norm{f}_p\cdot \Lambda\Big(\frac{\norm{f}_{\infty}}{\norm{f}_{p}}\Big).
\end{equation}\end{theorem}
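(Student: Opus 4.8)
The plan is to split according to whether $t=\norm{f}_\infty/\norm{f}_p$ is close to $1$ or not; the regime $t$ near $1$ is handled by a direct second-variation estimate around the point-mass extremizers, and the complementary regime by the known non-concentration of near-extremizers on discrete groups. For $t$ near $1$, I would normalize $\norm{f}_p=1$ and use the invariance of the $\integers^d$ Hausdorff--Young inequality under translations $n\mapsto n-z$ and under unimodular scalar multiplication to arrange that $M:=\norm{f}_\infty$ is attained at $0$ and that $f(0)=M\ge 0$. Writing $f=M\delta_0+h$, one has $h(0)=0$, $\norm{h}_p^p=1-M^p$, and $\frakf(f)=M+\frakf(h)$ on $\torus^d$, where crucially $\int_{\torus^d}\frakf(h)=h(0)=0$. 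Applying pointwise the elementary inequality
\[ |a+w|^{p'}\le a^{p'}+p'a^{p'-1}\operatorname{Re}w+C_{p'}\big(a^{p'-2}|w|^2+|w|^{p'}\big),\qquad a\ge 0,\ w\in\complex,\ p'\ge 2, \]
with $a=M$ and $w=\frakf(h)(\theta)$, and integrating over $\torus^d$, the first-order term drops out because $\int_{\torus^d}\frakf(h)=0$, leaving
\[ \norm{\frakf(f)}_{p'}^{p'}\le M^{p'}+C_{p'}\big(M^{p'-2}\norm{h}_2^2+\norm{\frakf(h)}_{p'}^{p'}\big). \]

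Next I would estimate the error crudely: $\norm{\frakf(h)}_{p'}\le\norm{h}_p$ by Hausdorff--Young for $\integers^d$ (optimal constant $1$), $M^{p'-2}\le 1$, and $\norm{h}_2^2\le\norm{h}_\infty^{2-p}\norm{h}_p^p\le\norm{h}_p^2$ since $\norm{h}_\infty\le\norm{h}_p$ on a discrete group. This yields $\norm{\frakf(f)}_{p'}^{p'}\le M^{p'}+C_{p'}\big((1-M^p)^{2/p}+(1-M^p)^{p'/p}\big)$, and since $p<2$ forces both exponents $2/p$ and $p'/p$ to exceed $1$, the error is of strictly higher order than $1-M^{p'}\asymp 1-M$. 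Hence there are $\eta_0,c>0$ depending only on $p$ with $\norm{\frakf(f)}_{p'}\le 1-c(1-M)$ whenever $M\ge 1-\eta_0$; by homogeneity, $\norm{\frakf(f)}_{p'}\le\norm{f}_p\big(1-c(1-\norm{f}_\infty/\norm{f}_p)\big)$ when $\norm{f}_\infty/\norm{f}_p\ge 1-\eta_0$, which is the asserted tail behavior with $\gamma=1$.

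It remains to make $\Lambda$ strictly less than $1$ for every $t<1$, which is forced since $\Lambda$ is nondecreasing with $\Lambda(1)=1$; equivalently, to show that $\norm{f}_\infty\le(1-\eps)\norm{f}_p$ implies $\norm{\frakf(f)}_{p'}\le(1-\delta(\eps))\norm{f}_p$. Granting this one sets $\Lambda(t)=\sup\{\norm{\frakf(f)}_{p'}/\norm{f}_p:\norm{f}_\infty/\norm{f}_p\le t\}$, which is nondecreasing and $\le 1$ by construction, $<1$ for $t<1$ by the implication, $\le 1-c(1-t)$ near $t=1$ by the computation above, and continuous by a routine argument. For the implication I would follow the discrete near-extremizer rigidity of Fournier and of Eisner--Tao: apply Lemma~\ref{lemma:nicelyboundednearextremizers} (which applies to $\frakf$ on $\integers^d$, since it is bounded $\ell^{\tilde p}\to L^{\tilde p'}$ for all $\tilde p$ near $p<2$) to reduce, up to an $L^p$-small error, to a function $g$ whose nonzero values all have comparable size, supported on a set $S$; then observe that the Hölder step $\norm{\frakf(g)}_{p'}^{p'}\le\norm{\frakf(g)}_\infty^{p'-2}\norm{\frakf(g)}_2^2$ is tight only if $|\frakf(g)|$ is a.e.\ constant on its support, which fails with a quantitative loss as soon as $S$ contains two well-separated points, forcing $|S|$ bounded; and finally conclude that a $g$ supported on a set of bounded cardinality with $\norm{\frakf(g)}_{p'}$ near $\norm{g}_p$ is close to a point mass by a compactness argument in a fixed finite-dimensional space. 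The main obstacle is precisely this last step: arbitrarily spread-out ``Dirichlet-kernel'' configurations of $S$ must be excluded \emph{uniformly} over all such configurations, and this is where one genuinely uses the discreteness of $\integers^d$ rather than the merely approximate arithmetic structure available on $\reals^d$.
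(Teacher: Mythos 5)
Your first step is fine: the expansion around a point mass, with the linear term killed by $\int_{\torus^d}\frakf(h)=h(0)=0$ and the dimension-free estimates $\norm{\frakf(h)}_{p'}\le\norm{h}_p$, $\norm{h}_2\le\norm{h}_p$, correctly gives a deficit $1-c(1-s)$ when $s=\norm{f}_\infty/\norm{f}_p$ is close to $1$, uniformly in $d$, because $2/p>1$ and $p'/p>1$ make the error $o(1-M)$. But this is the easy half, and the proposal does not prove the theorem. The real content is the claim you defer: that $\norm{f}_\infty\le(1-\eps)\norm{f}_p$ forces $\norm{\frakf(f)}_{p'}\le(1-\delta(\eps))\norm{f}_p$ uniformly in $d$ and over all support configurations. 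Without it, even your stated conclusion near $t=1$ fails: writing $R(s)$ for the supremum of $\norm{\frakf(f)}_{p'}/\norm{f}_p$ over $\norm{f}_\infty/\norm{f}_p=s$, your envelope $\Lambda(t)=\sup_{s\le t}R(s)$ satisfies $\Lambda(t)\ge\sup_{s\le 1-\eta_0}R(s)$ for every $t$, so the asymptotic $\Lambda(t)\le 1-c(1-t)^{\gamma}$ already requires the uniform deficit away from $t=1$; monotonicity does not manufacture it.

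Your sketch of that missing step does not work as written. The chain $\norm{\frakf(g)}_{p'}^{p'}\le\norm{\frakf(g)}_\infty^{p'-2}\norm{\frakf(g)}_2^2\le\norm{g}_1^{p'-2}\norm{g}_2^2$ is not in general bounded by $\norm{g}_p^{p'}$ on a discrete group (there $\norm{g}_1\ge\norm{g}_p$), and after the reduction to merely comparable values it overshoots the sharp constant by a fixed factor, so near-equality in the sharp Hausdorff--Young inequality cannot be propagated through it. ``Two well-separated points'' is not a meaningful obstruction on $\integers^d$: for a two-point support the distribution of $|\frakf(g)|$ on $\torus^d$ is the same for every nonzero separation vector, so separation per se is irrelevant; what must be excluded, quantitatively, is near-extremality for supports with nontrivial additive structure. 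Finally, the concluding ``compactness argument in a fixed finite-dimensional space'' is exactly what is unavailable: the family of $N$-point configurations is noncompact (separations may diverge and $d$ is unbounded), and degenerating configurations converge to functions supported on lattices of different rank, so one needs a Freiman-isomorphism or limiting argument carried out uniformly — which is the substance of the result. Note that the paper does not prove this theorem; it quotes it from Fournier, Eisner--Tao, and Charalambides--Christ, and those references supply precisely the step you yourself flag as ``the main obstacle.'' As it stands, the proposal establishes only the $t\to 1^-$ expansion, not the theorem.
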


The next result falls short of fully characterizing near extremizers for $\integers^\kappa\times\reals^d$,
and is not used in the proofs of our main theorems,
but does provide significant information concerning them which will be used in our analysis of
near extremizers for $\reals^d$.
\begin{proposition}[Near-extremizers for $\integers^\kappa\times\reals^d$] \label{prop:1ptdecomp}
Let $\Lambda$ be a compact subset of $(1,2)$.
Let $p\in\Lambda$, and let $\kappa,d$ be positive integers. Let $\delta>0$.
Suppose that $0\ne f\in L^p(\integers^\kappa\times\reals^d)$,
and that \[\norm{\fcross(f)}_{p'} \ge (1-\delta)\bestA_p^d\norm{f}_p.\]
Then there exists a disjointly supported Lebesgue measurable decomposition $f=g+h$ with 
\[\norm{h}_p\le C\delta^{1/(p+1)}\norm{f}_p,\] and
for each $x\in\reals^d$ 
\[\text{There exists at most one $n\in\integers^\kappa$ for which $g(n,x)\ne 0$.}\]
\end{proposition}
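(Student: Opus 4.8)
The plan is to pass to the partial Fourier transform $\scriptf$ in the $\integers^\kappa$-variable and then apply, fiberwise in $x\in\reals^d$, the known quantitative description of near-extremizers of the Hausdorff--Young inequality for $\integers^\kappa$. Normalize $\norm{f}_p=1$. First I would invoke Lemma~\ref{lemma:mixednormlowerbound}, which turns the hypothesis into
\[ \norm{\scriptf f}_{L^p_x L^{p'}_\theta(\reals^d_x\times\torus^\kappa_\theta)} \ge 1-\delta. \]
For a.e.\ $x$ the slice $\theta\mapsto\scriptf f(\theta,x)=\sum_{n\in\integers^\kappa}f(n,x)e^{-2\pi i n\cdot\theta}$ is precisely $\frakf$ applied to $n\mapsto f(n,x)\in\ell^p(\integers^\kappa)$, so the Hausdorff--Young inequality for $\integers^\kappa$, whose optimal constant is $1$, gives $\norm{\scriptf f(\cdot,x)}_{L^{p'}_\theta}\le\norm{f(\cdot,x)}_{\ell^p(\integers^\kappa)}$ for a.e.\ $x$. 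Writing $\mu(x)=\norm{f(\cdot,x)}_{\ell^p(\integers^\kappa)}^p$, so that $\int_{\reals^d}\mu=1$, and raising the previous display to the power $p$, one obtains a fiberwise deficit that is small on average:
\[ \int_{\reals^d}\Big(\mu(x)-\norm{\scriptf f(\cdot,x)}_{L^{p'}_\theta}^p\Big)\,dx \;\le\; 1-(1-\delta)^p\;\le\; p\delta. \]

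Next I would apply the quantitative discrete theorem, Theorem~\ref{thm:discretecase}, to $n\mapsto f(n,x)$ for a.e.\ $x$: with $s(x):=\norm{f(\cdot,x)}_{\ell^\infty}/\norm{f(\cdot,x)}_{\ell^p}\in[0,1]$,
\[ \norm{\scriptf f(\cdot,x)}_{L^{p'}_\theta}\;\le\; \norm{f(\cdot,x)}_{\ell^p}\,\Lambda\big(s(x)\big), \]
where $\Lambda$ is continuous, nondecreasing, strictly less than $1$ on $[0,1)$, and satisfies $\Lambda(t)\le 1-c(1-t)^\gamma$ as $t\to1^-$. Hence the integrand of the deficit dominates $\mu(x)\big(1-\Lambda(s(x))^p\big)$, and since $1-\Lambda(t)^p$ is bounded below by a positive multiple of $(1-t)^\gamma$ for $t$ near $1$ and by a positive constant for $t$ bounded away from $1$, a Chebyshev-type estimate --- split $\reals^d$ at a threshold $s(x)=1-\eta$, the set $\{s\le 1-\eta\}$ carrying $\mu$-mass $O(\delta\eta^{-\gamma})$ and its complement contributing $1-s(x)^p\le p(1-s(x))<p\eta$ --- followed by optimization of $\eta$ in terms of $\delta$ gives
\[ \int_{\reals^d}\mu(x)\big(1-s(x)^p\big)\,dx \;\le\; C\delta^{p/(p+1)}. \]
The value $p/(p+1)$ is not optimized; it results from this balancing together with the quantitative form of Theorem~\ref{thm:discretecase}, which is where the argument ceases to be a purely formal consequence of the cited results and where I expect the bookkeeping to need care.

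Finally, to produce the decomposition, I would let $z(x)\in\integers^\kappa$ be, for a.e.\ $x$, the first maximizer of $n\mapsto|f(n,x)|$ in a fixed enumeration of $\integers^\kappa$ --- the supremum is attained because $f(\cdot,x)\in\ell^p(\integers^\kappa)$, and $x\mapsto z(x)$ is then Lebesgue measurable --- and set $g(n,x)=f(n,x)$ when $n=z(x)$, $g(n,x)=0$ otherwise, and $h=f-g$. Then $g$ and $h$ are Lebesgue measurable with disjoint supports, for each $x$ at most one value of $n$ makes $g(n,x)\ne0$, and by Tonelli
\[ \norm{h}_p^p=\int_{\reals^d}\sum_{n\ne z(x)}|f(n,x)|^p\,dx=\int_{\reals^d}\mu(x)\big(1-s(x)^p\big)\,dx\;\le\; C\delta^{p/(p+1)}, \]
whence $\norm{h}_p\le C\delta^{1/(p+1)}$; undoing the normalization $\norm{f}_p=1$ gives the assertion. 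The standard density reductions between $L^p$ and $L^1\cap L^p$ needed to apply the various Fourier transforms legitimately are routine.
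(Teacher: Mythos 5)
Your proposal is correct and takes essentially the same route as the paper's own proof: Lemma~\ref{lemma:mixednormlowerbound} to pass to the mixed norm $L^p_xL^{p'}_\theta$, a fiberwise application of Theorem~\ref{thm:discretecase} to $n\mapsto f(n,x)$, a Chebyshev splitting at the threshold $\norm{\tilde f_x}_\infty=1-\eta$, and optimization of $\eta$ against $\delta$. The only cosmetic differences are that you retain the fiberwise maximizer as $g$ even on the bad set (the paper simply sets $g=0$ there), and that — exactly as in the paper's own bookkeeping — the precise exponent $1/(p+1)$ depends on how the unspecified exponent $\gamma$ of Theorem~\ref{thm:discretecase} is balanced against the elementary bound on the bad fibers, a caveat you already flag.
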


\begin{proof}
By Lemma~\ref{lemma:mixednormlowerbound},
the hypothesis on the Fourier transform $\fcross(f)$ implies that
\begin{equation} \label{eq:constantgone} \norm{\scriptf(f)}_{p'} \ge (1-\delta)\norm{f}_p.\end{equation}
Define $f_x(n) = f(n,x)$.
By the definitions of the two Fourier transforms, 
\[\scriptf(f)(\theta,x) \equiv \frakf(f_x)(\theta).\]

Let \[\scripte=\set{x\in\reals^d: \norm{f_x}_p>0}\] and  define the measure
$\mu$ on $\reals^d$ by $\mu(S)=\int_S \norm{f_x}_{L^p(\integers^\kappa)}^p\,dx$.
Define $\tilde f_x(n) = f_x(n)\norm{f_x}_p^{-1}$ for $x\in\scripte$ and
$\tilde f_x(n)\equiv 0$ for all $x\notin\scripte$.
Thus $\norm{\tilde f_x}_{L^p(\integers^\kappa)}=1$ for every $x\in\scripte$. 

By Lemma~\ref{lemma:mixednormlowerbound}, 
\[ \int_{\reals^d} \norm{\frakf(f_x)}_{L^q(\torus^\kappa)}^p\,dx 
\ge (1-c\delta) \norm{f}_{L^p(\integers^\kappa\times\reals^d)}^p.\] 
Equivalently,
\[ \int_{\reals^d} \norm{\frakf(\tilde f_x)}_{L^q(\torus^\kappa)}^p\,d\mu \ge  (1-c\delta)\norm{f}_p^p.\] 

By Theorem~\ref{thm:discretecase}, 
since $\norm{\tilde f_x}_{L^p(\integers^\kappa)}=1$ whenever $\tilde f_x$ does not vanish identically,
\[ \norm{\frakf(\tilde f_x)}_{L^q(\torus^\kappa)}
\le \Lambda(\norm{\tilde f_x}_{L^\infty(\integers^\kappa)}) \le 1-c(1-\norm{\tilde f_x}_{L^\infty(\integers^\kappa)})^\gamma \]
so since $\norm{\tilde f_x}_{L^\infty(\integers^\kappa)}\in[0,1]$ and $\norm{\frakf(\tilde f_x)}_{L^q(\torus^\kappa)}\in[0,1]$,
\[ \norm{\frakf(\tilde f_x)}_{L^q(\torus^\kappa)}^p \le 1-c(1-\norm{\tilde f_x}_{L^\infty(\integers^\kappa)})^\gamma \]
with different constants $c,\gamma\in\reals^+$.  Consequently
\begin{equation}\label{eq:awkwardcomparison}
\int_{\reals^d} (1-\norm{\tilde f_x}_{L^\infty(\integers^\kappa)})^{\gamma}\,d\mu(x) 
\le  \scriptc \int_{\reals^d} (1-\norm{\frakf(\tilde f_x)}_{L^q(\torus^\kappa)}^p)\,d\mu(x)  \end{equation}
where $\scriptc$ denotes a certain finite constant, whose  value does not change in the chain
of inequalities below.

For $x\in\scripte$, $\norm{\tilde f_x}_\infty\le\norm{\tilde f_x}_p=1$,
with equality only if the support of $f_x$ consists of a single element of $\integers^\kappa$.
Let $\eta>0$ be a small parameter to be chosen below, and define 
\[\scriptg=\set{x\in\scripte: \norm{\tilde f_x}_\infty\ge 1-\eta}.\]
For each $x\in\scriptg$ there exists a decomposition
\begin{equation} f_x=g_x+h_x\end{equation} 
where the support of $g_x$ consists of a single element of $\integers^\kappa$, and 
\begin{equation} \label{eq:hxsmallnorm}
\norm{h_x}_{L^p(\integers^\kappa)} \le C\eta^\gamma\norm{f_x}_{L^p(\integers^\kappa)}\end{equation}
where $C,\gamma\in\reals^+$ are positive constants.

By Chebyshev's inequality,
\begin{align*}
\mu(\reals^d\setminus \scriptg) 
&= \mu\big(\{x: 1-\norm{\tilde f_x}_{L^\infty(\integers^\kappa)} > \eta \}\big)
\\&\le \eta^{-\gamma} \int_{\reals^d} (1-\norm{\tilde f_x}_{L^\infty(\integers^\kappa)})^{\gamma}\,d\mu(x) 
\\&\le \scriptc \eta^{-\gamma} \int_{\reals^d} (1-\norm{\frakf(\tilde f_x}_q)^p\,d\mu(x) 
\qquad \text{ by \eqref{eq:awkwardcomparison}}
\\ & = \scriptc\eta^{-\gamma}
-\scriptc\eta^{-\gamma} \int_{\reals^d} \norm{\frakf(\tilde f_x)}_q^p\,d\mu(x)
\\& = \scriptc\eta^{-\gamma}
-\scriptc\eta^{-\gamma} \norm{\scriptf(f)}_{L^p_x L^q_\theta}^p
\\&\le \scriptc\eta^{-\gamma} -\scriptc\eta^{-\gamma} (1-\delta)^p\norm{f}_p^p
\qquad\text{ by \eqref{eq:constantgone}}
\\&  \le C\eta^{-\gamma}\delta \norm{f}_p^p.
\end{align*}

Define $g(n,x)=g_x(n)$ for $(n,x)\in\integers^\kappa\times\scriptg$ and $g(n,x)=0$ for $x\notin\scriptg$.
Define $h(n,x) = h_x(n)$ for $x\in\scriptg$ and $h(n,x)=f(n,x)$ for $x\notin\scriptg$.
Then $g$ has the required properties, while
\begin{align*}
\norm{h}_p^p 
&= \int_{\reals^d\setminus\scriptg} \norm{f_x}_p^p\,dx + \int_\scriptg\norm{h_x}_p^p\,dx
\\ &= \mu(\reals^d\setminus\scriptg) + \int_\scriptg\norm{h_x}_p^p\,dx
\\ &\le C\eta^{-\gamma}\delta \norm{f}_p^p + \int_{\scriptg} C\eta^{p\gamma}\norm{f_x}_p^p\,dx
\qquad \text{ by \eqref{eq:hxsmallnorm}}
\\ & \le  C\eta^{-\gamma}\delta \norm{f}_p^p + C\eta^{p\gamma}\norm{f}_p^p.
\end{align*}
Choose $\eta = \delta^{1/(p+1)\gamma}$ to obtain \[\norm{h}_p  \le C\delta^{1/(p+1)}\norm{f}_p.\]
\end{proof}

\section{Lifting from $\reals^d$ to $\integers^d\times\reals^d$}

Let $d\ge 1$, let $\Lambda\subset(1,2)$ be compact,  and let $p\in\Lambda$.
Let  $\delta\in(0,\tfrac12)$.
Let $0\ne f\in L^p(\reals^d)$ satisfy $\norm{\widehat{f}}_{p'} \ge (1-\delta)\bestA_p^d\norm{f}_p$,
and suppose that $f$ is supported on $\set{x\in\reals^d: \distance(x,\integers^d)<\delta}$.
Define $F:\integers^d\times\reals^d\to\complex$ by
\begin{equation} \label{eq:Fdefn} F(n,x)=
\begin{cases} f(n+x)\ \ &\text{for $(n,x)\in \integers^d\times[-\delta,\delta]^d$}
\\ 0 &\text{for $x\in\reals^d\setminus[-\delta,\delta]^d$.} \end{cases} \end{equation}
Then $\norm{F}_{L^p(\integers^d\times\reals^d)} = \norm{f}_{L^p(\reals^d)}$.

Then
\begin{equation} \fcross(F)(\theta,k+\theta) = \widehat{f}(k+\theta) \ \text{ for all $k\in\integers^d$.}  \end{equation}
Indeed,
\begin{align*}
\fcross(F)(\theta,\xi) 
&= \sum_{n\in\integers^d}\int_{[-\tfrac12,\tfrac12]^d} e^{-2\pi i(n\cdot\theta + x\cdot\xi)} f(n+x)\,dx
\\& = \sum_{n\in\integers^d}\int_{n+[-\tfrac12,\tfrac12]^d} e^{-2\pi i(n\cdot\theta + (y-n)\cdot\xi)} f(y)\,dy
\end{align*}
which simplifies to
$\int_{\reals^d} e^{-2\pi i(y\cdot\xi)} f(y)\,dy$ when $\xi-\theta\in\integers^d$.

This formula is not well suited to our application, because it provides no
control of $\scriptf(F)(\theta,\xi)$ for most $(\theta,\xi)\in\torus^d\times\reals^d$.
Writing $\reals^d\owns\xi=k+\alpha$ where $(k,\alpha) \in\integers^d\times[-\tfrac12,\tfrac12]^d$,
\begin{align*}
\fcross(F)(\theta,k+\alpha) 
&= \sum_{n\in\integers^d} \int_{x\in\reals^d}
e^{-2\pi i(n\cdot\theta + x\cdot(k+\alpha))} F(n,x)\,dx
\\&= \sum_{n\in\integers^d} \int_{x\in[-\delta,\delta]^d}
e^{-2\pi i(n\cdot\theta + x\cdot(k+\alpha))} f(n+x)\,dx
\\&= \sum_{n\in\integers^d} \int_{x\in[-\delta,\delta]^d}
e^{-2\pi i((n+x)\cdot(k+\theta)} e^{-2\pi i x\cdot(\alpha -\theta)} f(n+x)\,dx
\\ & =\widehat{f}(k+\theta) + \sum_{n\in\integers^d} \int_{x\in[-\delta,\delta]^d}
e^{-2\pi i((n+x)\cdot(k+\theta)} \big(e^{-2\pi i x\cdot(\alpha -\theta)}-1\big) F(n,x)\,dx.
\end{align*}

\begin{lemma}
There exist $C,\gamma\in\reals^+$ such that for all $p\in\Lambda$,
all $\delta\in(0,\tfrac12]$, and all functions $f\in L^p(\reals^d)$ supported
in $\set{x: \distance(x,\integers^d)<\delta}$,
\begin{equation} \Big|\norm{\fcross(F)}_{L^{p'}(\integers^d\times\reals^d)} - \norm{\widehat{f}}_{L^{p'}(\reals^d)}\Big| 
\le C\delta^\gamma\norm{f}_p.  \end{equation}
\end{lemma}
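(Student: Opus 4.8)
The plan is to split $\fcross(F)$ into a main term that reproduces $\norm{\widehat{f}}_{L^{p'}(\reals^d)}$ exactly, plus an error term of size $O(\delta\norm{f}_p)$; this gives the stated estimate with exponent $\gamma=1$. For $\xi\in\reals^d$ write $\xi=k(\xi)+\alpha(\xi)$ with $k(\xi)\in\integers^d$ and $\alpha(\xi)\in[-\tfrac12,\tfrac12)^d$, identify $\torus^d$ with $[-\tfrac12,\tfrac12)^d$, and set $G(\theta,\xi)=\widehat{f}(k(\xi)+\theta)$. The computation recorded immediately before the statement gives, when $F\in L^1$,
\[ \fcross(F)(\theta,\xi)=G(\theta,\xi)+E(\theta,\xi),\qquad E(\theta,k+\alpha)=\sum_{n\in\integers^d}\int_{[-\delta,\delta]^d}e^{-2\pi i(n+x)\cdot(k+\theta)}\big(e^{-2\pi i x\cdot(\alpha-\theta)}-1\big)F(n,x)\,dx. \]
Since $\big|\norm{\fcross(F)}_{p'}-\norm{G}_{p'}\big|\le\norm{E}_{p'}$, it suffices to prove that $\norm{G}_{L^{p'}(\torus^d\times\reals^d)}=\norm{\widehat{f}}_{L^{p'}(\reals^d)}$ and that $\norm{E}_{L^{p'}(\torus^d\times\reals^d)}\le C\delta\norm{f}_p$.

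Both facts rest on the elementary identity
\[ \int_{\torus^d}\int_{\reals^d}\Psi\big(k(\xi)+\theta\big)\,d\xi\,d\theta=\int_{\reals^d}\Psi(\eta)\,d\eta, \]
valid for every measurable $\Psi\ge 0$ on $\reals^d$: integrating in $\xi$ over each unit cell $k+[-\tfrac12,\tfrac12)^d$, on which $k(\xi)=k$, converts $\int_{\reals^d}\,d\xi$ into $\sum_{k\in\integers^d}$, and then $\int_{\torus^d}\sum_{k\in\integers^d}\Psi(k+\theta)\,d\theta=\int_{\reals^d}\Psi$. Applying this with $\Psi=|\widehat{f}|^{p'}$ yields $\norm{G}_{L^{p'}(\torus^d\times\reals^d)}=\norm{\widehat{f}}_{L^{p'}(\reals^d)}$.

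To estimate $E$, expand $e^{-2\pi i x\cdot(\alpha-\theta)}-1=\sum_{\beta\neq 0}\tfrac{(-2\pi i)^{|\beta|}}{\beta!}\,x^{\beta}(\alpha-\theta)^{\beta}$ over nonzero multi-indices $\beta$. Because $F(n,x)=f(n+x)\one_{[-\delta,\delta]^d}(x)$ and, since $\delta<\tfrac12$, the cubes $n+[-\delta,\delta]^d$ are pairwise disjoint and exhaust the support of $f$, substituting $y=n+x$ shows that the resulting inner integral equals $\widehat{\psi_\beta}(k+\theta)$, where $\psi_\beta(y)=(y-n)^{\beta}f(y)$ for $y\in n+[-\delta,\delta]^d$ and $\psi_\beta=0$ elsewhere; since $|y-n|\le\delta$ on the support, $\norm{\psi_\beta}_p\le\delta^{|\beta|}\norm{f}_p$. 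Hence $E(\theta,\xi)=\sum_{\beta\neq 0}\tfrac{(-2\pi i)^{|\beta|}}{\beta!}(\alpha(\xi)-\theta)^{\beta}\,\widehat{\psi_\beta}(k(\xi)+\theta)$. Estimating term by term in $L^{p'}(\torus^d\times\reals^d)$ and using $|(\alpha(\xi)-\theta)^{\beta}|\le 1$ (each coordinate of $\alpha(\xi)-\theta$ lies in $[-1,1]$), the displayed identity with $\Psi=|\widehat{\psi_\beta}|^{p'}$, and the Hausdorff--Young inequality on $\reals^d$, one finds that each term has $L^{p'}$ norm at most $\bestA_p^d\delta^{|\beta|}\norm{f}_p$, whence
\[ \norm{E}_{L^{p'}(\torus^d\times\reals^d)}\le\bestA_p^d\norm{f}_p\sum_{\beta\neq 0}\frac{(2\pi\delta)^{|\beta|}}{\beta!}=\bestA_p^d\norm{f}_p\big(e^{2\pi d\delta}-1\big)\le C\,\delta\,\norm{f}_p \]
for $\delta\le\tfrac12$, with $C$ depending only on $d$ and, uniformly, on $p$ in compact subsets of $(1,2)$.

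The one technical point — and the only real obstacle — is that the interchange of the sum over $\beta$ with the integral defining $E$ is justified as written only when $F\in L^1$. I would therefore first carry out the whole argument for $f\in L^1\cap L^p$ supported in $\set{x:\distance(x,\integers^d)<\delta}$, where every series and integral converges absolutely and Fubini applies, and then pass to an arbitrary such $f$ by density. Choosing $f_m\to f$ in $L^p$ with each $f_m\in L^1\cap L^p$ supported in the same neighborhood, the maps $f\mapsto\fcross(F)$ (with $F$ the lift of $f$) and $f\mapsto\widehat{f}$ are bounded into $L^{p'}$, and the bound just established, applied to $f_m-f$, forces the corresponding error terms to converge in $L^{p'}$; hence both the decomposition $\fcross(F)=G+E$ and the estimate $\norm{E}_{p'}\le C\delta\norm{f}_p$ survive the passage to the limit, which completes the proof.
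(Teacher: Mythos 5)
Your proposal is correct, and it uses the same decomposition as the paper, namely $\fcross(F)=G(f)+E$ with $G(f)(\theta,k+\alpha)=\widehat{f}(k+\theta)$, but it estimates the error term by a genuinely different method. The paper observes that $T(f)=\fcross(F)-G(f)$ is bounded from $L^1$ to $L^\infty$ with norm $O(\delta)$ (since $e^{-2\pi i x\cdot(\alpha-\theta)}-1=O(\delta)$ on the support) and from $L^2$ to $L^2$ uniformly in $\delta$, and then invokes complex interpolation, which yields the estimate with some exponent $\gamma=\gamma(p)\in(0,1)$, uniform on compact $\Lambda\subset(1,2)$. You instead expand $e^{-2\pi i x\cdot(\alpha-\theta)}-1$ in its Taylor series, recognize each term as $(\alpha-\theta)^\beta\,\widehat{\psi_\beta}(k+\theta)$ with $\norm{\psi_\beta}_p\le\delta^{|\beta|}\norm{f}_p$, and sum using the periodization identity $\int_{\torus^d}\int_{\reals^d}\Psi(k(\xi)+\theta)\,d\xi\,d\theta=\int_{\reals^d}\Psi$ together with termwise Hausdorff--Young. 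This is more elementary (no interpolation theorem) and gives the stronger exponent $\gamma=1$; its cost is the bookkeeping of the series and the need for a density argument, since the pointwise formula for $\fcross(F)$ and the interchange of the sum with the integral are only directly justified for $f\in L^1\cap L^p$. The identity $\norm{G(f)}_{p'}=\norm{\widehat{f}}_{p'}$, which you prove via the same periodization identity, is also implicitly needed in the paper's argument, so making it explicit is a point in your favor.

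One small repair in the last step: you say the bound is "applied to $f_m-f$," but $f_m-f$ need not lie in $L^1$, so that application is not literally licensed. It is also unnecessary: since $f\mapsto\fcross(F)$ and $f\mapsto G(f)$ are bounded from $L^p$ into $L^{p'}$, the errors $E_m=\fcross(F_m)-G(f_m)$ converge in $L^{p'}$ to $E=\fcross(F)-G(f)$ automatically, and the inequality $\norm{E_m}_{p'}\le C\delta\norm{f_m}_p$ passes to the limit (alternatively, apply your bound to the differences $f_m-f_{m'}$, which are in $L^1\cap L^p$). With that wording fixed, the argument is complete.
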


\begin{proof}
The mapping $f\mapsto F$ defined above
is a bounded linear operator from $L^r(\reals^d)$ to $L^r(\integers^d\times\reals^d)$
for all $r\in[1,\infty]$, so by the Hausdorff-Young inequality,
$f\mapsto \fcross(F)$ is bounded from $L^r(\reals^d)$ to $L^{r'}(\torus^d\times\reals^d)$ for all $r\in[1,2]$.

Consider the linear operator $f\mapsto G(f)$, where $G(f)(\theta,y)$ is defined 
for all $(\theta,y)\in\torus^d\times\reals^d$ by
\[G(f)(\theta,k+\alpha) = \widehat{f}(k+\theta)
\ \text{ for $(k,\alpha)\in\integers^d\times[-\tfrac12,\tfrac12]^d$.}\]
The operator $G$ is likewise bounded from $L^r(\reals^d)$ to $L^{r'}(\torus^d\times\reals^d)$ for all $r\in[1,2]$.

Thus the difference $T(f)=\fcross(F)-G(f)$
is a bounded linear operator from $L^r(\integers^d+[-\delta,\delta]^d)$ to $L^{r'}(\torus^d\times\reals^d)$, for $r\in[1,2]$. 
For $r=2$, $T$ is bounded uniformly in $\delta$.  For $r=1$, $T$ is bounded with norm $O(\delta)$
since
\[
e^{-2\pi i x\cdot(\alpha -\theta)}-1\  =\  O(\delta)\]
uniformly for all $\theta,\alpha\in [-\tfrac12,\tfrac12]^d$. 
The lemma follows by complex interpolation between these two inequalities.
\end{proof}

\begin{corollary}[Lifts of near-extremizers remain near-extremizers] \label{cor:lifted}
For any $d\ge 1$ and any compact set $\Lambda\subset(1,2)$ there exist positive constants $C,\gamma$
with the following property.  
Let $\eta<\tfrac12$. 
Let $p\in\Lambda$ and $f\in L^p(\reals^d)$.
Suppose that $f$ is supported in $\set{x\in\reals^d: \distance(x,\integers^d)\le\eta}$.
Let $F\in L^p(\integers^d\times\reals^d)$ be associated to $f$ by \eqref{eq:Fdefn}.
If $\norm{\widehat{f}}_{L^{p'}(\reals^d)} \ge (1-\delta)\bestA_p^d\norm{f}_{L^p(\reals^d)}$
then \begin{equation} \norm{\fcross(F)}_{L^{p'}(\torus^d\times\reals^d)} 
\ge (1-\delta-C\eta^\gamma)\bestA_p^d\norm{F}_{L^p(\integers^d\times\reals^d)}.\end{equation}
\end{corollary}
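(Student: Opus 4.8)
The plan is to obtain this as an immediate consequence of the preceding Lemma, combined with the identity $\norm{F}_{L^p(\integers^d\times\reals^d)}=\norm{f}_{L^p(\reals^d)}$ recorded just after \eqref{eq:Fdefn}. In particular, since $\norm{F}_p=\norm{f}_p$, the claimed inequality is equivalent to a lower bound for $\norm{\fcross(F)}_{p'}$ in terms of $\norm{f}_{L^p(\reals^d)}$.

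First I would invoke the preceding Lemma, applying it with the support parameter there denoted $\delta$ taken to equal the present $\eta$; the hypothesis that $f$ is supported in $\set{x:\distance(x,\integers^d)\le\eta}$ is precisely what that Lemma requires, and since $\eta<\tfrac12$ it falls within the admissible range. This produces constants $C,\gamma\in\reals^+$, depending only on $d$ and $\Lambda$, such that
\[ \norm{\fcross(F)}_{L^{p'}(\torus^d\times\reals^d)}\ \ge\ \norm{\widehat{f}}_{L^{p'}(\reals^d)}-C\eta^\gamma\norm{f}_{L^p(\reals^d)}. \]
Inserting the hypothesis $\norm{\widehat{f}}_{p'}\ge (1-\delta)\bestA_p^d\norm{f}_p$ then gives
\[ \norm{\fcross(F)}_{p'}\ \ge\ \big((1-\delta)\bestA_p^d-C\eta^\gamma\big)\norm{f}_p. \]

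The only step requiring a word of care is rewriting the additive error $C\eta^\gamma\norm{f}_p$ as a multiple of $\bestA_p^d\norm{f}_p$, so that the bound takes the form asserted in the statement. This is legitimate because $p$ ranges over the compact set $\Lambda\subset(1,2)$, on which $p\mapsto\bestA_p$ is continuous and strictly positive; hence $\bestA_p^d\ge c>0$ for some $c=c(\Lambda,d)$, so that $C\eta^\gamma\norm{f}_p\le (C/c)\,\eta^\gamma\,\bestA_p^d\norm{f}_p$. Renaming $C/c$ as $C$ — still depending only on $d$ and $\Lambda$ — and using $\norm{f}_p=\norm{F}_p$ yields
\[ \norm{\fcross(F)}_{L^{p'}(\torus^d\times\reals^d)}\ \ge\ (1-\delta-C\eta^\gamma)\,\bestA_p^d\,\norm{F}_{L^p(\integers^d\times\reals^d)}, \]
which is the corollary. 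I do not anticipate any genuine obstacle here: the analytic content is entirely contained in the preceding Lemma, whose proof rests on the complex interpolation argument already given there, and the present deduction is purely bookkeeping, the one mild subtlety being the uniform positive lower bound for $\bestA_p^d$ on $\Lambda$ just used.
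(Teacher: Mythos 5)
Your proposal is correct and matches the paper's intent exactly: the corollary is stated as an immediate consequence of the preceding lemma (applied with its support parameter equal to $\eta$), combined with the identity $\norm{F}_{L^p(\integers^d\times\reals^d)}=\norm{f}_{L^p(\reals^d)}$ and the uniform lower bound $\bestA_p^d\ge c(\Lambda,d)>0$ on the compact set $\Lambda$, which is precisely the bookkeeping you carry out.
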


\section{Spatial localization}

In this section we combine what has come before to prove that
modulo an additive remainder term with small norm,
any near-extremizer is majorized by a suitably bounded multiple of the indicator function of an ellipsoid.

\begin{proposition}[Spatial localization] \label{prop:spatiallocalization}
Let $d\ge 1$, and let $\Lambda$ be a compact subset of $(1,2)$.
For every $\eps>0$ there exists $\delta>0$ with the following property for every exponent $p\in\Lambda$.
Let $0\ne f\in L^p(\reals^d)$ satisfy $\norm{\widehat{f}}_{p'} \ge (1-\delta)\bestA_p\norm{f}_p$.
There exist an ellipsoid $\scripte\subset\reals^d$ and a decomposition $f = \varphi+\psi$ such that 
\begin{gather}
\norm{\psi}_p<\eps
\\ \varphi\prec \scripte
\\ \norm{\varphi}_\infty |\scripte|^{1/p}\le C_\eps\norm{f}_p.
\end{gather}
\end{proposition}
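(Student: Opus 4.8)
The plan is to upgrade Lemma~\ref{lemma:NE2} --- which already concentrates almost all of the $L^p$ mass of a near-extremizer on a single continuum multiprogression of bounded rank --- to the assertion that this multiprogression may be taken to be an ellipsoid of comparable volume. Throughout I use that $\norm{\widehat{f}}_{p'}/\norm{f}_p$ is invariant under the affine symmetry group: if $\scriptt\in\aff(d)$ and $g=f\circ\scriptt$ then $\norm{\widehat{g}}_{p'}/\norm{g}_p=\norm{\widehat{f}}_{p'}/\norm{f}_p$, so $\delta$--near-extremizers go to $\delta$--near-extremizers; the conclusion of the proposition is likewise affine invariant, since $\Gl(d)$ carries ellipsoids to ellipsoids and leaves $\norm{\varphi}_\infty|\scripte|^{1/p}/\norm{f}_p$ and $\norm{\psi}_p/\norm{f}_p$ unchanged.

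First I would apply Lemma~\ref{lemma:NE2} with a small auxiliary parameter $\eps_1=\eps_1(\eps)$ to write $f=g+h$ with $\norm{h}_p<\eps_1\norm{f}_p$, $g\prec P$ for a single continuum multiprogression $P$ with $\norm{g}_\infty|P|^{1/p}\le C_{\eps_1}\norm{f}_p$ and $\rank(P)\le R=R(\eps_1)$; by Lemma~\ref{lemma:properversusgeneral} I may take $P$ semiproper, and by an overall dilation I normalize $|P|=1$. Two observations. (i) $g$ is itself a near-extremizer: $\norm{g}_p\ge(1-\eps_1)\norm{f}_p$ and $\norm{\widehat{g}}_{p'}\ge\norm{\widehat{f}}_{p'}-\bestA_p^d\norm{h}_p\ge(1-\delta-C\eps_1)\bestA_p^d\norm{g}_p$. (ii) It suffices to produce an ellipsoid $\scripte\supset P$ with $|\scripte|\le C_\eps|P|$: then $\varphi=g$, $\psi=h$, $\scripte$ work, because $\norm{\varphi}_\infty|\scripte|^{1/p}\le C_\eps^{1/p}\norm{g}_\infty|P|^{1/p}\le C_\eps^{1/p}C_{\eps_1}\norm{f}_p$ and $\norm{\psi}_p<\eps_1\norm{f}_p\le\eps\norm{f}_p$ once $\eps_1\le\eps$, and undoing the dilation (and any affine normalization) preserves all these ratios.

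The decisive step is a dichotomy. Either (a) $P$ lies in an ellipsoid of volume $\le C_\eps|P|$, in which case (ii) finishes the proof; or (b) $P$ admits no such ellipsoid, and I claim that case (b) cannot occur for a near-extremizer. In case (b) the bounded-rank multiprogression $P$ is, after a suitable $\Gl(d)$ change of coordinates adapted to its generators, \emph{gappy}: its diameter is much larger than $|P|^{1/d}$. I would then invoke Lemma~\ref{lemma:gapd}, with a parameter $\delta_1$ chosen small as a function of $\eps$ and of the aspect ratio of $P$ (in the spirit of the remark following that lemma), to obtain $\scriptt\in\aff(d)$ with $|J(\scriptt)|$ bounded below and $\norm{\scriptt(x)}_{\reals^d/\integers^d}<\delta_1$ on $P$; in the gappy case $\scriptt(P)$ still has diameter far exceeding $|\scriptt(P)|^{1/d}$, hence meets $\integers^d$ only along a sparse set of lattice points. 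Replace $g$ by $g\circ\scriptt^{-1}$: still a near-extremizer, now supported in $\integers^d+[-\delta_1,\delta_1]^d$, with $\norm{g}_\infty|\scriptt(P)|^{1/p}\le C_{\eps_1}\norm{f}_p$ preserved. Form the lift $G(n,x)=g(n+x)$ on $\integers^d\times\reals^d$ as in~\eqref{eq:Fdefn}; by Corollary~\ref{cor:lifted}, $G$ is a near-extremizer for $\integers^d\times\reals^d$ once $\delta,\delta_1$ are small. By Proposition~\ref{prop:1ptdecomp}, $G=G_1+G_2$ with $G_1,G_2$ disjointly supported, $\norm{G_2}_p$ small relative to $\norm{G}_p$, and at most one $n$ with $G_1(n,x)\ne0$ for each $x$. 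The function on $\reals^d$ corresponding to $G_1$ is supported on $\{n(x)+x\}$, which projects injectively into $[-\delta_1,\delta_1]^d$ and so has Lebesgue measure $O_d(\delta_1^d)$; being $\prec g$, its $L^p$ norm is $\le O_d(\delta_1^{d/p})\norm{g}_\infty\le O_d(\delta_1^{d/p})C_{\eps_1}|\scriptt(P)|^{-1/p}\norm{f}_p$. Since in the gappy case $|\scriptt(P)|$ is itself bounded below in terms of $\delta_1$ and the diameter, and $\norm{G_2}_p$ is small, this forces $\norm{g}_p\ll\norm{f}_p$, contradicting near-extremality of $g$. Hence (b) is impossible and (a) holds.

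The main obstacle is exactly the exclusion of case (b). Lemma~\ref{lemma:gapd} gives no upper bound on $|J(\scriptt)|=|\scriptt(P)|$, so the lift $G$ can lose an uncontrolled amount of $L^p$ mass, and the contradiction materializes only if the gappiness produced in (b) is quantitatively strong enough --- and robust under the renormalization $\scriptt$ --- to overcome that loss; calibrating these two quantitative strengths, and first extracting the requisite gap structure of $P$ purely from the bounded-rank hypothesis, is where the real work lies. A secondary point requiring care is the ordering of the nested parameters $\eps,\eps_1,\delta_1,\delta$ so that every appeal to Lemma~\ref{lemma:NE2}, Corollary~\ref{cor:lifted}, and Proposition~\ref{prop:1ptdecomp} is valid.
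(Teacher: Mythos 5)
Your plan tracks the paper's proof up through the lift (Lemma~\ref{lemma:NE2}, normalization $|P|=1$, Lemma~\ref{lemma:gapd}, the lift \eqref{eq:Fdefn} and Corollary~\ref{cor:lifted}), but the step on which everything hinges — excluding the ``gappy'' case (b) by contradiction — has a genuine gap, and in fact the contradiction you seek does not exist. Lemma~\ref{lemma:NE2} does not promise an \emph{efficient} multiprogression: its conclusions are perfectly consistent with a gappy $P$ even when $f$ is an exact Gaussian extremizer (take $P$ to be a ball containing the essential support together with sparse far-away pieces of comparable total measure; then $g=f\one_P$, $h=f-g$ and this $P$ satisfy every conclusion of the lemma while $|\mathrm{conv}(P)|/|P|$ is arbitrarily large). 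So the statement ``a near-extremizer cannot be in case (b)'' is false as a statement about pairs $(f,P)$ produced by Lemma~\ref{lemma:NE2}, and no calibration of parameters can prove it. What must be shown — and what the paper shows — is not that $P$ is non-gappy but that, after the renormalization by $\scriptt$, a \emph{single} cell $n_0+\tfrac12\unitQ_d$ carries $(1-o_{\delta,\tau}(1))$ of the $L^p$ mass; one then replaces the decomposition by $\varphi=G\one_{n_0+\frac12\unitQ_d}$, and the weak Jacobian lower bound \eqref{eq:Pcontrolled} (namely $|P|\ge c_\tau$ after renormalization) is exactly enough to bound $\norm{\varphi}_\infty|P^*|^{1/p}\le C_\delta|P|^{-1/p}\norm{f}_p$. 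The proof is a refinement of the decomposition, not a reductio.

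Even within your framing, the mechanism fails quantitatively and structurally. Quantitatively: your contradiction needs $\norm{g}_p\lesssim(\delta_1^d/|\scriptt(P)|)^{1/p}\norm{f}_p$ to be small, i.e.\ $|\scriptt(P)|\gg\delta_1^d$; but Lemma~\ref{lemma:gapd} only guarantees $|J(\scriptt)|\ge c\,\delta_1^{d\rankk+d^2}\ll\delta_1^d$, it provides no diameter-dependent lower bound, and nothing prevents it from handing you a strongly contracting $\scriptt$ for which $|\scriptt(P)|\le\delta_1^d$ — so your inequality chain yields only $|\scriptt(P)|\lesssim_{\eps}\delta_1^d$, which contradicts nothing. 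Structurally: Proposition~\ref{prop:1ptdecomp} (one lattice point per spatial fiber $x$) is too weak for this purpose — a function made of $N$ bumps sitting in $N$ distinct lattice cells with pairwise disjoint offsets in $[-\delta_1,\delta_1]^d$ satisfies its conclusion exactly and has essential support of measure $\le(2\delta_1)^d$, yet is spread over arbitrarily many cells; your support-measure-plus-$\norm{g}_\infty$ arithmetic cannot distinguish it from a concentrated function. The paper avoids this by working on the other fibers: it takes the partial Fourier transform $\phi(n,\xi)=\tilde\scriptf(F)(n,\xi)$ and applies Theorem~\ref{thm:discretecase} to $n\mapsto\phi(n,\xi)$ for each good frequency $\xi$, which upgrades fiberwise concentration to the global statement $\sup_n\norm{F_n}_p\ge(1-o(1))\norm{f}_p$; note the paper states explicitly that Proposition~\ref{prop:1ptdecomp} is \emph{not} used in the proofs of the main theorems. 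To repair your argument you would essentially have to import this frequency-fiber step, at which point you are reproducing the paper's proof.
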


\begin{proof}
Let $q=p'$.
Let $\delta$ be small and assume that $f\in L^p$ satisfies $\norm{\widehat{f}}_{q} \ge (1-\delta)\bestA_p^d \norm{f}_p$.
There exist a disjointly supported decomposition $f = G+H$ and associated proper continuum multiprogression $P$
satisfying the conclusions of Lemma~\ref{lemma:NE2}. 
Thus $\norm{H}_p \le o_\delta(1)\norm{f}_p$,
$G\prec P$, $\norm{G}_\infty |P|^{1/p} \le C_\delta \norm{f}_p$,
and the rank of $P$ is majorized by a finite quantity that depends only on $\delta,\Lambda,d$.  
Thus \begin{gather*} \big| \norm{G}_p-\norm{f}_p\big| \le\norm{H}_p \le o_\delta(1)\norm{f}_p,
\\ \norm{\widehat{G}}_q \ge (1-o_\delta(1))\bestA_p^d\norm{G}_p.  \end{gather*}
To complete the proof, it suffices to show that there exists an ellipsoid $\scripte\supset P$ that satisfies
$|\scripte|\le C|P|$, where the constant $C$ depends only on $\delta,\Lambda,d$.

By replacing $f$ by $t^{d/p}f(t x)$ for $t=|P|$ we may assume that $|P|=1$.
Express $P$ as $P = [0,\eta]^d + \{a + \sum_{j=1}^\rankk n_j v_j: 0\le n_j<N_j\}$ where each $v_j\in\reals^d$
and $\eta^d \prod_j N_j = |P|=1$.

Let $\tau>0$ be a small quantity to be chosen later.
Let $\rankk$ be the rank of $P$.
According to Lemma~\ref{lemma:gapd}, there exists 
$\scriptt\in\aff(d)$ such that 
\begin{equation*} \norm{\scriptt(x)}_{\reals^d/\integers^d}<\tau\ \text{ for every $x\in P$} \end{equation*}
and
\begin{equation*} \label{eq:lambdalowerbound} |\det(\scriptt)|\ge c\tau^{d\rankk+d^2}.  \end{equation*}
Here $\det(\scriptt)$ denotes the determinant of $\scriptt$.
The availability of a positive lower bound that depends only on $\tau,d$
in the second inequality is of central importance, but the precise form of this bound is not.

Replace the function $f(x)$ by $|\det(\scriptt)|^{-1/p}f(\scriptt^{-1}x)$, 
and make the corresponding modifications of $G,H$.
Replace $P$ by $\scriptt(P)$.
The ratio $\norm{\widehat{f}}_{q}/\norm{f}_p$ is unchanged, and the modified functions $G,H$ enjoy the same properties
relative to the modified $f,P$ as above, including the bound $\norm{G}_\infty |P|^{1/p}\le C_\delta\norm{f}_p$.
We now have
\begin{gather} \label{eq:nearlattice} \norm{x}_{\reals^d/\integers^d}\le \tau\ \text{ for all $x\in P$}
\\ \label{eq:Pcontrolled} |P| \ge c_\tau>0 \end{gather}
where $c_\tau$ depends only on $\tau,d$.
However, Lemma~\ref{lemma:gapd} provides no upper bound on the Jacobian determinant of
$\scriptt$; therefore we now have no upper bound on the measure of this modified set $P$.

For $(n,x)\in\integers^d\times\reals^d$ define 
\begin{equation}F(n,x) = \begin{cases} G(n+x)\  &\text{if $x\in[-\tfrac12,\tfrac12]^d$}
\\  0 &\text{ otherwise.} \end{cases}\end{equation} 
This function satisfies upper and lower bounds 
\[ (1-o_\delta(1))\norm{f}_{L^p(\reals^d)} \le \norm{F}_{L^p(\integers^d\times\reals^d)}
=\norm{G}_{L^p(\reals^d)} \le \norm{f}_{L^p(\reals^d)}.\]

By Corollary~\ref{cor:lifted}, \eqref{eq:nearlattice} allows us to lift the problem
to $\integers^d\times\reals^d$, ensuring that
\begin{equation} \norm{\fcross(F)}_{L^q(\torus^d\times\reals^d)} \ge (1-o_{\delta,\tau}(1))\bestA_p^d
\norm{F}_{L^p(\integers^d\times\reals^d)}.\end{equation}
Here and below, $o_{\delta,\tau}(1)$ denotes a quantity that is majorized by a function
of $\delta,\tau,\Lambda,d$ alone,
and that tends to zero as $\max(\delta,\tau)\to 0$ while $\Lambda,d$ remain fixed.

Define $\phi = \tilde\scriptf(F)$; 
$\phi(n,\xi)$ is the partial Fourier transform of $F$ with respect to the second variable.
Define $\phi_\xi(n) = \phi(n,\xi)$. 
Then $\fcross(F)(\theta,\xi) = \frakf{\phi_\xi}(\theta)$ for $(\theta,\xi)\in\torus^d\times\reals^d$.

Define $A(\xi) = \norm{\frakf{\phi_\xi}}_q/\norm{\phi_\xi}_p$ if $\norm{\phi_\xi}_p\ne 0$,
and $A(\xi)=0$ otherwise. Then $A(\xi)\le 1$ for all $\xi$ by the Hausdorff-Young inequality for $\integers^d$, and
\begin{align*}
\norm{\fcross(F)}_{L^q(\torus^d_\theta\times\reals^d_\xi)} 
= \norm{\frakf{\phi_\xi}}_{L^q_\xi L^q_\theta}
=  \norm{A(\xi) {\phi_\xi}}_{L^q_\xi L^p_n}.
\end{align*}
Let $\sigma>0$ be small and define
\begin{equation} \scripte=\set{\xi\in\reals^d: 
\norm{\phi_\xi}_p\ne 0 \ \text{ and } \ \norm{\frakf{\phi_\xi}}_q < (1-\sigma)\norm{\phi_\xi}_p }.  \end{equation}
That is, $\scripte$ is the set of all $\xi$ for which $A(\xi)<1-\sigma$.
Now
\begin{align*}
\norm{A(\xi)\phi_\xi}_{L^q_\xi L^p_n}^q
& = \int_{\scripte}A(\xi)^q \norm{\phi_\xi}_p^q\,d\xi + \int_{\reals^d\setminus\scripte}A(\xi)^q \norm{\phi_\xi}_p^q\,d\xi
\\& \le \int_{\scripte}(1-\sigma)^q \norm{\phi_\xi}_p^q\,d\xi + \int_{\reals^d\setminus\scripte}\norm{\phi_\xi}_p^q\,d\xi
\\& \le \norm{\phi}_{L^q_\xi L^p_n}^q - c\sigma \int_\scripte \norm{\phi_\xi}_p^q\,d\xi.
\end{align*}
Therefore
\begin{align*}
(1-o_{\delta,\tau}(1))\bestA_p^{qd} \norm{F}_{p}^q
&\le \norm{\fcross(F)}_q^q
\\&= \norm{A(\xi)\phi_\xi}^q_{L^q_\xi L^p_n}
\\&\le \norm{\phi}_{L^q_\xi L^p_n}^q - c\sigma \int_\scripte \norm{\phi_\xi}_p^q\,d\xi
\\&\le \bestA_p^{dq} \norm{F}_p^q - c\sigma \int_\scripte \norm{\phi_\xi}_p^q\,d\xi.
\end{align*}

Therefore by choosing $\sigma$ to be a function of $\delta$ that tends to zero
sufficiently slowly as $\max(\delta,\tau)\to 0$, one obtains
\begin{align} & \int_\scripte \norm{\phi_\xi}_p^q\,d\xi \le o_{\delta,\tau}(1)\norm{f}_p^q 
\\ & \norm{\frakf(\phi_\xi)}_q \ge (1-o_{\delta,\tau}(1))\norm{\phi_\xi}_p \ \text{ for all $\xi\notin\scripte$.} \end{align}

By Theorem~\ref{thm:discretecase},
for each $\xi\notin\scripte$ there exist $m(\xi)\in\integers^d$ and a decomposition 
\begin{equation} \phi_\xi = g_\xi + h_\xi\end{equation}
such that $g_\xi,h_\xi\in L^p(\integers^d)$, 
$g_\xi$ and $h_\xi$ have disjoint supports for each $\xi$,
\begin{gather*} g_\xi(n)=0\ \text{ for all $n\ne m(\xi)$,}
\\
\norm{h_\xi}_p \le o_{\delta,\tau}(1)\norm{\phi_\xi}_p.\end{gather*}
Set $g(n,\xi)=g_n(\xi)$ and $h(n,\xi) = h_n(\xi)$ for all $x\notin\scripte$;
set $g(n,\xi)=h(n,\xi)=0$ for all $x\in\scripte$.
Thus $\phi(n,\xi) = g(n,\xi)+h(n,\xi)$ for all $\xi\notin\scripte$. 

For each $\xi\in\reals^d$, 
\begin{align*} \scriptf(F)(\theta,\xi) 
&= \sum_{n\in\integers^d} e^{-2\pi i n\cdot\theta} \phi_\xi(n)
\\ &= \sum_{n\in\integers^d} e^{-2\pi i n\cdot\theta}g(n,\xi) 
+ \sum_{n\in\integers^d} e^{-2\pi i n\cdot\theta}h(n,\xi) .\end{align*}

Since the function $n\mapsto g(n,\xi)$ is supported where $n=m(\xi)$,
\[|\sum_n e^{-2\pi i n\cdot\theta} g(n,\xi)|^q = |g(m(\xi),\xi)|^q = \sum_n |g(n,\xi)|^q \]
for $\xi\in\reals^d\setminus\scripte$. Consequently
\begin{multline} 
\norm{\scriptf(g)(\cdot,\xi)}_{L^q(\torus^d)}^q
=
\int_{\reals^d/\integers^d} |\sum_n e^{-2\pi i n\cdot\theta}g(n,\xi)|^q\,d\theta
=
\int_{\reals^d/\integers^d} \sum_n  |g(n,\xi)|^q\,d\theta
\\=  \sum_n |g(n,\xi)|^q
\le  \sum_n |\phi(n,\xi)|^q
= \norm{\phi(\cdot,\xi)}_{L^q(\integers^d)}^q;
\end{multline}
the last inequality follows because $\phi(n,\xi)=g(n,\xi)+h(n,\xi)$
and $g(n,\xi)$, $h(n,\xi)$ are disjointly supported functions of $n$ for each $\xi$
under discussion, that is, for each $\xi\in\reals^d\setminus\scripte$.

Writing $F_n(\xi) = F(n,\xi)$,
\begin{align*}
\int_{\reals^d} \sum_{n\in\integers^d} |\phi(n,\xi)|^q\,d\xi
& = \sum_{n\in\integers^d} \int_{\reals^d}|\phi(n,\xi)|^q\,d\xi
\\& 
 = \sum_{n\in\integers^d} \int_{\reals^d}|\tilde\scriptf(F)(n,\xi)|^q\,d\xi
\\& \le 
\sum_{n\in\integers^d} \bestA_p^{dq} \norm{F_n}_p^q
\\& \le  
\sup_{n\in\integers}\norm{F_n}_p^{q-p} \bestA_p^{dq} \sum_{n\in\integers^d}  \norm{F_n}_p^p
\\&  =   
\sup_{n\in\integers}\norm{F_n}_p^{q-p} \bestA_p^{dq} \norm{F}_{L^p}^p
\\& \le 
\sup_{n\in\integers}\norm{F_n}_p^{q-p} \bestA_p^{dq} \norm{f}_{L^p}^p.
\end{align*}
Thus we have shown that
\begin{equation} \label{eq:scriptfgbound}
\norm{\tilde\scriptf(g)}_{L^q(\torus^d\times\reals^d)}^q
\le \sup_{n\in\integers}\norm{F_n}_p^{q-p} \bestA_p^{dq} \norm{f}_{L^p}^p.
\end{equation}

The contribution of $h$ is small. Indeed, by the Hausdorff-Young inequality for $\integers^d$,
\[ \int_{\torus^d} |\scriptf(h)(\theta,\xi)|^q\,d\theta \le \norm{h(\cdot,\xi)}_{L^p(\integers^d)}^q\]
where $\scriptf$ continues to denote the partial Fourier transform with respect to the
first variable with the second variable, $\xi$, held fixed.
Therefore since $h(\cdot,\xi)\equiv 0$ for $\xi\in\scripte$,
\begin{multline*}
\norm{\scriptf(h)}_{L^q(\torus^d\times\reals^d)}^q
 =  \int_{\reals^d\setminus\scripte} \norm{\scriptf(h) (\cdot,\xi)}_{L^q(\torus^d)}^q\,d\xi 
\\ \le \int_{\reals^d\setminus\scripte} \norm{h(\cdot,\xi)}_{L^p(\integers^d)}^q\,d\xi 
\le \int_{\reals^d} o_{\delta,\tau}(1)\norm{\phi_\xi}_{L^p(\integers^d)}^q\,d\xi
\\= o_{\delta,\tau}(1)\norm{\phi}^q_{L^q_\xi L^p_n}
\le o_{\delta,\tau}(1)\norm{f}^q_p.
\end{multline*}

Set $F_n(x)=F(n,x)$.
Consider 
$\norm{\fcross(F)}_{L^q(\torus^d\times\reals^d)}$,
recalling that $\fcross(h)(\theta,\xi)=\scriptf(h)(\theta,\xi)$. 
The contributions of $\torus^d\times\scripte$
and of $h$ have been shown above to be $o_{\delta,\tau}(1)\cdot \norm{f}_p$. 
Combine these upper bounds with the bound for the contribution of $g$ obtained in \eqref{eq:scriptfgbound}
to conclude that
\begin{align*}
\norm{\fcross(F)}_{L^q(\torus^d\times\reals^d)}^q
&\le o_{\delta,\tau}(1) \norm{f}_p^q 
+ (1+o_{\delta,\tau}(1)) \int_{\reals^d} \sum_{n\in\integers^d} |\phi(n,\xi)|^q\,d\xi
\\&  \le   o_{\delta,\tau}(1) \norm{f}_p^q  
+ (1+o_{\delta,\tau}(1)) 
\sup_{n\in\integers}\norm{F_n}_p^{q-p} \bestA_p^{dq} \norm{f}_{L^p}^p.
\end{align*}

Therefore since $\norm{\fcross(F)}_{q}\ge (1-o_{\delta,\tau}(1)) \bestA_p^d \norm{F}_p
\ge (1-o_{\delta,\tau}(1))\norm{f}_p$,
\begin{equation*} \sup_{n\in\integers}\norm{F_n}_p \ge \big(1-o_{\delta, \tau}(1)\big)\norm{f}_p, \end{equation*}
that is, there exists $n_0\in\integers^d$ such that
\begin{equation} \norm{G}_{L^p(n_0+\tfrac12\unitQ_d)} \ge \big(1-o_{\delta,\tau}(1)\big)\norm{f}_p.\end{equation}

Define $g(x)=G(x)$ if $x\in n_0+\tfrac12 \unitQ_d$, and $g(x)=0$ otherwise.
Define a progression $P^*\subset\reals^d$ to be $P^* = n_0+\tfrac12\unitQ_d$. Define $h = H + (G-g)$.
Then $f=g+h$, $g\prec P^*$, and $\norm{h}_p\le o_{\delta,\tau}(1)\norm{f}_p$.

By \eqref{eq:Pcontrolled},
\[\norm{g}_\infty |P^*|^{1/p} \le \norm{G}_\infty
= \norm{G}_\infty |P|^{1/p} \cdot|P|^{-1/p}
\le C_\delta |P|^{-1/p} \norm{f}_p 
\le C_\delta C'\tau^{-C'}\norm{f}_p\]
where $C'<\infty$ depends only on $d$ and on the rank of $P$, which in turn is bounded by a function of $\delta$ alone.
Choose $\tau$ to be any function of $\delta$ that tends to zero as $\delta$ tends to zero.
Then $\norm{g}_\infty |P^*|^{1/p} \le C_\delta\norm{f}_p$, as required.
Choosing $\delta$ to be sufficiently small as a function of the quantity $\eps$ in the statement of
Proposition~\ref{prop:spatiallocalization}, all conclusions have been established.
\end{proof}

\section{Frequency localization and tightness}

The Fourier transform of any near-extremizer is localized in essentially the same sense as the function itself.
\begin{lemma}[Frequency localization] \label{lemma:frequencylocalization}
Let $d\ge 1$ and let $\Lambda\subset(1,2)$ be a compact set.
For every $\tau>0$ there exists $\delta>0$ with the following property for every $p\in\Lambda$.
Let $0\ne f\in L^p(\reals^d)$ satisfy $\norm{\widehat{f}}_{p'} \ge (1-\delta)\bestA_p^d\norm{f}_p$.
There exist an ellipsoid $\scripte'\subset\reals^d$ and a decomposition $\widehat{f} = \varphi+\psi$ such that 
\begin{gather}
\norm{\psi}_{p'}<\tau
\\ \varphi\prec\scripte' 
\\ \norm{\varphi}_\infty |\scripte'|^{1/p'}\le C_\tau\norm{f}_p.
\end{gather}
\end{lemma}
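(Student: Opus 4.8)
\emph{Proof strategy.} The plan is to deduce Lemma~\ref{lemma:frequencylocalization} from the spatial localization result, Proposition~\ref{prop:spatiallocalization}, applied not to $f$ but to the companion function $\psi=|\widehat f|^{q-2}\widehat f$ (with $q=p'$), exploiting the duality that the adjoint of the Fourier transform $\mathcal F\colon L^p\to L^{p'}$ is, up to a reflection, the Fourier transform again. Normalize $\norm f_p=1$ throughout.

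First I would check that $\psi$ is itself a near-extremizer of the Hausdorff--Young inequality. Since $q=p'$ one has $(q-1)p=q$, so $\norm\psi_p=\norm{\widehat f}_q^{q/p}$ and Hölder's inequality is saturated: $\langle\widehat f,\psi\rangle=\int|\widehat f|^q=\norm{\widehat f}_q\norm\psi_p$. Transposing the Fourier transform via $\langle\widehat f,\psi\rangle=\langle f,\mathcal F^{-1}\psi\rangle$ (unitarity of $\mathcal F$ on $L^2$) and then using $|\langle f,\mathcal F^{-1}\psi\rangle|\le\norm f_p\norm{\mathcal F^{-1}\psi}_{p'}=\norm{\widehat\psi}_{p'}$ yields $\norm{\widehat\psi}_{p'}\ge\norm{\widehat f}_q\norm\psi_p\ge(1-\delta)\bestA_p^d\norm\psi_p$. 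Moreover $\norm\psi_p$ is bounded above and below by constants depending only on $p,d$ (once $\delta\le\tfrac12$).

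Next I would apply Proposition~\ref{prop:spatiallocalization} to $\psi$ with a parameter $\eps_1=\eps_1(\tau,\Lambda,d)$ to be chosen at the end (rescaling $\psi$ to unit norm and scaling back): this furnishes an ellipsoid $\scripte'\subset\reals^d$ in the frequency variable of $f$ and a decomposition $\psi=\varphi_0+\psi_0$ with $\norm{\psi_0}_p<\eps_1\norm\psi_p$, $\varphi_0\prec\scripte'$, and $\norm{\varphi_0}_\infty|\scripte'|^{1/p}\le C_{\eps_1}\norm\psi_p$. I would then transfer this back through the pointwise inverse of $\widehat f\mapsto\psi$: because $(q-1)(p-1)=1$, one has $\widehat f=\Phi(\psi)$ where $\Phi(w)=|w|^{p-2}w$, a map that is Hölder continuous of order $p-1$ on $\complex$, uniformly for $p$ in compact subsets of $(1,2)$. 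Setting $\varphi=\Phi(\varphi_0)$ (so $\varphi\prec\scripte'$) and $\psi_{\rm new}=\widehat f-\varphi$ gives $\widehat f=\varphi+\psi_{\rm new}$, and, using $|\Phi(w)|=|w|^{p-1}$, $(p-1)p'=p$, and $\norm\psi_p^{p-1}=\norm{\widehat f}_q$,
\[
\norm\varphi_\infty|\scripte'|^{1/p'}=\big(\norm{\varphi_0}_\infty|\scripte'|^{1/p}\big)^{p-1}\le C_{\eps_1}^{p-1}\norm{\widehat f}_q\le C_{\eps_1}^{p-1}\bestA_p^d,
\]
\[
\norm{\psi_{\rm new}}_{p'}=\norm{\Phi(\psi)-\Phi(\varphi_0)}_{p'}\le C\norm{\psi-\varphi_0}_{(p-1)p'}^{p-1}=C\norm{\psi_0}_p^{p-1}<C\eps_1^{p-1}\norm{\widehat f}_q\le C\eps_1^{p-1}\bestA_p^d.
\]
Choosing $\eps_1$ small enough makes the latter $<\tau$, renames $C_{\eps_1}^{p-1}\bestA_p^d$ as $C_\tau$, and forces the admissible $\delta$ to be the one Proposition~\ref{prop:spatiallocalization} requires for $\eps_1$; all constants depend only on $\tau,\Lambda,d$, the uniformity over $p\in\Lambda$ being inherited from Proposition~\ref{prop:spatiallocalization} and the continuity of the exponent relations.

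The only real obstacle I anticipate is the passage through the nonlinear map $\Phi$: verifying the pointwise identity $\widehat f=|\psi|^{p-2}\psi$ and the Hölder-$(p-1)$ bound for $|w|^{p-2}w$ on $\complex$, and confirming that raising $\eps_1\norm\psi_p$ to the power $p-1\in(0,1)$ works in our favor rather than against us. If one prefers to avoid the Hölder estimate, one may instead observe that the decomposition produced by Proposition~\ref{prop:spatiallocalization} is disjointly supported --- which is clear from its construction --- whence $\psi_{\rm new}=\Phi(\psi_0)$ exactly and $\norm{\psi_{\rm new}}_{p'}=\norm{\psi_0}_p^{p-1}$ with no loss of constants.
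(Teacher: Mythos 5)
Your proposal is correct and follows essentially the same route as the paper: the paper likewise forms the dual function $g=\widehat f\,|\widehat f|^{q-2}$, shows by the identical Hölder-saturation/Parseval/Hausdorff--Young argument that it nearly extremizes the $L^p$ inequality, and then applies Proposition~\ref{prop:spatiallocalization} to it. The only difference is that the paper leaves the transfer back to $\widehat f$ implicit, whereas you spell it out (via the Hölder-$(p-1)$ bound for $w\mapsto|w|^{p-2}w$, or more simply via the disjointly supported decomposition), which is a harmless and indeed welcome elaboration.
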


\begin{proof}
Let $q=p'$.
To simplify notation, assume that $\norm{f}_p=1$.
Consider the function $g(\xi) = \widehat{f}(\xi)|\widehat{f}(\xi)|^{q-2}$,
which belongs to $L^p$ and satisfies \[\norm{g}_p = \norm{\widehat{f}}_q^{q-1}
\in [(1-\delta)^{q-1}\bestA_p^{(q-1)d},\bestA_p^{(q-1)d}].\]
Now
\begin{align*} \norm{\widehat{f}}_q^q = \int g(\xi) \overline{\widehat{f}(\xi)}\,d\xi
 = \int \widehat{g}(-x) \overline{f(x)}\,dx \le \norm{\widehat{g}}_q\norm{f}_p, \end{align*}
so
\[
\norm{\widehat{g}}_{p'} 
\ge 
\norm{\widehat{f}}_{p'}^q\norm{f}_p^{-1}
\ge (1-\delta)^q \bestA_p^{qd}\norm{f}_p^{q-1}
\ge (1-\delta)^q \bestA_p^{qd}\bestA_p^{-(q-1)d}\norm{g}_p
 =  (1-\delta)^q \bestA_p^d\norm{g}_p.
\]
Thus $g$ also nearly extremizes the Hausdorff-Young inequality with exponent $p$.
Applying Proposition~\ref{prop:spatiallocalization} to $g$ gives all of the required conclusions.
\end{proof}

For any ellipsoid $\scripte\subset\reals^d$ there exists $\scriptt_\scripte\in\aff(d)$
that maps the unit ball of $\reals^d$ bijectively onto $\scripte$. 
Define the distance function $\rho_\scripte(x,y) = |\scriptt_\scripte^{-1}(x-y)|$.
This distance is uniquely defined, even though $\scriptt_\scripte$ is not.
The distance $\rho_\scripte(x,S)$ between a point and a set is defined in
terms of distances between points, in the usual way.

Let $\Theta$ be an auxiliary function satisfying $\lim_{t\to\infty}\Theta(t)=0$.
\begin{definition}
Let $\scripte\subset\reals^d$ be an ellipsoid, and $\delta>0$.
$0 \ne f\in L^p$ is $\delta$--normalized in $L^p$ with respect to $\Theta,\scripte$ if  for all $t\in[0,\infty)$,
\begin{align*}
\int_{|f| \ge t|\scripte|^{-1/p}\norm{f}_p} |f|^p &\le (\Theta(t)+\delta)\norm{f}_p^p
\\
\int_{\rho_\scripte(x,\scripte) \ge t} |f|^p &\le (\Theta(t)+\delta)\norm{f}_p^p.
\end{align*}
\end{definition}

In the next lemma, if $E\subset\reals^d$ is an ellipsoid then its polar is 
\[ E^*=\set{y: |\langle x,y\rangle|\le 1\ \text{ for every $x\in E$}}\]
where $\langle \cdot,\cdot\rangle$ denotes the Euclidean inner product.
For $s\in\reals^+$, $sE^*=\set{Ay: y\in E^*}$.

\begin{lemma}[Tightness] \label{lemma:tightness}
Let $d\ge 1$ and let $\Lambda\subset(1,2)$ be a compact set. 
Let $\Theta$ be as above.
There exist $\delta_0>0$ and $A<\infty$ with the following property for every $p\in\Lambda$.
Set $q=p'$.  
Let $\scripte,\tilde\scripte\subset\reals^d$ be ellipsoids and let $u,v\in\reals^d$.
Suppose that $0\ne f\in L^p(\reals^d)$ is $\delta_0$--normalized in $L^p$ with respect to $\Theta,\scripte+u$. 
Suppose further that
$\widehat{f}$ is $\delta_0$--normalized in $L^q$ with respect to $\Theta,\tilde\scripte+v$. 
Then
\begin{equation} \scripte \subset A\tilde\scripte^* \ \text{ and } \ \tilde\scripte\subset A\scripte^*. \end{equation}
\end{lemma}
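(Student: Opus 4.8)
The plan is to argue by contradiction after reducing to a normal form using the symmetry group of the inequality. The conclusion $\scripte\subset A\tilde\scripte^{*}$, $\tilde\scripte\subset A\scripte^{*}$ is unaffected if $f(x)$ is replaced by $c\,|\det T|^{1/p}f(Tx)$ (which replaces $\scripte$ by $T^{-1}\scripte$ and $\tilde\scripte$ by $T^{t}\tilde\scripte$, and preserves $\delta_0$-normalization, the reference amplitudes transforming compatibly), nor by translating or modulating $f$ (which move only $u$ and $v$). So we may take $\norm f_p=1$, $u=v=0$, $\scripte=\mathbb{B}:=\{x:|x|\le 1\}$, and, after an orthogonal $T$, $\tilde\scripte=\{\xi:\sum_j\xi_j^2/a_j^2\le 1\}$ with $a_1\ge\cdots\ge a_d>0$. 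Since $\mathbb{B}^{*}=\mathbb{B}$, both inclusions then reduce to the single statement $\tilde\scripte\subset A\mathbb{B}$, so it suffices to bound $a_1$ from above. We use in addition that $N:=\norm{\widehat f}_q$ is bounded below by a fixed multiple of $\norm f_p$; this is the one input beyond normalization, and it holds because the lemma is invoked for near-extremizers, for which $N$ is close to $\bestA_p^{d}$.

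Extract good parts. Fix $t_0$ large with $\Theta(t_0)$ small, and put $\eps_0:=(2\Theta(t_0)+2\delta_0)^{1/p}$. The two defining inequalities of normalization give $f=g+r$ with $\norm r_p\le\eps_0$, $g\prec(1+t_0)\mathbb{B}$, $\norm g_\infty\le t_0|\mathbb{B}|^{-1/p}$; and $\widehat f=G+R$ with $\norm R_q\le\eps_0N$, $G\prec(1+t_0)\tilde\scripte=:S$, $\norm G_\infty\le t_0|\tilde\scripte|^{-1/q}N=:t_0M$. Since $g$ is bounded with compact support, $\widehat g\in L^2\cap L^\infty$ with $\norm{\widehat g}_\infty\le\norm g_1\le C(t_0)$, $\norm{\widehat g}_2=\norm g_2\le C(t_0)$, and $\norm{\nabla\widehat g}_\infty\le 2\pi\norm{|x|\,g}_1\le C(t_0)$, so $\widehat g$ is $C(t_0)$-Lipschitz. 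With $E:=\widehat g-G=R-\widehat r$, Hausdorff--Young gives $\norm E_q\le\eps_0N+\bestA_p^d\eps_0\le C\eps_0$ and $\norm G_q\ge(1-\eps_0)N$.

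Two estimates follow. First, since $q>2$, $\norm G_q^q\le\norm G_\infty^{q-2}\norm G_2^2$, so using $M^q|\tilde\scripte|=N^q$ one gets $\norm G_2^2\ge c(t_0)N^2|\tilde\scripte|^{1-2/q}$, with the matching upper bound $\norm G_2^2\le\norm G_\infty^2|S|\le C(t_0)N^2|\tilde\scripte|^{1-2/q}$; as the cross term in $\norm{\widehat g}_2^2=\norm{G+E}_2^2$ is at most $\norm G_\infty\norm E_{L^1(S)}\le C(t_0)\eps_0N|\tilde\scripte|^{1-2/q}$, negligible next to $\norm G_2^2$ because $N\ge c_0$, we obtain $c(t_0)N^2|\tilde\scripte|^{1-2/q}\le\norm g_2^2\le C(t_0)$, hence $|\tilde\scripte|\le V_0(t_0)$. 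Second, $a_d$ is bounded below: suppose $a_d<a_*$ for a small $a_*=a_*(t_0)$ to be fixed. Every point of $S$ lies within $2(1+t_0)a_d$ of $\partial S$ in the $\pm e_d$ direction, so $S$ is thin there. From $\norm G_q\ge(1-\eps_0)N$, $\norm G_\infty\le t_0M$ and the $L^2$-bound on $G$, one shows that for a suitable $\lambda_0=\lambda_0(t_0)$ the set $\Omega:=\{|\widehat g|>\lambda_0M\}\cap S$ has measure $\ge c(t_0)|\tilde\scripte|$ --- the bulk of the $L^q$-mass of $G$ sits where $|G|>2\lambda_0M$, and the small error $E$ spoils only a fraction, using $N\ge c_0$. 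Put $r_0:=\lambda_0M/(8\,C(t_0))$; since $|\tilde\scripte|\le V_0$ and $N\ge c_0$, both $M$ and $r_0$ are $\ge c(t_0)>0$. Choosing $a_*$ with $2(1+t_0)a_*<3r_0$, the translate $\Omega+3r_0e_d$ is disjoint from $S$, and the Lipschitz bound gives $|\widehat g|\ge\lambda_0M/8$ there. Hence $\int_{\reals^d\setminus S}|\widehat g|^q\ge(\lambda_0M/8)^q|\Omega|\ge c(t_0)M^q|\tilde\scripte|=c(t_0)N^q$; but $\widehat g=E$ off $S$, so this integral is at most $\norm E_q^q\le C\eps_0^q$. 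Choosing $t_0$ large and then $\delta_0$ small so that $C\eps_0^q<c(t_0)c_0^q$, this is a contradiction, and $a_d\ge a_*(t_0)$. Then $a_j\ge a_*(t_0)$ for all $j$, so $a_1=|\tilde\scripte|/(\omega_d\prod_{j\ge 2}a_j)\le V_0/(\omega_d a_*^{d-1})=:A$. Undoing the reductions completes the proof; all constants, hence $\delta_0$ and $A$, may be chosen uniformly for $p\in\Lambda$.

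The main obstacle is the second estimate --- the lower bound on the semi-axes of $\tilde\scripte$, a reverse-uncertainty statement --- together with the bookkeeping of constants. One must convert the $L^q$-concentration of $\widehat f$ near $\tilde\scripte$ and the pointwise regularity of $\widehat g$ (boundedness and a Lipschitz bound, inherited from $g$ being bounded and spatially localized near $\mathbb{B}$) into a quantitative contradiction with a too-thin ellipsoid, absorbing the $L^q$-error $E$; and one must verify that, although the constants $C(t_0)$ grow with $t_0$, the final inequality $C\eps_0^q<c(t_0)c_0^q$ can still be arranged (choosing $t_0$ first, then $\delta_0$). The whole scheme rests essentially on $\norm{\widehat f}_q$ being comparable to $\norm f_p$: without it the conclusion genuinely fails, e.g.\ for highly oscillatory functions that are flat on $\scripte$ but whose Fourier transforms spread over a long thin ellipsoid. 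The symmetry reduction, the extraction of $g$ and $G$, and the upper bound $|\tilde\scripte|\le V_0$ via Plancherel are comparatively routine.
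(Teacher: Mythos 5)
Your architecture is sound, and for the first half it is essentially the paper's own: you prove the volume bound by playing the $L^2$ norm of the bounded, compactly supported good part $g$ of $f$ (via Plancherel) against the $L^2$ mass that the frequency-side normalization forces $\widehat{f}$ to carry at height comparable to $|\tilde\scripte|^{-1/q}\norm{\widehat{f}}_q$ on a set of measure comparable to $|\tilde\scripte|$; the paper does the same thing with a Chebyshev set and the error $\widehat{h}$. For the second half you take a genuinely different, dual route: you normalize the spatial ellipsoid to the unit ball and exclude a thin axis of $\tilde\scripte$ by combining the pointwise Lipschitz bound on $\widehat{g}$ with a translation of the large-value set $\Omega$ out of $S=(1+t_0)\tilde\scripte$, whereas the paper normalizes $\tilde\scripte$ to the unit ball, bounds $\norm{\partial_{\xi_1}\widehat{g}}_q\le Crs_1$ through $\norm{x_1g}_p$, and invokes a ``small directional derivative plus fixed mass on a fixed ball forces large $L^q$ norm'' principle. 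Both mechanisms are legitimate quantitative uncertainty arguments, and your reduction to bounding $a_1$, as well as your explicit isolation of the extra input $\norm{\widehat{f}}_q\ge c_0\norm{f}_p$, are correct; the lemma as stated omits that input, but the paper's proof also uses it tacitly (e.g.\ in the assertion $\norm{\widehat{f}}_{L^q(A_0\tilde\scripte)}\ge\tfrac12\bestA_p^d\norm{f}_p$), and it is available in every application.

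There is, however, a genuine gap in your quantifier structure. You cut both good parts $g$ and $G$ at a single parameter $t_0$, so $\eps_0\ge(2\Theta(t_0))^{1/p}$ no matter how small $\delta_0$ is; yet every smallness requirement you place on $\eps_0$ (the cross term in $\norm{\widehat{g}}_2^2$, the bound $|\{|E|\ge\lambda_0M\}|\le\tfrac12|\{|G|>2\lambda_0M\}|$, and the final inequality $C\eps_0^q<c(t_0)c_0^q$) is measured against a constant $c(t_0)$ that decays like a power of $t_0$, through $\lambda_0(t_0)\sim C(t_0)^{-1/(q-2)}$, the factor $t_0^{-q}$ coming from $\norm{G}_\infty\le t_0M$, and the Lipschitz constant. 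Since $\Theta$ is only assumed to tend to zero, possibly slower than any power, there need be no $t_0$ at all with $\Theta(t_0)$ below the required $t_0$-dependent threshold; so ``choose $t_0$ large, then $\delta_0$ small'' does not close the argument — the very point you flagged as needing verification is the point that fails.

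The repair is to decouple the thresholds, which is exactly how the paper orders its choices. Fix one moderate threshold $t_1$ with $\Theta(t_1)$ below an absolute constant and $\delta_0$ small absolutely, and use it only for the statements that need a definite fraction of the mass: define the large-value set directly from $\widehat{f}$, namely $\Omega'=\{\xi:\rho_{\tilde\scripte}(\xi,\tilde\scripte)\le t_1,\ \lambda_0M\le|\widehat{f}(\xi)|\le t_1M\}$, so that $|\Omega'|\ge c(t_1)|\tilde\scripte|$ and $M\ge c(t_1)$ follow with no error set to subtract. Only after $c(t_1),\lambda_0(t_1)$ are fixed do you choose the remaining parameters: a spatial cut $t_3$ for $g$ so that $\norm{\widehat{r}}_q$ is small compared to $\lambda_0 M$ on all but a small fraction of $\Omega'$ (its only other role is to make the Lipschitz constant finite, and that constant enters only $r_0$ and $a_*$, which merely need to be positive), and a larger dilate $(1+t_2)\tilde\scripte$ out of which you translate, with $\Theta(t_2)+\delta_0$ small compared to $c(t_1)c_0^q$, bounding the exterior mass of $\widehat{g}$ by the exterior mass of $\widehat{f}$ plus $\norm{\widehat{r}}_q^q$ rather than by $\norm{E}_q^q$. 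Each smallness condition is then against a constant already fixed at an earlier stage, so such choices exist because $\Theta\to0$; with this restructuring your argument goes through.
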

For $d=1$, the conclusion is simply that $|\scripte|\cdot|\tilde\scripte|\le A'<\infty$.
Conversely, it is a simple consequence of the uncertainty principle that 
this product $|\scripte|\cdot |\tilde\scripte|$ is bounded below
by some positive constant that depends only on $\delta_0,\Theta$.

\begin{proof}
In this proof, by a constant we mean a finite quantity that depends only on $d,\Lambda,\Theta$.
We assume without loss of generality that $\norm{f}_p=1$.

We claim first that
\begin{equation}\label{eq:ellipsoidproductupper} |\scripte|\cdot |\tilde\scripte|\le C \end{equation}
where $C<\infty$ is a constant.
By making an affine change of variables and multiplying $f$ by a scalar, 
we may assume without loss of generality 
$|\scripte|=1$, without altering the assumption that $\norm{f}_p=1$. 
Let $\eta>0$ be small. If $\delta_0$ is sufficiently small, depending on $\eta,d$, then by definition 
of normalization, there exists a decomposition
$f=g+h$ with $\norm{h}_p\le\eta$, $\norm{g}_p\le\norm{f}_p=1$, and $\norm{g}_2\le C<\infty$.
Here $C\in\reals^+$ is a positive constant, provided that $\eta$ is chosen to be sufficiently small.

The hypothesis that $\widehat{f}$ is normalized in $L^q$ guarantees that there exists 
a set $S\subset\reals^d$ satisfying $|S|\ge c'|\tilde\scripte|$ such that $|\widehat{f}(x)| \ge c'|\tilde\scripte|^{-1/q}$ 
for all $x\in S$.
If $\eta$ is sufficiently small then 
it follows from Chebyshev's inequality that for any function $H$ satisfying $\norm{H}_q\le\eta$,
there exists a measurable set $\tilde S_H\subset S$ such that $|\tilde S_H| \ge \tfrac12|S| \ge \tfrac12 c'|\tilde\scripte|$
and
\[|\widehat{f}(\xi)-H(\xi)| \ge \tfrac12 c'|\tilde\scripte|^{-1/q} \ \text{ for all $\xi\in\tilde S_H$}\]

Apply this to $H=\widehat{h}$, where $h$ is as above. Then 
\[\norm{\widehat{f}-H}_2 =\norm{\widehat{g}}_2 = \norm{g}_2\le C.\]
On the other hand,
\begin{align*}
\norm{\widehat{f}-H}_2
\ge \tfrac 12 c'|\tilde\scripte|^{-1/q} |\tilde S_H|^{1/2} 
\ge \tfrac12 c'|\tilde\scripte|^{\gamma}
\end{align*}
where $\gamma= \tfrac12-\tfrac1q$ is strictly positive since $q>2$.
Thus $|\tilde\scripte|^\gamma$ is majorized by a finite constant,
completing the proof of \eqref{eq:ellipsoidproductupper}.

Via an affine change of variables we can reduce to the case in which $\tilde\scripte$ is the
ball of radius $1$ centered at $0$. By \eqref{eq:ellipsoidproductupper}, $|\scripte|$ is then bounded above.
By applying a rotation and translation of $\reals^d$ we can further reduce to the case
in which \[\scripte=\{x\in\reals^d: \sum_{j=1}^d s_j^{-2}x_j^2\le 1\},\] where each $s_j\in\reals^+$.
Since $|\scripte| = c_d \prod_j s_j$, this product is bounded above. 
Without loss of generality suppose that $s_1=\min_j s_j$. 
We claim that $s_1$ is bounded below by a strictly positive constant.
This implies that $\scripte\subset A\tilde\scripte^*$ and $\tilde\scripte\subset A\scripte^*$ for some constant $A$,
completing the proof of the lemma.

Let $\eps>0$. Define $\scripte_r=\{x: \sum_j s_j^{-2} x_j^2\le r^2\}$. 
If $\delta_0$ is sufficiently small 
then it is possible to decompose the $\delta_0$--normalized  function $f$
as $f=g+h$ where $\norm{h}_p<\eps$, $g$ is supported in $\scripte_r$,
and $\norm{g}_\infty |\scripte_r|^{1/p}\le C_\eps$, 
where $r$ is bounded above
by a quantity that depends only on $\eps,\Lambda,d,\Theta$ provided that $\delta$ is sufficiently small.
We have
\[ \norm{x_1 g}_p \le rs_1 \norm{g}_p\le Crs_1.\]
Therefore
\begin{equation}\label{eq:rs1bound} \norm{\frac{\partial \widehat{g}}{\partial \xi_1}}_q \le Crs_1.  \end{equation}
Because $\widehat{f}$ is normalized in $L^q$ with respect to the unit ball $\tilde\scripte$,
there exists a constant $A_0\in\reals^+$ such that
$\norm{\widehat{f}}_{L^q(A_0\tilde\scripte)} \ge \tfrac12 \bestA_p^d \norm{f}_p\ge c>0$. 
Therefore if $\eps$ is a sufficiently small constant then
\begin{equation}\label{eq:rs2bound} \norm{\widehat{g}}_{L^q(A_0\tilde\scripte)} 
\ge \norm{\widehat{f}}_{L^q(A_0\tilde\scripte)}-\norm{\widehat{h}}_q 
\ge \norm{\widehat{f}}_{L^q(A_0\tilde\scripte)}-C\norm{h}_p 
\ge c- C\eps=c'>0.  \end{equation}

Bounds $\int_{|\xi|\le A_0}|h(\xi)|^q\,d\xi \ge c'>0$
and
$\norm{\frac{\partial h}{\partial \xi_1}}_q \le \rho$
imply $\norm{h}_q\ge R$ where $R\to\infty$ as $\rho\to 0$ so long as $c',A_0$ remain fixed.
In \eqref{eq:rs1bound} and \eqref{eq:rs2bound}, 
we have shown that $h=\widehat{g}$ satisfies these conditions
with $\rho=Crs_1$. Since $r$ is bounded above, $s_1$ must be bounded below.
\end{proof}

\section{Proof of Proposition~\ref{prop:submain}}

\begin{lemma} \label{lemma:tightnessapplied}
Let $d\ge 1$ and let $\Lambda\subset(1,2)$ be a compact set.
For every $\eps>0$ there exists $\delta>0$ with the following property for every  $p\in\Lambda$.
Let $0\ne f\in L^p(\reals^d)$ satisfy $\norm{\widehat{f}}_{p'} \ge (1-\delta)\bestA_p^d\norm{f}_p$.
There exist ellipsoids $\scripte,\scriptf\subset\reals^d$ centered at $0$,
vectors $u,v\in\reals^d$, and decompositions $f = \varphi+\psi$ 
and $\widehat{f} = \tilde\varphi+\tilde\psi$ such that 
\begin{gather}
\\ \varphi\prec \scripte+u
\ \text{ and } 
\tilde \varphi\prec \scriptf +v
\\ \norm{\varphi}_\infty |\scripte|^{1/p}\le C_\eps\norm{f}_p
\ \text{ and } 
 \norm{\tilde\varphi}_\infty |\scriptf|^{1/p}\le C_\eps\norm{f}_p
\\ \norm{\psi}_p<\eps \ \text{ and } \norm{\tilde \psi}_{p'}<\eps
\\ |\scripte|\cdot|\scriptf| \le C_\eps
\\ \scripte\subset C_\eps\scriptf^* \ \text{ and } \ \scriptf\subset C_\eps\scripte^*.
\end{gather}
\end{lemma}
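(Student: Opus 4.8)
The plan is to assemble three previously established facts: the spatial localization of $f$ (Proposition~\ref{prop:spatiallocalization}), the localization of $\widehat f$ (Lemma~\ref{lemma:frequencylocalization}), and the tightness principle (Lemma~\ref{lemma:tightness}), the last of which converts two separate one-sided localizations into the dual pair of inclusions $\scripte\subset C_\eps\scriptf^*$, $\scriptf\subset C_\eps\scripte^*$. After normalizing so that $\norm{f}_p=1$, I would apply Proposition~\ref{prop:spatiallocalization} to $f$ with a parameter $\eps'>0$ to be fixed below, getting an ellipsoid $\scripte_0$ and a decomposition $f=\varphi+\psi$ with $\varphi\prec\scripte_0$, $\norm{\varphi}_\infty|\scripte_0|^{1/p}\le C_{\eps'}$, $\norm{\psi}_p<\eps'$, and apply Lemma~\ref{lemma:frequencylocalization} to $f$ with a parameter $\tau'>0$, getting an ellipsoid $\scriptf_0$ and a decomposition $\widehat f=\tilde\varphi+\tilde\psi$ with $\tilde\varphi\prec\scriptf_0$, $\norm{\tilde\varphi}_\infty|\scriptf_0|^{1/p'}\le C_{\tau'}$, $\norm{\tilde\psi}_{p'}<\tau'$. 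Taking $u,v$ to be the centers of $\scripte_0,\scriptf_0$ and setting $\scripte=\scripte_0-u$, $\scriptf=\scriptf_0-v$, these are centered at the origin and satisfy $\scripte+u=\scripte_0$, $\scriptf+v=\scriptf_0$; and once $\eps',\tau'\le\eps$, every conclusion except the product bound and the polar inclusions is already recorded in these two decompositions.

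To obtain the remaining two conclusions I would invoke Lemma~\ref{lemma:tightness}, for which it must be checked that $f$ is $\delta_0$-normalized in $L^p$ with respect to $(\Theta,\scripte+u)$ and that $\widehat f$ is $\delta_0$-normalized in $L^{p'}$ with respect to $(\Theta,\scriptf+v)$, for the constant $\delta_0$ and a suitable auxiliary function $\Theta$. I would take $\Theta$ to be any continuous function with $\lim_{t\to\infty}\Theta(t)=0$ that satisfies $\Theta(t)\ge1$ on an initial interval $[0,T]$ with $T\ge 2C_{\eps'}$ (long enough also for the $\widehat f$ bound). The ``far from the ellipsoid'' condition is immediate: $\varphi$ vanishes off $\scripte_0=\scripte+u$, so for $t>0$ the region $\{x:\rho_{\scripte+u}(x,\scripte+u)\ge t\}$ lies outside $\scripte_0$, where $f=\psi$, and the integral there is at most $\norm{\psi}_p^p<\eps'^p$; at $t=0$ it is $\norm{f}_p^p=1\le\Theta(0)+\delta_0$. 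For the ``large values'' condition I would split $\{|f|\ge t|\scripte+u|^{-1/p}\}$ according to whether $|\varphi|>\tfrac12 t|\scripte_0|^{-1/p}$: the first piece is empty once $t>2C_{\eps'}$, and on the second piece $|\psi|\ge\tfrac12 t|\scripte_0|^{-1/p}$ and hence $|f|\le 2|\psi|$, so that piece contributes at most $2^p\norm{\psi}_p^p<2^p\eps'^p$; thus for $t>2C_{\eps'}$ the whole integral is $<2^p\eps'^p$, and for $0\le t\le 2C_{\eps'}$ it is at most $\norm{f}_p^p=1\le\Theta(t)+\delta_0$. So $f$ is $\delta_0$-normalized in $L^p$ provided $2^p\eps'^p\le\delta_0$, and the identical computation with $p$ replaced by $p'$, using that $\norm{\widehat f}_{p'}$ is comparable to $1$, shows $\widehat f$ is $\delta_0$-normalized in $L^{p'}$ provided $2^{p'}\tau'^{p'}\le\delta_0$.

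Lemma~\ref{lemma:tightness} then gives $\scripte\subset A\scriptf^*$ and $\scriptf\subset A\scripte^*$ with $A=C_\eps$; since $|\scripte|\,|\scripte^*|$ equals a fixed dimensional constant for any origin-centered ellipsoid, $\scripte\subset A\scriptf^*$ forces $|\scripte|\,|\scriptf|\lesssim_d A^d$, i.e.\ the product bound $|\scripte|\,|\scriptf|\le C_\eps$ (equivalently, this is \eqref{eq:ellipsoidproductupper}). Finally $\delta$ is taken small enough that the hypotheses of Proposition~\ref{prop:spatiallocalization} and Lemma~\ref{lemma:frequencylocalization} hold with the chosen $\eps',\tau'$. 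The hard part, such as it is, is the order of quantifiers: $\Theta$ is built from the $\eps'$-dependent constant $C_{\eps'}$, yet we need $\eps'\le\delta_0^{1/p}$ while $\delta_0$ is produced by Lemma~\ref{lemma:tightness} from $\Theta$. The way out is that the $\delta_0$ furnished by Lemma~\ref{lemma:tightness} can be arranged to depend on $\Theta$ only through its decay at infinity, not through the length $T$ of its plateau (only the constant $A$ grows with $T$); so one first extracts $\delta_0=\delta_0(d,\Lambda)$ with the tail shape of $\Theta$ fixed, then sets $\eps'=\tau'=\min(\eps,\tfrac12\delta_0^{1/p})$, then builds $\Theta$ with $T\ge 2C_{\eps'}$, and the argument closes. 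Everything else is a routine assembly of the cited results.
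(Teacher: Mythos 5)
Your assembly is exactly the route the paper intends (its own proof of this lemma is the one-sentence citation of Proposition~\ref{prop:spatiallocalization}, Lemma~\ref{lemma:frequencylocalization} and Lemma~\ref{lemma:tightness}), and your verification that the two decompositions make $f$ and $\widehat f$ suitably normalized with respect to a plateau-shaped $\Theta$ is fine. The gap is precisely the step you single out as ``the way out'': the assertion that the $\delta_0$ furnished by Lemma~\ref{lemma:tightness} can be taken to depend on $\Theta$ only through its ``tail shape'' and not through the plateau length $T$. That refinement is not part of the statement of Lemma~\ref{lemma:tightness}, and it is not what its proof delivers. In that proof the admissible $\delta_0$ is of size roughly $\eta^p$, where $\eta$ must be small compared with the constant $c'$ appearing in the Chebyshev step (the set on which $|\widehat f|\ge c'|\tilde\scripte|^{-1/q}$ must survive subtraction of $\widehat h$ with $\norm{\widehat h}_q\le\eta$), and $c'$ --- like $A_0$ and the radius $r$ used later in that proof --- is determined by the first point at which $\Theta$ falls below fixed absolute thresholds. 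For your $\Theta$ that point is at least $T\ge 2C_{\eps'}$, so these constants, and with them the admissible $\delta_0$, degrade exactly as the plateau lengthens; the distinction between ``tail shape'' and ``plateau length'' does not help, since lengthening the plateau pushes the tail out (indeed normalization with respect to a plateau of height $\ge 1$ is a strictly weaker hypothesis as $T$ grows, because the conditions are vacuous for $t\le T$). So the circularity $\eps'\to C_{\eps'}\to\Theta\to\delta_0\to\eps'$, which you correctly identify as the only delicate point, is not closed by anything you actually prove; note also that Lemma~\ref{lemma:tightness} as stated carries no lower bound on $\norm{\widehat f}_q$, so any strengthened version you invoke would at least have to use near-extremality explicitly.

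Two ways to repair it: either prove the strengthened tightness statement you assert (a genuine quantitative claim needing its own argument), or avoid the $\eps'$-dependent $\Theta$ altogether, which is closer to what the paper does. For the latter, apply Proposition~\ref{prop:spatiallocalization} and Lemma~\ref{lemma:frequencylocalization} not only at scale $\eps'$ but at a reference scale (say $\tfrac14$) and, if needed, at a sequence of scales $2^{-k}$; then show the resulting ellipsoids at different scales are comparable --- since each $\varphi$ agrees with $f$ where it is nonzero, the sup-norm bounds, the lower bounds on $\norm{\varphi}_p$, and the nonempty intersections force $|\scripte_{2^{-k}}|\le C_k|\scripte_{1/4}|$ and containment in bounded dilates. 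This is exactly the comparability argument the paper runs inside the proof of the precompactness lemma. With it one verifies normalization with respect to the fixed reference ellipsoid and a fixed $\Theta$ depending only on $d,\Lambda$ (built from the sequence of constants $C_{2^{-k}}$), applies Lemma~\ref{lemma:tightness} once with its $\delta_0(\Theta)$ --- no circularity, since $\Theta$ no longer depends on $\eps'$ --- and then transfers the polar inclusions and the product bound to the $\eps$-level ellipsoids via the comparability estimates. As written, your proof has a genuine gap at this single point; the rest is a correct assembly of the cited results.
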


\begin{proof}
This is a direct combination of Proposition~\ref{prop:spatiallocalization},
Lemma~\ref{lemma:frequencylocalization}, and Lemma~\ref{lemma:tightness}. 
\end{proof}

\begin{lemma} [Precompactness after renormalization]
Let $d\ge 1$ and $p\in(1,2)$. 
Let $(f_n)$ be a sequence of functions
in $L^p(\reals^d)$ satisfying $\norm{f_n}_p=1$ and $\lim_{n\to\infty} \norm{\widehat{f_n}}_{p'} =  \bestA_p^d$.
Then there exist a sequence of positive real numbers $\lambda_n$ 
and sequences of vectors $a_n,\xi_n\in\reals^d$ such that the sequence of functions
\begin{equation} \label{renormalize} g_n(x) = e^{i\xi_n \cdot x} \lambda_n^{1/p}f_n(\lambda_n (x-a_n))\end{equation}
is precompact in $L^p(\reals^d)$.
\end{lemma}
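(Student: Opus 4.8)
The plan is to choose the renormalizations so that $(\widehat{g_n})$ becomes precompact in $L^{p'}(\reals^d)$, and then to deduce precompactness of $(g_n)$ in $L^p$ by a soft argument based on uniform convexity. Since precompactness is a statement about subsequences, I may first pass to a subsequence and assume $\norm{f_n}_p=1$ and $\norm{\widehat{f_n}}_{p'}=(1-\delta_n)\bestA_p^d$ with $\delta_n\downarrow 0$.

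\emph{Step 1: a standardizing renormalization.} Fix a small $\eps_0>0$ and apply Lemma~\ref{lemma:tightnessapplied} to each $f_n$ with $n$ large (the finitely many remaining terms being harmless). Using the symmetry group of the inequality — translations, modulations, dilations, and (for $d>1$) linear changes of variable, each of which fixes both $\norm{\widehat f}_{p'}/\norm f_p$ and the $L^p$ norm — I would send the spatial ellipsoid $\scripte_n$ to a round ellipsoid centered at $0$; by the polarity relations $\scripte_n\subset C\scriptf_n^{*}$, $\scriptf_n\subset C\scripte_n^{*}$ this simultaneously standardizes the frequency ellipsoid $\scriptf_n$. Thus $f_n$ is replaced by a renormalization $g_n$ such that the bulks of $g_n$ and of $\widehat{g_n}$ both lie in a fixed ball $B(0,C_0)$ with amplitude $\le C_0$, up to $L^p$ (resp.\ $L^{p'}$) errors of size $<\eps_0$. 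The crux, and what I expect to be the principal obstacle, is to promote this single renormalization to all finer scales: I claim that for every $\eps>0$ there are $N_\eps,R_\eps,C_\eps$ so that for $n\ge N_\eps$ one has decompositions $g_n=\varphi_n^\eps+\psi_n^\eps$, $\widehat{g_n}=\tilde\varphi_n^\eps+\tilde\psi_n^\eps$ with $\varphi_n^\eps,\tilde\varphi_n^\eps\prec B(0,R_\eps)$, $\norm{\varphi_n^\eps}_\infty,\norm{\tilde\varphi_n^\eps}_\infty\le C_\eps$, $\norm{\psi_n^\eps}_p<\eps$, $\norm{\tilde\psi_n^\eps}_{p'}<\eps$. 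To prove this one reapplies Lemma~\ref{lemma:tightnessapplied} to $g_n$ at scale $\eps$; the new ellipsoids must meet the already-standardized bulk, so the amplitude bound forces their Lebesgue measures to be bounded below uniformly in $n$, the polarity relations then bound their measures above uniformly in $n$, and finally the distribution-function control of Lemma~\ref{lemma:nicelyboundednearextremizers}, applied to $g_n$ and (by duality) to $\widehat{g_n}$, together with $\delta_n\to0$, confines them and the associated decompositions to fixed balls. Equivalently: the renormalization produced at scale $\eps$ should differ from that at scale $\eps_0$ by an element of a subset of the symmetry group bounded in terms of $\eps,\eps_0$ alone, and that discrepancy (including any needed $\Gl(d)$ correction for eccentricity) is absorbed into $R_\eps,C_\eps$. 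Carrying out this stabilization bookkeeping cleanly is the delicate part.

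\emph{Step 2: precompactness of $(\widehat{g_n})$.} Fix $\eps$. Each $\varphi_n^\eps$ is uniformly bounded and uniformly compactly supported, hence lies in $L^1$ with uniformly bounded norm, so $\{\widehat{\varphi_n^\eps}\}$ is equi-Lipschitz and uniformly bounded on $\reals^d$; therefore $\{\widehat{\varphi_n^\eps}\,\one_{B(0,2R_\eps)}\}$ is precompact in $L^{p'}$ by Arzel\`a--Ascoli. Since $\widehat{g_n}-\widehat{\varphi_n^\eps}=\widehat{\psi_n^\eps}$ has $L^{p'}$ norm $\le\bestA_p^d\eps$, and since on $B(0,2R_\eps)^c$ one has $\widehat{\varphi_n^\eps}=\tilde\psi_n^\eps-\widehat{\psi_n^\eps}$ (because $\tilde\varphi_n^\eps\prec B(0,R_\eps)$), the function $\widehat{g_n}$ lies within $O(\eps)$ of this precompact family in $L^{p'}$. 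Letting $\eps\to0$ shows $\{\widehat{g_n}\}$ is totally bounded in the complete space $L^{p'}$, hence precompact.

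\emph{Step 3: transfer to $(g_n)$.} Along any subsequence with $\widehat{g_n}\to H$ in $L^{p'}$, pass to a further subsequence with $g_n\rightharpoonup g$ weakly in the reflexive space $L^p$; then $\widehat{g_n}\rightharpoonup\widehat g$ weakly in $L^{p'}$, so $H=\widehat g$ and $\widehat{g_n}\to\widehat g$ strongly. Now $\bestA_p^d=\lim\norm{\widehat{g_n}}_{p'}=\norm{\widehat g}_{p'}\le\bestA_p^d\norm g_p$ gives $\norm g_p\ge1$, while weak lower semicontinuity gives $\norm g_p\le\liminf\norm{g_n}_p=1$; hence $\norm g_p=1=\lim\norm{g_n}_p$, and since $L^p$ with $1<p<\infty$ is uniformly convex, weak convergence together with convergence of norms forces $g_n\to g$ in $L^p$. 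As every subsequence of $(g_n)$ therefore has a convergent sub-subsequence, $(g_n)$ is precompact. The extremality hypothesis on $(f_n)$ enters precisely in the inequality $\norm g_p\ge1$.
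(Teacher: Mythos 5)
Your proposal is correct and follows essentially the same route as the paper: one renormalization via Lemma~\ref{lemma:tightnessapplied} and the symmetry group, then the observation that the $\eps$-scale ellipsoids $\scripte_{n,\eps},\scriptf_{n,\eps}$ and decompositions are automatically confined to fixed balls with no further $\eps$-dependent symmetries, then precompactness of $(\widehat{g_n})$ in $L^{p'}$ via equicontinuity of $\widehat{\varphi_{n,\eps}}$ plus frequency localization, and finally the weak-limit/Hausdorff--Young/norm-convergence (Radon--Riesz) argument to upgrade to $L^p$ convergence, exactly as in the paper. The one slip is your appeal to Lemma~\ref{lemma:nicelyboundednearextremizers} for the spatial confinement in Step 1 --- that lemma controls only the distribution of values of $|f|$, not supports; the confinement instead comes (as in the paper) from the fact that $\varphi_{n,\eps}$ and the fixed-scale bulk $\varphi_n$ are both restrictions of $g_n$, so their overlap has bounded amplitude and $L^p$ mass bounded below, which combined with the volume and polarity bounds and convexity of the ellipsoids yields containment in balls of radius $O_\eps(1)$.
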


\begin{proof}
For each sufficiently large $n$ there exist ellipsoids $\scripte_n,\scriptf_n$, vectors $u_n,v_n$, 
and associated decompositions of $f_n$
such that the conclusions of Lemma~\ref{lemma:tightnessapplied} hold with $\eps=\tfrac14$.
By replacing $f_n$ by $e^{-iv_n\cdot x}f_n(x+u_n)$
we may reduce to the case $u_n=v_n=0$.
By composing $f_n$ with an element of $\Gl(d)$ and multiplying by the appropriate power of the absolute
value of its determinant, we may reduce to the case in which $\scripte_n$ is the unit ball of $\reals^d$.

Continue to denote these modified functions by $f_n$. 
Denote by $\varphi_{n},\psi_{n},\tilde\varphi_{n},\tilde\psi_{n}$ the associated functions in
the decompositions of $f,\widehat{f}$ respectively, that are provided by Lemma~\ref{lemma:tightnessapplied}. 
For each $\eps>0$, there exists $N<\infty$ such that for each $n\ge N$,
Lemma~\ref{lemma:tightnessapplied} associates to $f_n$ ellipsoids $\scripte_{n,\eps}$ and $\scriptf_{n,\eps}$
along with appropriate decompositions $f_n = \varphi_{n,\eps}+\psi_{n,\eps}$, 
$\widehat{f_n} = \tilde\varphi_{n,\eps}+\tilde\psi_{n,\eps}$. 

Symmetries of the inequality have been exploited to normalize so that $\scripte_n,\scriptf_n$ 
are balls centered at the origin with radii comparable to $1$. We next claim that this ensures 
corresponding normalizations for $\scripte_{n,\eps},\scriptf_{n,\eps}$; $\eps$--dependent symmetries
are not needed. 

According to Lemma~\ref{lemma:tightness}, 
\begin{gather*}
\norm{\varphi_{n}-\varphi_{n,\eps}}_p \le \tfrac14+2\eps
\\ \norm{\varphi_{n}}_p\ge\tfrac34
\\ \norm{\varphi_{n}}_\infty|\scripte_n|^{1/p}\le C
\\ \norm{\varphi_{n,\eps}}_\infty|\scripte_{n,\eps}|^{1/p}\le C_\eps
\end{gather*}
provided that $n$ is sufficiently large and $\eps$ is sufficiently small.
A consequence is that $\norm{\varphi_{n,\eps}}_\infty \ge c\norm{\varphi_n}_\infty$,
where $c>0$ is independent of $n,\eps$.

For each $x$, each of $\varphi_n(x),\varphi_{n,\eps}(x)$ is equal either to $f(x)$, or to $0$.
From these inequalities and this fact, along with the support relations $\varphi_n\prec\scripte_n$
and $\varphi_{n,\eps}\prec \scripte_{n,\eps}$,
it follows that $|\scripte_{n,\eps}|\le C_\eps|\scripte_n|$.
Likewise $|\scriptf_{n,\eps}|\le C_\eps|\scriptf_n|$.
Moreover, since $\varphi_{n}$ is supported on $\scripte_n$ while $\varphi_{n,\eps}$
is supported on $\scripte_{n,\eps}$, the intersection $\scripte_n\cap\scripte_{n,\eps}$
must be nonempty.
Moreover,
\[ |\scripte_{n,\eps}|^{1/p} \le C_\eps \norm{\varphi_{n,\eps}}_\infty^{-1}
\le C_\eps \norm{\varphi_{n}}_\infty^{-1} \le C_\eps |\scripte_n|^{1/p}\norm{\varphi_n}_p^{-1}
\le C_\eps |\scripte_n|^{1/p}.  \]
Likewise, $\scriptf_{n,\eps}$ must intersect $\scriptf_n$,
and $|\scriptf_{n,\eps}|\le C_\eps|\scriptf_n|$.
Since $\scripte_n,\scriptf_n$ are contained in Euclidean balls centered at $0$ that
are independent of $n$, and since $\scripte_{n,\eps},\scriptf_{n,\eps}$ are convex sets,
it follows from these volume bounds together with the nonempty intersection property
that $\scripte_{n,\eps},\scriptf_{n,\eps}$ are contained in balls centered at $0$,
whose radii depend only on $\eps$.

Define the norm
\begin{equation} \norm{f}_{\scriptf L^q} = \norm{\widehat{f}}_{L^q}.\end{equation}
The sequence $(f_n)$ (as modified in the first paragraph of the proof, above)
is precompact with respect to the $\scriptf L^q$ norm. Indeed,
for any $\eps>0$,
$\norm{\nabla \widehat{\varphi_{n,\eps}}}_q\le C_\eps$
since $\norm{|x|\varphi_{n,\eps}(x)}_p$ is majorized by a contant multiple of the diameter
of $\scripte_{n,\eps}$.
Since $\norm{\widehat{f_n}}_q$ is uniformly bounded and $\norm{\widehat{\psi_{n,\eps}}}_q\to 0$
as $n\to\infty$,
it follows that the sequence $(\widehat{f_n})$ is precompact in $L^q$ norm,
on any fixed bounded subset of $\reals^d$. Since $\scriptf_{n,\eps}$ is contained
in a ball independent of $n$, and since 
$\int_{\xi\notin\scriptf_{n,\eps}}|\widehat{f_n}(\xi)|^q\,d\xi\to 0$ as $n\to\infty$
for each fixed $\eps>0$,
the sequence $\widehat{f_n}$ is precompact in $L^q(\reals^d)$.

This reasoning cannot be directly reversed to establish precompactness in $L^p$,
because no bound for the $L^p$ norm of the inverse Fourier transform of $\tilde\psi_{n,\eps}$ is immediately available.
To circumvent this difficulty, consider any subsequence that converges in $\scriptf L^q$. Rename this
subsequence to be $(g_n: n\in\naturals)$. 
Then $\widehat{g_n}\to h$ in $L^q$ for some $h\in L^q(\reals^d)$.
Since $\norm{g_n}_p=1$ and $\norm{\widehat{g_n}}_q\to \bestA_p^d$, $\norm{h}_{L^q}=\bestA_p^d$.
Moreover, $h=\widehat{G}$ for some tempered distribution $G$.

Since the unit ball of $L^p$ is weak star compact and $L^p$ is separable, 
some subsequence of $(g_n)$ must converge in 
the weak star topology of $L^p(\reals^d)$, to some limit $g\in L^p$.
Continue to denote this sub-subsequence by $(g_n)$.
Then \[\norm{g}_p\le \liminf_{n\to\infty} \norm{g_n}_p =1.\] 

In the topology of the space $\scripts'$ of tempered distributions on $\reals^d$, 
$g_n\to g$. Therefore $\widehat{g_n}\to\widehat{g}$ in $\scripts'$. 
Since $\widehat{g_n}\to h$ in $L^q$ norm, $\widehat{g}=h$.
Therefore $\norm{\widehat{g}}_q = \bestA_p^d$.
By the Hausdorff-Young inequality, $\norm{\widehat{g}}_q\le \bestA_p^d\norm{g}_p$.
Therefore $\norm{g}_p\ge 1$, and hence $\norm{g}_p=1$.

We have shown that $g_n\to g$ in the weak star topology for $L^p$, and that $\norm{g_n}_p\to\norm{g}_p$. 
These two properties alone
imply, for $p\in(1,\infty)$, that $g_n\to g$ in the $L^p$ norm. 
\end{proof}

\begin{remark}
An alternative way to complete the proof, once precompactness in the $\scriptf L^q$ norm
has been established, is to consider
the sequence of convolutions $(g_n*g_n)$. This sequence converges in $L^r$ norm, where $r^{-1} = 2 p^{-1}-1$.
Indeed, $r\in (2,\infty)$ so by the Hausdorff-Young inequality, the inverse Fourier transform
is bounded from $L^{r'}$ to $L^r$; 
\begin{align*} \norm{g_n*g_n-g_m*g_m}_r 
&\le \bestA_{r'}^d \norm{\widehat{g_n}^2-\widehat{g_m}^2}_{r'}
\\&\le \bestA_{r'}^d \norm{\widehat{g_n}-\widehat{g_m}}_{q} \big(\norm{\widehat{g_n}}_q + \norm{\widehat{g_m}}_q\big)
\\& \le 2\bestA_{r'}^d \bestA_p^d \norm{\widehat{g_n}-\widehat{g_m}}_{q}. 
\end{align*}
The limit of this sequence must equal the inverse Fourier transform of $\widehat{g}\,^2$.
Since $\widehat{g}\,^2$ is a Gaussian, its inverse Fourier transform  $g*g$ satisfies
\begin{equation*}
\norm{g*g}_r = \bestA_{r'}^d\norm{\widehat{g}\,^2}_{r'}
= \bestA_{r'}^d\norm{\widehat{g}}_{q}^2
= \bestA_{r'}^d\bestA_p^{2d}\norm{g}_p^2.
\end{equation*}
The product
$\bestA_{r'}^d\bestA_p^{2d}\norm{g}_p^2$
is the optimal constant in Young's convolution inequality
$\norm{u*v}_r\le C\norm{u}_p\norm{v}_p$.
Therefore the sequence of ordered pairs $(g_n,g_n)$ is an extremizing sequence for Young's inequality,
that is, the limit as $n\to\infty$ of
$\norm{g_n*g_n}_r/\norm{g_n}_p^2$ 
equals the optimal constant in that inequality. 
The main theorem of \cite{christyoungest} asserts that any such extremizing
sequence for Young's convolution inequality, normalized so that $\norm{g_n}_p=1$,
is precompact in $L^p$, modulo the action of the group generated
by affine automorphisms of $\reals^d$ and modulations.
The proof of that theorem is not short, so this alternative is not preferable
to the self-contained argument indicated above.
\qed
\end{remark}

\section{Proofs of auxiliary results on multiprogressions} \label{section:proofs}

\begin{proof}[Proof of Lemma~\ref{lemma:properversusgeneral}]
Let $Q=[0,\eta]^d+\eta Q'$ where $Q'\subset\integers^d$ is a multiprogression
of rank $r$. Then $\sigma(Q) = \eta^d\sigma(Q')$.
Let $\delta= N^{-1}\eta$ for a large positive integer $N$.
If $N$ is chosen sufficiently large then 
$Q$ is contained in $[-C_\delta,C_d\delta] + \tilde Q'+R$
where $\tilde Q'$ and $R$ are multiprogressions contained in
$\delta\integers^d$, the rank and size of $Q'$ equal the rank and size of $\tilde Q'$ respectively,
and $R$ is a multiprogression of rank $d$ and size comparable to $N^d$.
Then $\sigma(Q'+R) \asymp N^d \sigma(Q')$.

According to Theorem~3.40 of \cite{TaoVu},
there exists a proper multiprogression $P'\subset\delta\integers^d$ of rank $r+d$
that contains $\tilde Q'+R$ and satisfies $\#(P') \le C_r\sigma(Q'+R)\le C'_r N^d \sigma(Q')$. 
Then $P = [-C\delta,C\delta]^d + P'$ is a continuum multiprogression in $\reals^d$ 
whose size is $C\delta^d\sigma(P') \le  C_r\delta^d N^d\sigma(Q') = C_r\sigma(Q)$.
Now $[0,\delta)^d+P'$ is a proper continuum multiprogression, whose
Lebesgue measure is comparable to that of $[-C\delta,C\delta]^d+P'$.
So $[-C\delta,C\delta]^d+P'$ has the required properties.
\end{proof}

To establish Lemma~\ref{lemma:gapd}, we will first prove a discrete analogue. 
\begin{lemma} \label{lemma:gappreparation}
For each $d\ge 1$ and $\rankk\ge 1$ there exists $c>0$ with the following property. 
Let $\delta\in(0,\tfrac12]$ and $\Lambda\in\reals^+$.
Let $P$ be a proper discrete multiprogression in $\integers^d$ of rank $\rankk$ and cardinality $N$.
Then there exists $\scriptt\in\aff(d)$ satisfying   
\begin{align}
&|J(\scriptt)| \ge c_\rankk \delta^{d \rankk} \#(P)^{-d}\Lambda^{d^2}
\\ &|T| \le \Lambda 
\\ &\norm{\scriptt(x)}_{\reals^d/\integers^d}\le \delta \text{ for every $x\in P$}
\end{align}
where $\scriptt(x)=T(x)+v$, $T\in\Gl(d)$, and $v\in\reals^d$.
\end{lemma}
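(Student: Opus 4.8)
The plan is to take $\scriptt(x)=T(x)+v$ with $v=-Ta$, where $a\in\integers^d$ is the base point of $P$, so that $\scriptt(x)=T(x-a)$ and the requirement $\norm{\scriptt(x)}_{\reals^d/\integers^d}\le\delta$ for all $x\in P$ becomes $\norm{Tw}_{\reals^d/\integers^d}\le\delta$ for all $w$ in the integral difference set $Q:=P-a=\{\sum_{i=1}^{\rankk}n_iv_i:0\le n_i<N_i\}$; it then remains to arrange that $T$ is invertible, $|T|\le\Lambda$, and $|\det T|\ge c_\rankk\delta^{d\rankk}\#(P)^{-d}\Lambda^{d^2}$. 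Two preliminary simplifications: any index with $N_i=1$ may be discarded, which decreases $\rankk$ and, since $\delta\le\tfrac12$, only strengthens the desired bound, so assume every $N_i\ge2$; and after handling the directions transverse to $\Span\{v_i\}$ separately by a dilation (an integer dilation if $\Lambda\ge1$, the dilation by $\Lambda$ otherwise), which contributes a controlled factor to $|\det T|$ and leaves the condition on $Q$ untouched, one reduces to the case in which $v_1,\dots,v_\rankk$ span $\reals^d$. Then $L:=\sum_i\integers v_i$ is a full-rank sublattice of $\integers^d$ of index $q:=[\integers^d:L]\ge1$.

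The heart of the matter is a simultaneous Diophantine approximation carried out in the space of $d\times d$ matrices. View a matrix $T$ as a point of $\reals^{d^2}$ and let $\Phi:\reals^{d^2}\to\torus^{\rankk d}$ be the group homomorphism $T\mapsto(Tv_1,\dots,Tv_\rankk)$ reduced modulo $\integers^{\rankk d}$; it is well defined because the $v_i$ are integral, and its kernel is the lattice $\Gamma=\{T\in\reals^{d^2}:TL\subseteq\integers^d\}$, of covolume $q^{-d}\le1$. Partition the cube $\{\,|T|\le\Lambda\,\}$ into $M^{d^2}$ congruent subcubes and $\torus^{\rankk d}$ into $K^{\rankk d}$ congruent cells, with $K$ a suitable multiple of $\delta^{-1}\sum_iN_i$ and $M$ the least integer for which $M^{d^2}>K^{\rankk d}$. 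By pigeonhole two subcube centres $T_1,T_2$ map into a common cell, so $E:=T_1-T_2$ has $|E|\le\Lambda$ and $\norm{Ev_i}_{\reals^d/\integers^d}\le C_d/K$ for each $i$; the triangle inequality $\norm{E\sum_in_iv_i}_{\reals^d/\integers^d}\le\sum_in_i\norm{Ev_i}_{\reals^d/\integers^d}$ then gives $\norm{Ew}_{\reals^d/\integers^d}\le\delta$ for every $w\in Q$. One may equivalently obtain such an $E$ by applying Minkowski's convex body theorem to the symmetric convex set $\{T:|T|\le\Lambda,\ Tv_i\in[-C_d/K,C_d/K]^d\ \forall i\}$.

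The matrix $E$ need not be invertible. Since adding to it any integer matrix $A$ preserves the inequality $\norm{(E+A)w}_{\reals^d/\integers^d}\le\delta$ on $Q$ and affects its size only through $|E+A|\le|E|+|A|$, one proceeds by cases. If $\Lambda\ge1$, re-run the previous step inside $\{\,|T|\le\tfrac12\Lambda\,\}$ and put $T=E+\lfloor\tfrac12\Lambda\rfloor I$: then $T\in\Gl(d)$, $|T|\le\Lambda$, and $|\det T|$ is at least a $d$-dependent constant times $\lfloor\tfrac12\Lambda\rfloor^{d}$, which dominates $c_\rankk\delta^{d\rankk}\#(P)^{-d}\Lambda^{d^2}$ throughout the range of parameters in which the latter quantity is consistent with the a priori bound $|\det T|\le\Lambda^{d}$ --- the range that occurs when this lemma is invoked in the proof of Lemma~\ref{lemma:gapd}, where $P$ has been discretized so finely that $\Lambda$ is comparable only to a modest power of $\#(P)$. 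If $\Lambda<1$, combine the dilation $T=\Lambda R$ by a rotation $R$ --- which already works when $\Lambda$ is at most a constant multiple of $\delta/\diameter(Q)$ --- with the geometry-of-numbers argument above for the intermediate range of $\Lambda$; throughout, properness of $P$ is used to bound $\sum_iN_i$ and $\diameter(Q)$ above and below in terms of $\#(P)=\prod_iN_i$ and $\rankk$.

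I expect the last step to be the main obstacle. The difficulty is to extract an invertible $T$ with a lower bound on $|\det T|$ whose constant depends only on $d$ and $\rankk$, although the generators $v_i$ may be arbitrarily long. This requires using not merely the component through the origin of the good set $\{T:|T|\le\Lambda,\ \norm{Tw}_{\reals^d/\integers^d}\le\delta\ \forall w\in Q\}$, but its many other components --- those in which the points $Tv_i$ cluster near various nonzero lattice vectors, which together carry the bulk of the good set when the $v_i$ are long --- and then converting the resulting volume count into a $v_i$-free estimate via the properness of $P$, which pins down $\#(P)$ and forces the $N_i$-weighted integer combinations of the $v_i$ occurring in $Q$ to be genuinely distinct. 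A secondary technical point is the transverse-directions reduction at the start, which is routine but must be arranged so as not to damage the determinant bound or the norm constraint.
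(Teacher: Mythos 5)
Your reduction to $Q=P-a$ and the pigeonhole step are fine as far as they go, but the crux of the lemma --- producing an \emph{invertible} $T$ with the stated lower bound on $|\det T|$ --- is exactly where your argument has a genuine gap. The matrix $E=T_1-T_2$ produced by pigeonholing carries no spectral information: it may be singular or nearly so, and your proposed repair $T=E+\lfloor\tfrac12\Lambda\rfloor I$ does not work. With $m=\lfloor\tfrac12\Lambda\rfloor$ and $|E|$ possibly comparable to $m$ (the difference of two centres of the box $\{|T|\le\tfrac12\Lambda\}$ can have norm up to $\Lambda$), the quantity $\det(E+mI)$ admits no lower bound: if $-m$ is an eigenvalue of $E$ it vanishes outright (e.g.\ $E=\operatorname{diag}(-m,0,\dots,0)$), and even the norm bound $|T|\le\Lambda$ can fail. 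The $\Lambda<1$ branch is likewise incomplete: the dilation $T=\Lambda R$ only works when $\Lambda\lesssim\delta/\diameter(Q)$, and for the intermediate range you offer nothing; your closing remarks about exploiting the ``other components of the good set'' and a ``volume count'' gesture at precisely the missing ingredient without supplying it. The paper closes this gap with two devices your sketch lacks: a random common target $s_j(\omega)\in[0,1]^d$ for each generator, which yields, for some fixed $\omega_0$, a subset of $\scriptm_d(\Lambda)$ of measure at least $c\,\Lambda^{d^2}\delta^{d\rankk}\#(P)^{-d}$ consisting of matrices with $\norm{T(v_j)-s_j(\omega_0)}_{\reals^d/\integers^d}\le\tfrac12 N_j^{-1}\delta$ for every $j$; and Lemma~\ref{lemma:determinant}, which extracts from any positive-measure set $E\subset\Gl(d)$ a bounded integer combination $T=\sum_i\alpha_iT_i$ with $\sum_i\alpha_i=0$ and $|\det T|\ge c\,\mu(E)^{1/d}$. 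The zero-sum condition makes the random shifts cancel, so $\norm{T(v_j)}_{\reals^d/\integers^d}\le CN_j^{-1}\delta$, while the determinant lemma supplies the quantitative lower bound; this is the step for which you have no substitute.

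Two secondary points. Restricting attention to ``the range of parameters that occurs when the lemma is invoked in Lemma~\ref{lemma:gapd}'' does not match the actual application: there $\Lambda=(2d)^{-1}\delta\eta^{-1}$ may be large or small, and no prior discretization of the continuum progression into $\integers^d$ is performed --- the lemma is applied with real generators $v_j\in\reals^d$, and the paper's proof of the present lemma nowhere uses integrality of $P$. Consequently your central trick --- that adding an integer matrix $A$ leaves $\norm{(E+A)w}_{\reals^d/\integers^d}$ unchanged because $Aw\in\integers^d$ --- while legitimate against the literal statement with $P\subset\integers^d$, cannot play the role the lemma has in the paper; the random-target/zero-sum-combination mechanism is what allows the argument to proceed without any such reliance, and it also delivers the determinant bound uniformly in $\Lambda$ rather than in a restricted range.
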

Here and below, $|\cdot|$ is used to indicate both the norm of a vector in $\reals^d$,
and the operator norm of $T\in\Gl(d)$, $|T|=\sup_{|x|\le 1}|T(x)|$. 

\begin{example} \label{example:pq}
The following example shows that Lemma~\ref{lemma:gappreparation} does not extend to unions of multiprogressions.
That is, if $P\cup Q\subset\reals^1$ has large diameter relative to $\#(P)+\#(Q)$ then there need not exist any dilate of $P\cup Q$
that lies close to $\integers$ and still has diameter $\ge 1$.  
It is this phenomenon that makes Lemma~\ref{lemma:NE2} necessary in our construction.

Let $p,q\gg 1$ be distinct and relatively prime natural numbers of comparable magnitudes. 
Let $N=pq$.  Let $P=\set{kp: 0\le k<q}$ and $Q=\set{kq: 0\le k<p}$. Both are subsets of $\set{0,1,\dots,N-1}$.
Their union is sparse; $\#(P\cup Q) = p+q-1$. On the other hand, $P+Q\subset\set{0,1,\dots,2N-2}$ has $pq=N$ elements.
Assume that $\delta<\tfrac12$. If $x$ and $2x$ both satisfy $\norm{z}_{\reals/\integers}<\delta$ then 
$\norm{x}_{\reals/\integers}<\tfrac12\delta$.
If $\norm{\lambda x}_{\reals/\integers}<\delta$ for every $x\in P$ then it follows from this last fact that
$\norm{\lambda p}_{\reals/\integers} < q^{-1}\delta$.
Likewise if $\norm{\lambda x}_{\reals/\integers}<\delta$ for every $x\in Q$ then $\norm{\lambda q}_{\reals/\integers} < p^{-1}\delta$.
Any element $x\in[0,pq)$ can be expressed as $x = ap+bq$ for some integers $|a|<q$ and $|b|<p$.
Therefore \begin{align*} \norm{\lambda x}_{\reals/\integers}
&\le |a|\norm{\lambda p}_{\reals/\integers} + |b|\norm{\lambda q}_{\reals/\integers}
\\&< |a| q^{-1}\delta + |b|p^{-1}\delta 
\\& < 2\delta.
\end{align*}
Since this holds for every integer $x\in[0,pq)$, provided that $2\delta<\tfrac12$
it follows from the same reasoning as above that $\lambda$ must satisfy 
\[\norm{\lambda}_{\reals/\integers} \le 2\delta (pq)^{-1} \le C\delta \#(P\cup Q)^{-2}\ll\delta\#(P\cup Q)^{-1}.\]
\qed
\end{example}


Let $\scriptm_d$ be the set of all real $d\times d$ matrices. 
Identify $\scriptm_d$ with $\reals^{d^2}$ by using the individual matrix entries as coordinates,
and equip $\scriptm_d$ with the associated Lebesgue measure, which we denote by $\mu$.
Regard $\Gl(d)$ as a subset of $\scriptm_d$.

For $T\in \scriptm_d$ we continue to denote the operator norm by $|T| = \sup_{|x|=1} |T(x)|$.
Let $\scriptm_d(\Lambda)$ be the set of all $T\in \scriptm_d$ satisfying $|T|\le \Lambda$.
Then $\mu(\scriptm_d(\Lambda)) = c_d\Lambda^{d^2}$ for a certain constant $c_d\in\reals^+$.

The next lemma will be essential in the proof of Lemma~\ref{lemma:gapd} for $d>1$.
Its proof is deferred to \S\ref{section:determinant}.
\begin{lemma} \label{lemma:determinant}
For each $d\ge 1$ there exist $c,C\in\reals^+$ and $K\in\naturals$ with the following property.
For any Lebesgue measurable set $E\subset \Gl(d)$ satisfying $0<\mu(E)<\infty$ there exist 
$T_1,\dots,T_K\in E$ and coefficients $s_j\in \integers$
satisfying 
\begin{align} &|s_i|\le C, \\& \sum_j s_j=0,
\\& |\det\big(\sum_{j=1}^K s_j T_j\big)| \ge c\mu(E)^{1/d}.\end{align}
\end{lemma}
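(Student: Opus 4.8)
The plan is to reduce, by affine invariance, to locating a large value of a certain non‑vanishing polynomial at a bounded integer point. The assertion is invariant under $E\mapsto\lambda E$, since $|\det(\sum_j s_jT_j)|$ and $\mu(E)^{1/d}$ both scale by $\lambda^d$. Moreover, since $\sum_j s_j=0$ we may write $\sum_{j\ge0}s_jT_j=\sum_{j\ge1}s_j(T_j-T_0)$ for any $T_0$; so it suffices to produce $T_0,T_1,\dots,T_{d^2}\in E$ and integers $m_1,\dots,m_{d^2}$ with $|m_j|\le C$ and $|\det_d(\sum_{j=1}^{d^2}m_j(T_j-T_0))|\ge c\,\mu(E)^{1/d}$, and then take $s_0=-\sum_j m_j$, $s_j=m_j$, $K=d^2+1$. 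Identify $\scriptm_d$ with $\reals^{d^2}$, and let $V_{\max}$ be the supremum of the $d^2$‑volume of a simplex with all $d^2+1$ vertices in $E$. An elementary maximality argument gives $V_{\max}\ge c_d\,\mu(E)$: if a near‑maximal simplex $\Delta=\mathrm{conv}(T_0,\dots,T_{d^2})$ is fixed, then replacing any one vertex by a point $T\in E$ cannot increase the volume, which forces every barycentric coordinate of $T$ with respect to $\Delta$ to have modulus $\le 1+o(1)$; hence $E$ lies in a fixed dilate of $\Delta$, of volume $O_d(V_{\max})$. Choosing such a $\Delta$ and setting $U_j=T_j-T_0$, we obtain $d^2$ vectors in $\scriptm_d$ with $V:=|\det(U_1,\dots,U_{d^2})|=(d^2)!\,V_{\max}\ge c_d\,\mu(E)$, where $\det(U_1,\dots,U_{d^2})$ denotes the ordinary $\reals^{d^2}$‑determinant of the matrix with columns $U_1,\dots,U_{d^2}$.

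Let $\Phi(t)=\det_d\big(\sum_{j=1}^{d^2}t_jU_j\big)$, a homogeneous polynomial of degree $d$ on $\reals^{d^2}$. Since the $U_j$ span $\scriptm_d$, the matrix $\sum t_jU_j$ runs over all of $\scriptm_d$, so $\Phi\not\equiv 0$. The crux is a quantitative version of this:
\[ \int_{[-1,1]^{d^2}}\Phi(t)^2\,dt\ \ge\ c_d\,V^{2/d}. \]
Granting it, $\max_{[-1,1]^{d^2}}|\Phi|\ge c_d V^{1/d}$, and since $\Phi$ has degree $\le d$ in each variable, Lagrange interpolation on $\{0,1,\dots,d\}^{d^2}$ gives $\max_{m\in\{0,\dots,d\}^{d^2}}|\Phi(m)|\ge c_d\max_{[-1,1]^{d^2}}|\Phi|\ge c\,\mu(E)^{1/d}$, which, by the reduction above, proves the lemma with $K=d^2+1$ and $C=d^3$. (A variant avoiding interpolation: writing $\sigma\mapsto\det_d(\sum_j\sigma_jU_j)$ as a function on $\{-1,1\}^{d^2}$ and applying Parseval on the Boolean cube shows $\max_{\sigma\in\{-1,0,1\}^{d^2}}|\det_d(\sum_j\sigma_jU_j)|\ge c_d\max_{|S|=d}|D(\{U_j:j\in S\})|$, where $D$ is the mixed discriminant; it then suffices to bound one such mixed discriminant below by $c_dV^{1/d}$. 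For $d=2$ this is immediate: $D$ polarizes the quadratic form $\det_2$ on $\scriptm_2$, which is nondegenerate of signature $(2,2)$, so the Gram identity asserting that $\det(U_1,U_2,U_3,U_4)^2$ is a fixed positive multiple of $\det\big(D(U_i,U_j)\big)_{i,j=1}^4$, combined with Hadamard's inequality, yields $\max_{i,j}|D(U_i,U_j)|\ge c\,V^{1/2}$.)

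The main obstacle is the displayed lower bound when $d\ge3$. Positivity is clear from $\Phi\not\equiv0$; the difficulty is uniformity over all configurations $U_1,\dots,U_{d^2}$, which may be highly anisotropic. After the substitution $M=\sum t_jU_j$ and comparison with a Gaussian of matching covariance, the estimate is equivalent to $\expect_{M\sim N(0,\Sigma)}[\det_d(M)^2]\ge c_d\,(\det\Sigma)^{1/d}$ for every positive definite $\Sigma$ on $\scriptm_d$. When $\Sigma$ is block diagonal with respect to the columns of $M$ this is exactly the Aleksandrov--Fenchel inequality for mixed discriminants, $\expect[\det_d(M)^2]=D(C_1,\dots,C_d)\ge d!\,(\det C_1\cdots\det C_d)^{1/d}$. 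In general, the ratio $\expect_{N(0,\Sigma)}[\det_d(M)^2]/(\det\Sigma)^{1/d}$ is invariant under $\Sigma\mapsto(g\otimes h)\Sigma(g\otimes h)^{T}$ for $g,h\in\Gl(d)$, which confines the problem to a set of orbit representatives; what remains is a direct estimate controlling the behavior as the eigenvalue ratios of $\Sigma$ become extreme, of the same AM--GM flavor as in the $d=2$ case (diagonalizing $\Sigma$, $\expect[\det_d(M)^2]$ is a nonnegative combination $\sum_\gamma w_\gamma\prod_k\lambda_k^{\gamma_k}$ of monomials in the eigenvalues $\lambda_k$ with $\sum_k\gamma_k=d$ in every term, and one wants this to dominate $(\prod_k\lambda_k)^{1/d}$ using $\prod_k\lambda_k=\det\Sigma$ and the nondegeneracy of $\det_d$, which forces sufficiently many terms to be present). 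This last step is where the genuine work lies; modulo it, the rest of the proof is assembled as above.
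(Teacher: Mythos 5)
Your skeleton (translating by a base point $T_0$ so that $\sum_j s_j=0$ reduces matters to differences $U_j=T_j-T_0$; extracting a near-maximal simplex with vertices in $E$ so that $|\det_{\reals^{d^2}}(U_1,\dots,U_{d^2})|\ge c_d\,\mu(E)$; and passing from a large value of the degree-$d$ polynomial $\Phi(t)=\det\big(\sum_j t_jU_j\big)$ on $[-1,1]^{d^2}$ to a large value at a bounded integer point by interpolation) is correct, and it would yield the lemma with $K=d^2+1$ and $C=O(d^3)$. But the proof has a genuine gap at exactly the step you flag as the crux: the uniform lower bound $\int_{[-1,1]^{d^2}}\Phi(t)^2\,dt\ge c_d\,V^{2/d}$ (equivalently, after the change of variables $M=\sum_j t_jU_j$, the statement that every symmetric parallelepiped $P\subset\scriptm_d$ contains a matrix with $|\det M|\ge c_d|P|^{1/d}$, or your Gaussian reformulation $\expect_{N(0,\Sigma)}[\det(M)^2]\ge c_d(\det\Sigma)^{1/d}$) is not established for $d\ge 3$. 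This is not a routine remainder: the $\Gl(d)\times\Gl(d)$ invariance you invoke has an orbit space of positive dimension inside the cone of covariances, so the problem does not reduce to the entrywise-diagonal (AM--GM/Aleksandrov--Fenchel) case, and the degeneration analysis "as the eigenvalue ratios of $\Sigma$ become extreme" is precisely the whole difficulty of Lemma~\ref{lemma:determinant}. (Your $d=2$ signature/Gram argument is plausible but also only sketched.) In addition, the asserted equivalence between the cube average and the Gaussian average of $\det^2$ with matching covariance would itself need an argument, e.g.\ a chaos-comparison inequality with constants depending only on $d$, though this is minor compared with the main missing estimate.

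For comparison, the paper attacks the same core difficulty differently: it polarizes the determinant by iterated finite differences in $t$, expressing the multilinear form $Q(A_1,\dots,A_d)$ with $Q(A,\dots,A)=d!\det A$ as a universal $\integers$-combination of determinants $\det\big(\sum_j\tau_jA_j\big)$, $\tau_j\in\{0,1\}$; multilinearity then transfers a hypothetical smallness of all such determinants from $E$ to the convex hull of $E\cup(-E)$, reducing the lemma to Sublemma~\ref{sublemma:SL}: a compact symmetric convex $E\subset\scriptm_d$ contains $A$ with $|\det A|\ge c\,\mu(E)^{1/d}$. That sublemma is proved by induction on $d$, slicing by the first column, using the cofactor identity $|\det(e_j,y)|=|\Det(\Pi_j(y))|$, a convexity argument producing a common ellipsoid inside the slices, and a Loomis--Whitney-type projection inequality. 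Your maximal-simplex-plus-interpolation framework is a legitimate alternative to the paper's polarization-plus-convex-hull reduction, and it would be complete if you supplied a proof of the parallelepiped (or symmetric convex body) determinant bound — i.e.\ something equivalent to Sublemma~\ref{sublemma:SL} — but as written that essential ingredient is missing.
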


The form of the dependence of the upper bound on $\mu(E)$ is forced by scaling considerations;
the issue is the existence of a uniform lower bound for all sets $E$ satisfying $\mu(E)=1$.

\begin{proof}[Proof of Lemma~\ref{lemma:gappreparation}]
By replacing $P$ by $P-b$ for suitable $b\in\reals^d$ we may assume that 
\[ P=\big\{ \sum_{j=1}^\rankk  n_{j}v_j: n_j\in [0,N_j) \text{ for all } 1\le j\le \rankk \big\}\]  
where $v_1,\cdots,v_\rankk\in\integers$ and the mapping $(n_1,\dots,n_\rankk)\mapsto \sum_j n_j v_j$
is injective on $\prod_j [0,N_j)$.
Set $N=\#(P)=\prod_j N_j$.

Let $\set{s_j(\omega): 1\le j\le \rankk}$ be independent random variables on an auxiliary probability space $\Omega$,
each of which takes values in $[0,1]^d$, uniformly distributed with respect to Lebesgue measure.
Let 
\begin{equation}
B = \set{(\omega,T)\in\Omega\times\scriptm_d(\Lambda):
\norm{T(v_j)-s_j(\omega)}_{\reals^d/\integers^d} 
\le \tfrac12 N_j^{-1}\delta \text{ for all } 1\le j\le \rankk }.
\end{equation}

By independence, for each $T\in \scriptm_d$, the probability that $(\omega,T)\in B$ is equal to
$\delta^{d \rankk}N^{-d}$  
provided that $\delta N_j^{-1}\le\tfrac12$ for all $1\le j\le \rankk$.
Therefore 
\[\iint_{\Omega\times \scriptm_d(\Lambda)} \one_B(\omega,T) \,\,d\omega\,d\mu(T)
=   \delta^{d \rankk}N^{-d} \mu(\scriptm_d(\Lambda))
= c_d \Lambda^{d^2} \delta^{d \rankk}N^{-d}.\]  
Therefore there exists $\omega_0$ such that 
\[\mu(\set{T\in\scriptm_d(\Lambda): (\omega_0,T)\in B}) \ge c_d \Lambda^{d^2} \delta^{d \rankk}N^{-d}.\]  
By Lemma~\ref{lemma:determinant},
there exist $T_i\in\scriptm_d(\Lambda)$ and $\alpha_i\in\integers$, defined for $1\le i\le K_d$, 
such that $(\omega_0,T_i)\in B$ for each index $i$, and $T = \sum_{i}\alpha_i T_i$ satisfies
\[ |\det(T)| 
\ge c_d \Lambda^{d^2}  \delta^{d \rankk}N^{-d}.\]
Moreover, 
$|T|\le C\Lambda$ where $C$ depends only on $d$ and on $\rankk$, $\sum_i\alpha_i=0$, and $|\alpha_i|\le C_d$.

Consider any index $j\in[1,\rankk]$.
Express \[T_i(v_j)- s_j(\omega_0) = n_{i,j} + \eta_{i,j}\] where $n_{i,j}\in\integers^d$
and $|\eta_{i,j}|= \norm{T_i(v_j)-s_j(\omega_0)}_{\reals^d/\integers^d}$.
Therefore since $\sum_i\alpha_i=0$, for each index $j$ we have
\[ T(v_j) = \sum_i \alpha_i T_i(v_j) = \sum_i \alpha_i (T_i(v_j)-s_j(\omega_0)).\]
Therefore since each $\alpha_i\in\integers$, 
\begin{align*}
\norm{T(v_j)}_{\reals^d/\integers^d}
& = \norm{\sum_i \alpha_i (T_i(v_j)-s_j(\omega_0))}_{\reals^d/\integers^d}
\\& = \norm{\sum_i \alpha_i n_{i,j} + \sum_i \alpha_i\eta_{i,j}}_{\reals^d/\integers^d}
\\& = \norm{\sum_i \alpha_i\eta_{i,j}}_{\reals^d/\integers^d}
\\& \le \sum_i |\alpha_i\eta_{i,j}|
\\& \le KC\max_i|\eta_{i,j}|
\\& = KC\max_i\norm{T_i(v_j)-s_j(\omega_0)}_{\reals^d/\integers^d}
\\& \le C N_j^{-1}\delta
\end{align*}
where $C<\infty$ depends only on $d,\rankk$.

Therefore for any $x =  \sum_{j=1}^\rankk n_j v_j\in P$, because the coefficients $n_j$ are integers,
\begin{align*}
\norm{T(x)}_{\reals^d/\integers^d} 
\le  \sum_j \norm{n_j T(v_j)}_{\reals^d/\integers^d}
\le \sum_j N_j \norm{T(v_j)}_{\reals^d/\integers^d}
\le C\delta.
\end{align*}
\end{proof}

\begin{proof}[Proof of Lemma~~\ref{lemma:gapd}] 
We may assume without loss of generality that $a=0$, and will then choose $v=0$.
The assumption that $|P|=1$ means that $\eta^d N=1$, where $N=\prod_{j=1}^\rankk N_j$.
Set $\Lambda = (2d)^{-1}\delta\eta^{-1}$. This quantity belongs to $(0,\infty)$,
and could be large or small, depending on the relative sizes of $\delta,\eta$.

According to Lemma~\ref{lemma:gappreparation}, there exists $T\in \Gl(d)$ satisfying 
\begin{gather*} |T|\le\Lambda 
\\ |\det(T)| \ge c_{d,\rankk} \delta^{d\rankk} N^{-d}\Lambda^{d^2} \end{gather*}
such that \[\norm{T(\sum_{j=1}^\rankk n_j v_j)}_{\reals^d/\integers^d} \le\tfrac12\delta \]
whenever $0\le n_j<N_j$ for all $j$.
Since $|T|\le (2d)^{-1} \delta\eta^{-1}$,  $|T(x)|\le \tfrac12\delta$ for any $x\in[0,\eta]^d$. 
Therefore $\norm{T(x)}_{\reals^d/\integers^d} \le\delta$ for all $x\in P$.

Since $\eta^d N=|P|=1$, we have
\begin{align*} |\det(T)| &\ge c_{d,\rankk} \delta^{d\rankk} N^{-d}\Lambda^{d^2}
\\&= c \delta^{d\rankk} N^{-d} (\delta\eta^{-1})^{d^2} 
\\&= c \delta^{d^2} \delta^{d\rankk} (\eta^d N)^{-d}
\\&= c \delta^{d^2+d\rankk} |P|^{-d}
\\&= c \delta^{d^2+d\rankk} \end{align*}
where $\in\reals^+$ depends only on $d,\rankk$.
\end{proof}

\begin{proof}[Proof of Lemma~\ref{lemma:largesubsets}]
For any $A\subset G$ define $\sym(A) = \set{g\in G: A+g=A}$, which is a subgroup of $G$.
Kneser's theorem (\cite{TaoVu}, Theorem~5.5)  states that
\[\#(A+B)\, \ge\, \#(A)\,+\,\#(B)\, -\, \#(\sym(A+B))\] for all nonempty sets $A,B\subset G$.

Consider $E_0=E$ and $E_1=E_0-E_0$. By Kneser's theorem,
either $\#(E_1)\ge \tfrac32 \#(E_0)$, or $\#\sym(E_1)\ge \tfrac12 \#(E_1)\ge \tfrac12 \alpha\#(G)$.
In the latter case, $H=\sym(E_1)$ is a subgroup of $G$ with at least $\tfrac12\alpha\#(G)$ elements. 
Moreover, $H\subset E_1=E-E$ since $0\in E-E$ and therefore $0+h\in E_1$ for all $h\in H$.
The group $G$ is a union of cosets of $H$, and $H$ has at most $\#(G)/\#(H)\le 2\alpha^{-1}$ cosets.

In the former case, $\#(E_1)\ge \tfrac32\alpha$.
Form $E_2=E_1-E_1$, and consider the same dichotomy as above. If $\# \sym(E_2)\ge\tfrac12 \#(E_2)
\ge \tfrac34 \#(E_1)$ and the same reasoning as in the preceding paragraph concludes the proof. 
Otherwise $\#(E_2) \ge \tfrac32\#(E_1)\ge (3/2)^2\#(E)$.

Iterating this process at most $C\log(\alpha^{-1})$ times gives the desired conclusion.
Indeed, define $N$ to be the smallest integer such that $(3/2)^N\alpha>1$. 
If the process were to fail to halt with $\#\sym(E_n)\ge \tfrac12 (3/2)^{n-1}\alpha$ for some $n<N$,
then $\#(E_N)\ge (3/2)^N\alpha\#(G) > \#(G)$, a contradiction. 
\end{proof}

\begin{proof}[Proof of Lemma~\ref{lemma:compatible}]
The first conclusion holds for arbitrary measurable sets $P,Q$, because
\begin{align*} \norm{\one_P*\one_Q}_s \le |P|^{1/\rho}|Q|^{1/\rho} \min\Big(\frac{|P|}{|Q|},\frac{|Q|}{|P|}\Big)^\gamma \end{align*}
for certain $\gamma>0$.

It suffices to prove the analogue of the second conclusion for discrete multiprogressions contained in $\integers^d$.
Reduction of the continuum case to this discrete case is straightforward.
By Lemma~\ref{lemma:properversusgeneral}, we may assume without loss of generality that $P,Q$ are proper multiprogressions.

We have already shown that $P,Q$ have comparable cardinalities.
If $\norm{\one_P*\one_Q}_s\ge \tau|P|^{1/\rho}|Q|^{1/\rho}$
then 
\begin{equation} \label{eq:manyreps} \norm{\one_P*\one_Q}_\infty \ge \tau^\gamma |Q|,\end{equation} 
where $\gamma\in\reals^+$ depends only on $\rho$;
otherwise a simple interpolation argument gives an upper bound
$<\tau|P|^{1/\rho}|Q|^{1/\rho}$ for $\norm{\one_P*\one_Q}_s$.
In the reasoning below, we will exploit only this fact \eqref{eq:manyreps},
not the stronger inequality
$\norm{\one_P*\one_Q}_s\ge \tau|P|^{1/\rho}|Q|^{1/\rho}$. This will make possible an induction
on the rank of $Q$.

Let $N=\#(Q)$.
There exist $z$ and $M\ge \tau^\gamma N$ for which there are $M$ pairs $(p_i,q_i)\in P\times Q$ 
satisfying $p_i+q_i=z$. These elements $q_i$ of $Q$ are distinct, since if $p_i+q=p_j+q$ then $p_i=p_j$.  
The set of all these differences $q_i-q_j$ contains at least $M$
distinct elements of $Q-Q$, since $M$ distinct values are obtained by fixing $i$ and varying $j$.
Since $q_i-q_j=p_j-p_i$ for all pairs of indices $i,j$, 
\begin{equation} \#(P-P\,\, \cap\,\, Q-Q)\ge \tau^\gamma N.\end{equation}

Consider the case in which $Q$ has rank equal to $1$.
Without loss of generality represent $Q$ as $\set{nw: 0\le n<N=|Q|}$, where $w\in\integers^d$ and $n\in\integers$.
We have shown that $\#(\set{n\in(-N,N): nw\in P-P}) \ge \tau^{\gamma} N$.

Define $\kappa\in(0,N)$ to be the largest positive integer such that $\kappa w\in P-P$. Then
since $P-P = -(P-P)$ and $Q-Q = -(Q-Q)$, necessarily $\kappa \ge \tfrac12\tau^\gamma N$.
Consider the quotient group $\integers/\kappa\integers$. Denote by $[x]$ the coset
of an element $x\in\integers^d$, and by $[S]$ the image of a subset $S\subset\integers$
under the quotient map $\integers\to \integers/\kappa\integers$. 
In $\integers/\kappa\integers$ consider the subset \[E=\set{[n]\in \integers/\kappa\integers: [nw]\in [(P-P)\cap(Q-Q)]},\]
which contains the image under the quotient map of $\{0,w,2w,\dots,(\kappa-1)w\}\cap(P-P)$
and hence has cardinality at least $\tfrac12\tau^\gamma N$.
Therefore by Lemma~\ref{lemma:largesubsets} 
there exists $\set{y_i: 1\le i\le m}\subset[0,\kappa)$ such that $\integers/\kappa\integers\subset \cup_{i=1}^m 
\big((mE-mE) + [y_i]\big)$, where $m$ depends only on $\tau,\rho$.

Every element of $[0,N)$ can be written as $n+l\kappa$ for some $0\le n<\kappa$ and $0\le l\le C\tau^{-\gamma}$.
Therefore every element $nw$ of $\{0,w,2w,\dots,(N-1)w\}$
can be expressed as an element of $mP-mP + k\kappa w+y_iw$ 
for some integers $k,i$ satisfying $1\le i\le m$ and $|k| \le 2\tau^{-\gamma}+2m$.
Since $\kappa w\in P-P$, any such expression belongs to $m'P-m'P+y_i w$ for some $1\le i\le m$,
where $m' = m+2C\tau^{-\gamma}$.
Thus $Q$ is contained in a union of $m$ translates of $mP-mP$, where $m$ depends only on $\tau,\rho$.

Represent $P$ as the set of all sums $a+\sum_{j=1}^\rankk \nu_j v_j$ with $0\le \nu_j<N_j$, where $\rankk$
is the rank of $P$, $N_j\ge 1$, and these sums are pairwise distinct.
Let $\scriptp$ be the set of elements of $\integers^d$ of the form 
\[\sum_{j=1}^\rankk \nu_j v_j + \sum_{i=1}^m c_i y_i\]
with $-mN_j<\nu_j< mN_j$ and $c_i\in\set{0,1}$.
Then $Q\subset\scriptp$, $\scriptp$ has rank $\le \rankk + m=\rankk+m(p,\tau)$, 
and
\[\#(\scriptp) \le \prod_j 2mN_j\cdot 2^m = 2^{m+\rankk} m^\rankk \#(P)\]
where $m,M$ depend only on $\rho,\tau$.
Since $P+Q\subset\scriptp$, 
this completes the treatment of progressions $Q$ of rank $1$.

For the general case, we proceed by induction on the rank of $Q$.
We know that \eqref{eq:manyreps} holds; $\norm{\one_Q*\one_P}_\infty \ge\tau^\gamma \#(Q)$.
Set 
$Q''=\{nw: 0\le n<N\}'$.
Represent $Q$ as $Q'+Q''$ where $\#(Q)=N\#(Q')$
and $Q'$ is a proper multiprogression of lower rank. Since $Q$ is a union of $N$
translates of $Q'$, 
\[\norm{\one_{Q'}*\one_P}_\infty \ge N^{-1}\norm{\one_Q*\one_P}_\infty\ge N^{-1}\tau^\gamma\#(Q)
=\tau^\gamma \#(Q').\]
In the same way,
\[ \norm{\one_{Q''}*\one_P}_\infty \ge \tau^\gamma \#(Q'').\]
By induction on the rank of $Q$, the first of these last two inequalities implies that
there exists a proper multiprogression
$P'$ satisfying $\#(P')\le C_{\tau,\rho}\#(P)$, of controlled rank, that contains $Q'+P$.

Now
\[ \norm{\one_{Q''}*\one_{P'}}_\infty \ge \norm{\one_{Q''}*\one_P}_\infty\ge \tau^\gamma \#(Q'').\]
Therefore the proof is concluded by invoking the rank one case treated above
to find a suitable proper multiprogression containing $Q''+P'$.
\end{proof}

\begin{proof}[Proof of Lemma~\ref{lemma:interactionimpliesstructure}]
If $p'\ge 4$ then by the Hausdorff-Young inequality, $\norm{\widehat{\varphi}\,\widehat{\psi}}_{p'/2} \le 
\norm{\varphi*\psi}_{s}$ where $s^{-1} = 2p^{-1}-1\in[0,\tfrac12]$. Therefore 
\[\norm{\one_P*\one_Q}_s \ge \lambda|P|^{1/p}|Q|^{1/p}.\]

If $2<p'<4$ then define $\theta\in(0,1)$ by the relation
$1/p' = \tfrac12 \theta + \tfrac14(1-\theta)$. Then
\begin{align*}
\norm{\widehat{\varphi}\,\widehat{\psi}}_{p'/2} 
&\le
\norm{\widehat{\varphi}\,\widehat{\psi}}_{1}^\theta 
\norm{\widehat{\varphi}\,\widehat{\psi}}_{2}^{1-\theta}
\\&\le 
\norm{\widehat{\varphi}}_2^\theta
\norm{\widehat{\psi}}_2^\theta
\norm{\widehat{\varphi}\,\widehat{\psi}}_{2}^{1-\theta}
\\& =  
\norm{{\varphi}}_2^\theta
\norm{{\psi}}_2^\theta
\norm{\varphi*\psi}_{2}^{1-\theta}
\\& \le |P|^{-1/p}|Q|^{-1/p} |P|^{\theta/2} |Q|^{\theta/2} \norm{\one_P*\one_Q}_{2}^{1-\theta}.
\end{align*}
Using the hypothesis $ \norm{\widehat{\varphi}\,\widehat{\psi}}_{p'/2} \ge\lambda$
and the relation $\theta = 3-4p^{-1}$, this leads to
\begin{equation*} \norm{\one_P*\one_Q}_2 \ge \lambda^{1/(1-\theta)} |P|^{3/4}|Q|^{3/4}.  \end{equation*}

To complete the proof, it now suffices to invoke Lemma~\ref{lemma:compatible},
with $\rho=p$ if $p\in(1,\tfrac43]$, and $\rho=\tfrac43$ if $p\in(\tfrac43,2)$.
\end{proof}

\begin{proof}[Proof of Proposition~\ref{prop:FreimanR}]
We will use this fact, which is Corollary~2.24 of \cite{TaoVu}:
Let $A,B\subset\integers^d$.
Suppose that $\#(A)\le K\#(B)\,\le\, K^2\#(A)$ and that $\#(A+B)\le K\#(A)$.
Then \[\#(n_1A-n_2A+n_3B-n_4B) \le K^{C|n|}\,\,\#(A)\] where $n=(n_1,n_2,n_3,n_4)$. 

Let $0<\eps\le\tfrac12 \min(|A|,|B|)$ be given. 
For small $s>0$ consider the set $A(s)\subset\integers^d$ consisting of all $n\in\integers^d$
such that $|A\cap (sn+s\unitQ^d)| \ge (1-\sigma_d)s^d$, where $\sigma_d$ is a sufficiently small constant.
Define $A^\dagger =A \setminus \cup_{n\in A(s)} (sn+s\unitQ^d)$.
If $s$ is chosen to be sufficiently small then $|A^\dagger|<\eps$.
In the same way define $B(s)\subset\integers^d$ and $B^\dagger\subset B$,
choosing $s$ also to be sufficiently small to guarantee that $|B^\dagger|<\eps$.

Then $\#A(s)\approx s^{-d}|A|$, and $\#B(s)\approx s^{-d}|B|$.
Since $|A+B|\ge |(A\setminus A^\dagger)+(B\setminus B^\dagger)| \ge s^d \#(A(s)+B(s))$, 
$\#(A(s)+B(s))\le Cs^{-d}K(|A|+|B|) \le CK(\#A(s)+\#B(s))$.
By Fre{\u\i}man's theorem,  formulated as Theorem~5.32 of \cite{TaoVu}, 
there exists a proper multiprogression ${\mathbf P}\subset 4A(s) \subset \integers^d$ of rank $O_K(1)$ 
such that $A(s)$ is contained in a union of $O_K(1)$ translates of ${\mathbf P}$. Set $P = s{\mathbf P}+s\unitQ^d$. 
Then $A\setminus A^\dagger \subset P$, and $|P|\le O_K(1)|A|$; the factor $O_K(1)$ does not depend on $\eps$.

To remove the small exceptional set $A^\dagger$, let  $Q\subset\reals^d$ be a proper 
continuum multiprogression, associated to $B$ in the same way that $P$ is associated to $A$.  
Set $B^* = B\setminus B^\dagger$. 

Consider a maximal collection $\scripta$ of elements $a_j\in A$ for which the sets $a_j+B^*$ are pairwise disjoint.
These sets $a_j+B^*$ are contained in $A+B$, and each has measure $|B^*|\ge |B|-\eps \ge \tfrac12 |B|$.
Therefore \[|\cup_j (a_j+B^*)| \ge M|B^*|\ge \tfrac12 M |B|\ge \tfrac12 MK^{-1}|A|\]
where $M=|\scripta|\in\naturals\cup\set{\infty}$ is the number of indices $j$. 
Since \[|\cup_j (a_j+B^*)| \le |A+B|\le K|A|,\] $M\le 2K^2|A|$.

If $a\in A$ then by the maximality of $\scripta$, there exists $a_j\in\scripta$ for which $a+B^*$ intersects $a_j+B^*$.
Then $a\in a_j+B^*-B^*$. Therefore 
\[A \subset \cup_{j=1}^M \big(a_j+ B^*-B^*\big).\] 
Since $B^*$ is contained in a union of $O_K(1)$ translates of $Q$,
this shows that $A$ is contained in a union of $O_K(1)$ translates of $Q-Q$.
\end{proof}

\begin{proof}[Proof of Proposition~\ref{prop:BSGR}] 
Begin as in the proof of 
Proposition~\ref{prop:FreimanR}, constructing $s>0$ and sets $\tilde A = A(s),\tilde B=B(s)\subset\integers^d$
with the properties indicated there.
Then there exists a set $\Lambda \subset\tilde A+\tilde B$ satisfying $\#(\Lambda)\ge c_K \#(\tilde A)\#(\tilde B)$
such that \[\#(\set{a+b: (a,b)\in \Lambda}) \le C_K \#(A)+C_K\#(B).\]
The Balog-Szemer\'edi-Gowers theorem, Theorem~2.19 of \cite{TaoVu}, 
guarantees the existence of sets $\tilde A'\subset \tilde A$, $\tilde B'\subset\tilde B$
satisfying \[\#(\tilde A') \ge c_K \#(\tilde A), \ \  \#(\tilde B') \ge c_K \#(\tilde B),
\ \  \#(\tilde A'+\tilde B') \le C_K\#(\tilde A) + C_K\#(\tilde B).\]
The sets $A'=A\cap\big(s\unitQ^d + s\tilde A'\big)$
and $B'=B\cap\big(s\unitQ^d + s\tilde B'\big)$
satisfy \[|A'|=s^d\#(\tilde A')\ge c_K |A| \ \text{ and } \ 
|B'|=s^d\#(\tilde B')\ge c_K |B|,\]
and \[|A'+B'| \le 2^d s^d(\#(\tilde A'+\tilde B')) \le C_K|A|\,+\,C_K|B|.\]
\end{proof}

\section{Proof of Lemma on large determinants} \label{section:determinant}

\begin{proof}[Proof of Lemma~\ref{lemma:determinant}]
It suffices to prove that there exist  $K$, $s=(s_1,\dots,s_K)$, and $\vec{T}=(T_1,\dots,T_K)$
satisfying the first and third conclusions.  Indeed, given $E$,
choose any element $S\in E$, and consider the set $\tilde E = \set{T-S: T\in E}$,
which satisfies $\mu(\tilde E)=\mu(E)$.
If $s$ and $\vec{T}$ satisfy the first and last conclusions for $\tilde E$
then $s'= (s_1,\dots,s_K,-\sum_{1}^K s_i)$ and $(T_1,\dots,T_K,S)$ satisfy all three conclusions for $E$.  

Consider any $A_1,\dots,A_d\in E$.
For $t=(t_1,\dots,t_d)\in\integers^d$ consider
\[P(t) = \det(\sum_{j=1}^d t_j A_j),\] which can be expanded as a polynomial
\begin{equation} \label{eq:Psub0rep} P(t) = \sum_\alpha t^\alpha Q_\alpha(\vec{A})\end{equation}
where the sum is taken over all multi-indices $\alpha=(\alpha_1,\dots,\alpha_d)$ of degree exactly $d$,
and each $Q_\alpha(\vec{A})=Q_\alpha(A_1,\dots,A_d)$ is a homogeneous  polynomial of degree $d$ in the entries
of the matrices $A_j$.

Consider the polynomial in $t$ defined by
\[ P_1(t) =P_1(t_1,t_2,\dots,t_d)= P_0(t)-P_0(0,t_2,t_3,\dots,t_d),\]
which satisfies
\begin{equation} \label{eq:Psub1rep} P_1(t) = \sum_{\alpha_1\ne 0}  t^\alpha Q_\alpha(\vec{A}),  \end{equation}
where the sum is taken over all multi-indices $\alpha= (\alpha_1,\dots,\alpha_d)$ of degree exactly $d$
such that $\alpha_1\ne 0$, and the expressions $Q_\alpha(\vec{A})$ are identical to those in \eqref{eq:Psub0rep}.

Consider next \[ P_2(t) = P_1(t)-P_1(t_1,0,t_3,t_4,\dots,t_d),\] which satisfies
\begin{equation} \label{eq:Psub2rep} P_1(t) = \sum_{\alpha_1,\alpha_2\ne 0}  t^\alpha Q_\alpha(\vec{A}),  \end{equation}
the sum now being taken over all $\alpha$ of degree $d$ for which neither of $\alpha_1,\alpha_2$ vanishes.

Iterating this construction $d$ times produces the polynomial function 
\begin{equation} \label{eq:Psubdrep} P_d(t) = \sum_{\alpha_j\ne 0\,\,\forall j} t^\alpha Q_\alpha(\vec{A})
=\prod_{j=1}^d t_j \cdot Q_{(1,1,\dots,1)}(\vec{A}).  \end{equation}
Moreover, $P_d(t)$ is a linear combination, with integer coefficients, 
of polynomials of the form $\det(\sum_{j=1}^d \tau_j A_j)$ where $\tau$ is related to $t$ by $\tau_j\in\set{t_j,0}$
for each index $j$.

Specialize to $t=(1,1,\dots,1)$ and define $Q = Q_{(1,1,\dots,1)}$.
We have expressed $Q(\vec{A}) $ as a universal $\integers$--linear combination
of finitely many expressions of the form $\det(\sum_{j=1}^d \tau_j A_j)$, where each $\tau_j$ is an element of $\set{0,1}$. 

We argue by contradiction. Let $\eps>0$ be small and suppose there exists $E\subset\scriptm_d$
such that $0<\mu(E)<\infty$ and $|\det(\sum_{j=1}^d \tau_j A_j)|<\eps\mu(E)^{1/d}$ for all $\tau_j\in\set{0,1}$.
We have shown that this implies that $|Q(A_1,\dots,A_d)|<C_d\eps\mu(E)^{1/d}$ for all $A_1,\dots,A_d\in E$.

$Q$ is a multilinear function of $(A_1,\dots,A_d)$:
\begin{multline*} Q(A_1,\dots, A_{k-1},A'_k+A''_k,A_{k+1},\dots)
\\ = Q(A_1,\dots, A_{k-1},A'_k,A_{k+1},\dots)
+Q(A_1,\dots, A_{k-1},A''_k,A_{k+1},\dots) \end{multline*}
for all matrices $A_1,\dots,A_d$ and all $k\in\set{1,2,\dots,d}$.
Indeed, $Q(A_1,\dots,A_d)$ is  a homogeneous polynomial of degree $d$ in the entries of these matrices,
and satisfies $Q(t_1A_1,\dots,t_dA_d) \equiv \prod_j t_j \cdot Q(A_1,\dots,A_d)$.
Therefore $Q$ is a linear combination of monomials, each of which is a product of factors, each of which
is a linear function of the entries of a single matrix, with one factor for each index $j$.

$Q$ satisfies
\begin{equation} Q(A,A,\dots,A) = d!\, \det(A)  \end{equation}
for all $A\in\scriptm_d$. Indeed,
\begin{align*} Q(A,\dots,A)  = \frac{\partial^d}{\partial t_1\cdots\partial t_d} \det(\sum_{j=1}^d t_j A)\Big|_{t=0}
= \frac{\partial^d}{\partial t_1\cdots\partial t_d} (\sum _j t_j)^d \det(A)\Big|_{t=0}  = d!\,\det(A).  \end{align*}

Denote by $\scripte$ the closed convex hull of $E$, that is, the closure of the set of
all finite linear combinations $\sum_{k} s_k A_k$ with $A_k\in E$, $s_k\ge 0$, and $\sum_k s_k=1$.
By the multilinearity of $Q$,
the condition $|Q(A_1,\dots,A_d)|< C_d \eps\mu(E)^{1/d}$ for all $(A_1,\dots,A_d)\in E^d$
implies the same inequality for all $(A_1,\dots,A_d)\in \scripte\times E^{d-1}$.
Repeating this reasoning for  all indices $k\in\set{1,2,\dots,d}$ in succession shows that
\begin{equation} |Q(A_1,\dots,A_d)|< C_d \eps\mu(E)^{1/d} \ \text{ for all $(A_1,\dots,A_d)\in \scripte^d$}.\end{equation}
Specializing to $A_1=\dots=A_d$ gives
\begin{equation} |\det(A)|< C \eps\mu(E)^{1/d} \ \text{ for all $A\in \scripte$}\end{equation}
where $C$ depends only on the dimension $d$.

If $|\det(A)|\le\eps$ for all $A\in E$ then the same holds for all $A\in E\cup -E$. Passing to its convex hull 
as above, we have reduced the case of a general measurable set $E\subset\scriptm_d$ to that of a compact convex set
$E\subset\scriptm_d$ that satisfies $E = -E$.
\end{proof}

\begin{sublemma}\label{sublemma:SL}
For any $d\ge 1$ there exists $c\in\reals^+$ with the following property.
Let $E\subset\scriptm_d$ be a compact convex set satisfying $0<\mu(E)<\infty$ and $E=-E$.
Then there exists $A\in E$ satisfying $|\det(A)|\ge c\mu(E)^{1/d}$.
\end{sublemma}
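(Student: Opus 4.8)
The plan is to reduce the statement, via scaling and John's theorem, to the case of an ellipsoid, then to reduce that case to a lower bound for the second moment of the determinant of a Gaussian random matrix of arbitrary covariance, and finally to prove that Gaussian inequality by a compactness argument. The quantity $\Phi(E):=\mu(E)^{-1/d}\max_{A\in E}|\det A|$ is invariant under dilations $E\mapsto tE$. Let $\mathcal E\subseteq E$ be the inscribed ellipsoid of maximal volume; since $E=-E$ it is centered at the origin, and John's theorem gives $E\subseteq\sqrt{d^2}\,\mathcal E$, so $\mu(E)\le d^{d^2}\mu(\mathcal E)$ and $\Phi(E)\ge d^{-d}\Phi(\mathcal E)$. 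Hence it is enough to bound $\Phi$ below on ellipsoids centered at $0$. Write such an ellipsoid as $E=\{A\in\scriptm_d:q(A)\le 1\}$ with $q$ a positive definite quadratic form; let $\nu_1\ge\cdots\ge\nu_{d^2}>0$ be its eigenvalues and $v_1,\dots,v_{d^2}$ an orthonormal (in the Hilbert--Schmidt inner product) basis of eigenmatrices. Substituting $A=\sum_k\nu_k^{-1/2}s_kv_k$ gives $q(A)=|s|^2$, so $\mu(E)=\kappa_{d^2}\prod_k\nu_k^{-1/2}$ (with $\kappa_{d^2}$ the volume of the Euclidean unit ball of $\reals^{d^2}$) and
\[\max_{A\in E}|\det A|=\sup_{|s|=1}|p(s)|,\qquad p(s):=\det\bigl(\textstyle\sum_k\nu_k^{-1/2}s_kv_k\bigr),\]
with $p$ homogeneous of degree $d$ on $\reals^{d^2}$.

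Next I would use $\sup_{|s|=1}|p(s)|^2\ge\int_{S^{d^2-1}}p(s)^2\,d\sigma(s)$ for the normalized surface measure $\sigma$. Since $p$ is homogeneous of degree $d$, $\int_{S^{d^2-1}}p^2\,d\sigma=K_d^{-1}\expect[p(Z)^2]$, where $Z$ is a standard Gaussian vector on $\reals^{d^2}$ and $K_d=\expect|Z|^{2d}$ depends only on $d$. Here $\expect[p(Z)^2]=\expect[(\det G)^2]$ for the centered Gaussian matrix $G=\sum_k\nu_k^{-1/2}Z_kv_k$, whose covariance operator $\Sigma$ on $\scriptm_d$ has eigenvalues $\nu_k^{-1}$, so that $\det_{\reals^{d^2}}\Sigma=\prod_k\nu_k^{-1}=(\mu(E)/\kappa_{d^2})^2$. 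Thus the sublemma follows once one shows: there exists $c_d>0$ such that every centered Gaussian matrix $G$ in $\scriptm_d$ with nondegenerate covariance $\Sigma$ satisfies $\expect[(\det G)^2]\ge c_d\,(\det_{\reals^{d^2}}\Sigma)^{1/d}$.

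To prove this Gaussian inequality, note that both sides are homogeneous of degree $d$ under $\Sigma\mapsto t\Sigma$, so it suffices to bound $F(\Sigma):=\expect[(\det G)^2]$ below on $\scripts=\{\Sigma\succ 0:\det_{\reals^{d^2}}\Sigma=1\}$. By Wick's formula $F$ is a polynomial in the entries of $\Sigma$, hence continuous; and $F(\Sigma)>0$ for every $\Sigma\succ 0$, since $\det$ is a nonzero polynomial and $G$ has a strictly positive density, so $\det G\ne 0$ almost surely. Consequently $F$ is bounded below by a positive constant on each compact slice $\{\Sigma\in\scripts:\lambda_{\max}(\Sigma)\le R\}$. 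It then remains to show $F(\Sigma)\to\infty$ as $\lambda_{\max}(\Sigma)\to\infty$ within $\scripts$; choosing $R$ large enough would then finish the proof. For this I would let $V_+\subseteq\scriptm_d$ be the span of those eigenmatrices of $\Sigma$ whose eigenvalues blow up, decompose $G=G_++G_0$ into its independent components in $V_+$ and $V_+^\perp$, and write $G_+=\lambda^{1/2}H$ with $\lambda=\lambda_{\max}(\Sigma)$ and $H$ of unit order supported in $V_+$. Since $V_+\ne\{0\}$, a generic element of $V_+$ has some rank $r\ge 1$, and expanding $\det(G_++G_0)=\lambda^{d/2}\det(H+\lambda^{-1/2}G_0)$ in powers of $\lambda^{-1/2}$ should show, using $\Sigma|_{V_+^\perp}\succ 0$, that the leading term has order $\lambda^{r/2}$ with a coefficient not vanishing identically; hence $F(\Sigma)\gtrsim\lambda^{r}\to\infty$.

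The hard part will be exactly this last asymptotic step: pinning down the precise order of vanishing of $\det$ along the subspace $V_+$, and verifying that the associated leading coefficient is a genuinely nonzero random variable, requires a careful multilinear-algebra argument — in essence an analysis of the term of degree $d-r$ in the expansion of $\det(H+tG_0)$ about $t=0$ and of its dependence on $G_0\in V_+^\perp$. Everything else — John's theorem, the passage to the spherical $L^2$ average, and the elementary algebraic reductions — is routine.
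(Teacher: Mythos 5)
Your reductions are correct as far as they go: passing to the John ellipsoid, parametrizing it by the eigendirections of its quadratic form, replacing the sup of $|\det|$ by the spherical $L^2$ average, and rewriting that average as $\expect[(\det G)^2]$ for a centered Gaussian $G$ with covariance $\Sigma$ are all fine, and the target inequality $\expect[(\det G)^2]\ge c_d(\det_{\reals^{d^2}}\Sigma)^{1/d}$ is indeed true. Note, however, that it is essentially a reformulation of the sublemma rather than a reduction of it: on the finite-dimensional space of polynomials of degree at most $d$ in $d^2$ variables all norms are equivalent, so the Gaussian second-moment bound and the sup-over-an-ellipsoid bound are the same statement up to constants depending only on $d$. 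The genuine gap is in your proposed proof of that inequality. The intermediate claim that $F(\Sigma)=\expect[(\det G)^2]\to\infty$ as $\lambda_{\max}(\Sigma)\to\infty$ on the slice $\{\det\Sigma=1\}$ is false. Take $d=2$, let the two entries of the first row be independent Gaussians of variance $\lambda$ and the two entries of the second row independent Gaussians of variance $\lambda^{-1}$: then $\det\Sigma=1$ while $\expect[(\det G)^2]=2$ for every $\lambda$, so $F$ remains bounded as $\lambda_{\max}=\lambda\to\infty$. In this example $V_+$ is the span of the first-row matrices (generic rank $r=1$), and your expansion predicts $F\gtrsim\lambda$; what goes wrong is that the constraint $\det\Sigma=1$ forces the eigenvalues of $\Sigma$ restricted to $V_+^\perp$ to tend to zero, so the leading coefficient, which is linear in $G_0$, has variance of order $\lambda^{-1}$ and exactly cancels the growth coming from $V_+$. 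Your sketch assumes $\Sigma|_{V_+^\perp}$ stays nondegenerate, which the normalization forbids. So the step you yourself flag as ``the hard part'' is not a technicality to be checked later: the entire content of the sublemma is the interaction between the blowing-up directions and the necessarily shrinking complementary ones, and the compactness scheme (positivity on $\{\lambda_{\max}\le R\}$ plus divergence at infinity) cannot close, because the divergence is simply not there; what you would need is a uniform positive lower bound on the whole slice, which is the original problem again.

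For comparison, the paper handles exactly this interaction by an induction on the dimension $d$ together with slicing. After normalizing so that the projection of the ellipsoid $E$ onto the first-column variables is a ball of radius $r$, convexity shows that for first columns $x$ in the ball of radius $r/2$ the slice $E_x$ contains a translate of a fixed ellipsoid $E'$ with $|E'|\gtrsim|E|r^{-d}$; a Loomis--Whitney-type inequality then produces a coordinate vector $e_j$ such that the projection of $E'$ onto the corresponding $(d-1)\times(d-1)$ cofactor variables has measure $\gtrsim|E'|^{(d-1)/d}$, and the inductive hypothesis applied to that projection, combined with cofactor expansion along the first column, gives $|\det(x,y)|\gtrsim|E|^{1/d}$. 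If you wish to salvage your Gaussian route, you need an argument of comparable substance at the degenerate end of the slice $\{\det\Sigma=1\}$ (for instance, an induction on $d$ exploiting the rank stratification of $V_+$ and tracking how much determinant mass the shrinking directions must retain); as written, the proposal leaves the crux unproved and relies on a false asymptotic.
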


\begin{proof}
Denote by $e_1,\dots,e_d$ the standard basis vectors for $\reals^d$.
Denote by $v_j(A)$ the $j$--th column of $A\in\scriptm_d$, for $1\le j\le d$.
Define $\pi(A)=v_1(A)\in\reals^d$, the first column of $A$.
Regard $\scriptm_d = \reals^{d^2}$ as $\reals^d\times\reals^{d(d-1)}$,
by assigning to $A$ the coordinates $x = v_1(A)\in\reals^d$ and $y = (v_2(A),\dots,v_d(A))\in\reals^{d(d-1)}$.
For $x\in\reals^d$ let \[E_x=\{y\in\reals^{d(d-1)}: (x,y) \in E\}.\]

We will use the notation $|S|$ to indicate the Lebesgue measures of sets $S$
in $\reals^d$, in $\reals^{d^2} = \scriptm_d$, and in $\reals^{d(d-1)}$.
Regard $\scriptm_d$ as a Hilbert space, using the inner product structure
defined by its identification with $\reals^{d^2}$ using matrix entries,
and using the standard inner product structure for $\reals^{d^2}$.

For $\Phi\in\Gl(d)$ consider $\Phi(E)=\set{\Phi\circ A: A\in E}$.
If $|\det(\Phi)|=1$ then both the range of the function $|\det|:E\to[0,\infty)$ 
and the Lebesgue measure of $E$ are unchanged under replacement of $E$ by $\Phi(E)$.

$E$ contains some closed ellipsoid centered at $0$, whose measure is comparable to that of $E$; replace
$E$ by such an ellipsoid.  
The set $\pi(E)\subset\reals^d$ is itself a closed ellipsoid. Specify $r\in \reals^+$ 
by requiring that a ball of radius $r$ in $\reals^d$ have measure equal to $|\pi(E)|$.
By replacing $E$ by $\Phi(E)$ for an appropriately chosen element $\Phi\in\Gl(d)$
satisfying $\det(\Phi)=1$, we may reduce to the case in which $\pi(E)$ is
a ball of radius $r$ centered at the origin.

For any $j\in\set{1,2,\dots,d}$ and any
$y\in\reals^d\times\reals^{d(d-1)}$, $\det(e_j,y)$ depends only on the orthogonal
projection of $y$ onto a certain subspace $V_j$ of $\reals^{d(d-1)}$ having dimension $(d-1)^2$.
Indeed, $|\det(e_j,y)|$ is equal to the absolute value of an associated cofactor, which depends only
on the entries of a certain $(d-1)\times(d-1)$ minor. These entries provide coordinates for $V_j$.
Moreover, $|\det(e_j,y)| = |\Det(\Pi_{j}(y))|$ where 
$\Pi_j$ denotes this orthogonal projection,
and $\Det(z)$ denotes the $d-1$--dimensional determinant of $z\in\scriptm_{d-1} \leftrightarrow \reals^{(d-1)^2}$. 

We claim that there exists $j\in\set{1,2,\dots,d}$ such that $x=\tfrac12 r e_j$ satisfies
\begin{equation}|\Pi_j(E_x)| \ge c(|E|r^{-d})^{(d-1)/d}.\end{equation}
Granting this, we conclude by induction on the dimension $d$ that there exists
$z=\Pi_j(y)\in \Pi_j(E_x)$ satisfying 
\[|\Det(z)| \ge c |\Pi_j(E_x)|^{1/{d-1}} \ge c(|E|r^{-d})^{1/d}=cr^{-1}|E|^{1/d}.\]
Consequently 
\[ |\det(x,y)| = |\det(\tfrac12 r e_j,y)| = \tfrac12 r \, |\Det(z)| \ge cr\cdot r^{-1}|E|^{1/d} =  c|E|^{1/d},\]
completing the proof of Sublemma~\ref{sublemma:SL}.
\end{proof}

\begin{proof}[Proof of claim]
Denote by $\scriptb,\scriptb'$ the closed balls centered at $0$, with radii equal to $r,r/2$ respectively.
Since $\int_{\pi(E)}|E_x|\,dx = |E|$,
there must exist $\bar x\in \pi(E)$ for which \[|E_{\bar x}| \ge |E|/|\pi(E)| = c|E|r^{-d}.\]
For any $x,x'\in \pi(E)$, \[tE_x+(1-t)E_{x'}\in E_{tx+(1-t)x'}.\] 
Applying this with $x'=\bar x$, we conclude that $E_x$ contains a scaled translate of $E_{\bar x}$
for many points $x\in\scriptb$, with scaling factor $\ge \tfrac18$. Repeating this argument, we conclude that 
$E_x$ contains a scaled translate of $E_{\bar x}$ for each $x\in\scriptb'$,  with scaling factor
bounded below by a positive constant. 

Thus there exists an ellipsoid $E'\subset\reals^{d(d-1)}$ satisfying
\[|E'| \ge c |E|r^{-d}\] such that 
for each $x\in \scriptb'$, $E_x$ contains $E'+\alpha(x)$ for some $\alpha(x)\in\reals^{d(d-1)}$.
Since \[\Pi_x(E_x)\supset\Pi_x(E'+\alpha(x))=\Pi_x(E')+\Pi_x(\alpha(x)),\]
$|\Pi_x(E_x)| \ge |\Pi_x(E')|$ and consequently
it suffices to show that there exists $j\in\sset{1,2,\dots,d}$ satisfying \[|\Pi_{e_j}(E')| \ge c|E'|^{(d-1)/d}.\]

A variant of the Loomis-Whitney inequality states that
for any Lebesgue measurable set $S\subset\reals^{d(d-1)}$,
\begin{equation}\label{eq:loomiswhitney} |S| \le \prod_{j=1}^d |\Pi_{e_j}(S)|^{\frac1{d-1}}.  \end{equation}
Therefore
\begin{equation*} |E'| \le \prod_{j=1}^d |\Pi_{e_j}(E')|^{\frac1{d-1}}\le \max_{j} |\Pi_{e_j}(E')|^{d/(d-1)}.  \end{equation*}
\end{proof}

A generalization of the Loomis-Whitney inequality, which includes the variant used above, is as follows.
Let $(X,\scripta,\lambda)$ be a measure space.
Denote by $\lambda^k$ the product measure $\lambda\times\cdots\times\lambda$ with $k$
factors, on $X^k$.
For each $j\in\{1,2,\dots,d\}$ define $\phi_j:X^d\to X^{d-1}$ by
$\Phi_j(x_1,\dots,x_d) = (x_1,\dots,x_{j-1},x_{j+1},\dots,x_d)$
with the obvious modifications for $j=1$ and $j=d$.
Then for any measurable set $E\subset X^d$, \[\lambda^d(E) \le \prod_{j=1}^d \lambda^{d-1}(\phi_j(E)).\]
Standard proofs of the Loomis-Whitney inequality establish this generalization.
\qed

\section{On second variations for general inequalities}

Let $T$ be a bounded linear operator from $L^p$ to $L^q$, for an arbitrary pair of measure spaces; 
we will specialize later to the case in which $T$ is the Fourier transform and $q=p'$.
Denote by $\norm{T}$ the operator norm, which we assume throughout the discussion to be finite and strictly positive.
For $0\ne h\in L^p$ consider 
\begin{equation*} \Phi(h) = \norm{Th}_q/\norm{h}_p. \end{equation*}
Our present goal is to develop a substitute for the second order Taylor expansion 
of $\Phi$ about an extremizer $F$.
The functional $\Phi$ fails to actually be twice continuously differentiable; 
a quantity such as $|F(x)+f(x)|^p$ cannot be expanded in Taylor series about $|F(x)|$
unless $|f(x)|$ is small relative to $|F(x)|$.

\begin{definition}
Let $p,q,T$ be as above and let $0\ne F\in L^p$. The real quadratic form $\scriptq_F$
is defined formally to be
\begin{multline}
\scriptq_F(h) 
= \tfrac{q-1}2 \norm{TF}_q^{-q} \int (\Re(Th/TF))^2 |TF|^{q}
+ \tfrac{1}{2} \norm{TF}_q^{-q} \int (\Im(Th/TF))^2 |TF|^{q}
\\ - \tfrac{p-1}2 \norm{F}_p^{-p} \int (\Re(h/F))^2 |F|^{p} 
- \tfrac12 \norm{F}_p^{-p} \int (\Im(h/F))^2 |F|^{p}.
\end{multline}
\end{definition}

The quadratic form $\scriptq_F$ is arrived at by consideration of
the formal second order Taylor expansion of $\Phi$ about $F$.
In this definition,
$(\Re(Th/TF))^2|TF|^q$ and $(\Im(Th/TF))^2|TF|^q$ are interpreted as zero
at any point at which $TF$ vanishes, as is reasonable since the net power of $TF$ is $q-2>0$.
Likewise, 
$(\Re(h/F))^2 |F|^p$ and $(\Im(h/F))^2|F|^p$ are interpreted
as zero at any point at which $F$ vanishes. This is not reasonable
for general functions $h$, but is reasonable for functions that
are pointwise $O(|F|)$;
we will utilize $\scriptf_Q(h)$ only for such functions.

\begin{proposition} \label{prop:taylorsubstitute}
For any exponents $p,q$ satisfying $p<2\le q$
there exist constants $c,C,\eta_0,\gamma\in\reals^+$ with the following property.
Let $0<\eta\le\eta_0$ and suppose that $\delta\le \eta^\gamma$.
Let $T:L^p\to L^q$ be a bounded linear operator with operator norm $\norm{T}\in(0,\infty)$.
Let $0\ne F\in L^p$ satisfy $\norm{TF}_q = \norm{T}\cdot\norm{F}_p$.
Suppose that $f\in L^p$,
that $\norm{f}_p\le \delta\norm{F}_p$,
and that $\Re\big(\int f\, \overline{F}|F|^{p-2}\big)=0$. 
Decompose \begin{equation} f=f_\sharp+f_\flat \end{equation}
where
\begin{equation}
f_\sharp(x) = \begin{cases}
f(x)\ &\text{if}\  |f(x)|\le \eta |F(x)|
\\ 0 &\text{otherwise}.
\end{cases}
\end{equation}
Then 
\begin{equation}
\frac{\norm{\Phi(F+f)}_q}{\norm{T}\cdot\norm{F+f}_p}
\le 1 + \scriptq_F(f_\sharp)
+C\eta \norm{f_\sharp}_p^2\norm{F}_p^{-2} 
- c\eta^{2-p} \norm{f_\flat}_p^p \norm{F}_p^{-p}.
\end{equation}
\end{proposition}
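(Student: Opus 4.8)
The plan is to set up the argument so that the numerator $\norm{T(F+f)}_q$ and the denominator $\norm{F+f}_p$ are each estimated separately, and in each case the contribution of the ``small'' part $f_\sharp$ is handled by a genuine second-order Taylor expansion while the ``large'' part $f_\flat$ is handled by convexity/monotonicity inequalities that only need one-sided control. Write $g = F + f$. For the numerator, since $q \ge 2$ the map $z \mapsto |z|^q$ is $C^2$ and one has a clean pointwise expansion $|Tg|^q = |TF|^q + q|TF|^{q-2}\Re(\overline{TF}\,Tf) + \tfrac{q}{2}|TF|^{q-2}\big((q-1)(\Re(Tf/TF))^2 + (\Im(Tf/TF))^2\big)|TF|^q + E$ wherever $TF \ne 0$, with an error $E$ controlled by $|Tf|^{\min(3,q)}|TF|^{q-\min(3,q)}$ plus (if $q<3$) a term like $|Tf|^q$; the key point is that $\int q|TF|^{q-2}\Re(\overline{TF}\,Tf) = 0$ is \emph{not} automatic here, so I would instead expand $\norm{Tg}_q = \norm{TF}_q\big(1 + \norm{TF}_q^{-q}\int(\cdots)\big)^{1/q}$ and absorb the linear term, which is $\le O(\norm{f}_p)\norm{F}_p^{-1}\cdot\norm{F}_p$, into the final estimate against $\scriptq_F$ after exploiting the orthogonality hypothesis $\Re\int f\overline{F}|F|^{p-2}=0$ on the denominator side. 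For the denominator, the relation $p<2$ is the source of all the trouble: $|F+f|^p$ is \emph{not} twice differentiable, and the correct statement is a \emph{concavity-type} bound. On the region where $|f| \le \eta|F|$ one still has the Taylor expansion $|F+f_\sharp|^p = |F|^p + p|F|^{p-2}\Re(\overline F f_\sharp) + \tfrac{p}{2}|F|^{p-2}\big((p-1)(\Re(f_\sharp/F))^2 + (\Im(f_\sharp/F))^2\big)|F|^p + O(\eta)|f_\sharp|^2|F|^{p-2}$, and the linear term integrates to something that pairs against the orthogonality hypothesis; on the region where $|f|>\eta|F|$ one uses instead the elementary inequality $|F+f_\flat|^p \le |F|^p + p|F|^{p-2}\Re(\overline F f_\flat) + c_p|f_\flat|^p$ together with the reverse bound $|f_\flat|^p \ge \eta^{p-2}|f_\flat|^2|F|^{p-2}$ valid precisely because $|f_\flat| > \eta|F|$ and $p-2<0$ — this is where the favorable term $-c\eta^{2-p}\norm{f_\flat}_p^p\norm{F}_p^{-p}$ in the conclusion comes from, after raising $\norm{F+f}_p = \norm{F}_p(1 + \norm{F}_p^{-p}\int(\cdots))^{1/p}$ to the $-1$ power and noting that $(1+u)^{-1/p} \le 1 - \tfrac1p u + O(u^2)$ turns a \emph{lower} bound on $\int|F+f|^p$ into an \emph{upper} bound on $\norm{F+f}_p^{-1}$, with the sign of the $|f_\flat|^p$ term working in our favor.

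The key steps, in order: (i) fix notation $g=F+f$, $u_{\rm num} = \norm{TF}_q^{-q}\int(|Tg|^q - |TF|^q)$ and $u_{\rm den} = \norm{F}_p^{-p}\int(|F+f|^p - |F|^p)$, so that $\Phi(g)/\norm{T}\cdot\norm{F}_p^{-1}\cdot\norm{F}_p \cdot \norm{T}^{-1}\norm{TF}_q = (1+u_{\rm num})^{1/q}(1+u_{\rm den})^{-1/p}$, and record that $\norm{TF}_q = \norm{T}\norm{F}_p$ kills the prefactor; (ii) establish the pointwise expansions above, splitting each integral over the $f_\sharp$-region and the $f_\flat$-region; (iii) on the $f_\sharp$ side collect the quadratic terms into exactly $\scriptq_F(f_\sharp)$ (modulo the cross terms between $f_\sharp$ and $f_\flat$, which vanish since they are disjointly supported), with Taylor errors bounded by $C\eta\norm{f_\sharp}_p^2\norm{F}_p^{-2}$ after using $|Tf_\sharp| \le \norm{T}\norm{f_\sharp}_p$ for the numerator error and $|f_\sharp|\le\eta|F|$ for the denominator error; (iv) on the $f_\flat$ side, on the numerator use $\norm{Tf_\flat}_q \le \norm{T}\norm{f_\flat}_p \le \norm{T}\norm{f}_p \le \delta\norm{TF}_q$ to see its total contribution to $u_{\rm num}$ is $O(\delta)\cdot O(\norm{f_\flat}_p\norm{F}_p^{-1})$, which is $\le \eta^\gamma$ times something small and hence negligible compared to the gain; on the denominator use the reverse bound to extract $+c\eta^{p-2}\norm{f_\flat}_p^2\norm{F}_p^{-p}\cdot\norm{F}_p^{\,p}/\norm{F}_p^{\,p}$ wait — more precisely $u_{\rm den} \ge p\norm{F}_p^{-p}\Re\int|F|^{p-2}\overline F f + (\text{quadratic }f_\sharp\text{ terms}) + c_p\eta^{p-2}\norm{F}_p^{-p}\int_{f_\flat\text{-region}}|f_\flat|^2|F|^{p-2}$ and then the last integral is $\ge \eta^{p-2}\cdot\eta^{2-p}\cdot$ nothing — rather one bounds $\int|f_\flat|^2|F|^{p-2} \ge \eta^{2-p}\norm{f_\flat}_p^p$ directly, since on that set $|F|^{p-2} \ge (\eta^{-1}|f_\flat|)^{p-2} = \eta^{2-p}|f_\flat|^{p-2}$; (v) combine via $(1+u_{\rm num})^{1/q} \le 1 + \tfrac1q u_{\rm num} - c u_{\rm num}^2/2 + \dots \le 1 + \tfrac1q u_{\rm num}$ (since $q\ge 1$ and we only need an upper bound, $(1+u)^{1/q} \le 1+\tfrac1q u$ when $u\ge -1$ and $q\ge 1$) and $(1+u_{\rm den})^{-1/p} \le 1 - \tfrac1p u_{\rm den} + C u_{\rm den}^2$, multiply out, use the orthogonality hypothesis to cancel the two linear-in-$f$ terms $\tfrac1q\norm{TF}_q^{-q}\cdot q\int|TF|^{q-2}\Re(\overline{TF}Tf)$ against $-\tfrac1p\norm{F}_p^{-p}\cdot p\int|F|^{p-2}\Re(\overline F f)$ — here I would use that $F$ is an extremizer, so by the Euler–Lagrange equation $|TF|^{q-2}\overline{TF}\cdot T = $ const $\cdot |F|^{p-2}\overline F$ in the appropriate weak sense, and the hypothesis $\Re\int f\overline F|F|^{p-2}=0$ then forces both linear terms to vanish; (vi) bookkeep that all cross terms between the $\tfrac1q u_{\rm num}$ and $-\tfrac1p u_{\rm den}$ expansions, and the $u_{\rm den}^2$ remainder, are of size $O(\norm{f}_p^2\norm{F}_p^{-2}) = O(\delta^2)$ which, since $\delta \le \eta^\gamma$ with $\gamma$ chosen large, is dominated either by $C\eta\norm{f_\sharp}_p^2\norm{F}_p^{-2}$ (when $f_\sharp$ carries most of the mass) or by the favorable $-c\eta^{2-p}\norm{f_\flat}_p^p\norm{F}_p^{-p}$ term (when $f_\flat$ does), using $\norm{f}_p^2 \le 2\norm{f_\sharp}_p^2 + 2\norm{f_\flat}_p^2$ and $\norm{f_\flat}_p^2 \le \delta^{2-p}\norm{F}_p^{2-p}\norm{f_\flat}_p^p$.

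The main obstacle I anticipate is step (v)–(vi): making the cancellation of the first-order terms rigorous despite the non-differentiability, and then controlling the genuinely problematic error terms that are quadratic in $\norm{f}_p$ but only \emph{linearly} small, i.e.\ showing they are beaten by the explicit gain terms rather than overwhelming them. The device that makes this work is the hypothesis $\delta \le \eta^\gamma$ for a sufficiently large exponent $\gamma = \gamma(p)$: it lets one trade a factor of $\delta$ (hence many powers of $\eta$) whenever an error term is not manifestly of the form $\eta\norm{f_\sharp}_p^2\norm{F}_p^{-2}$ or $-\eta^{2-p}\norm{f_\flat}_p^p\norm{F}_p^{-p}$. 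A secondary subtlety is that the Taylor expansion of $|z|^q$ for $q<3$ has an error that is only $|w|^q$-small, not $|w|^3$-small, so for exponents $q \in [2,3)$ one must check separately that $\int |Tf|^q |TF|^{q-q}$ — i.e.\ $\norm{Tf}_q^q$ — contributes only $O(\delta^q)\norm{TF}_q^q$, which again is absorbed using $\delta \le \eta^\gamma$. Everything else is bounded elementary inequalities ($|a+b|^p \le |a|^p + p|a|^{p-2}\Re(\bar a b) + C|b|^p$ for $p \in (1,2)$, and its reverse on the region $|b| \gtrsim |a|$) plus Hölder, and I would organize the write-up as two lemmas (numerator estimate, denominator estimate) followed by the combination.
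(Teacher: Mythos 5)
Your overall architecture is the same as the paper's: expand numerator and denominator separately with the sharp/flat split at $\eta|F|$, cancel the first-order terms using extremality of $F$ together with the hypothesis $\Re\int f\overline F|F|^{p-2}=0$, and absorb the leftover quadratic errors using $\delta\le\eta^\gamma$ together with $\norm{f_\flat}_p^2\le\delta^{2-p}\norm{F}_p^{2-p}\norm{f_\flat}_p^p$. But the step that carries the whole proposition --- the lower bound for the denominator on the flat region --- is not established in your write-up, and your manipulations around it are inconsistent. What is needed is the pointwise inequality $|F+f|^p\ge|F|^p+p|F|^{p-2}\Re(\overline F f)+c\,\eta^{2-p}|f|^p$ wherever $|f|>\eta|F|$, for complex values of $f/F$; this is the content of Lemmas~\ref{lemma:fka} and \ref{lemma:complexquasitaylor}, and it requires a genuine case analysis (the delicate regions are $f/F$ near the negative real axis and near $-1$). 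You instead quote an upper bound $|F+f_\flat|^p\le|F|^p+p|F|^{p-2}\Re(\overline F f_\flat)+c_p|f_\flat|^p$, which is the wrong direction for bounding $\norm{F+f}_p$ from below; you state the conversion inequality backwards (on the flat region the true inequality is $|f_\flat|^2|F|^{p-2}\ge\eta^{2-p}|f_\flat|^p$, not $|f_\flat|^p\ge\eta^{p-2}|f_\flat|^2|F|^{p-2}$); and your proposed lower bound for $u_{\rm den}$ with the large prefactor $\eta^{p-2}$ in front of $\int|f_\flat|^2|F|^{p-2}$ would produce an $\eta$-independent gain $c\norm{f_\flat}_p^p\norm{F}_p^{-p}$, which is false: testing $|1+t|^p$ at real $t$ of size comparable to $\eta$ shows the admissible constant in front of $|t|^p$ degenerates like $\eta^{2-p}$. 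Note also that no quadratic-type lower bound can hold pointwise where $|f|\gg|F|$, since $|1+z|^p$ grows only like $|z|^p$ with $p<2$; the gain must be produced directly in the form $c\eta^{2-p}|f|^p$, which is exactly what the missing lemma asserts.

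Two further steps are wrong as stated, though both are repairable by the absorption mechanism you already invoke. First, the cross terms between $f_\sharp$ and $f_\flat$ in the numerator do not vanish: $f_\sharp,f_\flat$ are disjointly supported, but $Tf_\sharp$ and $Tf_\flat$ are not, so replacing $Tf$ by $Tf_\sharp$ in the quadratic terms costs contributions such as $\int|TF|^{q-2}|Tf_\sharp|\,|Tf_\flat|$ and $\int|TF|^{q-2}|Tf_\flat|^2$. The paper controls these via the weighted splitting $(\Re(Tf/TF))^2\le(1+\eta)(\Re(Tf_\sharp/TF))^2+(1+\eta^{-1})(\Re(Tf_\flat/TF))^2$, and it is precisely the resulting $C\eta^{-1}\norm{f_\flat}_p^2\norm{F}_p^{-2}$ term, absorbed into $c\eta^{2-p}\norm{f_\flat}_p^p\norm{F}_p^{-p}$ via $\norm{f_\flat}_p\le\delta\norm{F}_p$, that forces the exponent $\gamma\ge(3-p)/(2-p)$; your blanket ``$O(\delta^2)$, take $\gamma$ large'' does not by itself dominate these terms, since they may be much larger than any fixed multiple of $\eta\norm{f_\sharp}_p^2\norm{F}_p^{-2}$. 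Second, your claimed numerator remainder $|Tf|^{\min(3,q)}|TF|^{q-\min(3,q)}$ is not valid globally for $q>3$ (take $|Tf|\gg|TF|$, e.g.\ $q=4$); for every $q>2$ one needs an additional term of the form $C\tau^{2-q}|Tf|^q$ on the set where $|Tf|>\tau|TF|$ --- the paper's second threshold, taken with $\tau=\eta$ --- and there is no pointwise bound $|Tf_\sharp|\le\norm{T}\norm{f_\sharp}_p$, so the cubic error must be handled in integrated form by H\"older, after which $\norm{f}_p\le\delta\norm{F}_p\le\eta^\gamma\norm{F}_p$ does reduce it to $C\eta\norm{f_\sharp}_p^2\norm{F}_p^{-2}$ plus absorbable $f_\flat$ terms.
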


Throughout the proof we will assume that $q$ is strictly greater than $2$, but it will be clear
that a simplified analysis applies for $q=2$.
Both of the relations $p<2$ and $2<q$ are important in the proof,
which relies on certain properties of the expression $|1+z|^r$ 
for $r\in\set{p,q}$ and arbitrary $z\in\complex$.  We begin with the case of real $z$.
\begin{lemma} \label{lemma:fka}
There exist $c,C\in\reals^+$ such that for $t\in\reals$ and $0<\eta\le 1$,
\begin{equation} |1+t|^p \ge 
\begin{cases}
1+pt+\tfrac12 p(p-1)t^2 - C\eta t^2\qquad&\text{for $t\in[-\eta,\eta]$}
\\ 1+pt+c \eta^{2-p} |t|^p &\text{otherwise}. \end{cases}
\end{equation}
\end{lemma}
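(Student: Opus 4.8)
The plan is to reduce both halves of the inequality to lower bounds on the single nonnegative function $\psi(t)=|1+t|^p-1-pt$, which measures the deviation of $|1+t|^p$ from its tangent line at the origin. Since $s\mapsto|s|^p$ is convex for $p>1$ and $1+pt$ is the tangent to $|1+t|^p$ at $t=0$, one has $\psi\ge 0$, with $\psi(t)=0$ only at $t=0$; moreover $\psi$ is continuous on all of $\reals$ (including at $t=-1$, where $\psi(-1)=p-1$), and $\psi(t)/|t|^p\to 1$ as $|t|\to\infty$. The first claimed bound is equivalent to $\psi(t)\ge\tfrac12 p(p-1)t^2-C\eta t^2$ for $|t|\le\eta$, and the second to $\psi(t)\ge c\,\eta^{2-p}|t|^p$ for $|t|>\eta$.

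For the first case I would split on the size of $|t|$. When $|t|\le\tfrac12$, Taylor's theorem with Lagrange remainder gives $|1+t|^p=(1+t)^p=1+pt+\tfrac12 p(p-1)t^2+R(t)$ with $|R(t)|\le C_0|t|^3$, where $C_0$ depends only on $p$ because $(1+\xi)^{p-3}$ stays bounded for $\xi$ between $0$ and $t$; since $|t|\le\eta$ this yields $|R(t)|\le C_0\eta t^2$, which is the required bound provided $C\ge C_0$. When $\tfrac12<|t|\le\eta$ — which forces $\eta\ge\tfrac12$ — I would instead argue by compactness: $g(t)=|1+t|^p-1-pt-\tfrac12 p(p-1)t^2$ is continuous on the compact set $\{\tfrac12\le|t|\le 1\}$, hence bounded below there by some $-M$ with $M\ge 0$, while $\eta t^2\ge\tfrac18$ on this range, so $g(t)\ge -M\ge -8M\eta t^2$. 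Taking $C=\max(C_0,8M)$ handles the first case.

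For the second case, $|t|>\eta$, I would use three sub-ranges. If $|t|\le t_0$, where $t_0=\min(\tfrac12,\,p(p-1)/(4C_0))$ is chosen so that the cubic remainder above is dominated by the quadratic term, then $\psi(t)\ge\tfrac12 p(p-1)t^2-C_0|t|^3\ge\tfrac14 p(p-1)t^2=\tfrac14 p(p-1)|t|^{2-p}\cdot|t|^p\ge\tfrac14 p(p-1)\,\eta^{2-p}|t|^p$, using $|t|>\eta$ together with $2-p>0$. If $t_0<|t|<2$, then $\psi$ is continuous and strictly positive on the compact set $\{t_0\le|t|\le 2\}$, hence bounded below there by some $c_4>0$, while $\eta^{2-p}|t|^p\le 2^p$; so $\psi(t)\ge c_4\ge (c_4 2^{-p})\,\eta^{2-p}|t|^p$. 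If $|t|\ge 2$, then $\psi(t)/|t|^p$ is continuous and strictly positive on $\{|t|\ge 2\}$ and tends to $1$ at $\pm\infty$, hence is bounded below by some $c_5>0$, giving $\psi(t)\ge c_5|t|^p\ge c_5\,\eta^{2-p}|t|^p$ since $\eta^{2-p}\le 1$. Setting $c=\min(\tfrac14 p(p-1),\,c_4 2^{-p},\,c_5)$ completes the second case, and all of $C_0,M,t_0,c_4,c_5$ depend only on $p$.

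The only genuinely delicate point is the regime $|t|\approx\eta\approx 1$ of the first case: near $t=-1$ the quantity $(1+t)^p$ and its derivatives degenerate, so one cannot hope for a third-order Taylor estimate with a constant independent of $\eta$. The remedy is exactly the observation that in this regime $\eta t^2$ is bounded away from $0$, so a crude compactness lower bound for $g$ already suffices — this is why the two sub-cases of the first case are split at $|t|=\tfrac12$. Apart from this, the argument is elementary, relying only on Taylor's theorem, convexity, and continuity on compact sets; the hypotheses $p>1$ (for positivity of the quadratic coefficient and of $\psi$ off the origin) and $p<2$ (so that $2-p>0$, which is what makes $\eta^{2-p}\le 1$ and $\eta^{2-p}<|t|^{2-p}$ usable) are both invoked.
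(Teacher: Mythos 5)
Your proof is correct, and it follows essentially the same route as the paper: both reduce the lemma to elementary pointwise estimates for $|1+t|^p$ by splitting into ranges of $t$, using a third-order Taylor expansion near $t=0$, the comparison $t^2\ge \eta^{2-p}|t|^p$ for $\eta<|t|\lesssim 1$, and crude lower bounds on the remaining ranges. The only cosmetic difference is that where you invoke compactness and the growth $\psi(t)\sim|t|^p$ at infinity, the paper instead uses the sign of the third derivative of $(1+t)^p-1-pt-\tfrac12 p(p-1)t^2$ on $(-1,0]$ together with monotonicity for $t\le-1$; both devices serve the same purpose.
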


\begin{proof}
Consider the function $\phi(t)=(1+t)^p -1-pt-\tfrac12 p(p-1)t^2$. 
Since $\phi,\phi',\phi''$ vanish at $t=0$, $\phi(t)\ge -Ct^3$ for $t\in[0,1]$,
so $\phi(t)\ge -C\eta t^2$ on $[0,\eta]$ for all $\eta\le 1$.
On the other hand, $\phi$
has third derivative $p(p-1)(p-2)(1+t)^{p-3}$, which is negative on the interval $(-1,0]$.
It satisfies $\phi(0)=\phi'(0)=\phi''(0)=0$.
Therefore $\phi$ is strictly decreasing on $[-1,0]$, hence strictly positive on $[-1,0)$,
giving the required inequality on $[-\eta,0]$ for any $\eta\in(0,1]$.

By continuity, there exists $\eta_0>0$ such that $\phi>0$ on $[-1-\eta_0,0]$. 
Moreover, for $-1-\eta_0\le t\le -\eta$, $\tfrac12 p(p-1)t^2 \ge c \eta^{2-p} |t|^p$. 
Therefore for 
$t\in[-1-\eta_0,-\eta]$,
\[ |1+t|^p \ge 1 + pt + \tfrac12 p(p-1)t^2 \ge 1+pt + c\eta^{2-p}|t|^p.\]

For $t\ge\eta$, the second inequality holds because $(1+t)^p-1-pt$ has positive
second derivative $p(p-1)(1+t)^{p-2} \ge p(p-1)$. 

The function $|1+t|^p$ is decreasing on $(-\infty,-1]$ while $1+pt$ is increasing.
Since the second inequality holds at $-1-\eta_0$, it consequently holds on any compact
subinterval of $(-\infty,-1-\eta_0]$. On the other hand,
for $t<0$ and $|t|$ very large, since $p>1$
\[|1+t|^p \ge \tfrac12 |t|^p > 1+p|t| + \tfrac14 |t|^p \ge 1+pt + \tfrac14 |t|^p.\]
\end{proof}

\begin{lemma} \label{lemma:complexquasitaylor}
Let $p\in(1,2)$ and let $\eta>0$ be sufficienty small.
There exist $c,C\in\reals^+$ such that for any $z\in\complex$, 
\begin{equation}
|1+z|^p \ge \begin{cases} 1 + p\Re(z) + \tfrac12 p(p-1) (\Re(z))^2 + \tfrac{p}2(\Im(z))^2 
- C\eta |z|^2 &\text{ if } |z|\le\eta
\\ 1+p\Re(z)+c \eta^{2-p} |z|^p &\text{ if } |z|>\eta. \end{cases}
\end{equation}
\end{lemma}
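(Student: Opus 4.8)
The plan is to treat the two regimes separately, and to split the large regime $|z|>\eta$ further at a fixed radius $R=R(p)$. Throughout, $c,C,R,\eta_0$ denote constants depending only on $p$; only $\eta_0$ (the threshold below which the small-regime expansion is valid) will matter, none of the constants depends on $\eta$.

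For the small regime $|z|\le\eta$, the idea is to reduce to the real case already handled in Lemma~\ref{lemma:fka}. Write $z=x+iy$ with $x=\Re(z)$, $y=\Im(z)$, so that
\[
|1+z|^p = (1+x)^p\Big(1+\tfrac{y^2}{(1+x)^2}\Big)^{p/2}.
\]
Once $\eta_0$ is small, $|x|\le\eta$ forces $1+x$ into a fixed compact subinterval of $(0,\infty)$, and $s:=y^2/(1+x)^2\le 1$. Lemma~\ref{lemma:fka} gives $(1+x)^p\ge 1+px+\tfrac12p(p-1)x^2-C\eta x^2$, while an elementary one-variable estimate (the function $(1+s)^{p/2}-1-\tfrac p2 s+Cs^2$ has vanishing value and derivative at $s=0$ and nonnegative second derivative on $[0,1]$ for $C$ large) gives $(1+s)^{p/2}\ge 1+\tfrac p2 s-Cs^2$ on $[0,1]$. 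Using $(1+x)^p\cdot\tfrac p2 s=\tfrac p2(1+x)^{p-2}y^2=\tfrac p2 y^2+O(\eta|z|^2)$ and $s^2=O(y^4)=O(\eta^2|z|^2)$, multiplying the two lower bounds, and discarding cross terms of order $|z|^3$ and higher (all $O(\eta|z|^2)$), one obtains $|1+z|^p\ge 1+p\Re(z)+\tfrac12 p(p-1)(\Re z)^2+\tfrac p2(\Im z)^2-C\eta|z|^2$, the first alternative.

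For the large regime, set $G(z)=|1+z|^p-1-p\Re(z)$. Since $w\mapsto|w|^p$ is strictly convex on $\complex\cong\reals^2$ for $p>1$ and $1+p\Re(z)$ is its supporting affine function at $z=0$, we have $G\ge 0$ with $G(z)=0$ only for $z=0$. The Hessian of $|w|^p$ at $w=1$ is $\mathrm{diag}(p(p-1),p)$, which is positive definite, so $G(z)\ge c|z|^2$ for $|z|\le\rho_0$ with $\rho_0=\rho_0(p)$ small. On the compact annulus $\{\rho_0\le|z|\le R\}$, $G$ is continuous and strictly positive (it does not vanish there since $0$ is excluded), hence bounded below by a positive constant, which together with $|z|\le R$ again yields $G(z)\ge c|z|^2$. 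Finally, for $|z|\ge R$ one has $|1+z|\ge|z|/2$, so $|1+z|^p\ge 2^{-p}|z|^p$, while $1+p|z|\le\tfrac12 2^{-p}|z|^p$ once $R$ is large (here $p>1$ enters), giving $G(z)\ge 2^{-p-1}|z|^p$. Combining the pieces: for $\eta<|z|\le R$, $G(z)\ge c|z|^2\ge c\,\eta^{2-p}|z|^p$ because $\eta^{2-p}|z|^{p-2}=(\eta/|z|)^{2-p}\le 1$; for $|z|\ge R$, $G(z)\ge 2^{-p-1}|z|^p\ge 2^{-p-1}\eta^{2-p}|z|^p$ since $\eta\le 1$ and $2-p>0$. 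Taking $c$ to be the minimum of the two constants so produced gives the second alternative.

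The routine Taylor bookkeeping of the small regime aside, the only point requiring genuine care is the large regime: verifying that $G$ vanishes only at the origin (so that compactness delivers a positive lower bound on the annulus $\{\rho_0\le|z|\le R\}$), and then gluing the three pieces $|z|\le\rho_0$, $\rho_0\le|z|\le R$, $|z|\ge R$ into a single constant $c$ valid for all $|z|>\eta$, uniformly in $\eta$. The key algebraic observation making the gluing work is that $\eta^{2-p}|z|^p\le|z|^2$ whenever $|z|\ge\eta$, so a clean quadratic lower bound on bounded sets is exactly what is needed to feed the desired $\eta^{2-p}|z|^p$ bound.
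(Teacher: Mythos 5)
Your proposal is correct, and for the main branch $|z|>\eta$ it takes a genuinely different route from the paper. The paper proves the second alternative by an explicit case analysis in $u=\Re(z)$, $v=\Im(z)$ (very large $|z|$; $u\ge\eta$ with $|v|\le|u|$; $|v|\ge\eta$ with $u$ in several ranges; $|v|\le\eta$ with $u$ near or below $-1$), each case being reduced to the real-variable Lemma~\ref{lemma:fka} through the factorization $|1+z|^p=|1+u|^p\big(1+|1+u|^{-2}v^2\big)^{p/2}$. You instead work with the gap $G(z)=|1+z|^p-1-p\Re(z)$ above the supporting affine function of the strictly convex map $w\mapsto|w|^p$ at $w=1$: strict convexity gives $G>0$ off $z=0$, the positive definite Hessian (diagonal with entries $p(p-1)$ and $p$) gives $G(z)\ge c|z|^2$ near $0$, compactness gives a positive lower bound (hence again a quadratic one) on $\rho_0\le|z|\le R$, and the crude growth estimate handles $|z|\ge R$; the single observation $\eta^{2-p}|z|^p\le|z|^2$ for $|z|\ge\eta$, together with $\eta^{2-p}\le1$ at infinity, then glues these into the stated bound with constants depending only on $p$, in particular uniform in $\eta$, which is what the later applications (Lemma~\ref{lemma:expansion} and Proposition~\ref{prop:taylorsubstitute}) actually require. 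Your route buys brevity and avoids the delicate sign-and-size case distinctions; the paper's route buys a completely explicit, elementary argument with no appeal to compactness, recycling the one-variable lemma it has already established. In the regime $|z|\le\eta$ your treatment (Lemma~\ref{lemma:fka} for $(1+u)^p$, the elementary concavity bound for $(1+s)^{p/2}$, and the bookkeeping $s=v^2/(1+u)^2=v^2+O(\eta|z|^2)$ with all cross terms $O(\eta|z|^2)$) is a minor variant of the paper's direct two-variable Taylor expansion and is sound, including the point that both lower bounds being positive justifies multiplying them.
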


\begin{proof} 
Write $z=u+iv\in\reals+i\reals$. Then $|1+z|^p = ((1+u)^2+v^2)^{p/2}$.
\newline (i)\ 
If $|z|\le \tfrac12 $ then Taylor expansion gives
\[ |1+z|^p = 1+pu+\tfrac12 p(p-1)u^2 + \tfrac{p}2 v^2 + O(|z|^3)\]
and the remainder term $O(|z|^3)$ is $\ge -C\eta |z|^2$
when $|z|\le\eta$.
\newline (ii)\ 
There exists $C_0$ such that if $|z|\ge C_0$ then
\[|1+z|^p \ge \tfrac12 |z|^p\ge 1 + p|z| +\tfrac 14|z|^p
\ge 1+ pu + \tfrac14 |z|^p.\]  
\newline (iii)\ 
If $\eta< |z|\le C_0$, $u\ge \eta$, and $|v|\le |u|$ then by Lemma~\ref{lemma:fka}, 
\[|1+z|^p \ge |1+u|^p \ge 1+pu+c\eta^{2-p} |u|^p \ge 1+pu + c'\eta^{2-p} |z|^p.\] 
\newline (iv)\ 
If $\eta< |z| \le C_0$, $-C_0\le u\le\eta$, and $|v|\ge \eta$ then since $p\le 2$,
\begin{multline*}
|1+z|^p = ((1+u)^2+v^2)^{p/2} = |1+u|^p (1+|1+u|^{-2}v^2)^{p/2} 
\\ \ge |1+u|^p (1+c\eta^{2-p} |1+u|^{-p} |v|^p) = |1+u|^p + c\eta^{2-p} |v|^p. \end{multline*}
Indeed, $|1+u|\le 1+C_0$, so $|1+u|^{-2}$ is bounded away from $0$,
so $|1+u|^{-2}v^2$ is $\ge c\eta^2$. 
Therefore $(1+|1+u|^{-2}v^2)^{p/2}\ge 1+c\eta^{2-p}|1+u|^{-p}|v|^p$,
justifying the inequality step in the display.

If $|u|\le\eta$ then Lemma~\ref{lemma:fka} allows us to continue the chain of inequalities
\begin{align*} |1+z|^p &\ge 1+pu + \tfrac12p(p-1) u^2-C\eta u^2 + c\eta^{2-p} |v|^p  
\\ &\ge 1+pu + c\eta^{2-p} |u|^p + c\eta^{2-p} |v|^p 
\\ &\ge 1+pu + c' \eta^{2-p}|z|^p.\end{align*}
If $-C_0\le u\le -\eta$ then again Lemma~\ref{lemma:fka} gives
\[ |1+z|^p 
\ge 1+pu + c\eta^{2-p} |u|^p + c\eta^{2-p} |v|^p 
\ge 1+pu + c' \eta^{2-p}|z|^p.\]
\newline (v)\ 
If  $|v|\le\eta$ and $-C_0\le u\le -1-\eta_0$
then \[|1+z|^p \ge |1+u|^p > 1+pu+c|u|^p \ge 1+pu+c'|z|^p.\]
\newline (vi)\ 
If $\eta<|z|\le C_0$, $|v|\le\eta$, and $-1-\eta_0\le u\le -1+\eta_0 $
then
\begin{align*} |1+z|^p \ge |1+u|^p \ge 1+pu +\tfrac12 p(p-1)u^2 
\ge 1+pu+c|z|^2 
\ge 1+pu +c\eta^{2-p} |z|^p.
\end{align*}
\newline (vii)\ 
If $\eta<|z|\le C_0$, $|v|\le\eta$, and $-1+\eta_0\le u\le 0$ then 
\begin{align*} |1+z|^p &\ge |1+u|^p (1+|1+u|^{-2}v^2)^{p/2}
\\& \ge |1+u|^p (1+\tfrac{p}2 |1+u|^{-2}v^2 -C|1+u|^{-4}v^4)
\\& = |1+u|^p +\tfrac{p}2 |1+u|^{p-2}v^2 -C|1+u|^{p-4} v^4.
\end{align*}
Since $0\le 1+u\le 1$ and $p<2$,
\[ \tfrac{p}2|1+u|^{p-2}v^2 \ge \tfrac{p}2 v^2 \ge \tfrac12 p(p-1)v^2.\]
Provided that $\eta$ is sufficiently small,
this quantity is large relative to $|1+u|^{p-4}v^4$ since $|1+u|$
is bounded away from zero, $p-4<0$, and $|v|\le \eta$.
Consequently
\begin{multline*} |1+z|^p \ge  |1+u|^p +\tfrac12 p(p-1)v^2
\ge 1+pu+\tfrac12 p(p-1)(u^2+v^2) \ge 1+pu +c\eta^{2-p} |z|^p; \end{multline*}
the inequality $|1+u|^p \ge 1+pu+\tfrac12 p(p-1)u^2$ for $u$ in this range was
shown in the  proof of Lemma~\ref{lemma:fka}.
\end{proof}

\begin{lemma} \label{lemma:expansion}
Let $1<p<2 \le q$.
Let $T:L^p\to L^q$ be a bounded linear operator.
Let $0\ne F\in L^p$ be an extremizer for the inequality $\norm{Tg}_q\le \norm{T}\cdot\norm{g}_p$.
There exist $\eta_0,C,c\in\reals^+$ with the following property.
Let $0<\eta,\tau\le\eta_0$.
Let $f\in L^p$ be arbitrary.
Decompose $f=f_\sharp+f_\flat$ where 
$f_\sharp(x)=f(x)$ if $|f(x)| < \eta F(x)$, and $f_\sharp(x)=0$ otherwise.  Then
\begin{multline*}
\norm{F+f}_p^p
\ge  \norm{F}_p^p  + 
p\Re\big(\textstyle\int f\overline{F} |F|^{p-2} \big)
\\ 
+  \tfrac{p(p-1)}{2} \int |F|^{p}(\Re(f_\sharp/F))^2 
+  \tfrac{p}{2} \int |F|^{p}(\Im (f_\sharp/F))^2 
\\
-C\eta \norm{F}_p^{p-2}
\norm{f_\sharp}_p^2 
+ c\eta^{2-p}  
\norm{f_\flat}_p^p 
\end{multline*}
and
\begin{multline*}
\norm{T(F+f)}_q^q \le   \norm{TF}_q^q 
+ q\Re\big(\textstyle\int Tf\overline{TF} |TF|^{q-2} \big)
\\ +  \tfrac{q(q-1)}{2} \int (\Re(Tf_\sharp/TF))^2 |TF|^{q} 
+  \tfrac{q}{2} \int (\Im(Tf_\sharp/TF))^2 |TF|^{q} 
\\ +(C\tau+C\eta)\norm{T}^2\norm{TF}_p^{q-2}\norm{f_\sharp}_p^2 
\\ + C\tau^{2-q}\norm{T}^q \norm{f}_p^q + C\eta^{-1} \norm{T}^2\norm{TF}_q^{q-2}\norm{f_\flat}_p^2.
\end{multline*}
\end{lemma}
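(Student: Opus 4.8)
The plan is to obtain both inequalities from pointwise elementary estimates for $|1+z|^r$, $r\in\{p,q\}$, with $z$ the ratio of the perturbation to the base function, then integrate and invoke H\"older's inequality to convert the resulting weighted integrals into the norm quantities displayed in the statement.

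For the lower bound on $\norm{F+f}_p^p$ this is essentially immediate. Where $F(x)\ne 0$, write $|F(x)+f(x)|^p=|F(x)|^p\,|1+z(x)|^p$ with $z(x)=f(x)/F(x)$, and apply Lemma~\ref{lemma:complexquasitaylor}; its two cases $|z(x)|\le\eta$ and $|z(x)|>\eta$ are exactly the sets where $f(x)=f_\sharp(x)$ and where $f(x)=f_\flat(x)$. Multiplying by $|F(x)|^p$ turns $\Re(z)|F|^p$ into $\Re(f\overline F)|F|^{p-2}$, turns $(\Re z)^2|F|^p$ and $(\Im z)^2|F|^p$ into the weighted quadratic integrands in the conclusion, turns the error $\eta|z|^2|F|^p$ into $\eta|f_\sharp|^2|F|^{p-2}$, and turns $\eta^{2-p}|z|^p|F|^p$ into $\eta^{2-p}|f_\flat|^p$. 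Points with $F(x)=0$ are trivial, since there $|F+f|^p=|f_\flat|^p\ge c\eta^{2-p}|f_\flat|^p$ as $\eta\le 1$. Integrating and controlling the $C\eta$ remainder by the pointwise bound $|f_\sharp|\le\eta|F|$ on the support of $f_\sharp$ (absorbing, where needed, a small multiple of the positive quadratic terms, which majorize $\int|f_\sharp|^2|F|^{p-2}$) yields the first displayed inequality.

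For the upper bound on $\norm{T(F+f)}_q^q$ the same scheme applies in the target space, but since $q\ge 2$ one needs the reverse of Lemmas~\ref{lemma:fka} and \ref{lemma:complexquasitaylor}: the elementary bound
\[ |1+w|^q\ \le\ 1+q\Re w+\tfrac{q(q-1)}{2}(\Re w)^2+\tfrac{q}{2}(\Im w)^2+C\tau|w|^2+C\tau^{2-q}|w|^q, \qquad w\in\complex,\ \tau\in(0,1], \]
which is proved by an analogous case analysis (and on $\{|w|>\tau\}$ the term $\tau|w|^2$ is itself dominated by $\tau^{2-q}|w|^q$). The genuine difficulty is that the splitting $f=f_\sharp+f_\flat$ is made in the domain of $T$, so $Tf_\sharp,Tf_\flat$ are not controlled pointwise against $TF$; only $\norm{Tf_\sharp}_q\le\norm T\norm{f_\sharp}_p$ and $\norm{Tf_\flat}_q\le\norm T\norm{f_\flat}_p$ are at hand. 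I would therefore argue in two stages. First, write $T(F+f)=(TF+Tf_\sharp)+Tf_\flat$ and peel $Tf_\flat$ off using $|a+b|^q\le |a|^q+q\Re(\overline a b)|a|^{q-2}+C(|a|^{q-2}|b|^2+|b|^q)$ with $a=TF+Tf_\sharp$, together with $\big||a+h|^{q-2}\overline{a+h}-|a|^{q-2}\overline a\big|\le C(|a|^{q-2}|h|+|h|^{q-1})$ for $a=TF$, $h=Tf_\sharp$, to replace the linear term $q\Re(\overline{TF+Tf_\sharp}\,Tf_\flat)|TF+Tf_\sharp|^{q-2}$ by $q\Re(\overline{TF}\,Tf_\flat)|TF|^{q-2}$ modulo $Tf_\sharp\cdot Tf_\flat$ cross contributions; H\"older and an arithmetic--geometric split with weight $\eta$, namely $\norm{f_\sharp}_p\norm{f_\flat}_p\le\tfrac12\eta\norm{f_\sharp}_p^2+\tfrac12\eta^{-1}\norm{f_\flat}_p^2$, produce the remainder $C\eta^{-1}\norm T^2\norm{TF}_q^{q-2}\norm{f_\flat}_p^2$, part of the $C\eta\norm T^2\norm{TF}_q^{q-2}\norm{f_\sharp}_p^2$ remainder, and a contribution to $C\tau^{2-q}\norm T^q\norm f_p^q$ (absorbing $\norm{f_\sharp}_p^{q-2}\norm{f_\flat}_p^2\le\norm f_p^q$ since $\tau^{2-q}\ge 1$). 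Second, apply the displayed $q\ge 2$ bound to $|TF+Tf_\sharp|^q$ with $w=Tf_\sharp/TF$ (so the scale-$\tau$ split now takes place in the target space, independently of the $\sharp/\flat$ split), multiply by $|TF|^q$, and integrate: this produces $\norm{TF}_q^q$, the linear term $q\Re\int Tf_\sharp\,\overline{TF}|TF|^{q-2}$, the two weighted quadratic integrals of $Tf_\sharp/TF$ over all of $\reals^d$, the remainder $C\tau|Tf_\sharp|^2|TF|^{q-2}$ on $\{|w|\le\tau\}$ which H\"older bounds by $C\tau\norm T^2\norm{TF}_q^{q-2}\norm{f_\sharp}_p^2$, and the remainder $C\tau^{2-q}|Tf_\sharp|^q$ on $\{|w|>\tau\}$ which gives a further contribution to $C\tau^{2-q}\norm T^q\norm f_p^q$. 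Summing the two linear terms to $q\Re\int Tf\,\overline{TF}|TF|^{q-2}$ and collecting the remainders gives the second displayed inequality.

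The main obstacle is the $L^q$ half, precisely because the decomposition $f=f_\sharp+f_\flat$ is invisible in the image of $T$: this forces the two-stage argument, and essentially all the work is bookkeeping --- peeling $Tf_\flat$ with only its $L^q$ norm available, disposing of the $Tf_\sharp\cdot Tf_\flat$ cross terms, choosing the H\"older exponents and the weight $\eta$ in the arithmetic--geometric step so that each remainder lands on exactly one of the four error expressions, and verifying that the Taylor remainders for the possibly non-integer exponent $q$ behave as in the displayed elementary bound. The $L^p$ half is, by contrast, a direct integration of Lemma~\ref{lemma:complexquasitaylor}.
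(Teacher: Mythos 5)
Your proof is correct, and the $L^p$ half coincides with the paper's: factor $|F+f|^p=|F|^p|1+f/F|^p$, apply Lemma~\ref{lemma:complexquasitaylor} (whose two regimes are exactly the supports of $f_\sharp$ and $f_\flat$), and integrate. For the $L^q$ half you arrive at the same bound by a genuinely different bookkeeping route. The paper applies the $\tau$-cutoff expansion in the target space to the \emph{full} increment $g=Tf$ against $G=TF$, namely $|G+g|^q\le |G|^q+q\Re(g\overline G|G|^{q-2})+\tfrac{q(q-1)}2(\Re(g/G))^2|G|^q+\tfrac q2(\Im(g/G))^2|G|^q+C\tau|g|^2|G|^{q-2}+C\tau^{2-q}|g|^q$, integrates with H\"older, and only afterwards splits the quadratic terms via $(\Re(Tf/TF))^2\le(1+\eta)(\Re(Tf_\sharp/TF))^2+2\eta^{-1}|Tf_\flat|^2\,|TF|^{-2}$ (and likewise for the imaginary part), which produces the $C\eta^{-1}\norm T^2\norm{TF}_q^{q-2}\norm{f_\flat}_p^2$ and $C\eta$ remainders in one stroke. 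You instead peel $Tf_\flat$ off first, expanding $|a+b|^q$ around $a=TF+Tf_\sharp$, which obliges you to restore the linear term against $TF$ via the difference bound for $z\mapsto |z|^{q-2}\bar z$ and to handle the $Tf_\sharp\cdot Tf_\flat$ cross terms with H\"older and the weighted arithmetic--geometric inequality, before applying the $\tau$-cutoff expansion only to $TF+Tf_\sharp$. The paper's order is leaner because no cross terms ever appear (one weighted Cauchy--Schwarz on the quadratic form does all the sharp/flat separation), whereas your order keeps the target-space cutoff confined to the already controlled increment $Tf_\sharp$; both yield exactly the stated error structure, and your auxiliary pointwise inequalities (the $q\ge 2$ analogue of Lemma~\ref{lemma:fka}, the $|a+b|^q$ expansion, the $|a|^{q-2}\bar a$ difference estimate) are standard and correctly deployed.
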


Here $|F|^p (\Re(f_\sharp/F))^2$ and $|F|^p (\Im(f_\sharp/F))^2$ 
are again interpreted as zero at any point at which $F$ vanishes. 
Likewise, $|TF|^{q})(\Re(Tf_\sharp/TF))^2$
and $|TF|^{q})(\Im(Tf_\sharp/TF))^2$
are interpreted as zero at any point at which $TF$ vanishes.

\begin{proof}
By viewing $|F+f|^p$ as $|F|^p|1+f/F|^p$ and invoking Lemma~\ref{lemma:expansion} one obtains
\begin{multline} \label{eq:Fplusf}
|F+f|^p \ge |F|^p + p|F|^{p-2}\Re(\overline{F} f) 
+ \tfrac12 p(p-1)|F|^{p}\cdot(\Re(f_\sharp/F))^2 
\\ + \tfrac{p}2 |F|^{p}\cdot(\Im(f_\sharp/F))^2 
+ c\eta^{2-p}  |f_\flat|^p - C\eta |F|^{p-2} |f_\sharp|^2.
\end{multline}
Integration gives the first conclusion.

In establishing the second conclusion, we set $G=TF$ and $g=Tf$ to simplify notation,
retaining the convention that the real and imaginary parts of $g/G$ are interpreted as zero
at points at which $G=0$, when accompanied by a factor of $|G|^{q-2}$.
Let $\tau\in(0,\tfrac12]$.
At points $y$ at which $|g|\le \tau |G|$ and $G\ne 0$,
\begin{align*} |G+g|^q 
&\le |G|^q + q\Re(g\overline{G}|G|^{q-2}) + \tfrac12 q(q-1)(\Re(g/G))^2|G|^{q} 
+ \tfrac{q}2 (\Im(g/G))^2|G|^{q} +C|g|^3|G|^{q-3}.
\end{align*}
At points at which $|g|>\tau|G|$, one has the bounds
$|g|\cdot|G|^{q-1} \le \tau^{-(q-1)}|g|^q$
and $|g|^2\cdot|G|^{q-2} \le \tau^{-(q-2)}|g|^q$,
and consequently if $C$ is chosen to be sufficiently large then
\begin{multline*} |G+g|^q \le |G|^q 
+ q\Re(g\,\cdot\,\overline{G}|G|^{q-2}) 
\\ + \tfrac12 q(q-1)(\Re(g/G))^2|G|^{q} 
+ \tfrac{q}2 (\Im(g/G))^2|G|^{q} + C\tau^{2-q} |g|^q \end{multline*}
simply because both the left-hand side of the inequality, and all terms on the right, are $O(\tau^{2-q}|g|^q)$.
Thus uniformly at almost all points of the measure space on which $G,g$ are defined,
\begin{multline*} |G+g|^q \le |G|^q 
+ q\Re(g\,\cdot\,\overline{G}|G|^{q-2}) 
\\ + \tfrac12 q(q-1)(\Re(g/G))^2 |G|^q + \tfrac{q}2 (\Im(g/G))^2|G|^q
\\ + C\tau |g|^2|G|^{q-2} + C\tau^{2-q} |g|^q.  \end{multline*}

Integrating the above upper bound for $|G+g|^q$ and invoking H\"older's inequality therefore gives 
\begin{multline*}
\norm{G+g}_q^q \le   \norm{G}_q^q 
+ q\Re(\int g\,\cdot\,\overline{G}|G|^{q-2}) 
 +  \tfrac{q(q-1)}{2} \int (\Re(g/G))^2 |G|^{q} 
\\
 +  \tfrac{q}{2} \int (\Im(g/G))^2 |G|^{q} 
+C\tau\norm{G}_q^{q-2}\norm{g}_q^2 +C\tau^{2-q} \norm{g}_q^q.
\end{multline*}
Now
\begin{align*} (\Re(Tf/TF))^2 &\le (1+\eta) (\Re(Tf_\sharp/TF))^2 + (1+\eta^{-1})(\Re(Tf_\flat/TF))^2
\\& \le (1+\eta) (\Re(Tf_\sharp/TF))^2 + 2\eta^{-1}|Tf_\flat|^2|TF|^{-2} \end{align*}
and a corresponding inequality holds for the imaginary parts. Substituting this into the preceding
inequality gives
\begin{align*}
\norm{T(F+f)}_q^q &\le   \norm{TF}_q^q 
+ q\Re(\int Tf\,\cdot\,\overline{TF}|TF|^{q-2}) 
\\&\qquad
 +  \tfrac{q(q-1)}{2} \int (\Re(Tf_\sharp/TF))^2 |TF|^{q} 
 +  \tfrac{q}{2} \int (\Im(Tf_\sharp/TF))^2 |TF|^{q} 
\\ & \qquad \qquad +C\tau\norm{TF}_q^{q-2}\norm{Tf}_q^2 +C\tau^{2-q}\norm{Tf}_q^q
+ C\eta^{-1} \norm{TF}_q^{q-2}\norm{T}_p^{2}\norm{f_\flat}_p^2.
\end{align*}
\end{proof}

\begin{proof}[Proof of Proposition~\ref{prop:taylorsubstitute}]
Because $F$ extremizes the ratio $\Phi$, the first variation of $\Phi$ about $F$ must vanish. 
For any $h\in L^p$, both $\norm{F+th}_p$ and $\norm{TF+tTh}_q$ are differentiable
functions of $t\in\reals$.
Therefore $\frac{d}{dt}\Phi(F+th)\big|_{t=0}=0$. Calculation of this derivative gives 
\begin{equation} \norm{F}_p^p\, \Re\big(\textstyle\int Th\,\cdot\, \overline{TF} |TF|^{q-2} \big)
- \norm{TF}_q^q\, \Re\big(\textstyle\int h\cdot \overline{F} |F|^{p-2} \big)=0. \end{equation}
Therefore \begin{equation} \Re\big(\textstyle\int h\overline{F} |F|^{p-2} \big)=0\ \  \Longrightarrow\  \ 
\Re\big(\textstyle\int Th\,\cdot\, \overline{TF} |TF|^{q-2} \big)=0.\end{equation}

Consider the ratio $\Phi(F+f) = \norm{T(F+f)}_q/\norm{F+f}_p$.
Given $\eta>0$, decompose $f$ as $f_\sharp+f_\flat$ in the manner specified 
in the statement of Proposition~\ref{prop:taylorsubstitute}.
Invoke Lemma~\ref{lemma:expansion}, with $\tau$ chosen to equal $\eta$, to obtain an upper bound for the numerator
of this ratio, and a lower bound for its denominator, taking  a $q$-th root in the numerator 
and a $p$-th root in the denominator. 
The terms proportional to $\Re(\int Tf\cdot \overline{TF}|TF|^{q-2})$
in the numerator and $\Re(\int f\cdot \overline{F}|F|^{p-2})$ in the denominator vanish.
Factor $\norm{TF}_q$ from the numerator, and $\norm{F}_p$ from the denominator;
their ratio gives a factor of $\norm{T}$.
Apply binomial expansions to the remaining numerator and denominator and simplify to obtain 
\begin{multline} \label{eq:junkterms}
\norm{T}^{-1} \Phi(F+f) \le  1 + \scriptq_F(f_\sharp)
+C\eta  \norm{F}_p^{-2} \norm{f}_p^2 
\\ 
+ C\eta^{2-q} \norm{f}_p^q\norm{F}_p^{-q}
- c \eta^{2-p}   \norm{F}_p^{-p} \norm{f_\flat}_p^p 
+ C\eta^{-1}  \norm{F}_p^{-2}\norm{f_\flat}_p^2.
\end{multline}
Provided that $\norm{f_\flat}_p/\norm{F}_p<\delta$,
the sum of the second and third terms in the second line of \eqref{eq:junkterms} can be rewritten as
\begin{align*} \norm{F}_p^{-p}\norm{f_\flat}_p^p &\big(-c\eta^{2-p} + C\eta^{-1} \big(\norm{f_\flat}_p/\norm{F}_p\big)^{2-p} \big) 
\\&\le \norm{F}_p^{-p}\norm{f_\flat}_p^p \big(-c\eta^{2-p} + C\eta^{-1} \big(\norm{f}_p/\norm{F}_p\big)^{2-p} \big) 
\\& \le -\norm{F}_p^{-p}\norm{f_\flat}_p^p \big(c\eta^{2-p} - C\delta^{2-p}\eta^{-1} \big)  \end{align*}
since $\norm{f_\flat}_p\le\norm{f}_p\le\delta\norm{F}_p$.
The first term in the second line of \eqref{eq:junkterms} is
\begin{align*} C\eta^{2-q} \norm{f}_p^q\norm{F}_p^{-q}
\le C\delta^{q-2}\eta^{2-q} \norm{f}_p^2\norm{F}_p^{-2}.  \end{align*}
Thus
\begin{multline} 
\norm{T}^{-1} \Phi(F+f) \le  1 + \scriptq_F(f_\sharp)
\\
+C\big(\eta + (\delta/\eta)^{q-2}\big)  \norm{F}_p^{-2} \norm{f}_p^2 
- \big(c\eta^{2-p} - C\delta^{2-p}\eta^{-1} \big)  \norm{F}_p^{-p}\norm{f_\flat}_p^p.
\end{multline}
Since $p<2$ and $q>2$, the final line is
\[ \le 2C\eta   \norm{F}_p^{-2} \norm{f}_p^2 - \tfrac12 c\eta^{2-p} \norm{F}_p^{-p}\norm{f_\flat}_p^p\]
provided that $\delta\le c_0 \eta^\gamma$ where
\[ \gamma = \max\big(\frac{q-1}{q-2},\,\frac{3-p}{2-p}\big)\]
and $c_0$ is a sufficiently small constant.
Finally,
\begin{align*} \eta   \norm{F}_p^{-2} \norm{f}_p^2 
&\le C\eta   \norm{F}_p^{-2} \norm{f_\sharp}_p^2 + C\eta \norm{F}_p^{-2}\norm{f_\flat}_p^2
\\ &\le C\eta   \norm{F}_p^{-2} \norm{f_\sharp}_p^2 + C\delta^{2-p}\eta \norm{F}_p^{-p}\norm{f_\flat}_p^p.\end{align*}
The second term can be absorbed into the term $- c\eta^{2-p} \norm{F}_p^{-p}\norm{f_\flat}_p^p$;
$\delta^{2-p}\eta\ll \eta^{2-p}$ since $p\in(1,2)$ and $\delta,\eta$ are small.
\end{proof}

When $q=p'$, this exponent $\gamma$ is equal to $\max(\frac1{2-p},\frac{3-p}{2-p}) = \frac{3-p}{2-p}$.

\section{Second variation for the Hausdorff-Young inequality}
We now return to the special case $Tf=\widehat{f}$ with $f\in L^p(\reals^d)$,
and $q=p'$. Thus $\Phi(f) = \norm{\widehat{f}}_q/\norm{f}_p$.  Set \[G(x) = e^{-\pi|x|^2}.\]
Then $\int_{\reals^d}G=1$, $\widehat{G}\equiv G$, and \[\norm{G}_p^p = p^{-d/2}.\]

The quadratic form $\scriptq=\scriptq_G$ introduced above becomes
\begin{multline} \scriptq(h) 
= \tfrac12 (q-1) q^{d/2} \int G^{q-2}|\Re(\widehat{h})|^2 
+ \tfrac12  q^{d/2} \int G^{q-2}|\Im(\widehat{h})|^2 
\\
- \tfrac12 (p-1) p^{d/2} \int G^{p-2} |\Re(h)|^2  
- \tfrac12  p^{d/2} \int G^{p-2} |\Im(h)|^2  
\end{multline}
since $G$ is real-valued. Since $(q-1)\ge 1$,
\begin{multline} \scriptq(h) 
\le \tfrac12 (q-1) q^{d/2} \int G^{q-2}|\widehat{h}|^2 
- \tfrac12 (p-1) p^{d/2} \int G^{p-2} |\Re(h)|^2  
- \tfrac12  p^{d/2} \int G^{p-2} |\Im(h)|^2.  
\end{multline}
Note that $G^{p-2} = e^{(2-p)\pi|x|^2}$ and $2-p>0$; finiteness of $\int |h|^2 G^{p-2}$
implies finiteness of $\norm{h}_p^p$, but not vice versa. 

We have shown that if $f\in L^p(\reals^d)$ has small norm and
satisfies $\Re(\int f G^{p-1})=0$, then for any small $\eta>0$,
there exists a decomposition $f=f_\sharp+f_\flat$ such that
\[\norm{f}_p^p = \norm{f_\sharp}_p^p + \norm{f_\flat}_p^p\]
and
\begin{equation}\label{eq:finalexpansion} 
\frac{\norm{\widehat{G+f}_q}} { {\norm{G+f}_p} } \le \bestA_p^d
+ \bestA_p^d \scriptq(f_\sharp) -c\eta^{2-p} \norm{f_\flat}_p^p + C\eta \norm{f}_p^2.
\end{equation}
Since $|f_\sharp|\le\eta G$, $\int G^{p-2}|f_\sharp|^2$ is necessarily finite.
Since $G^{q-2}$ is a Schwartz function, $\int |\widehat{f_\sharp}|^2 G^{q-2}$ is also finite
by H\"older's inequality.  Thus $\scriptq(f_\sharp)$ is well-defined.

Denote the inner product for $L^2(\reals^d)$ by $\langle f,\,g\rangle = \int f\,\overline{g}$.
Denote convolution by $*$; $\varphi*\psi(x) = \int \varphi(x-y)\psi(y)\,dy$.
Define $G^t(x) = (G(x))^t = e^{-\pi t|x|^2}$. 
For any $s,t>0$,
\begin{equation} 
\widehat{G^t}(\xi)  = t^{-d/2} e^{-\pi |\xi|^2/t} = t^{-d/2} G^{1/t}(\xi)
\end{equation}
By Plancherel's identity, for any $h\in L^p$,
\begin{equation}\label{eq:removeFT} \textstyle\int |\widehat{h}|^2 G^{q-2} 
= \langle \widehat{h},\,G^{q-2}\widehat{h}\rangle
= \langle h, (G^{q-2})^\vee *h\rangle
= (q-2)^{-d/2} \langle h, G^\sigma * h\rangle\end{equation}
where 
\begin{equation} \sigma = (q-2)^{-1} = \frac{p-1}{2-p}.  \end{equation}

\begin{definition}
For $\varphi\in L^2(\reals^d)$, 
\begin{equation} \T(\varphi) = G^t\cdot \big(G^\sigma*(G^t \varphi)\big)\end{equation}
where  $\sigma = (p-1)(2-p)^{-1}$  and $t = (2-p)/2$.
\end{definition}
This operator
$\T$ is a complex-linear, compact, self-adjoint, nonnegative operator on complex-valued $L^2(\reals^d)$. 
Then
\begin{equation}
\int |\widehat{f}|^2 G^{q-2} = (q-2)^{-d/2}\langle h,\scriptt h\rangle
\end{equation}
where $h = fG^{-t} = fG^{(p-2)/2}$ for all $f$ in a suitable dense subspace of $L^p$.

\begin{lemma} \label{lemma:eigenvalues}
There is a complete set of orthogonal eigenfunctions for $\T$ indexed by $\set{0,1,2,\dots}^d$
of the form 
\begin{equation} \psi_\alpha(x)=(x^\alpha+r_\alpha(x))e^{-p\pi|x|^2/2},\end{equation}
where $r_\alpha$ is a polynomial of degree $\le |\alpha|-2$,
with corresponding eigenvalues 
\begin{equation} \lambda_\alpha = (p-1)^{|\alpha|}(2-p)^{d/2}.\end{equation}
\end{lemma}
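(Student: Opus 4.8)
The plan is to reduce the statement to dimension one and then diagonalize the one‑dimensional operator by a coherent‑state (generating function) computation.

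\emph{Reduction to $d=1$.} Since $G(x)=\prod_{j=1}^d e^{-\pi x_j^2}$ and convolution on $\reals^d$ is the $d$‑fold tensor product of the one‑dimensional convolutions, the operator $\T$ on $L^2(\reals^d)$ factors as $\T=\T_1^{\otimes d}$, where $\T_1\varphi=G^t\cdot\big(G^\sigma*(G^t\varphi)\big)$ acts on $L^2(\reals)$ with the same $\sigma=(p-1)/(2-p)$ and $t=(2-p)/2$. Hence it suffices to produce, for $d=1$, a complete orthogonal set of eigenfunctions $\psi_n(x)=(x^n+r_n(x))e^{-p\pi x^2/2}$ with $\deg r_n\le n-2$ and eigenvalue $\lambda_n=(p-1)^n(2-p)^{1/2}$. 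Indeed, then $\psi_\alpha(x)=\prod_j\psi_{\alpha_j}(x_j)$ is a complete orthogonal system of eigenfunctions of $\T$ with $\lambda_\alpha=\prod_j\lambda_{\alpha_j}=(p-1)^{|\alpha|}(2-p)^{d/2}$, and expanding $\prod_j(x_j^{\alpha_j}+r_{\alpha_j}(x_j))$ one sees that $\psi_\alpha=(x^\alpha+r_\alpha(x))e^{-p\pi|x|^2/2}$ with $\deg r_\alpha\le|\alpha|-2$, since any monomial other than $x^\alpha$ in that product loses degree at least two in some factor.

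\emph{Action of $\T_1$ on Gaussians.} For $w\in\complex$ set $\Phi_w(x)=e^{-\frac{p\pi}{2}x^2+2\pi wx-\frac{\pi}{p}w^2}$. Because $t+\tfrac p2=1$, completing the square gives $G^t(x)\Phi_w(x)=e^{-\pi x^2+2\pi wx-\frac\pi p w^2}=e^{\frac{(p-1)\pi}{p}w^2}\,e^{-\pi(x-w)^2}$, a complex translate of $G$. The elementary Gaussian convolution identity, together with $1+\sigma=(2-p)^{-1}$ and hence $\tfrac{\sigma}{1+\sigma}=p-1$, gives $G^\sigma*G=(2-p)^{1/2}e^{-\pi(p-1)x^2}$; since moreover $t+(p-1)=\tfrac p2$, we obtain
\[ \T_1\Phi_w(x)=(2-p)^{1/2}\,e^{\frac{(p-1)\pi}{p}w^2}\,e^{-\frac{p\pi}{2}x^2+2\pi(p-1)wx-\pi(p-1)w^2}=(2-p)^{1/2}\,\Phi_{(p-1)w}(x), \]
where the last equality uses $\tfrac{(p-1)\pi}{p}-\pi(p-1)=-\tfrac{\pi(p-1)^2}{p}$. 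I expect this identity to be the main point of the proof: it is exactly here that the relations among $p$, $\sigma$, and $t$ conspire, although once it is set up it is only a finite computation.

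\emph{Extraction of eigenfunctions and completeness.} Writing $\Phi_w(x)=e^{-\frac{p\pi}{2}x^2}e^{2\pi wx-\frac\pi p w^2}$ and recognizing the second factor, after the substitution $y=\sqrt{\pi p}\,x$, as the Hermite generating function, one gets $\Phi_w=\sum_{n\ge0}\frac{(2\pi w)^n}{n!}\psi_n$ with $\psi_n(x)=q_n(x)e^{-p\pi x^2/2}$, where $q_n(x)=\big(2^n(\pi p)^{n/2}\big)^{-1}H_n(\sqrt{\pi p}\,x)$ is monic of degree $n$; since $H_n$ has parity $(-1)^n$, $q_n-x^n$ has degree $\le n-2$, so $\psi_n$ has the required shape. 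The series converges absolutely in $L^2(\reals)$ for every $w$, since the $L^2$‑norm of its $n$‑th term is $O\big((n!)^{-1/2}A^n\big)$ by the classical identity $\int H_n(\sqrt{\pi p}\,x)^2e^{-p\pi x^2}\,dx=c\,2^n n!$; therefore the bounded operator $\T_1$ may be applied term by term, and comparing the resulting series with $(2-p)^{1/2}\Phi_{(p-1)w}=\sum_n\frac{(2\pi w)^n}{n!}(2-p)^{1/2}(p-1)^n\psi_n$ and matching coefficients yields $\T_1\psi_n=(2-p)^{1/2}(p-1)^n\psi_n$. Finally, $\{\psi_n\}_{n\ge0}$ spans the same subspace of $L^2(\reals)$ as $\{x^ne^{-p\pi x^2/2}\}_{n\ge0}$, by triangularity since each $q_n$ is monic of degree $n$, and the latter is total in $L^2(\reals)$: if $f$ is orthogonal to all of these functions, then the entire function $\zeta\mapsto\int f(x)e^{-p\pi x^2/2}e^{\zeta x}\,dx$ has all derivatives vanishing at $\zeta=0$, hence is identically zero, forcing $f=0$. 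Orthogonality of the $\psi_n$ is then automatic, because $\T_1$ is self‑adjoint (its kernel $e^{-\pi t x^2}e^{-\pi\sigma(x-y)^2}e^{-\pi t y^2}$ is real and symmetric) and its eigenvalues $(2-p)^{1/2}(p-1)^n$ are pairwise distinct, as $p-1\in(0,1)$. This produces a complete orthogonal set of eigenfunctions in dimension one, and by the first paragraph the lemma follows.
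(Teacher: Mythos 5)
Your proof is correct, but it takes a genuinely different route from the paper. The paper works directly in $\reals^d$ and never leaves the polynomial scale: it first checks $\T(G^{p/2})=(2-p)^{d/2}G^{p/2}$, then proves by induction (integrating by parts against the Gaussian) a recursion $Q_{x_kP}=(p-1)x_kQ_P-(2\pi)^{-1}\partial_{x_k}Q_P+(2\pi)^{-1}Q_{\partial_{x_k}P}$, which shows that $\T$ acts triangularly on $\{x^\alpha G^{p/2}\}$ with diagonal entries $(p-1)^{|\alpha|}(2-p)^{d/2}$; self-adjointness plus Gram--Schmidt then produces the eigenfunctions, and completeness comes from identifying them with Hermite functions after a change of variables. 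You instead tensorize to $d=1$ and diagonalize $\T_1$ in closed form on the coherent states $\Phi_w$, via the identity $\T_1\Phi_w=(2-p)^{1/2}\Phi_{(p-1)w}$, then extract the eigenfunctions by the Hermite generating function and match coefficients of an $L^2$-valued power series. Your computation is in effect a direct derivation of the Mehler-kernel structure that the paper alludes to when citing Beckner; it buys completely explicit eigenfunctions (rescaled Hermite functions), a one-line completeness argument, and avoids both the induction and the Gram--Schmidt step, at the price of a little analytic care that you do handle: $L^2$-convergence of the generating series so that $\T_1$ can be applied term by term, and--the one point you pass over silently--the Gaussian convolution identity applied to a \emph{complex} translate $e^{-\pi(\cdot-w)^2}$, which should be justified by analytic continuation in $w$ (both sides are entire and agree for real $w$). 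The paper's recursive argument, by contrast, stays real and algebraic and yields the stated form $(x^\alpha+r_\alpha)G^{p/2}$ in dimension $d$ directly rather than by expanding a product of one-dimensional eigenfunctions; both arguments deliver exactly the eigenvalues and the span information the paper needs.
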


This can be read off, via a change of variables, from properties of the Mehler kernel and Hermite semigroup; 
see Beckner \cite{beckner}, p.~163.
Since Theorem~\ref{thm:refinement1} and Theorem~\ref{thm:refinement2}
depend on precise values of eigenvalues and an identification
of the span of a certain family of eigenfunctions, and since a self-contained derivation 
is not significantly longer than the derivation from standard formulas via change of variables, we include a self-contained 
proof of Lemma~\ref{lemma:eigenvalues} in \S\ref{section:Hermite}.

\begin{definition}
The subspace $\scriptv_1\subset L^2(\reals^d)$ consists of all complex-valued functions $F\in L^2$ that satisfy
\begin{equation} \label{scriptvprimedefn}
\int F(x)\,x^\alpha G^{p/2}(x)\,dx=0\ \text{ for all $0\le|\alpha|\le 1$.} \end{equation} \end{definition}

\begin{definition}
The subspace $\scriptv_2\subset\scriptv_1$ is the closed 
subspace of $L^2(\reals^d)$ consisting of all complex-valued functions $F$ that satisfy 
\begin{equation} \label{scriptvdefn}
\int F(x)\,x^\alpha G^{p/2}(x)\,dx=0\ \text{ for all $0\le|\alpha|\le 2$.}
\end{equation}
\end{definition}

According to Lemma~\ref{lemma:eigenvalues}, both $\scriptv_1,\scriptv_2$ are invariant subspaces for $\T$.
These two subspaces are of special significance because
if $h\in\scriptv_1$ and $\Re(h)\in\scriptv_2$  then
$hG^{1-(p/2)}$ belongs to the real tangent space $\scriptn_G$ to $\frakG$ at $G$.
The converse relation holds for a dense subspace of $\scriptn_G$.

The following inequalities are direct consequences of Lemma~\ref{lemma:eigenvalues}.
\begin{corollary} \label{cor:spectralcorollary}
\begin{align} \norm{\T(F)}_{L^2(\reals^d)} &\le  (p-1)^{3} (2-p)^{d/2}\norm{F}_{L^2(\reals^d)}
\ \text{ for all $F\in\scriptv_2$.} 
\\ \norm{\T(F)}_{L^2(\reals^d)} &\le  (p-1)^{2} (2-p)^{d/2}\norm{F}_{L^2(\reals^d)}
\ \text{ for all $F\in\scriptv_1$.} \end{align}
\end{corollary}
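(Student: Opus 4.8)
The plan is to read both inequalities directly off the spectral description of $\T$ furnished by Lemma~\ref{lemma:eigenvalues}. Recall that $\set{\psi_\alpha}_{\alpha\in\set{0,1,\dots}^d}$ is a complete orthogonal system of eigenfunctions of the compact self-adjoint nonnegative operator $\T$ on $L^2(\reals^d)$, with $\T\psi_\alpha=\lambda_\alpha\psi_\alpha$ and $\lambda_\alpha=(p-1)^{|\alpha|}(2-p)^{d/2}$. Since $0<p-1<1$, the eigenvalues $\lambda_\alpha$ are strictly decreasing in $|\alpha|$; hence $\sup_{|\alpha|\ge 2}\lambda_\alpha = \lambda_\alpha$ for any $\alpha$ with $|\alpha|=2$, which equals $(p-1)^2(2-p)^{d/2}$, and $\sup_{|\alpha|\ge 3}\lambda_\alpha=(p-1)^3(2-p)^{d/2}$. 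This is the entire arithmetic content of the corollary; the remaining work is to verify that restricting to $\scriptv_1$, respectively $\scriptv_2$, is the same as restricting to the closed span of the $\psi_\alpha$ with $|\alpha|\ge 1$, respectively $|\alpha|\ge 2$.

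The key step is therefore an identification of subspaces. First I would observe that by Lemma~\ref{lemma:eigenvalues} each $\psi_\alpha$ has the form $\psi_\alpha(x)=(x^\alpha+r_\alpha(x))e^{-p\pi|x|^2/2}$ with $\deg r_\alpha\le|\alpha|-2$, so $\set{\psi_\alpha:|\alpha|\le k}$ and $\set{x^\alpha e^{-p\pi|x|^2/2}:|\alpha|\le k}$ span the same finite-dimensional space for every $k$; this is an elementary triangularity argument on the grading of polynomials by degree. Consequently, for a function $F\in L^2$, the conditions $\langle F,\psi_\alpha\rangle=0$ for all $|\alpha|\le k$ are equivalent to $\int F(x)\,x^\alpha e^{-p\pi|x|^2/2}\,dx=0$ for all $|\alpha|\le k$, i.e.\ to the defining conditions of $\scriptv_1$ (when $k=1$) and $\scriptv_2$ (when $k=2$), recalling $G^{p/2}(x)=e^{-p\pi|x|^2/2}$. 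Since the $\psi_\alpha$ form an orthogonal basis, $F\in\scriptv_1$ means precisely that $F$ lies in the closed span of $\set{\psi_\alpha:|\alpha|\ge 1}$, and $F\in\scriptv_2$ means $F$ lies in the closed span of $\set{\psi_\alpha:|\alpha|\ge 2}$. These subspaces are $\T$-invariant (already noted in the excerpt just before the corollary).

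Given the identification, the bounds follow from the spectral theorem: if $F=\sum_{|\alpha|\ge k}c_\alpha\psi_\alpha$ (convergence in $L^2$, with the $\psi_\alpha$ normalized or with the appropriate Parseval weights), then $\T F=\sum_{|\alpha|\ge k}\lambda_\alpha c_\alpha\psi_\alpha$, so by orthogonality $\norm{\T F}_{L^2}^2=\sum_{|\alpha|\ge k}\lambda_\alpha^2|c_\alpha|^2\norm{\psi_\alpha}_{L^2}^2\le\big(\sup_{|\alpha|\ge k}\lambda_\alpha\big)^2\sum_{|\alpha|\ge k}|c_\alpha|^2\norm{\psi_\alpha}_{L^2}^2=\big(\sup_{|\alpha|\ge k}\lambda_\alpha\big)^2\norm{F}_{L^2}^2$. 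Taking $k=2$ gives the second inequality with constant $(p-1)^2(2-p)^{d/2}$, and $k=3$ would give $(p-1)^3(2-p)^{d/2}$; but for $\scriptv_2$ we only excluded $|\alpha|\le 2$, so the relevant supremum is over $|\alpha|\ge 3$, yielding the stated constant $(p-1)^3(2-p)^{d/2}$ — one checks that no eigenfunction with $|\alpha|=2$ survives in $\scriptv_2$, precisely because $\scriptv_2$ was defined by orthogonality to all $x^\alpha G^{p/2}$ with $|\alpha|\le 2$. I do not anticipate a genuine obstacle here; the only point requiring a moment's care is the triangularity argument showing that the orthogonality conditions against monomials $x^\alpha$ coincide with orthogonality against the eigenfunctions $\psi_\alpha$, and the bookkeeping of which range of $|\alpha|$ is excluded in each case (strict degree $\le 1$ versus $\le 2$), which is what pins down the exponents $2$ and $3$ in the two constants.
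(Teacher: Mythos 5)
Your argument is correct and is essentially the route the paper intends: the corollary is presented there as a direct consequence of Lemma~\ref{lemma:eigenvalues}, exactly via the triangular relation between the eigenfunctions $\psi_\alpha=(x^\alpha+r_\alpha)G^{p/2}$ and the monomials $x^\alpha G^{p/2}$, the resulting identification of $\scriptv_1,\scriptv_2$ with spans of eigenfunctions, and the monotonicity of $\lambda_\alpha=(p-1)^{|\alpha|}(2-p)^{d/2}$ in $|\alpha|$. Only correct the off-by-one slip in your identification sentence: $F\in\scriptv_1$ lies in the closed span of $\set{\psi_\alpha: |\alpha|\ge 2}$ and $F\in\scriptv_2$ in that of $\set{\psi_\alpha: |\alpha|\ge 3}$ (which is what your final paragraph actually uses to get the constants $(p-1)^2$ and $(p-1)^3$), not $|\alpha|\ge 1$ and $|\alpha|\ge 2$ as written there.
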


These will be used to establish the following upper bound for the quadratic form $\scriptq$.
\begin{proposition} \label{prop:spectral} Let $d\ge 1$ and $p\in(1,2)$.
For all complex-valued functions $h\in L^p(\reals^d)$ satisfying 
\begin{equation} \label{h;scriptv} 
\left\{\begin{aligned}
& \int_{\reals^d} |h|^2 G^{p-2}<\infty
\\&\int h(x)\,x^\alpha G^{p-1}(x)\,dx=0\ \text{ for all $0\le|\alpha|\le 1$}
\\ &\int \Re(h)(x)\,x^\alpha G^{p-1}(x)\,dx=0\ \text{ for all $|\alpha| = 2$}
\end{aligned} \right. \end{equation}
one has
\begin{equation} \scriptq(h) \le -\tfrac12 (2-p)(p-1)p^{d/2} \int_{\reals^d} |h|^2 G^{p-2}.  \end{equation}
\end{proposition}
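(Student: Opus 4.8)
The plan is to pass, via the substitution $h=G^{(2-p)/2}\varphi$, to the operator $\T$ acting on $\varphi\in L^2(\reals^d)$, and then to read off the conclusion from the eigenvalue bounds of Corollary~\ref{cor:spectralcorollary}. Set $t=(2-p)/2$ and $\varphi=hG^{-t}=hG^{(p-2)/2}$. The hypothesis $\int|h|^2G^{p-2}<\infty$ says exactly that $\varphi\in L^2$, and since $G$ is real-valued one has $\Re\varphi=G^{(p-2)/2}\Re h$ and $\Im\varphi=G^{(p-2)/2}\Im h$, hence $\int G^{p-2}|h|^2=\norm{\varphi}_{2}^2$, $\int G^{p-2}|\Re h|^2=\norm{\Re\varphi}_{2}^2$, and $\int G^{p-2}|\Im h|^2=\norm{\Im\varphi}_{2}^2$. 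Starting from the bound $\scriptq(h)\le\tfrac12(q-1)q^{d/2}\int G^{q-2}|\widehat h|^2-\tfrac12(p-1)p^{d/2}\int G^{p-2}|\Re h|^2-\tfrac12 p^{d/2}\int G^{p-2}|\Im h|^2$ obtained above from $q-1\ge1$, and combining the identity \eqref{eq:removeFT} with the definition of $\T$, which together give $\int|\widehat h|^2G^{q-2}=(q-2)^{-d/2}\langle\varphi,\T\varphi\rangle$, I obtain
\[ \scriptq(h)\le\tfrac12(q-1)q^{d/2}(q-2)^{-d/2}\langle\varphi,\T\varphi\rangle-\tfrac12(p-1)p^{d/2}\norm{\Re\varphi}_{2}^2-\tfrac12 p^{d/2}\norm{\Im\varphi}_{2}^2. \]
For $h$ merely satisfying $\int|h|^2G^{p-2}<\infty$ one checks $\int|\widehat h|^2G^{q-2}<\infty$ by H\"older's inequality, since $\widehat h\in L^q$ and $G^{q-2}$ is a Schwartz function, and then extends the two operator identities from a dense subspace by continuity.

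Next I translate the moment conditions. Since $G^{p-1}=G^{(p-2)/2}G^{p/2}$, the condition $\int h\,x^\alpha G^{p-1}=0$ for $0\le|\alpha|\le1$ becomes $\int\varphi\,x^\alpha G^{p/2}=0$ for $0\le|\alpha|\le1$, i.e.\ $\varphi\in\scriptv_1$; taking real and imaginary parts (the weights $x^\alpha G^{p/2}$ being real) gives $\Re\varphi,\Im\varphi\in\scriptv_1$. The further condition $\int\Re(h)\,x^\alpha G^{p-1}=0$ for $|\alpha|=2$ then upgrades $\Re\varphi$ to $\Re\varphi\in\scriptv_2$. Because $\T$ is defined by a kernel built from the real functions $G^t$ and $G^\sigma$, it commutes with complex conjugation, so $\T\Re\varphi=\Re\T\varphi$ and $\T\Im\varphi=\Im\T\varphi$ are real-valued, and self-adjointness of $\T$ makes the cross terms cancel: $\langle\varphi,\T\varphi\rangle=\langle\Re\varphi,\T\Re\varphi\rangle+\langle\Im\varphi,\T\Im\varphi\rangle$. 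By Cauchy--Schwarz together with Corollary~\ref{cor:spectralcorollary}, $\langle\Re\varphi,\T\Re\varphi\rangle\le(p-1)^3(2-p)^{d/2}\norm{\Re\varphi}_{2}^2$ (using $\Re\varphi\in\scriptv_2$) and $\langle\Im\varphi,\T\Im\varphi\rangle\le(p-1)^2(2-p)^{d/2}\norm{\Im\varphi}_{2}^2$ (using $\Im\varphi\in\scriptv_1$).

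It remains to do the arithmetic. With $q=p/(p-1)$ one has $q-1=(p-1)^{-1}$ and $q-2=(2-p)(p-1)^{-1}$, so $q^{d/2}(q-2)^{-d/2}=(p/(2-p))^{d/2}$ and the coefficient of $\langle\varphi,\T\varphi\rangle$ in the displayed bound is $\tfrac12(p-1)^{-1}(p/(2-p))^{d/2}$. Substituting the two eigenvalue estimates, the $\T$-term is at most $\tfrac12 p^{d/2}\big((p-1)^2\norm{\Re\varphi}_{2}^2+(p-1)\norm{\Im\varphi}_{2}^2\big)$, whence
\[ \scriptq(h)\le\tfrac12 p^{d/2}\big((p-1)^2-(p-1)\big)\norm{\Re\varphi}_{2}^2+\tfrac12 p^{d/2}\big((p-1)-1\big)\norm{\Im\varphi}_{2}^2=-\tfrac12 p^{d/2}(2-p)\big((p-1)\norm{\Re\varphi}_{2}^2+\norm{\Im\varphi}_{2}^2\big). \]
Since $0<p-1<1$ we have $\norm{\Im\varphi}_{2}^2\ge(p-1)\norm{\Im\varphi}_{2}^2$, so the last expression is at most $-\tfrac12(2-p)(p-1)p^{d/2}\big(\norm{\Re\varphi}_{2}^2+\norm{\Im\varphi}_{2}^2\big)=-\tfrac12(2-p)(p-1)p^{d/2}\int|h|^2G^{p-2}$, which is the claim.

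The only delicate part is the bookkeeping of the moment conditions in the second paragraph: one must verify that they force $\Re\varphi\in\scriptv_2$ and $\Im\varphi\in\scriptv_1$, so that the two \emph{different} bounds of Corollary~\ref{cor:spectralcorollary} get applied to the right pieces --- it is the stronger bound $(p-1)^3$ on $\scriptv_2$ for the real part that is responsible for the favorable sign --- and that the exponent bookkeeping yields precisely the coefficient $(2-p)(p-1)p^{d/2}$; the passage from a dense subspace to general admissible $h$ is routine.
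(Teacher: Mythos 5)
Your proof is correct and follows essentially the same route as the paper: the substitution $F=G^{(p-2)/2}h$, the identity \eqref{eq:removeFT} converting the Fourier-side term to $\langle F,\T F\rangle$, and the two eigenvalue bounds of Corollary~\ref{cor:spectralcorollary} applied with $\Re F\in\scriptv_2$ and $\Im F\in\scriptv_1$, followed by the same arithmetic. The only (harmless) difference is that you split $\langle F,\T F\rangle$ into real and imaginary parts using that $\T$ commutes with conjugation, whereas the paper splits $F$ by orthogonal projection onto $\scriptv_2$ in the real Hilbert space sense; the resulting bound is identical.
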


\begin{proof}[Proof of Proposition~\ref{prop:spectral}]
Let $h$ satisfy \eqref{h;scriptv}. Define
\[ F = G^{(p-2)/2}h \ \text{ and }\ t = \tfrac12 (2-p). \]
Then $F\in L^2(\reals^d)$ with $\int |h|^2 G^{p-2} = \int |F|^2$, $F\in\scriptv_1$,
and $\Re(F)\in\scriptv_2$. By \eqref{eq:removeFT},
\begin{equation*} \int |\widehat{h}|^2 G^{q-2} 
= (q-2)^{-d/2} \langle F, \T(F)\rangle 
= (p-1)^{d/2}(2-p)^{-d/2}\langle F, \T(F)\rangle \end{equation*}
where $\langle\cdot,\cdot\rangle$ denotes the Hermitian inner product.
Thus
\begin{align*}
\scriptq(h) 
&\le \tfrac12(q-1)q^{d/2} (p-1)^{d/2}(2-p)^{-d/2}\langle F,\T F\rangle
-\tfrac12(p-1)p^{d/2} \norm{\Re(F)}_2^2
-\tfrac12 p^{d/2} \norm{\Im(F)}_2^2
\\&
= \tfrac12(p-1)^{-1}(2-p)^{-d/2}p^{d/2}\langle F,\T F\rangle
-\tfrac12(p-1)p^{d/2} \norm{\Re(F)}_2^2
-\tfrac12 p^{d/2} \norm{\Im(F)}_2^2.
\end{align*}

Let $F_{\scriptv_2}$ be the orthogonal projection of $F$ onto $\scriptv_2$;
here complex-valued $L^2(\reals^d)$ is regarded as a {\em real} Hilbert space and the 
orthogonal projection is taken in this sense.
Since $F\in\scriptv_1$, since $\scriptv_2$ and $\scriptv_1\ominus\scriptv_2$ are 
invariant subspaces for $\T$, and since  $F-F_{\scriptv_2}\perp F_{\scriptv_2}$,
\[ \langle F,\T F\rangle \le (p-1)^2(2-p)^{d/2} \norm{F-F_{\scriptv_2}}_2^2
+ (p-1)^3(2-p)^{d/2} \norm{F_{\scriptv_2}}_2^2.  \]
The assumptions on $h$ imply that $\Re(F)\in\scriptv_2$, and $i\Im(F)\in\scriptv_1$, 
which contains $\scriptv_2$.  Therefore $F-F_{\scriptv_2}$ equals the 
orthogonal projection of $i\Im(F)$ onto $\scriptv_1\ominus\scriptv_2$.
Therefore  since $p-1\le 1$,
\begin{multline*} (p-1)^2(2-p)^{d/2} \norm{F-F_{\scriptv_2}}_2^2
+ (p-1)^3(2-p)^{d/2} \norm{F_{\scriptv_2}}_2^2
\\ \le (p-1)^2(2-p)^{d/2} \norm{\Im(F)}_2^2
+ (p-1)^3(2-p)^{d/2} \norm{\Re(F)}_2^2.  \end{multline*}
In all,
\begin{align*} \scriptq(h) &\le
\Big(\tfrac12(p-1)^{-1}(2-p)^{-d/2}p^{d/2} (p-1)^3(2-p)^{d/2} -\tfrac12(p-1)p^{d/2}\Big) \norm{\Re(F)}_2^2
\\& \qquad + \Big(\tfrac12(p-1)^{-1}(2-p)^{-d/2}p^{d/2} (p-1)^2(2-p)^{d/2} -\tfrac12 p^{d/2}\Big) \norm{\Im(F)}_2^2
\\& = \tfrac12 p^{d/2} (p-2) \Big( (p-1)\norm{\Re(F)}_2^2 + \norm{\Im(F)}_2^2\Big).  \end{align*}
The factor $p-2$ is negative. Since $p-1<1$, and since $\norm{F}_2^2 = \int |h|^2G^{p-2}$, we conclude that
\begin{equation} \scriptq(h) \le -\tfrac12 p^{d/2} (2-p)  (p-1)\int |F|^2
= -\tfrac12 p^{d/2} (2-p)  (p-1)\int |h|^2 G^{p-2}.  \end{equation}
\end{proof}

The condition $f\in\scriptn_G$ is not inherited by $f_\sharp$
under the decomposition $f=f_\sharp+f_\flat$. 
Consequently $G^{(p-2)/2}f_\sharp$ need not satisfy \eqref{h;scriptv}.
The next result ensures that this discrepancy is harmless.
\begin{corollary} \label{cor:ohbother}
For each $d\ge 1$ and each compact set $\Lambda\subset(1,2)$
there exists  $C<\infty$  with the following property for every exponent $p\in\Lambda$.
Let $f\in L^p(\reals^d)$ and suppose that
$f=f_\sharp+f_\flat$ with $\int |f_\sharp|^2 G^{p-2}<\infty$.
Suppose that $f$ satisfies \eqref{h;scriptv}.
Then
\begin{equation} \scriptq(f_\sharp) 
\le -\tfrac12 (2-p)(p-1)p^{d/2} \int |f_\sharp|^2 G^{p-2} +C\norm{f_\flat}_p^2.  \end{equation}
\end{corollary}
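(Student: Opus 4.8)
\textbf{Proof proposal for Corollary~\ref{cor:ohbother}.}
The plan is to reduce to Proposition~\ref{prop:spectral} by repairing the orthogonality relations \eqref{h;scriptv}, which $f=f_\sharp+f_\flat$ satisfies but $f_\sharp$ need not (this is precisely the discrepancy noted in the remark preceding the corollary). The idea is to subtract from $f_\sharp$ a small, explicit, finite-rank correction $r$ built from Gaussians times low-degree polynomials, chosen so that $h:=f_\sharp-r$ lands back in the class to which Proposition~\ref{prop:spectral} applies, and so that $r$ is controlled in every relevant norm by $\norm{f_\flat}_p$.

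First I would construct $r$. Let $W\subset L^2(\reals^d)$ be the finite-dimensional real subspace spanned by the functions $x^\alpha G^{p/2}$ with $|\alpha|\le 2$ together with $ix^\alpha G^{p/2}$ with $|\alpha|\le 1$. Since $G^{p-1}$ decays faster than any polynomial, $x^\alpha G^{p-1}\in L^{p'}(\reals^d)$ for every $\alpha$, so by H\"older's inequality and the relations \eqref{h;scriptv} for $f$,
\[ \Big|\int f_\sharp\,x^\alpha G^{p-1}\Big|=\Big|\int f_\flat\,x^\alpha G^{p-1}\Big|\le C\norm{f_\flat}_p \quad(|\alpha|\le 1), \]
and likewise $|\Re\int f_\sharp x^\alpha G^{p-1}|\le C\norm{f_\flat}_p$ for $|\alpha|=2$. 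The linear map sending $w\in W$ to the vector of moments $\big(\Re\int w\,x^\alpha G^{p-1}\big)_{|\alpha|\le 2}$ and $\big(\Im\int w\,x^\alpha G^{p-1}\big)_{|\alpha|\le 1}$ is governed by Gram matrices of the shape $\big(\int x^{\alpha+\beta}G^{(3p-2)/2}\big)_{\alpha,\beta}$, which are moment matrices of a Gaussian measure and hence positive definite. Therefore there is a unique $r\in W$ whose moments of these types equal those of $f_\sharp$, and it obeys $\norm{r}_X\le C\norm{f_\flat}_p$ for any fixed norm $X$ on $W$; in particular $\big(\int|r|^2G^{p-2}\big)^{1/2}$, $\norm{r}_p$, and $\big(\int G^{q-2}|\widehat r|^2\big)^{1/2}$ are all $\le C\norm{f_\flat}_p$. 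By construction $h=f_\sharp-r$ satisfies all three conditions in \eqref{h;scriptv}, the first because both $f_\sharp$ (by hypothesis) and $r$ have finite weighted $L^2$ norm.

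Then I would apply Proposition~\ref{prop:spectral} to $h$ to get $\scriptq(h)\le-\tfrac12(2-p)(p-1)p^{d/2}\int|h|^2G^{p-2}$. Write $\scriptq=A_+-A_-$ as the difference of the two nonnegative quadratic forms in its definition (the $|\widehat{\cdot}\,|^2$ terms and the $|\cdot|^2$ terms), and expand $\scriptq(f_\sharp)=\scriptq(h)+2\widetilde{\scriptq}(h,r)+\scriptq(r)$ with $\widetilde{\scriptq}$ the polarization. Here $|\scriptq(r)|\le C\norm{f_\flat}_p^2$ since all norms on $W$ are equivalent, and $|\widetilde{\scriptq}(h,r)|\le A_+(h)^{1/2}A_+(r)^{1/2}+A_-(h)^{1/2}A_-(r)^{1/2}$ by Cauchy--Schwarz for each of the two positive forms. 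Since $h$ satisfies \eqref{h;scriptv}, the reduction to $\T$ in the proof of Proposition~\ref{prop:spectral} together with Corollary~\ref{cor:spectralcorollary} gives $A_+(h)\le C\int|h|^2G^{p-2}$, while $A_-(h)\le C\int|h|^2G^{p-2}$ trivially; with $A_\pm(r)\le C\norm{f_\flat}_p^2$ this yields $|\widetilde{\scriptq}(h,r)|\le C\big(\int|h|^2G^{p-2}\big)^{1/2}\norm{f_\flat}_p$. Finally, since $f_\sharp$ and $f_\flat$ are disjointly supported one has $\int|f_\sharp|^2G^{p-2}\le\int|f|^2G^{p-2}<\infty$, and the triangle inequality in $L^2(G^{p-2}\,dx)$ gives $\big|(\int|h|^2G^{p-2})^{1/2}-(\int|f_\sharp|^2G^{p-2})^{1/2}\big|\le C\norm{f_\flat}_p$; substituting this and absorbing the cross term with Young's inequality produces the asserted bound. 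The main obstacle is this absorption step: one must check that the cross terms do not erode the constant $\tfrac12(2-p)(p-1)p^{d/2}$, which is where the comparison $\int|f_\sharp|^2G^{p-2}=\int|h|^2G^{p-2}+O\big(\norm{f_\flat}_p\,(\int|h|^2G^{p-2})^{1/2}\big)$ and the smallness of $\norm{r}$ in all the relevant norms are used; everything else is routine bookkeeping.
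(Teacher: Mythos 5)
Your overall strategy is the right one: reduce to Proposition~\ref{prop:spectral} by a finite-rank correction whose size is controlled by the moments $\int f_\sharp\,x^\alpha G^{p-1}=-\int f_\flat\,x^\alpha G^{p-1}=O(\norm{f_\flat}_p)$, and your construction of $r$ (invertibility of the Gaussian moment matrices, $\norm{r}\le C\norm{f_\flat}_p$ in every relevant norm) is sound. The gap is exactly the step you yourself flag as the main obstacle. With your non-orthogonal correction (drawn from ${\rm span}\{x^\alpha G^{p/2}\}$ and determined by moment matching), the cross term $2\widetilde{\scriptq}(h,r)$ and the cross term in $\int|f_\sharp|^2G^{p-2}=\int|h+r|^2G^{p-2}$ are each genuinely of size $\big(\int|h|^2G^{p-2}\big)^{1/2}\norm{f_\flat}_p$, and $\int|h|^2G^{p-2}$ can be arbitrarily large compared with $\norm{f_\flat}_p^2$; so such terms cannot be dominated by $C\norm{f_\flat}_p^2$. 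The only available absorption is Young's inequality against the negative term, which replaces $\tfrac12(2-p)(p-1)p^{d/2}$ by $\tfrac12(2-p)(p-1)p^{d/2}-\eps$ with $C=C_\eps$. That is strictly weaker than the stated corollary, whose point is precisely the exact constant (it is what makes $\bestB_{p,d}$ optimal in Theorem~\ref{thm:refinement2}); no choice of how you distribute the $ab$-type terms avoids this, because there is no sign or cancellation structure in them for your choice of $r$.

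The paper's proof removes the cross terms identically rather than absorbing them: set $F=G^{(p-2)/2}f_\sharp$, split $\Re F$ into its $\scriptv_2$-component plus its orthogonal projection onto ${\rm span}\{x^\alpha G^{p/2}:|\alpha|\le2\}$, and $\Im F$ into its $\scriptv_1$-component plus its projection onto ${\rm span}\{x^\alpha G^{p/2}:|\alpha|\le1\}$. Since these finite-dimensional spans and their orthocomplements are invariant under $\T$ (Lemma~\ref{lemma:eigenvalues}), both $\langle F,\T F\rangle$ and the weighted $L^2$ norms decompose with no cross terms at all; the $\scriptv$-components carry the exact constant via Corollary~\ref{cor:spectralcorollary}, while the finite-dimensional components have squared $L^2$ norm comparable (through the inverse Gram matrix, depending only on $p,d$) to $\sum_\alpha\big|\int f_\sharp x^\alpha G^{p-1}\big|^2\le C\norm{f_\flat}_p^2$ by H\"older. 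If you replace your $r$ by this orthogonal projection — equivalently, take the correction from ${\rm span}\{x^\alpha G\}$, so that $G^{(p-2)/2}r$ lies in the low eigenspaces and is orthogonal to $G^{(p-2)/2}h$ — then all your cross terms vanish, the bookkeeping really is routine, and you obtain the stated inequality with the exact constant.
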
 

\begin{proof}
Split $\Re(f_\sharp) G^{(2-p)/2}$ 
as a function in $\scriptv_2$ plus a function orthogonal to $\scriptv_2$;
likewise split $\Im(f_\sharp)G^{(2-p)/2}$
as a function in $\scriptv_1$ plus a function orthogonal to $\scriptv_1$.
Apply Proposition~\ref{prop:spectral} to conclude that
\begin{multline*} \scriptq(f_\sharp) \le -\tfrac12 (2-p)(p-1)p^{d/2} \int |f_\sharp|^2 G^{p-2} 
\\
+ C\sum_{|\alpha|\le 2} \big|\int \Re(f_\sharp) x^\alpha G^{p-1} \big|^2
+ C\sum_{|\alpha|\le 1} \big|\int \Im(f_\sharp) x^\alpha G^{p-1} \big|^2
\end{multline*}
where $C<\infty$ depends on $p,d$.  Now 
\begin{equation*} 
\sum_{|\alpha|\le 2} \big|\int f_\sharp x^\alpha G^{p-1} \big|^2
= \sum_{|\alpha|\le 2} \big|\int f_\flat x^\alpha G^{p-1} \big|^2
\le C \norm{f_\flat}_p^2
\end{equation*}
by H\"older's inequality since $p\le 2$; $G^{p-1}$ decays rapidly since $p-1>0$.
\end{proof}

We have shown that if $f$ satisfies \eqref{h;scriptv}
then for all sufficiently small $\eta>0$,
\begin{align*}
\Phi(G+f)
& \le \bestA_p^d -\big(\tfrac12(2-p)(p-1)p^{d/2}\bestA_p^d-O(\eta)\big) \int |f_{\sharp,\eta}|^2 G^{p-2}
-c_{d,p}\eta^{2-p}\norm{f_{\flat,\eta}}_p^p
\\&
= \bestA_p^d -\big(\bestB_{p,d}-O(\eta)\big)\norm{G}_p^{-p} \int |f_{\sharp,\eta}|^2 G^{p-2}
-c'_{d,p} \eta^{2-p}\norm{G}_p^{-p} \norm{f_{\flat,\eta}}_p^p
\end{align*}
where $c_{d,p}>0$. 

This can be rephrased in simpler terms at the expense of some loss of information:
\begin{equation} 
\Phi(G+f)
\le \bestA_p^d -\big(\lambda \bestB_{p,d}-O(\eta)\big)\norm{G}_p^{-2} \norm{f_{\sharp,\eta}}_p^2
-c'_{d,p} \eta^{2-p}\norm{G}_p^{-p} \norm{f_{\flat,\eta}}_p^p,
\end{equation}
where 
\begin{align*} \lambda
= \norm{G}_p^{2-p} \inf_{0\ne g\in L^p} \norm{g}_p^{-2}\int_{\reals^d} |g|^2 e^{-(p-2)\pi |x|^2}\,dx.
\end{align*}
Now $\lambda\ge 1$. Indeed, letting
$r= 2/(2-p)$ be the exponent conjugate to $2/p$  and invoking H\"older's inequality gives
\begin{multline*}
\norm{g}_p^p = \int |g|^p G^{p(p-2)/2} G^{-p(p-2)/2}
 \le \big(\int |g|^2 G^{p-2} \big)^{p/2} \big(\int G^{-rp(p-2)/2} \big)^{1/r}
\\ =  \big(\int |g|^2 G^{p-2} \big)^{p/2} \big(\int G^{p} \big)^{(2-p)/2}
 =  \big(\int |g|^2 G^{p-2} \big)^{p/2} \norm{G}_p^{(2-p)p/2}.
\end{multline*}
Therefore
$\norm{G}_p^{p-2} \norm{g}_p^{-2} \int |g|^2 G^{p-2} \ge 1$.

In actuality, $\lambda=1$ since $g$ can be chosen so that H\"older's inequality becomes an equality.
Information has been sacrificed in two ways; the condition $g\in\scriptn_G$ was not used in this step,
and the factor $\bestB_{p,d}$ arises only for linear combinations of a few specific eigenfunctions.

We conclude that
\begin{equation} \label{eq:longroad}
\Phi(G+f)
\le \bestA_p^d -\big(\bestB_{p,d}-O(\eta)\big)\norm{G}_p^{-2} \norm{f_{\sharp,\eta}}_p^2
-c'_{d,p} \eta^{2-p}\norm{G}_p^{-p} \norm{f_{\flat,\eta}}_p^p
\end{equation}
for all $0\ne f\in\scriptn_G$ with $\dist_p(f,\frakG)/\norm{f}_p$ sufficiently small
and all $\eta\in(0,\eta_0]$.

\section{Conclusion of proofs of main theorems}
Fix a dimension $d\ge 1$ and an exponent $p\in(1,2)$.
Let $\scriptg$ be the group of linear bijections
of $L^p(\reals^d)$ generated by  translations, modulations, multiplication
by nonzero constants, and $f\mapsto |\det(T)|^{1/p}\,f\circ T$ where $T\in\Gl(d)$. 
Each of these generators $\phi$, except multiplication by constants,
preserves $L^p$ norms, and moreover, satisfies
$\norm{\widehat{\phi f}}_{p'} = \norm{\widehat{f}}_{p'}$ for all $p\in L^p$.

By the implicit function theorem, any function $u\ne 0$ 
with $\dist_p(u,\frakG)/\norm{u}_p$ sufficiently small can be expressed
in a unique way as $u=\pi(u) + u^{\perp}$ where
$\pi(u)\in\frakG\setminus\{0\}$, and $u^\perp\in\scriptn_{\pi(u)}$.
Moreover
\begin{equation} \dist_p(u,\frakG)\le \norm{u^\perp}_p=\diststar(u,\frakG).\end{equation}
The condition $v\in\scriptn_g$ is invariant under $\scriptg$ in the sense that 
for any $\phi\in\scriptg$, 
\[ v\in\scriptn_g\Longleftrightarrow \phi(v)\in\scriptn_{\phi(g)}.\]

Let $G(x)=e^{-\pi |x|^2}$.
The group $\scriptg$ acts transitively on $\frakG$. 
Therefore it suffices to analyze functions $u\in L^p$ satisfying $\pi(u)=G$,
and $u-\pi(u)=u^\perp\in\scriptn_G$.
The condition that $u^\perp\in\scriptn_G$ is
$\Re\big(\int u^\perp \,PG^{p-1}\big)=0$ for all $P$
belonging to the set $\scriptp$ of quadratic polynomials
introduced in Definition~\ref{defn:scriptp}.
This is a restatement of \eqref{h;scriptv} with $h=u^\perp$.

\begin{proof}[Proof of Theorem~\ref{thm:refinement2}]
The statement is invariant under multiplication
by nonzero constants, so it suffices to consider functions $f$ that satisfy
$\norm{\pi(f)}_p = \norm{G}_p$ such that $\dist_p(f,\frakG)$ is sufficiently small.
The group $\scriptg$ acts transitively on $\frakG$;
there exists $\phi\in\scriptg$ such that $\phi(\pi(f)) = G$, and $\phi$ preserves $L^p$ norms.
Then $\phi(f) = G + \phi(f^\perp)$ and $\phi(f^\perp)= (\phi f )^\perp$ belongs to  $\scriptn_G$. 

We have shown that there exists $\eta_0>0$ depending only on $d,p$ such that 
if $\pi(f)=G$ and $\norm{(\phi f)^\perp}_p = \norm{f^\perp}_p$ is sufficiently small,
then for any $\eta\in(0,\eta_0]$,
\begin{equation}\begin{aligned}
\frac{\norm{\widehat{\phi f}}_q}{\norm{\phi f}_p}
&\le \bestA_p^d
-\big(\bestB_{p,d}-O(\eta)\big) \norm{\phi f}_p^{-p} \int |(\phi f)^\perp_\eta|^2 |G|^{p-2}
\\
& \qquad\qquad\qquad\qquad - c_{d,p}\eta^{2-p}\norm{\phi f}_p^{-p}\cdot\norm{(\phi f)^\perp-(\phi f)^\perp_\eta}_p^p. 
\end{aligned}\end{equation}
This is equivalent to
\begin{equation}\begin{aligned}
\frac{\norm{\widehat{f}}_q}{\norm{f}_p}
&\le \bestA_p^d
-\big(\bestB_{p,d}-O(\eta)\big) \norm{f}_p^{-p} \int | f^\perp_\eta|^2 |\pi(f)|^{p-2}
- c_{d,p}\eta^{2-p}\norm{f}_p^{-p}\norm{f^\perp- f^\perp_\eta}_p^p. 
\\&\le \bestA_p^d
-\big(\bestB_{p,d}-O(\eta)\big) \norm{f}_p^{-p} \int | f^\perp_\eta|^2 |\pi(f)|^{p-2}
\\ & \qquad\qquad\qquad\qquad - c_{d,p}\eta^{2-p}\big(\diststar(f,\frakG)/\norm{f}_p \big)^{-(2-p)}
\norm{f}_p^{-2}\norm{f^\perp- f^\perp_\eta}_p^2 
\end{aligned}\end{equation}
since $\norm{f^\perp-f^\perp_\eta}_p \le \norm{f^\perp}_p = \diststar(f,\frakG)$ and $p<2$.

Thus for any $A<\infty$ and $\eta>0$ there exists $\delta>0$
such that whenever $\norm{f}_p\ne 0$ and $\dist_p(f,\frakG)\le\delta\norm{f}_p$,
\begin{equation}
\frac{\norm{\widehat{f}}_q}{\norm{f}_p}
\le \bestA_p^d
-\big(\bestB_{p,d}-O(\eta)\big) \norm{f}_p^{-p} \int | f^\perp_\eta|^2 |\pi(f)|^{p-2}
-A\norm{f}_p^{-2}\norm{f^\perp-f^\perp_\eta}_p^2.
\end{equation}
\end{proof}

\begin{proof}[Proof of Theorem~\ref{thm:refinement1}]
By \eqref{eq:longroad} and the same reasoning as used above to
reduce the analysis of $f$ to that of $\phi(f)$,
for any $A<\infty$ and $\eta>0$ there exists $\delta>0$
such that whenever $\norm{f}_p\ne 0$ and $\dist_p(f,\frakG)\le\delta\norm{f}_p$,
\begin{equation}
\frac{\norm{\widehat{f}}_q}{\norm{f}_p}
 \le \bestA_p^d -\big(\bestB_{p,d}-O(\eta)\big)\norm{f}_p^{-2} \norm{f^\perp_\eta}_p^2
-A \norm{f}_p^{-2} \norm{f^\perp - f^\perp_{\eta}}_p^2.
\end{equation}
Choosing $A$ sufficiently large gives
\begin{equation}
\frac{\norm{\widehat{f}}_q}{\norm{f}_p}
 \le \bestA_p^d -\bestB_{p,d}\norm{f}_p^{-2}{\rm dist}^\star(f,\frakG)^2
+ \eta \norm{f}_p^{-2}{\rm dist}^\star(f,\frakG)^2.
\end{equation}
\end{proof}

\medskip
Theorem~\ref{thm:truemain} follows directly upon combining Theorem~\ref{thm:refinement1} and
Proposition~\ref{prop:submain}. \qed

\section{Sharpness of the constant $\bestB_{p,d}$}
We conclude by showing that the constant $\bestB_{p,d}$ in Theorem~\ref{thm:refinement2}
is optimal for $d=1$; the same reasoning applies in all dimensions, but the notation is more involved.
As above, consider $G(x)=e^{-\pi x^2}$.
Let $\{H_k: k=0,1,2,\dots\}$ be the family of Hermite polynomials with respect to $G^{p/2}$;
by this we mean that $H_k$ has degree $k$
and $\{H_k(x)G^{p/2}(x): k\ge 0\}$ is an orthonormal basis for $L^2(\reals^d)$.

Fix a real-valued auxiliary function $\eta\in C^\infty(\reals)$ that is supported in $[-1,1]$,
satisfies $\eta\equiv 1$ in some interval containing $0$,
and $\eta(-x)\equiv\eta(x)$ for all $x\in\reals$.
Let  $\rho>0$ satisfy $\pi\rho^2<1$.

For small $\eps>0$ consider $G+f$ where $f=f_\eps$ is defined to be
\begin{equation} f(x) = \eta(x/\rho\sqrt{\ln(1/\eps)}) \big(\eps H_3(x)+ c_\eps H_1(x)\big) G(x) \end{equation}
and the coefficient $c_\eps$ is chosen to ensure that 
$\int f(x) x^kG^{p-1}(x)\,dx=0$ for $k=0,1,2$. 
This orthogonality condition for $k=1$ uniquely specifies $c_\eps$ for all sufficiently small $\eps$.
The orthogonality then holds for $k=0,2$ since $f$ is odd while $x^k G^{p-1}$ is even. 

One finds that $c_\eps = O(\eps^{1+\delta})$ as $\eps\to 0$, for some $\delta>0$.
$f$ is supported where $|x|\le \rho\sqrt{\ln(1/\eps)}$,
so $\eps H_3(x) \le C\rho^3 \eps \ln(1/\eps)^3$
while $G(x) \ge e^{-\pi \rho^2\ln(1/\eps)} = \eps^{\pi\rho^2}$.
Thus $|\eps H_3(x)| \le C'\eps^{\gamma} G(x)$ for all $x$ in the support of $f=f_\eps$,
for all $\gamma < 1-\pi\rho^2>0$.

Set $g=g_\eps = G^{(p-2)/2} f$.
Then \[g = \eps H_3 G^{p/2} + O(\eps^{1+\delta})\] in $L^2(\reals)$.
Apply the analysis of $\Phi$ developed above, with $\eta=\eps^\gamma$ for some fixed $\gamma\in(0,1-\pi\rho^2)$.
Then $f_\sharp\equiv f$, and $f_\flat=0$.  The analysis therefore gives
\begin{align*}
\frac{\norm{\widehat{G+f}}_q}{\norm{G+f}_p}
= \bestA_p^d + \bestA_p^d \scriptq(f) + o_\eps(1)\norm{f}_p^2
= \bestA_p^d + \bestA_p^d \scriptq(f) + o(\eps^2)
\end{align*}
and
\begin{align*}
\scriptq(f) 
&= -\tfrac12(2-p)(p-1)p^{1/2}\norm{\eps H_3G^{p/2}}_{L^2(\reals)}^2 + O(\eps^{2+2\delta})  
\\&= -\tfrac12(2-p)(p-1)p^{1/2} \eps^2 + O(\eps^{2+2\delta})  
\\&= -\tfrac12(2-p)(p-1)p^{1/2} \int f^2G^{p-2} + O(\norm{f}_p^{2+2\delta})
\\&= -\tfrac12(2-p)(p-1)p^{1/2} (\norm{G}_p+O(\eps))^p \norm{f}_p^{-p} \int f^2G^{p-2} + O(\norm{f}_p^{2+2\delta})
\\&= -\tfrac12(2-p)(p-1) \norm{G+f}_p^{-p} \int f^2G^{p-2} + O(\norm{f}_p^{2+2\delta'})
\end{align*}
where $\delta'>0$.
Thus $F=G+f$ satisfies
\begin{align*} \frac{\norm{\widehat{F}}_q}{\norm{F}_p} 
&= \bestA_p^d - \bestB_{p,d} \int |f|^2 G^{p-2} + O(\norm{f}_p^{2+2\delta'})  
\\&= \bestA_p^d - \bestB_{p,d} \int |f_\sharp|^2 G^{p-2} + O(\norm{f}_p^{2+2\delta'}).  
\end{align*}
Thus the constant $\bestB_{p,d}$ is indeed optimal. \qed


\section{Spectrum of $\T$}\label{section:Hermite}
Recall that $\T$ is the compact self-adjoint linear operator on $L^2(\reals^d)$
defined by $\varphi\mapsto G^t\cdot (G^\sigma*(G^t \varphi))$
where $\sigma = (p-1)(2-p)^{-1}$.
In this section we sketch a calculation of its eigenvalues and an orthogonal basis of eigenfunctions,
establishing Lemma~\ref{lemma:eigenvalues}.

The powers $G^r$ of $G$ satisfy
\begin{equation}\label{gaussianconvolution}
G^s*G^t  = (rs^{-1}t^{-1})^{d/2} G^r\ \text{ where } r^{-1} = s^{-1}+t^{-1}\end{equation} 
and of course, $G^sG^t = G^{s+t}$.

\begin{lemma} $\T(G^{p/2}) = (2-p)^{d/2} G^{p/2}$.\end{lemma}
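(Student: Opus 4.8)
The plan is to compute $\T(G^{p/2})$ directly from the definition, using only the two elementary identities for powers of $G$ recorded just above: the multiplicativity $G^sG^t=G^{s+t}$ and the Gaussian convolution formula \eqref{gaussianconvolution}. Both identities are legitimate here because every function involved is a positive power of a Gaussian, hence Schwartz, and $\sigma=(p-1)(2-p)^{-1}>0$ for $p\in(1,2)$. First I would substitute $\varphi=G^{p/2}$ into $\T(\varphi)=G^t\cdot\bigl(G^\sigma*(G^t\varphi)\bigr)$, with $t=(2-p)/2$. Since $G^tG^{p/2}=G^{t+p/2}$ and $t+p/2=(2-p)/2+p/2=1$, the inner factor collapses to $G$ itself, so $\T(G^{p/2})=G^t\cdot(G^\sigma*G)$.

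Next I would apply \eqref{gaussianconvolution} to $G^\sigma*G$, i.e.\ with exponents $s=\sigma$ and the second exponent equal to $1$. The resulting exponent $r$ is determined by $r^{-1}=\sigma^{-1}+1=(2-p)(p-1)^{-1}+1=(p-1)^{-1}$, so $r=p-1$; the multiplicative prefactor is $\bigl(r\,\sigma^{-1}\cdot 1\bigr)^{d/2}=\bigl((p-1)\cdot(2-p)(p-1)^{-1}\bigr)^{d/2}=(2-p)^{d/2}$. Hence $G^\sigma*G=(2-p)^{d/2}\,G^{p-1}$.

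Finally, multiplying by $G^t$ and using multiplicativity once more gives $\T(G^{p/2})=(2-p)^{d/2}\,G^{t+p-1}$, and since $t+p-1=(2-p)/2+(p-1)=p/2$, this is exactly $(2-p)^{d/2}\,G^{p/2}$. There is no genuine obstacle in this argument; the only points requiring care are the three pieces of exponent arithmetic ($t+p/2=1$, $r^{-1}=(p-1)^{-1}$, and $t+p-1=p/2$) and checking that the constant emitted by \eqref{gaussianconvolution} simplifies cleanly to $(2-p)^{d/2}$.
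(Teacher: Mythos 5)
Your computation is correct and follows essentially the same route as the paper: both proofs evaluate $\T(G^{p/2})$ directly from the two Gaussian identities ($G^sG^t=G^{s+t}$ and \eqref{gaussianconvolution}), the only cosmetic difference being that the paper first tracks $\T(G^s)$ for general $s$ and then specializes to $s=p/2$, whereas you substitute $s=p/2$ from the start. The exponent arithmetic and the constant $(2-p)^{d/2}$ check out.
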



\begin{proof}
Let $s>0$. By the above formulas, $\T(G^s)$ is a scalar multiple of $G^{s'}$ where
$s'= ((s+t)^{-1}+\sigma^{-1})^{-1}+t$.
Since $t=\tfrac12(2-p)$ and $\sigma = (p-1)(2-p)^{-1}$, 
if $s=p/2$ then 
\[s' = \big(\,(\tfrac12 p + \tfrac12(2-p))^{-1}+(2-p)(p-1)^{-1}\big)^{-1} + \tfrac12(2-p)
= \tfrac12 p=s. \]
By \eqref{gaussianconvolution}, $\T(G^{p/2}) = \lambda_0 G^{p/2}$ with
\[\lambda_0^{2/d} = \frac{s-t}{s+t}\,\sigma^{-1}
= \frac{ \tfrac12 p-\tfrac12(2-p) } {\tfrac12 p+\tfrac12(2-p) } \cdot \frac{2-p } {p-1 } = 2-p.  \]
\end{proof}

\begin{lemma}
For any multi-index $\alpha$,
\begin{equation} \label{eq:hermiteclaim} \T(x^\alpha G^{p/2}) = Q_\alpha (x)G^{p/2} 
\ \text{ where }\ 
Q_\alpha (x)= (p-1)^{|\alpha|} x^\alpha + R_\alpha(x) \end{equation}
for some polynomial $R_\alpha$ of degree $\le |\alpha|-2$.
\end{lemma}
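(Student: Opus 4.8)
The plan is to compute $\T(x^\alpha G^{p/2})$ directly, using two observations: that the outer and inner multiplications by $G^t$ collapse neatly against the convolution, and that the whole family of functions $\set{x^\alpha G : \alpha}$ is encoded in a single generating object, the translated Gaussian. Since $t+\tfrac p2 = 1$, we have $G^t\cdot(x^\alpha G^{p/2}) = x^\alpha G$, so
\[ \T(x^\alpha G^{p/2}) = G^t\cdot\big(G^\sigma * (x^\alpha G)\big). \]
Now I would use the generating identity $e^{\pi|v|^2}G(x-v) = G(x)e^{2\pi x\cdot v}$, whose $v^\alpha$-Taylor coefficient is $\tfrac{(2\pi)^{|\alpha|}}{\alpha!}x^\alpha G(x)$; equivalently $x^\alpha G(x) = \tfrac1{(2\pi)^{|\alpha|}}\partial_v^\alpha\big[e^{\pi|v|^2}G(\cdot-v)\big]\big|_{v=0}$. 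So it suffices to apply $\T$ to the single family $v\mapsto e^{\pi|v|^2}G(\cdot-v)$ and extract a Taylor coefficient.

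For this, convolution commutes with translation, and by the Gaussian convolution formula \eqref{gaussianconvolution} one has $G^\sigma * G = (2-p)^{d/2}G^{p-1}$ (here $r^{-1} = \sigma^{-1}+1 = (p-1)^{-1}$, and the constant $(r\sigma^{-1})^{d/2}$ simplifies to $(2-p)^{d/2}$). Hence $G^\sigma*\big(e^{\pi|v|^2}G(\cdot-v)\big) = (2-p)^{d/2}e^{\pi|v|^2}G^{p-1}(\cdot-v)$. Multiplying by $G^t$ and completing the square, using $t+(p-1) = \tfrac p2$, turns $G^t(x)G^{p-1}(x-v)$ into $G^{p/2}(x)e^{2\pi(p-1)x\cdot v}e^{-\pi(p-1)|v|^2}$, and combining the $|v|^2$ terms (note $1-(p-1) = 2-p$) gives
\[ \T\big(e^{\pi|v|^2}G(\cdot-v)\big)(x) = (2-p)^{d/2}\,G^{p/2}(x)\,e^{2\pi(p-1)x\cdot v}\,e^{\pi(2-p)|v|^2}. \]
All operations involved — multiplication by Schwartz Gaussians, convolution with a Schwartz function, and $\partial_v^\alpha|_{v=0}$ — commute (for $v$ in a bounded set, $e^{\pi|v|^2}G(\cdot-v)$ is a smooth compactly-parametrized Schwartz-valued family), so I would read off the $v^\alpha$-Taylor coefficient on both sides. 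The left side is $\tfrac{(2\pi)^{|\alpha|}}{\alpha!}\T(x^\alpha G^{p/2})(x)$. On the right, expanding $e^{2\pi(p-1)x\cdot v}e^{\pi(2-p)|v|^2}$ in $v$, the maximal $x$-degree term in the coefficient of $v^\alpha$ arises only when all $|\alpha|$ factors of $v$ come from the linear-exponential factor, contributing $\tfrac{(2\pi(p-1))^{|\alpha|}}{\alpha!}x^\alpha$; each factor $v^{2k}$ ($k\ge 1$) pulled from $e^{\pi(2-p)|v|^2}$ reduces the available $x$-degree by exactly $2k$, so all remaining terms assemble into a polynomial in $x$ of degree $\le|\alpha|-2$. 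Cancelling $\tfrac{(2\pi)^{|\alpha|}}{\alpha!}$ yields $\T(x^\alpha G^{p/2}) = Q_\alpha G^{p/2}$ with $Q_\alpha(x) = (2-p)^{d/2}\big((p-1)^{|\alpha|}x^\alpha + R_\alpha(x)\big)$ and $R_\alpha$ of degree $\le|\alpha|-2$; thus the leading coefficient is $(p-1)^{|\alpha|}$ up to the overall normalizing factor $(2-p)^{d/2}$ already recorded for $\alpha=0$, in agreement with the eigenvalue $\lambda_\alpha$ of Lemma~\ref{lemma:eigenvalues}.

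I do not expect a real obstacle here: the substance is the algebraic remark that the translated Gaussian is a generating function for $\set{x^\alpha G}$, after which \eqref{gaussianconvolution} performs all the analytic work. The only things to handle with a bit of care are the exponent bookkeeping $t+\tfrac p2 = 1$ and $t+(p-1) = \tfrac p2$, the (routine) justification for moving $\partial_v^\alpha|_{v=0}$ through the convolution and the pointwise products, and the parity point that no term of degree $|\alpha|-1$ survives. Once \eqref{eq:hermiteclaim} is established, Lemma~\ref{lemma:eigenvalues} follows immediately: $\T$ acts lower-triangularly with respect to the degree filtration of $\Span\set{x^\beta G^{p/2}}$ with diagonal entries $(p-1)^{|\alpha|}(2-p)^{d/2}$, so applying Gram--Schmidt to $\set{x^\alpha G^{p/2}}$ produces an orthogonal eigenbasis $\psi_\alpha = (x^\alpha + r_\alpha)e^{-\pi p|x|^2/2}$ with $\deg r_\alpha\le|\alpha|-2$ and eigenvalues $\lambda_\alpha$ as claimed.
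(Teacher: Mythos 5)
Your argument is correct, but it reaches \eqref{eq:hermiteclaim} by a genuinely different route than the paper. The paper argues by induction on the degree of the polynomial: writing $x_kPG=-(2\pi)^{-1}P\,\partial_{x_k}G$, integrating by parts inside the convolution, and using $\partial_{x_k}(G^\sigma*G)=-2\pi\tfrac{\sigma}{\sigma+1}\,x_k\,G^\sigma*G$ with $\tfrac{\sigma}{\sigma+1}=p-1$, it derives the recursion $Q_{x_kP}=(p-1)x_kQ_P-(2\pi)^{-1}\partial_{x_k}Q_P+(2\pi)^{-1}Q_{\partial_{x_k}P}$, from which the leading coefficient and the degree bound on $R_\alpha$ follow. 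You instead compute the action of $G^t\cdot\big(G^\sigma*(\,\cdot\,)\big)$ on the generating family $e^{\pi|v|^2}G(x-v)=G(x)e^{2\pi x\cdot v}$ in closed form via \eqref{gaussianconvolution} and extract $v^\alpha$--Taylor coefficients; this is essentially the Mehler-kernel computation the paper cites from Beckner and chooses not to reproduce, and it buys the entire triangular structure (indeed the full generating function of the eigenbasis, including the parity fact that the degree drops by at least two) in one stroke, at the modest cost of the routine justification for differentiating under the convolution, whereas the paper's recursion is more elementary but yields the leading term only after an induction. Two small points to fix. First, the display you label $\T\big(e^{\pi|v|^2}G(\cdot-v)\big)$ is not literally $\T$ applied to that family ($\T$ carries an inner factor $G^t$); what you compute, and what you need by your own first display $\T(x^\alpha G^{p/2})=G^t\cdot\big(G^\sigma*(x^\alpha G)\big)$, is $G^t\cdot\big(G^\sigma*(e^{\pi|v|^2}G(\cdot-v))\big)$ — relabel it so. Second, your leading coefficient $(2-p)^{d/2}(p-1)^{|\alpha|}$ is the right one: as printed, \eqref{eq:hermiteclaim} drops the overall factor $(2-p)^{d/2}$, which is already forced by the case $\alpha=0$ ($\T(G^{p/2})=(2-p)^{d/2}G^{p/2}$) and is reinstated in the paper when it forms $\lambda_\alpha=(p-1)^{|\alpha|}(2-p)^{d/2}$; your handling is consistent with what Lemma~\ref{lemma:eigenvalues} actually requires.
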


\begin{proof}
We will first prove, by induction on the degree of $P$, 
that for any polynomial $P$ there exists a polynomial $Q_P$ such that $\T(PG^{p/2})=Q_PG^{p/2}$. 
Since $t + \tfrac12 p = \tfrac12(2-p)+\tfrac12 p=1$,
$G^t G^{p/2}=G$ 
and consequently
$\T(PG^{p/2}) \equiv G^t G^\sigma*(PG)$.

By induction on the degree,
\begin{align*} G^\sigma*(x_k P G) 
&= G^\sigma* \big(-(2\pi)^{-1} P \partial_{x_k} G\big)
\\&= -(2\pi)^{-1} G^\sigma* \partial_{x_k} (PG)
+(2\pi)^{-1}G^\sigma * (\partial_{x_k}P)G
\\& = -(2\pi)^{-1}\partial_{x_k}(G^\sigma* (P G))
+(2\pi)^{-1}G^\sigma * (\partial_{x_k}P )G
\\& = -(2\pi)^{-1}\partial_{x_k}(Q_P \cdot (G^\sigma* G))
+(2\pi)^{-1}Q_{\partial_{x_k}P} \cdot (G^\sigma * G)
\\& = -(2\pi)^{-1} \partial_{x_k} Q_P\, \cdot\, (G^\sigma* G)
-(2\pi)^{-1} Q_P \partial_{x_k}(G^\sigma*G)
+(2\pi)^{-1}Q_{\partial_{x_k}P} \cdot(G^\sigma * G).
\end{align*}
Now
\begin{multline*} \partial_{x_k}(G^\sigma*G) 
= \partial_{x_k}(\sigma+1)^{-d/2} G^{\sigma/(\sigma+1)}
\\
= -2\pi x_k \sigma(\sigma+1)^{-1}(\sigma+1)^{-d/2} G^{\sigma/(\sigma+1)} 
 = -2\pi \sigma (\sigma+1)^{-1} x_k G^\sigma*G. \end{multline*}
Since
$\sigma+1 = \frac{p-1}{2-p}+1 = (2-p)^{-1}$ and $\frac{\sigma}{\sigma+1} = p-1$,
we obtain
\begin{equation} Q_{x_k P} = 
(p-1) x_k Q_P -(2\pi)^{-1} \partial_{x_k}Q_P + (2\pi)^{-1} Q_{\partial_{x_k}P}.  \end{equation}
The lemma \eqref{eq:hermiteclaim} follows from this recursion formula. 
\end{proof}

\begin{proof}[Proof of Lemma~\ref{lemma:eigenvalues}]
By the preceding lemma together with the Gram-Schmidt procedure, 
$\T$ has an orthonormal set of eigenfunctions of the indicated form with eigenvalues 
\begin{equation} \lambda_\alpha = 
\left(\frac{\sigma}{\sigma+1}\right)^{|\alpha|}\lambda_0 = (p-1)^{|\alpha|} (2-p)^{d/2}.\end{equation}

This set of eigenfunctions is complete in $L^2(\reals^d)$. Indeed,
these are obtained by applying the Gram-Schmidt procedure to the set of 
all monomials times the Gaussian $G^{p/2}$, with the usual ordering by degree.
Therefore these are identical to the Hermite functions, up to a change of variables and normalization.
\end{proof}

\section{Application to Young's convolution inequality}

Let $p_j\in(1,2]$ for $j=1,2,3$ satisfy $\sum_{j=1}^3 p_j^{-1}=2$.
Let $q_j$ be the exponent conjugate to $p_j$.
Then $\sum_j q_j^{-1} = 3-\sum_j p_j^{-1} = 3-2=1$.
By Plancherel's theorem and the Hausdorff-Young and H\"older inequalities,
\begin{equation*}
\big|\langle f_1*f_2,f_3\rangle\big| 
= \big|\langle \widehat{f_1}\widehat{f_2},\,\widehat{f_3}\rangle\big|
\le \int |\widehat{f_1}\widehat{f_2}\widehat{f_3}|
\le \prod_{j=1}^3 \norm{\widehat{f_j}}_{q_j}
\le \prod_{j=1}^3 \bestA_{p_j}^d \norm{{f_j}}_{p_j}
= \bestC_{\vec{p}}^d \prod_{j=1}^3 \norm{{f_j}}_{p_j}.
\end{equation*}
Suppose that each function $f_j$ has positive norm. If
equality holds in Young's inequality,
then it holds at each step of this chain of inequalities. Therefore
each $f_j$ extremizes the $L^{p_j}$ Hausdorff-Young inequality,
$|\widehat{f_j}|^{q_j}$ is a constant multiple of $|\widehat{f_i}|^{q_i}$
almost everywhere for all $i,j$, 
and $\widehat{f_1}\widehat{f_2}\widehat{f_3}$ is a constant multiple of its absolute value.
This leads to the description of the set $\vec{\frakG}$ of extremizing ordered triples
stated in \S\ref{subsect:convolution}.

In proving Corollary~\ref{cor:convolution}, we may assume that $\vec{f}=(f_1,f_2,f_3)$
satisfies $\norm{f_j}_{L^{p_j}}=1$ for each index $j=1,2,3$.
It follows that if 
$\big|\langle f_1*f_2,f_3\rangle\big| \ge(1-\delta) \bestC_{\vec{p}}^d \prod_{j=1}^3 \norm{{f_j}}_{p_j}$
then for each $j$,
\begin{equation} \norm{\widehat{f_j}}_{q_j} \ge (1-c\delta) \norm{f_j}_{p_j} \end{equation}
and consequently by Theorem~\ref{thm:truemain},
there exist Gaussians $g_j$ satisfying
\begin{equation}\norm{f_j-g_j}_{p_j} \le C\delta^{1/2}.\end{equation} 

Setting $G_j = \widehat{g_j}$, we have $\norm{G_j}_{q_j}\asymp 1$ and
\begin{equation} \big|\langle G_1G_2,G_3\rangle \big|
\ge (1-C\delta^{1/2}) \prod_{j=1}^3 \norm{G_j}_{q_j}.\end{equation}
Express $G_j(x) = c_j e^{i\xi_j\cdot x} e^{-Q_j(x-a_j)}$
where  $c_j\in\complex$, $\xi_j\in\reals^d$, 
and $Q_j$ is a positive definite real quadratic form
$Q_j(x) = \sum_{m,n=1}^d b_{m,n}x_mx_n$.
Therefore
\[ \iint_{\reals^{d}} e^{-\sum_{j=1}^3 Q_j(x-a_j)}
\ge (1-C\delta^{1/2}) \prod_{j=1}^3 \norm{e^{-Q_j(\cdot)}}_{q_j}.\]

The conclusion \eqref{eq:bad4bound} of Corollary~\ref{cor:convolution}
follows from an elementary analysis based on completion of the square in
the exponent. Details are left to the reader.

\section{A remark}
It is natural to ask what prevents adaptation of the analysis \cite{liebgaussian}
of exact extremizers to an analysis of near-extremizers. 
One unstable step relies on analytic continuation, and
appears in Step 4 of the proof of Theorem~4.5 of \cite{liebgaussian}.
If $\phi(w,z)$ is an entire function of $(w,z)\in\complex\times\complex$
whose restriction to $\reals\times\reals$ factors as
$h(\Re(w))\tilde h(\Re(z))$, then $\phi$ likewise factors on $\complex\times\complex$. 
In an adaptation of the method to near extremizers, the best one could hope to know
after Step 3 would be that
the restriction to $\reals\times\reals$ agreed with a product 
function up to an additive error with small $L^qL^p$ norm.
In such a circumstance, but in the absence of further information, 
$\phi$ need not be close to a product function on $\complex\times\complex$.
It is not clear to this author whether additional information could be gleaned 
to make this step go through.

\end{document}